\newif\ifsubsections
	\definecolor{linkred}{rgb}{0.7,0.2,0.2}
	\definecolor{linkblue}{rgb}{0,0.2,0.6}
	\definecolor{linkred}{rgb}{0.0,0.0,0.0}
	\definecolor{linkblue}{rgb}{0,0.0,0.0}
\theoremstyle{plain}
\newtheorem{Proposition}[equation]{Proposition}
\newtheorem{Lemma}[equation]{Lemma}
\newtheorem{Corollary}[equation]{Corollary}
\newtheorem{Theorem}[equation]{Theorem}
\theoremstyle{definition}
\newtheorem{Definition}[equation]{Definition}
\newtheorem{Exercise}[equation]{Exercise}
\let\oldsubsubsection\subsubsection
\renewcommand{\subsubsection}[1]{%
\setcounter{subsubsection}{\value{equation}}%
\oldsubsubsection{#1}
\refstepcounter{equation}
}
\def\subsubfleuron{\leavevmode\kern-\parindent
\rlap{$\blacktriangleright$}%
\kern\parindent\relax}
\DeclareRobustCommand{\hhbar}{%
  {\mathord{\text{\lower0.05ex\hbox{$\mathchar'26$}{$\mkern-7mu h$}}}}%
}
\def\evm@hbar{{\hhbar}}
\newcommand{\C}{\mathbb{C}}
\newcommand{\Z}{\mathbb{Z}}
\newcommand{\Q}{\mathbb{Q}}
\newcommand{\R}{\mathbb{R}}
\newcommand{\N}{\mathbb{N}}
\newcommand{\bbO}{\mathbb{O}}
\newcommand{\mM}{\mathbb{M}}
\newcommand{\mMh}{{\widetilde{\mM}}}
\newcommand{\bbA}{\mathbb{A}}
\newcommand{\bF}{\mathbf{F}}
\newcommand{\bY}{\mathsf{Y}}
\newcommand{\bbV}{\mathbb{V}}
\newcommand{\bbW}{\mathbb{W}}
\newcommand{\bv}{\mathsf{v}}
\newcommand{\bw}{\mathsf{w}}
\newcommand{\aroof}{\widehat{\mathsf{a}}}
\newcommand{\bX}{\mathsf{X}}
\newcommand{\bJ}{\mathsf{J}}
\newcommand{\bT}{\mathsf{T}}
\newcommand{\bZ}{\mathsf{Z}}
\newcommand{\bW}{\mathsf{W}}
\newcommand{\bA}{\mathsf{A}}
\newcommand{\fm}{\mathsf{m}}
\newcommand{\bP}{\mathbb{P}}
\newcommand{\bE}{\mathsf{E}}
\newcommand{\bS}{\mathsf{S}}
\newcommand{\cA}{\mathscr{A}}
\newcommand{\cK}{\mathscr{K}}
\newcommand{\cL}{\mathscr{L}}
\newcommand{\cO}{\mathscr{O}}
\newcommand{\cE}{\mathscr{E}}
\newcommand{\cU}{\mathscr{U}}
\newcommand{\cN}{\mathscr{N}}
\newcommand{\cH}{\mathscr{H}}
\newcommand{\cW}{\mathscr{W}}
\newcommand{\cQ}{\mathscr{Q}}
\newcommand{\cM}{\mathscr{M}}
\newcommand{\cV}{\mathscr{V}}
\newcommand{\cT}{\mathscr{T}}
\newcommand{\cB}{\mathscr{B}}
\newcommand{\cR}{\mathscr{R}}
\newcommand{\cC}{\mathscr{C}}
\newcommand{\fZ}{\mathfrak{Z}}
\newcommand{\fF}{\mathfrak{F}}
\newcommand{\Dir}{\slashed D} 
\newcommand{\cHom}{\mathscr{H}\!om} 
\newcommand{\cTor}{\mathscr{T}\!or} 
\newcommand{\bfM}{\mathbf{M}}
\newcommand{\bfS}{\mathbf{S}}
\newcommand{\bfG}{\mathbf{G}}
\newcommand{\Camp}{C_\textup{ample}}
\newcommand{\cI}{\mathscr{I}}
\newcommand{\lan}{\left\langle} 
\newcommand{\ran}{\right\rangle}
\newcommand{\fg}{\mathfrak{g}} 
\newcommand{\fgh}{\hat{\mathfrak{g}}}
\newcommand{\Free}{\mathsf{Free}}
\newcommand{\rd}{/\!\!/\!\!/\!\!/}
\newcommand{\rdd}{/\!\!/}
\newcommand{\Ct}{\mathbb{C}^\times}
\newcommand{\Tube}{\mathsf{Tube}}
\newcommand{\Vertex}{\mathsf{Vertex}}
\newcommand{\Vertext}{\widetilde{\mathsf{Vertex}}}
\newcommand{\bolde}{\boldsymbol{\varepsilon}}
\newcommand{\bfC}{\mathbf{C}}
\newcommand{\Topp}{T^{1/2}_\textup{opp}}
\newcommand{\fC}{\mathfrak{C}}
\newcommand{\Hd}{{H}^{\raisebox{0.5mm}{$\scriptscriptstyle \bullet$}}}
\newcommand{\bSd}{{\mathsf{S}}^{\raisebox{0.5mm}{$\scriptscriptstyle
      \bullet$}}}
\newcommand{\bSdh}{{\widehat{\mathsf{S}}}^{\raisebox{0.5mm}{$\scriptscriptstyle
      \bullet$}}}
\newcommand{\Extd}{{\Ext}^{\raisebox{0.5mm}{$\scriptscriptstyle \bullet$}}}
\newcommand{\eqf}{\overset{\textup{\tiny \it{f}}}=}
\newcommand{\eqdef}{\overset{\textup{\tiny def}}=}
\newcommand{\glh}{\widehat{\mathfrak{gl}}} 
\newcommand{\bra}[1]{\left\langle #1 \right|}
\newcommand{\brar}[1]{\left\langle #1 \right\|}
\newcommand{\ket}[1]{\left| #1 \right\rangle}
\newcommand{\ketr}[1]{\left\| #1 \right\rangle}
\newcommand{\transp}{\textup{\tiny transp}}
\renewcommand{\dbar}{\bar\partial} 
\newcommand{\bx}{\textup{\ding{114}}}
\newcommand{\strr}{\textup{\ding{73}}}
\newcommand{\bk}{\boldsymbol{\kappa}}
\newcommand{\tX}{\widetilde{X}}
\newcommand{\Mbar}{\overline{M}}
\DeclareMathOperator{\Def}{Def}
\DeclareMathOperator{\Obs}{Obs}
\DeclareMathOperator{\Lie}{Lie}
\DeclareMathOperator{\Res}{Res}
\DeclareMathOperator{\Gr}{Gr}
\DeclareMathOperator{\Hilb}{Hilb}
\DeclareMathOperator{\End}{End}
\DeclareMathOperator{\Ext}{Ext}
\DeclareMathOperator{\Hom}{Hom}
\DeclareMathOperator{\Aut}{Aut}
\DeclareMathOperator{\rk}{rk}
\DeclareMathOperator{\ev}{ev}
\DeclareMathOperator{\Pic}{Pic}
\DeclareMathOperator{\Bl}{Bl}
\DeclareMathOperator{\supp}{supp}
\DeclareMathOperator{\Stab}{Stab}
\DeclareMathOperator{\Attr}{Attr}
\DeclareMathOperator{\diag}{diag}
\DeclareMathOperator{\codim}{codim}
\DeclareMathOperator{\str}{str}
\DeclareMathOperator{\indx}{index}
\DeclareMathOperator{\Prob}{Prob}
\DeclareMathOperator{\dist}{distance}
\newcommand{\Ld}{{\Lambda}^{\!\raisebox{0.5mm}{$\scriptscriptstyle
      \bullet$}}\!}
\newcommand{\Omd}{{\Omega}^{\raisebox{0.5mm}{$\scriptscriptstyle
      \bullet$}}\!}
\newcommand{\Ldm}{{\Lambda}^{\!\raisebox{0.5mm}{$\scriptscriptstyle \bullet$}\,}_{\scriptscriptstyle{-}}}
\newcommand{\vir}{\textup{vir}}
\newcommand{\orb}{\textup{orb}}
\newcommand{\opp}{\textup{opp}}
\newcommand{\points}{\textup{points}}
\newcommand{\mov}{\textup{moving}}
\newcommand{\nega}{\mathbf{n}}
\newcommand{\posi}{\mathbf{p}}
\newcommand{\pt}{\textup{pt}}
\newcommand{\bmu}{\boldsymbol{\mu}}
\newcommand{\bxi}{\boldsymbol{\xi}}
\newcommand{\QM}{\mathsf{QM}}
\newcommand{\cF}{\mathscr{F}}
\newcommand{\whcF}{\widehat{\mathscr{F}}}
\newcommand{\cG}{\mathscr{G}}
\newcommand{\tO}{\widehat{\mathscr{O}}}
\DeclareMathOperator{\Ker}{Ker}
\DeclareMathOperator{\Coker}{Coker}
\DeclareMathOperator{\tr}{tr}
\DeclareMathOperator{\weight}{weight}
\DeclareMathOperator{\Spec}{Spec}
\DeclareMathOperator{\Ann}{Ann}
\DeclareMathOperator{\Tor}{Tor}
\begin{document}

%
%
%
%
%
%

\title[K-theoretic computations in 
enumerative geometry]{Lectures on K-theoretic computations in 
enumerative geometry} 

%
%
\author{Andrei Okounkov}
\date{
Spring and Summer 2015} 
\address{
\newline Department of Mathematics,
Columbia University,
New York, NY 10027, 
\newline Institute for Problems of Information Transmission
Bolshoy Karetny 19, Moscow 127994, Russia
\newline 
Laboratory of Representation
Theory and Mathematical Physics 
Higher School of Economics 
Myasnitskaya 20, Moscow 101000, Russia 
}
\email{	okounkov@math.columbia.edu}

%
%
\keywords{Park City Mathematics Institute}

%
%
\maketitle

\tableofcontents

%
%


\section{Aims \& Scope} 

\subsection{K-theoretic enumerative geometry}

\subsubsection{}
These lectures are for graduate students who want to learn how 
to do the \emph{computations} from the title. Here I put emphasis 
on computations because I think it is very important to keep 
a connection between abstract notions of algebraic geometry, which can be very 
abstract indeed, and something we can see, feel, or test with 
code. 

While it is a challenge to adequately illustrate a modern algebraic geometry 
narrative, one can become familiar with main characters of these notes 
by working out examples, and my hope is that these notes will be placed
alongside a pad of paper, a pencil, an eraser, and a symbolic computation
interface. 

\subsubsection{} 

Modern enumerative 
geometry is not so much about numbers as it is about 
deeper properties of the moduli spaces that parametrize 
the geometric objects being enumerated. 

Of course, once a relevant moduli space $\mM$ is constructed one 
can study it as one would study any other algebraic scheme
(or stack, depending on the context). Doing this in any generality
 would appear to be seriously challenging, as even the dimension
of some of the simplest moduli spaces considered here 
(namely, the Hilbert scheme of points of 3-folds) is not known. 

\subsubsection{}

A productive middle ground is to compute the Euler characteristics
$\chi(\cF)$ 
of naturally defined coherent sheaves $\cF$ on $\mM$, as representations of 
a group $G$ of automorphisms of the problem. This goes beyond 
intersecting natural cycles in $\mM$, which is the realm of the 
traditional enumerative geometry, and is a nutshell description 
of equivariant K-theoretic enumerative geometry. 

The group $G$ will typically be connected and reductive, and 
the $G$-character of $\chi(\cF)$ will be a Laurent polynomial
on the maximal torus $T\subset G$ provided $\chi(\cF)$ is a
finite-dimensional virtual $G$-module. 
Otherwise, it will be a rational function on $T$. 
 In either case, it is a very concrete mathematical 
object, which can be turned and spun to be seen from many 
different angles.

\subsubsection{}
Enumerative geometry has received a very significant 
impetus from modern high energy physics, and this is even 
more true of its K-theoretic version. From its very origins, 
K-theory has been inseparable from indices of 
differential operators and related questions in mathematical 
quantum mechanics. For a mathematical physicist, 
the challenge is to study quantum systems with infinitely 
many degrees of freedom, systems that describe fluctuating
objects that have some spatial extent. 

While the dynamics of such systems is, obviously, very complex, 
their vacua, especially supersymmetric vacua, i.e.\ the quantum 
states in the kernel 
of a certain infinite-dimensional Dirac operator
\begin{equation}
\xymatrix{
\cH_\textup{even} \ar@/^/[rr]^{\Dir} && \cH_\textup{odd} \ar@/^/[ll]^\Dir
}\label{HDH} 
\end{equation}
may often be 
understood in finite-dimensional terms. In particular, the 
computations of supertraces
\begin{align}
\str e^{-\beta \Dir^2} A & = \tr_{(\Ker\Dir)_{\textup{even}}} A -
\tr_{(\Ker\Dir)_{\textup{odd}}} A \notag \\
&=\str_{\indx \Dir} A 
\label{strA}
\end{align}
of natural operators $A$ commuting with $\Dir$ 
may be equated with the kind of 
computations that is done in these notes. 

Theoretical physicists developed very powerful and insightful ways 
of thinking about such problems and theoretical physics serves as a 
very important source of both inspiration and applications for 
the mathematics pursued here. We will see
many examples of this below. 

\subsubsection{}

What was said so far encompasses a whole universe of physical 
and enumerative contexts. While resting on certain common principles, 
this universe is much too rich and diverse to be reasonably 
discussed here. 

These lectures are written with a particular goal in mind, which is 
to introduce a student to computations in two intertwined 
subjects, namely:
\begin{itemize}
\item[---] K-theoretic 
Donaldson-Thomas theory, and 
\item[---] quantum K-theory of Nakajima varieties. 
\end{itemize}
Some key features of these theories, and of the relationship 
between them, may be summarized as follows. 

\subsection{Quantum K-theory of Nakajima varieties}\label{s_Q_Nak}

\subsubsection{}
Nakajima varieties \cite{Nak1,Nak2} are associated to a quiver (that is, 
a finite graph with possibly parallel edges and loops), a pair of dimension 
vectors, and a choice of stability chamber.
 They form a remarkable family of \emph{equivariant 
symplectic resolutions} \cite{Kal2} and have found many geometric and 
representation-theoretic applications. Their construction will 
be recalled in Section \ref{s_Nak}. 

For quivers of affine ADE type, and a suitable choice of stability,
 Nakajima varieties are the moduli spaces of 
framed coherent sheaves on the corresponding ADE
surfaces, e.g.\ on $\C^2$ for a quiver with one vertex and 
one loop. These moduli spaces play a very important role in 
supersymmetric gauge theories and algebraic geometry. 

For general quivers, Nakajima varieties share many properties of the 
moduli
spaces of sheaves on a symplectic surface. 
In fact, from their construction, they may be interpreted as
moduli spaces of stable objects in 
certain abelian categories which have the same duality 
properties as coherent sheaves on a symplectic surface. 

\subsubsection{}
{}From many different perspectives, rational 
curves in Nakajima varieties are of particular interest. 
Geometrically, a map 
$$
\textup{curve $C$} \to \textup{Moduli of sheaves on a surface $S$}
$$
produces a coherent sheaf on a threefold $C\times S$. 
One thus expects a 
relation between  enumerative geometry of sheaves 
on threefolds fibered in ADE surfaces and 
enumerative geometry of maps from a fixed curve 
to affine ADE Nakajima varieties.  

Such a relation indeed exists and has been already profitably 
used in cohomology \cite{OP1,OP2,MauObl,moop}. For it to work well, it is important 
that the fibers are symplectic. Also, because the source $C$ of the 
map is fixed and doesn't vary in moduli, it can be taken to be 
a rational curve, or a union of rational curves. 

Rational curves in other Nakajima varieties lead to enumerative
problems of similar 3-dimensional flavor, even when they 
are lacking a direct geometric interpretations as counting sheaves on 
some threefold.  

\subsubsection{}

In cohomology, counts of rational curves in a 
Nakajima variety $X$ are conveniently 
packaged in terms of equivariant \emph{quantum cohomology}, which is 
a certain deformation of the cup product in $\Hd_G(X)$ with 
deformation base 
$H^2(X)$. 
A related structure is the equivariant \emph{quantum connection}, or 
Dubrovin connection, on the trivial $\Hd_G(X)$ bundle with 
base $H^2(X)$. 

While such packaging of enumerative information may be performed
for an abstract algebraic variety $X$, for Nakajima varieties these 
structures are described in the language of geometric representation 
theory, and namely in terms of an action of a certain Yangian $\bY(\fg)$ 
on  $\Hd_G(X)$. In particular, the quantum connection is identified 
with the trigonometric Casimir connection for $\bY(\fg)$, studied in
\cite{TolCas} for finite-dimensional Lie algebras, see also
\cite{TV}. 

The construction of the required Yangian $\bY(\fg)$, and the 
identification of quantum structures in terms of $\bY(\fg)$, are
the main results of \cite{MO1}. That work was inspired by conjectures put 
forward by Nekrasov and Shatashvili \cite{NS1,NS2}, on the one hand, and by 
Bezrukavnikov and his collaborators (see e.g.\ \cite{Et}), on the other. 

A similar geometric representation theory description of quantum 
cohomology is expected for all equivariant symplectic resolutions, 
and perhaps a little bit beyond, see for example \cite{FFFR}. Further, there 
are conjectural links, due to Bezrukavnikov and his collaborators,
 between quantum connections for symplectic 
resolutions $X$ and representation theory 
of their quantizations (see, for example, \cite{BezF,BK,BM,BezLo,Kal1}),  their derived
autoequivalences etc. 

\subsubsection{}

The extension \cite{MO2} of our work \cite{MO1} with 
D.~Maulik to K-theory requires
several new ideas, as certain arguments that are used again
and again in \cite{MO1} are simply not available in K-theory. For instance, 
in equivariant cohomology, a proper integral of correct dimension is 
a nonequivariant constant, which may be computed using an arbitrary 
specialization of the equivariant parameters (it is typically very convenient to set 
the weight $\hbar$ of the symplectic form to $0$ and send all other
equivariant variables to infinity). 

By contrast, in equivariant $K$-theory, the
only automatic conclusion about a proper pushforward to a point is
that it is a Laurent polynomial in the equivariant variables, and,
most of the time, this cannot be improved. If this Laurent polynomial does
not involve some 
equivariant variable variable $a$ then this is called \emph{rigidity}, and is
typically shown by a careful study of  the $a^{\pm 1} \to \infty$ limit. We will 
see many variations on this theme below. 

Also, very seldom there is rigidity 
with respect to the weight  $\hbar$ of the symplectic form 
and nothing can 
be learned from making that weight trivial. Any argument 
that involves such step in cohomology needs to be modified, most 
notably the proof of the commutation of quantum connection with 
the difference Knizhnik-Zamolodchikov connection, see Sections 1.4 and 
9 of \cite{MO1}. In the logic of \cite{MO1}, quantum connection is
characterized by this commutation property, so it is very important to
lift the argument to K-theory. 

One of  the goals of these notes is to serve as an introduction to the
additional techniques required for working in K-theory.  In
particular, the quantum Knizhnik-Zamolodchikov connection appears
in Section \ref{qKZ} as the computation of the shift operator 
corresponding to a minuscule cocharacter. Previously in Section 
\ref{s_shift_eq} we construct a difference connection which shifts 
equivariant variables by cocharacters of the maximal torus and show 
it commutes with the K-theoretic upgrade of the quantum 
connection. That upgrade is a difference connection that shifts the 
K\"ahler variables 
$$
z \in H^2(X,\C) \big/ 2 \pi i H^2(X,\Z)
$$
by cocharacters of this torus, that is, by a lattice isomorphic to
$\Pic(X)$. This quantum difference 
connection is constructed in Section \ref{s_shift_K}. 

\subsubsection{}

Technical differences notwithstanding, the eventual description of 
quantum $K$-theory of Nakajima varieties is exactly what one 
might have guessed recognizing a general pattern in 
geometric representation theory. The Yangian $\bY(\fg)$, which 
is a Hopf algebra deformation of $\cU(\fg\otimes \C[t])$, is a first 
member of a hierarchy in which the Lie algebra
$$
\fg\otimes \C[t] = \textup{Maps}(\C,\fg)
$$
is replaced, in turn,  by a central extension of maps from 
\begin{itemize}
\item[---] the multiplicative 
group $\C^\times$, or 
\item[---] an elliptic curve. 
\end{itemize}
Geometric realizations of 
the corresponding quantum groups are constructed in equivariant 
cohomology, 
equivariant K-theory, and equivariant elliptic cohomology, respectively, 
see \cite{MO2,OS} and also \cite{AO} for the construction of an elliptic 
quantum group from Nakajima varieties. 

Here we are on the middle level of the hierarchy, where the quantum 
group is denoted $\cU_\hbar(\fgh)$. The variable $q$ is the deformation 
parameter; its geometric meaning is the equivariant weight of 
the symplectic form. For quivers of finite type, these are identical 
to Drinfeld-Jimbo quantum groups from textbooks. For other 
quivers, $\cU_\hbar(\fgh)$ is constructed in the style of 
Faddeev, Reshetikhin, and Takhtajan from geometrically constructed
$R$-matrices, see \cite{MO1}. In turn, the construction of $R$-matrices, 
that is, solutions of the Yang-Baxter equations with a spectral 
parameter, rests on the $K$-theoretic version of stable envelopes
of \cite{MO2}. We discuss those in Section \ref{s_stab}.  

Stable envelopes in K-theory differ from their cohomological ancestors
in one important feature: they depend on an additional parameter,
called \emph{slope}, which is a choice of an alcove of a certain 
periodic hyperplane arrangement in $\Pic(X)\otimes_{\Z} \R$. 
This is the same data as appears, for instance,
 in the study of quantization of $X$ 
over a field of large positive characteristic. A technical advantage
of such slope dependence is a factorization of $R$-matrices into 
infinite product of certain \emph{root $R$-matrices}, which
generalizes the classical results obtained by Khoroshkin and 
Tolstoy in the case when the Lie algebra $\fg$ is of finite dimension. 

\subsubsection{}
The identification of the quantum difference connection in term of
$\cU_\hbar(\fgh)$ was long expected to be the lattice part of the 
quantum dynamical Weyl group action on $K_G(X)$. 
For finite-dimensional
Lie algebras, this object was studied by Etingof and Varchenko in 
\cite{EV} and many related papers, and it was shown in \cite{Bal} to 
correctly degenerate to the Casimir connection in the suitable 
limit. 

Intertwining operators between Verma modules, which is 
the main technical tool of \cite{EV}, are only available for real simple 
roots and so don't yield a large enough dynamical Weyl group as 
soon as the quiver is not of finite type. However, using root 
$R$-matrices, one can define and study the quantum dynamical Weyl 
group for arbitrary quivers. This is done in \cite{OS}. 
Once available, a representation-theoretic identification 
of the quantum difference connections gives an essentially 
full control over the theory. 

In these notes we stop where the analysis of \cite{OS} starts: we 
show that quantum connection commutes with qKZ, which is 
one of the key features of dynamical Weyl groups. 

\subsubsection{}
The monodromy of the quantum difference connection is 
characterized in \cite{AO} in terms of an elliptic analog
of $\cU_\hbar(\fgh)$. The categorical meaning of the 
corresponding operators is an area of active current 
research.

\subsubsection{}\label{s_mirror}
These notes are meant to be a partial sample of basic techniques and 
results, and  this is not an attempt to write 
an A to Z technical manual on the subject, nor to present a 
panorama of geometric applications that these techniques 
have. 

For instance, one of the most exciting topics in quantum 
K-theory of symplectic resolutions is the duality, known under 
many different names including ``symplectic duality'' or 
``3-dimensional mirror symmetry'', see \cite{NakCoul} for an introduction.
 Nakajima varieties may 
be interpreted as the moduli space of Higgs vacua in certain 
supersymmetric gauge theories, and the computations in their 
quantum K-theory may be interpreted as indices of the corresponding 
gauge theories on real 3-folds of the form $C \times S^1$. 
A physical duality equates these indices for certain pairs of 
theories, exchanging the K\"ahler parameters on one side with 
the equivariant parameters on the other.

 In the context of these
notes, this means that an exchange of the K\"ahler and 
equivariant difference equations of Section \ref{s_diff}, which 
may be studied as such and generalizes various dualities
known in representation theory. 
This is just one example of a very interesting phenomenon 
that lies outside of the scope of these lectures.

\subsection{K-theoretic 
Donaldson-Thomas theory}

\subsubsection{}

Donaldson-Thomas theory, or DT theory for short,
is an enumerative theory of sheaves on a fixed smooth 
quasiprojective 
threefold $Y$, which need not be Calabi-Yau to point out 
one frequent misconception. There are many categories similar to 
the category of coherent sheaves on a smooth threefold, and one 
can profitably study DT-style questions in that larger 
context. In fact, we already met with such generalizations 
in the form of quantum K-theory of general Nakajima 
quiver varieties. Still, I consider sheaves on threefolds to be the core
object of study in DT theories. 

An example of a sheaf to have in mind could be the structure sheaf $\cO_C$
of a  curve, or more precisely, a $1$-dimensional subscheme, $C\subset Y$. 
The corresponding DT moduli space $\mM$ is the Hilbert scheme 
of curves in $Y$ and what we want to compute from them is the 
K-theoretic version of counting curves in $Y$ of given degree 
and arithmetic genus satisfying some further geometric constraints like 
incidence to a given point or 
tangency to a given divisor.

There exist other enumerative theories of curves, notably the 
Gromov-Witten theory, and, in cohomology, there is a rather 
nontrivial equivalence between the DT and GW counts, first 
conjectured in \cite{mnop1,mnop2} and explored in many papers since. 
At present, it is not known whether the GW/DT correspondence 
may be lifted to K-theory, as one stumbles early on  
trying to mimic the DT computations on the GW side
\footnote{Clearly, the subject of these note has not even 
begun to settle, and our present view of many key phenomena
throughout the paper  
may easily change overnight.}, see for example the discussion 
in Section \ref{s_M0n}. 

\subsubsection{}\label{sL4L5}

Instead, in K-theory there is 
a different set of challenging conjectures \cite{NO} which may 
serve as one of the goalposts for the development of the theory. 

This time the conjectures relate
DT counts of curves in $Y$ to a very different kind of curve counts
in a Calabi-Yau 5-fold $Z$ which is a total space 
\begin{equation}
Z = 
\begin{matrix}
\cL_4 \oplus \cL_5  \\
\downarrow\\
Y
\end{matrix} \,, \quad \cL_4 \otimes \cL_5 = \cK_Y \,,
\label{ZLL} 
\end{equation}
of a direct sum of two line bundles on $Y$. One interesting 
feature of this correspondence is the following. One the DT 
side, one forms a generating function over all arithmetic 
genera and then its argument  $z$ becomes an element 
$z\in \Aut(Z,\Omega^5)$ which acts by $\diag(z,z^{-1})$ 
in the fibers of $\cL_4 \oplus \cL_5$. Here $\Omega^5$ is 
the canonical holomorphic 5-form on $Z$. 

A K-theoretic curve count in $Z$ is naturally a virtual 
representation of the group $\Aut(Z,\Omega^5)$ and, in particular, 
$z$ has a trace in it which is a rational function. This 
rational function is then equated to something one computes
on the DT side by summing over all arithmetic genera. We see
it is a nontrivial operation and, also, that equivariant 
K-theory is the natural 
setting in which such operations make sense. More general 
conjectures proposed in \cite{NO} similarly identify certain 
equivariant variables for $Y$ with variables that keep track 
of those curve degree for 5-folds that are lost in $Y$. 

For various DT computations below, we will point out their 
5-dimensional interpretation, but this will be the extent of 
our discussion of 5-dimensional curve counting. It is still in its
infancy and not ready to be presented in an introductory lecture
series. It is quite different from either the DT or GW curve 
counting in that it lacks a parameter that keeps track of 
the curve genus. Curves of any genus are counted equally, but 
the notion of stability is set up so that that only finitely many 
genera contribute to any given count. 

\subsubsection{}
When faced with a general threefold $Y$, a natural instinct is to 
try to cut $Y$ into simpler pieces from which the curve counts in $Y$ 
may be reconstructed. There are two special scenarios 
in which this works really well, they can be labeled 
\emph{degeneration} and \emph{localization}. 

In the first scenario, we put $Y$ in a 1-parameter 
family $\widetilde{Y}$ with a smooth total space 
$$
\xymatrix{ 
Y \ar@{^{(}->}[r] \ar[d]& \widetilde{Y} \ar[d] &  Y_1 \cup_D Y_2 \ar[d]
 \ar@{_{(}->}[l] \\
1 \ar@{^{(}->}[r] & \C&  0
 \ar@{_{(}->}[l]
}
$$
so that a special fiber of this family is a union $Y_1 \cup_D Y_2$ 
of two smooth 3-folds along a smooth divisor $D$. 
In this case the curve counts in $Y$ may be reconstructed from 
certain refined curve counts in each of the $Y_i$'s. These refined 
counts keep track of the intersection of the curve with 
the divisor $D$ and are called \emph{relative} DT counts. 
The technical foundations of the subject are laid in \cite{LiWu}. We will
get a sense how this works in Section \ref{s_degen_glue}. 

The work of  Levine and Pandharipande \cite{LevPand} supports the idea that 
using degenerations one should be able to reduce curve counting 
in general 3-folds to that in \emph{toric} 3-folds. Papers 
by Maulik and Pandharipande \cite{MP_top} and by 
Pandharipande and Pixton \cite{PandPix} 
offer spectacular examples of this idea being put into action. 

\subsubsection{}
Curve counting in toric 3-folds may be broken into further pieces 
using equivariant localization. Localization is a general principle by
which all 
computations in $G$-equivariant K-theory of $\mM$ depend only on the 
formal neighborhood of the $T$-fixed locus $\mM^T$, where $T\subset G$ is 
a maximal torus in a connected group $G$. We will rely on localization 
again and again in these notes. Localization is particularly powerful 
when used in \emph{both} directions, that is, going from the global 
geometry to the localized one and back, because each point of view
has its own advantages and limitations. 

A threefold $Y$ is toric if $T\cong(\Ct)^3$ acts with an open orbit on
$Y$. It then follows that $Y$ has finitely many orbits and, in
particular, 
finitely many orbits of dimension
$\le 1$. Those are the fixed points and the $T$-invariant curves, and
they correspond to the 1-skeleton of the toric polyhedron of $Y$.  
{}From the localization viewpoint, $Y$ may very well be replaced
by this 1-skeleton. All nonrelative curve counts in $Y$ may be done in terms 
of certain 3- and 2-valent tensors, called vertices and edges, 
associated to fixed points and $T$-invariant curves, respectively. 
See, for example, \cite{ECM} for a pictorial introduction. 

The underlying vector space for these tensors is 
\begin{itemize}
\item[---] the equivariant K-theory of $\Hilb(\C^2,\textup{points})$, or equivalently
\item[---] the standard Fock space, or the algebra of symmetric
  functions, 
\end{itemize}
with an extension of scalars to include all equivariant variables as 
well as the variable $z$ that keeps track of the arithmetic genus. 
Natural bases of this vector space are indexed by partitions 
and curve counts 
are obtained by contracting all indices, in great similarity to many TQFT
computations. 

 In the basis of torus-fixed points of the Hilbert
scheme, edges are simple diagonal tensors, while vertices are
something
complicated. More sophisticated bases spread the complexity more
evenly. 

\subsubsection{}
These vertices and edges, and related objects, are the nuts
and bolts of the theory and the ability to compute with them 
is a certain measure of professional skill in the subject. 

A simple, but crucial observation is that the geometry of ADE 
surface fibrations captures
all these vertices and edges \footnote{In fact, formally, 
it suffices to understand $A_n$-surface fibrations with $n\le 2$.} . This 
bridges DT theory with topics discussed in Section \ref{s_Q_Nak}, 
and was already put to a very good use in \cite{moop}. 

In \cite{moop}, there are two kind of 
vertices: \emph{bare}, or standard, and \emph{capped}. 
It is convenient 
to extend such taxonomy to general Nakajima varieties (or to general quasimap
problems, for that matter).  For general Nakajima varieties, in
cohomology, the 
notion of a bare 1-leg vertex coincides with Givental's notion of
I-function, and there is no real analog of two- or three-legged 
vertex for general Nakajima varieties 
\footnote{The Hilbert scheme $\Hilb(A_{n-1})$ of the 
$A_{n-1}$-surface is dual, in the sense of Section \ref{s_mirror},  
to the moduli space $M(n)$ of framed sheaves of rank $n$ of
$A_{0}\cong \C^2$, which is a Nakajima variety for the quiver 
with one vertex and one loop. 
The splitting of the 1-leg vertex for the $A_{n-1}$-surface 
into $n$ simpler vertices is a phenomenon which is dual
to $M(n)$ being an $n$-fold tensor product of $\Hilb(\C^2)$ in 
the sense of \cite{NakCoul}. For a general Nakajima variety, there is no 
direct analog of this.}. 

Bare and capped vertices are
the same tensors expressed in two different bases, and which have 
different geometric meaning and different properties. 
In particular, capped $1$-leg vertices are trivial, in that the 
contributions of all subscheme of nonminimal Euler characteristic 
cancel out. One can thus determine all vertices if one knows 
the transition matrix from bare vertices to the capped ones, 
commonly called the capping operator. The theory is built 
so that this operator is the fundamental solution of the 
quantum differential equation and this is how \cite{moop} works. 

\subsubsection{}

All these notions have a direct analog in K-theory and are
the subject of Section \ref{s_NB}. In fact, in the lift from 
cohomology to K-theory there is always at least a line 
bundle worth of natural ambiguities, and here we work with 
a certain specific twist $\tO_\vir$ of the virtual structure
sheaf $\cO_\vir$. This object, which we call the symmetrized virtual 
structure sheaf, is discussed at many points in these lectures 
and has many advantages over the ordinary virtual structure 
sheaf. {}From the point of view of mathematical physics, $\tO_\vir$
rather than $\cO_\vir$ is related to the index of the relevant Dirac
operator, and the roof over it is put to remind us of that. 
The importance of working with $\tO_\vir$ was emphasized 
already by Nekrasov in \cite{Zth}. 

With this, one defines the bare and capped vertices as before 
and sees that the capping operator is the fundamental 
solution of the quantum difference equation. The specific choice of 
$\tO_\vir$ becomes important in the proof of triviality of $1$-leg 
capped vertices, as this turns out to be a rigidity result which 
uses a certain self-duality of $\tO_\vir$ in a very essential way. 

The same argument, in fact, proves more, it proves a similar 
triviality of capped vertices with descendents (an object also 
discussed in Section \ref{s_NB}), once the framing is taken
sufficiently large with respect to the descendent insertion. 
This makes it possible to determine bare vertices with 
descendents as well as the explicit correspondence between the 
descendents and relative insertions and is an area of active 
current research. The latter correspondence plays a very 
important role, in particular, in reducing DT counts in general 
threefolds to those in toric varieties as in \cite{PandPix} and 
it would be very desirable to have an explicit description of it.

\subsection{Old vs.\ new} 

\subsubsection{}

These lectures are written for graduate students in the field
and many of the pages that follow discuss topics that have 
become classical or at least standard in the subject. I tried
to present them from an angle that is best suited for the 
applications I had in mind, but there is no way I could have
achieved an improvement over several existing treatments 
in the literature. 

In particular, my favorite  introduction to equivariant 
K-theory is Chapter 5 of the book \cite{CG} by Chriss and Ginzburg. 
For a better introduction to Nakajima quiver varieties, see
the lectures \cite{GinzNak} by Ginzburg and, of course, the original 
papers \cite{Nak1,Nak2} by Nakajima. For a discussion of 
moduli spaces of quasimaps, I recommend the reader opens 
the paper \cite{CKM}. For a general introduction to 
enumerative geometry, my favorite source are the lectures
by Pandharipande in \cite{Mirror}, even though those 
lectures discuss topics that are essentially disjoint from 
our goals here. 

\subsubsection{}
There is also a number of results in these notes that are new
or, at least, not available in the existing literature. Among them  
are the following. 

In Theorem \ref{t_Hilb3}, we prove a conjecture of Nekrasov 
\cite{Zth} on the K-theoretic degree 0 DT invariants of 
smooth projective threefolds. This generalizes the 
cohomological result of \cite{mnop2,LevPand}. 

In Theorem \ref{t_Sn}, we prove the main conjecture of 
\cite{NO} for reduced smooth curves in threefolds. This is, of course, 
a very special case of the general conjectures made in \cite{NO}, but 
perhaps adds to their credibility. In fact, we prove a statement on 
the level of derived categories of coherent sheaves and it would be 
extremely interesting to see if there is a similar upgrade of the 
K-theoretic conjectures made in \cite{NO}. 

In Theorem \ref{t_large_framing}, we prove what we call the 
\emph{large framing vanishing}, which is the absence of 
quantum corrections to the capped vertex with descendents 
once the framing (and polarization) is chose sufficiently 
large with respect to the given descendent. This has direct 
applications to an effective description of the correspondence 
between relative and absolute insertions in K-theoretic DT 
theory and is an area of active current research. 

In Theorems \ref{t_bfM} and \ref{t_bfS} we give a simple 
construction of the difference operators that shift the 
K\"ahler and equivariant variables in certain building blocks 
of the theory. Many yeas ago, Givental initiated a very broad 
program of developing quantum K-theory and, in particular, 
the paper \cite{GivTon} gives a very general proof of the 
existence of quantum difference equations in quantum K-theory. 
In this paper, we work with quasimaps instead of the moduli 
spaces of stable maps, but it is entirely possible that the 
methods of \cite{GivTon} can be adapted to work in our situation. 
However, our situation is much more special than that in
\cite{GivTon} because the classes $\tO_\vir$ 
in the K-theory of the moduli 
spaces that we consider behave
much better than the virtual structure sheaves.  This opens 
the door to many arguments that would not be available in general. 
Most importantly, we make a systematic use of self-dual 
features of $\tO_\vir$, which often result in rigidity. 
In particular, the large framing vanishing is fundamentally 
a rigidity result. 

Also, the special features of the 
theory presented here allow for an \emph{explicit}
determination of the corresponding 
difference equations in the language of representation 
theory. In particular, among the operators 
that shift the equivariant variables we find the quantum 
Knizhnik-Zamolodchikov equations, while the shifts of the 
K\"ahler variables are shown in \cite{OS} to come from the 
corresponding quantum dynamical Weyl group. 
The quantum 
Knizhnik-Zamolodchikov equations appear for a shift by 
a \emph{minuscule} cocharacter of the torus. This is a 
direct K-theoretic analog of the computation in \cite{MO1}, 
although it requires a significantly more involved argument to 
be shown. This is done in Theorem \ref{t_R}.

\subsection{Acknowledgements} 

\subsubsection{}
Both the old and the new parts of these notes owe a lot to 
many people. In the exposition of the fundamentals, I have 
been influenced by the sources cited above as well by 
countless other papers, lectures, and conversations. 
Evidently, I lack both the space and the qualification to make a 
detailed spectral analysis of these influences. For the same reason, 
I have made no attempt to put the material into anything resembling a 
historical perspective. 

In the new part, I discuss bits of a rather large research 
project, various parts of which are joint with M.~Aganagic, 
R.~Bezrukavnikov, D.~Maulik, N.~Nekrasov, A.~Smirnov, and others. 
It is hard to overestimate the influence this collaboration 
had on me and on my thinking about the subject. I must add 
P.~Etingof to this list of major inspirations. 

\subsubsection{}

I am very grateful to the Simons foundation for being financially 
supported as a Simons investigator and to the Clay foundation 
for supporting me as a Clay Senior Scholar during PCMI 2015. 

These note are based on the lectures I gave at Columbia in Spring of
2015 and at the Park City Mathematical Institute in Summer of 2015. 
I am very grateful to the participants of these lectures and to PCMI 
for its warm hospitality and intense intellectual atmosphere.
 Special thanks are to Rafe Mazzeo, Catherine
Giesbrecht, Ian Morrison, and the anonymous referee.

\section{Before we begin}

The goal of this section is to have a brief abstract discussion 
of several construction in equivariant K-theory which will appear 
and reappear in more concrete situations below. This 
section is not meant to be an introduction to equivariant 
K-theory; Chapter 5 of \cite{CG} is highly recommended for that.

\subsection{Symmetric and exterior algebras}\label{s_bSd} 

\subsubsection{}

Let $T$ be a torus and $V$ a finite dimensional $T$-module. 
Clearly, $V$ is a direct sum of 1-dimensional $T$-modules, which are 
called the \emph{weights} of $V$. The weights 
$\mu$ are recorded by the character of $V$
$$
\bbchi_V(t) = \tr_V t = \sum t^{\mu}\,, \quad t \in \bT\,, 
$$
where repetitions are allowed among the $\mu$'s. 

We denote by $K_T$ or $K_T(\pt)$ the K-group of the category of 
$T$-modules. Of course, any exact sequence 
$$
0 \to V' \to V \to V'' \to 0
$$
is anyhow split, so there is no need to impose the relation 
$\left[V\right] = \left[V'\right] + \left[V''\right]$ in this case. 
The map $V \mapsto \bbchi_V$ gives an isomorphism 
$$
K_T \cong \Z[t^\mu]\,,
$$
with the group algebra of the 
character lattice of the torus $T$.  Multiplication in $\Z[t^\mu]$ 
corresponds to $\otimes$-product in $K_T$. 

\subsubsection{}
For $V$ as above, we can form its 
symmetric powers $\bS^2 V,\bS^3 V, \dots$, including 
$S^0 V = \C$.  These are
$GL(V)$-modules and hence also $\bT$ modules. 

\begin{Exercise}
Prove that 
\begin{align}
\sum_{k\ge 0}\, s^k\, \bbchi_{\bS^k V}(t) & = 
\prod \frac{1}{1-s\, t^\mu}  \notag \\
&= \exp \sum_{n>0} \frac1n \, s^n \, \bbchi_V(t^n) \,. \label{e_SV}
\end{align}
\end{Exercise}

We can view the functions in \eqref{e_SV} as an element of
$K_\bT[[s]]$ or as a character of an infinite-dimensional graded 
$\bT$-module $\bSd V$ with finite-dimensional graded subspaces, 
where $s$ keeps track of the degree. 

For the exterior powers we have, similarly, 

\begin{Exercise}
Prove that 
\begin{align}
\sum_{k\ge 0}\, (-s)^k\, \bbchi_{\Lambda^k V}(t) & = 
\prod {(1-s\, t^\mu)}  \notag \\
&= \exp \left(-\sum_{n>0} \frac1n \, s^n \, \bbchi_V(t^n) 
\right) \,. \label{e_LV}
\end{align}
\end{Exercise}

The functions in \eqref{e_SV} and \eqref{e_LV} are reciprocal of 
each other. The representation-theoretic object behind 
this simple observation is known as the 
\emph{Koszul complex}. 

\begin{Exercise}
Construct an $GL(V)$-equivariant exact sequence 
\begin{equation}
\dots \to \Lambda^2 V \otimes \bSd V \to 
\Lambda^1 V \otimes \bSd V \to 
 \bSd V \to \C \to 0 \label{Koz} \,,
\end{equation}
where $\C$ is the trivial representation. 
\end{Exercise}

\begin{Exercise}\label{KozV0} 
Consider $V$ as an algebraic variety on which $GL(V)$ acts. 
Construct a $GL(V)$-equivariant resolution of the 
structure sheaf of $0\in V$ by vector bundles on $V$. 
Be careful not to get \eqref{Koz} as your answer. 
\end{Exercise}

\subsubsection{}
Suppose $\mu=0$ is not a weight of $V$, which means that 
\eqref{e_SV} does not have a pole at $s=1$. Then we can 
set $s=1$ in \eqref{e_SV} and define 
\begin{equation}
\bbchi_{\bSd V}  = \prod \frac{1}{1-t^\mu} = \exp 
\sum_n \frac1n \,  \bbchi_V \, t^n \,, \label{SV} 
\end{equation}
This is a well-defined element of \emph{completed} 
K-theory of $\bT$ provided 
$$
\| t^\mu \| < 1 
$$
for all weights of $V$ with respect to some norm $K_\bT$. 

Alternatively, and with only the $\mu\ne 0$ assumption 
on weights, \eqref{SV} is a well-defined element of the 
\emph{localized} K-theory of $\bT$ 
$$
K_{\bT,\textup{localized}} = \Z\left[t^\nu,\frac{1}{1-t^\mu}\right]
$$
where we invert some or all elements $1-t^\mu \in K_\bT$. 

Since 
$$
K_\bT  \hookrightarrow K_{\bT,\textup{localized}} 
$$
characters of finite-dimensional modules may be computed 
in localization without loss of information. However, certain different 
infinite-dimensional modules become the same in 
localization, for example 

\begin{Exercise}
For $V$ as above, check that 
\begin{equation}
\bSd V = (-1)^{\rk V}  \det V^\vee \otimes \bSd V^\vee\label{Sddual} 
\end{equation}
in localization of $K_\bT$. 
\end{Exercise}

\noindent 
We will see, however, that this is a feature rather than a bug. 

\begin{Exercise}
Show that $\bSd$ extends to a map 
\begin{equation}
K_\bT' \xrightarrow{\quad \bSd \quad} K_{\bT,\textup{localized}} 
\label{SV2}
\end{equation}
where prime means that there is no zero weight, which satisfies
$$
\bSd (V_1 \oplus V_2) = \bSd V_1 \otimes \bSd V_2 \,, 
$$
and, in particular, 
\begin{equation}
\bSd(-V) = \Ld \, V = \sum_i (-1)^i \Lambda^i V  \,.  \label{defLd} 
\end{equation}
\end{Exercise}

\noindent 
Here and in what follows, 
the symbol $\Ld \, V$ is defined by \eqref{defLd} as the
alternating sum of exterior powers. 

\subsubsection{}

The map \eqref{SV2} may be extended by 
continuity to a completion of $K_\bT$ with respect to a suitable
norm. This gives a compact 
way to write infinite products, for example 
$$
\bSd \frac{a-b}{1-q} = \prod_{n\ge 0} \frac{1-q^n b}{1-q^n a}  \,, 
$$
which converges in any norm $\|\,\cdot\,\|$ such that $\|q\|<1$.

\begin{Exercise}
Check that 
$$
\bSd \frac{a}{(1-q)^{k+1}}= \prod_{n\ge 0} (1-q^n a)^{-\binom{k+n}{n}}\,. 
$$
\end{Exercise}

\subsubsection{}

The map $\bSd$ is also known under many aliases, including \emph{plethystic
exponential}.  Its inverse is known, correspondingly, as the 
plethystic logarithm.

\begin{Exercise}
Prove that the inverse to $\bSd$ is given by the formula 
$$
\bbchi_V(t) = \sum_{n>0} \frac{\mu(n)}{n}  \ln \bbchi_{\bSd V}(t^n)
$$
where $\mu$ is the \emph{M\"obius function} 
$$
\mu(n) = 
\begin{cases}
(-1)^{\textup{\# of prime factors}} \,, &  \textup{$n$ squarefree}\\
0 \,, &  \textup{otherwise} \,. 
\end{cases}
$$
The relevant property of the M\"obius function is that it 
gives the matrix elements of $C^{-1}$ where the matrix 
$C=(C_{ij})_{i,j\in\N}$ is defined by 
$$
C_{ij} =
\begin{cases}
  1 \,, & i|j \\
 0 \,, & \textup{otherwise} \,. 
\end{cases}
$$
In other words, $\mu$ is the M\"obius function of the set $\N$ 
partially ordered by divisibility, see \cite{StEn}. 
\end{Exercise}

\subsubsection{}\label{s_ShV} 
If the determinant of $V$ is a square as a character of $\bT$, we define 
\begin{equation}
 \bSdh V = \left(\det V \right)^{1/2} \,  \bSd  \label{def_bSdh}
\end{equation}
which by \eqref{Sddual} satisfies
$$
 \bSdh V^\vee = (-1)^{\rk V} \,  \bSdh V \,.
$$
Somewhat repetitively,  it may be 
called the symmetrized symmetric algebra.

\subsection{$K_G(X)$ and $K^\circ_G(X)$} 

\subsubsection{}

Let a reductive group $G$ act on a scheme $X$. We denote by 
$K_G(X)$
 the K-group of the category of $G$-equivariant 
coherent sheaves on
$X$. Replacing general coherent sheaves by locally free ones, that is, 
by $G$-equivariant 
vector bundles on $X$, gives another group $K^\circ_G(X)$ with 
a natural homomorphism 
\begin{equation}
K^\circ_G(X) \to K_G(X)\label{e_circ_K} \,. 
\end{equation}
Remarkably and conveniently, \eqref{e_circ_K} is 
 is an isomorphism if $X$ is nonsingular.  
In other words, every coherent sheaf on a nonsingular variety is 
\emph{perfect}, which means it admits a locally free resolution of 
finite length, see for example Section B.8 in \cite{Ful}. 

\begin{Exercise}\label{ExerTor}
 Consider $X=\{x_1 x_2 = 0 \} \subset \C^2$ with the action of the 
maximal torus 
$$
T= 
\left\{
  \begin{pmatrix}
    t_1 \\
 & t_2 
  \end{pmatrix} \right\}
\subset GL(2)\,.
$$
Let $\cF=\cO_0$ be the structure 
sheaf of the origin $0\in X$. Compute the minimal $T$-equivariant 
resolution
$$
\dots \to \cR^{-2} \to \cR^{-1} \to \cR^0 \to \cF \to 0 
$$
of $\cF$ by sheaves of the form 
$$
\cR^{-i} = \cO_X \otimes R_i \,,
$$
where $R_i$ is a finite-dimensional $T$-module. 
Observe from the resolution that the groups 
\begin{equation}
\Tor_i(\cF,\cF) \eqdef  H^{-i}(\cR^\bullet \otimes \cF) = R_i   \label{Tor_i}
\end{equation}
are nonzero for all $i\ge 0$ and conclude that $\cF$ is not in the image of 
$K^\circ_T(X)$. Also observe that
\begin{equation}
  \sum (-1)^i \bbchi_{\Tor_i(\cF,\cF)} = 
\frac{\chi(\cF)^2}{\chi(\cO_X)} = 
\frac{(1-t_1^{-1})(1-t_2^{-1})}{1-t_1^{-1} t_2^{-1}}  
\label{TorFF} 
\end{equation}
expanded in inverse powers of $t_1$ and $t_2$.
\end{Exercise} 

\begin{Exercise}
Generalize \eqref{TorFF} to the case 
\begin{align*}
X &= \Spec \C[x_1,\dots,x_d] / I  \,,\\
\cF &= \C[x_1,\dots,x_d] / I' \,, 
\end{align*}
where $I\subset I'$ are monomial ideals, that is, ideals generated by 
monomials in the variables $x_i$.
\end{Exercise}

\subsubsection{}

The domain and the source of the map \eqref{e_circ_K} have 
different functorial properties with respect to 
$G$-equivariant morphisms
\begin{equation}
f: X \to Y \label{fXY} 
\end{equation}
of schemes. 

The pushforward of a K-theory class $\left[\cG\right]$ 
represented by a coherent sheaf $\cG$ is defined as
\begin{equation}
f_* 
\left[\cG \right] = \sum_i (-1)^i \left[ R^if_* \cG \right] \,. 
\label{K_push} 
\end{equation}
We abbreviate 
$f_*\cG=f_* 
\left[\cG \right]$ 
in what follows.

The length of the 
sum in \eqref{K_push} is bounded, e.g.\ by the dimension of $X$,
 but the terms, 
in general, are only quasicoherent sheaves on $Y$.  If $f$ is proper 
on the support of $\cG$ then this ensures $f_* \cG$ is coherent 
and thus lies in $K_G(Y)$. 
Additional hypotheses are required to conclude $f_* \cG$ 
is perfect. For an example, take $\iota_* \cO_0$ where 
$$
\iota: \{0\} \hookrightarrow \{x_1 x_2 =0 \}
$$
is the inclusion in Exercise \ref{ExerTor}.

\begin{Exercise}\label{ex_chiPn} 
The group $GL(2)$ acts naturally on $\bP^1=\bP(\C^2)$ and 
on line bundles $\cO(k)$ over it.  Push forward
these line bundles under $\bP^1 \to \pt$ using an explicit 
$T$-invariant \v Cech covering of $\bP^1$. Generalize to 
$\bP^n$. 
\end{Exercise}

\subsubsection{}

The pull-back of $\left[\cE\right] \in K_G(Y)$ is defined by 
$$
f^* \left[\cE\right]  = \sum (-1)^i \left[\cTor_i^{\cO_Y}
  \!(\cO_X,\cE) \right]  \,.
$$
Here the terms are coherent, but there may be infinitely many 
of them, as is the case for $\iota^* \cO_0$ in our running example. 
To ensure the sum terminates we need some flatness assumptions, 
such as $\cE$ being locally free. In particular, 
$$
f^*: K^\circ_G(Y) \to K^\circ_G(X)
$$
is defined for arbitrary $f$ by simply pulling back vector bundles. 

\begin{Exercise}
Globalize the computation in Exercise \ref{KozV0} to compute 
$\iota^* \cO_X \in K_G(\cO_X)$ for a $G$-equivariant inclusion
$$
\iota: X \hookrightarrow Y 
$$
of a nonsingular subvariety $X$
 in a nonsingular variety
$Y$. 
\end{Exercise}

\subsubsection{}

Tensor product makes $K^\circ_G(X)$ a ring and 
$K_G(X)$ is a module over it. The projection formula
\begin{equation}
f_*(\cF \otimes f^* \cE) = f_*(\cF) \otimes \cE \label{proj_form} 
\end{equation}
expresses the covariance of this module structure with respect to 
morphisms $f$.

\begin{Exercise}
Write a proof of the projection formula. 
\end{Exercise}

Projection formula can be used to prove that a proper 
pushforward $f_* \cG$ is perfect if $\cG$ is flat over $Y$, 
see Theorem 8.3.8 in \cite{FDA}.

\subsubsection{}
Let $X$ be a scheme and $X'\subset X$ a 
closed $G$-invariant subscheme. Then the sequence 
\begin{equation}
K_G(X') \to K_G(X) \to K_G(X \setminus X') \to 0 
\label{exK}
\end{equation}
where all maps are the natural pushforwards, 
is exact, see e.g.\ Proposition 7 in \cite{BS} 
for a classical discussion. This is the beginning of a long exact 
sequence of higher K-groups.  

\begin{Exercise}\label{Ex_exK}
For $X=\C^n$, $X'=\{0\}$, and $T\subset GL(n)$ the maximal torus, 
fill in the question marks in the following diagram 
$$
\xymatrix{
K_T(X') \ar[r] \ar[d]_{\sim} & K_T(X) \ar[r] \ar[d]_{\sim}
&  K_T(X\setminus X') \ar[d]_{\sim} \ar[r] & 0\\
\Z[t^\mu] \ar[r]^{\textup{?}} & \Z[t^\mu] \ar[r] & \textup{?} \ar[r] &
0
}
$$
in which the vertical arrows send the structure sheaves to $1\in
\Z[t^\mu]$. 
\end{Exercise}

In particular, since $X \setminus X_\textup{red}  = \varnothing$, 
the sequence \eqref{exK} implies 
$$
K_G(X_\textup{red}) \cong K_G(X) 
$$
where $X_\textup{red} \subset X$ is the reduced subscheme, whose  
sheaf of ideals $\cI\subset \cO_X$ is formed by nilpotent elements. 
Concretely, any coherent sheaf $\cF$ has a finite filtration 
$$
\cF \supset \cI \cdot \cF \supset \cI^2 \cdot \cF \supset \dots 
$$
with quotients pushed forward from $X_\textup{red}$. 

\subsubsection{}

One can think about the sequence \eqref{exK} like this. Let 
$\cF_1$ and $\cF_2$ be two coherent 
sheaves on $X$, together with an isomorphism 
$$
s:  \cF_2 \big|_U \xrightarrow{\,\sim\,} \cF_1\big|_U
$$
of their restriction to the open set $U = X\setminus X'$. 
Let $\iota: U \to X$ denote the inclusion  and let 
$$
\whcF \subset \iota_* \iota^* \cF_1 
$$
be the subsheaf generated by the natural maps 
$$
\cF_1 \to \iota_* \iota^* \cF_1 \,, \quad 
\cF_2 \xrightarrow{\,s\,} \iota_* \iota^* \cF_1  \,. 
$$
Of course, $ \iota_* \iota^* \cF_1$ is only a quasicoherent sheaf on $X$, 
which is evident in the simplest example $X=\bbA^1$, 
$X'=\textup{point}$, $\cF_1 = \cO_X$. 
However, the sheaf $\whcF$ is generated
by the generators of $\cF_1$ and $\cF_2$, and hence 
coherent.  

By construction, the kernels and cokernels of the natural maps
$$
f_i : \cF_i \to \whcF 
$$
are supported on $X'$. Thus 
$$
 \cF_1 - \cF_2 = \Coker f_1 - \Ker f_1 + \Ker f_2 - \Coker
 f_2 
$$
is in the image of $K(X')\to K(X)$. 

\subsubsection{}

\begin{Exercise}
Let $G$ be trivial and 
let $\cF$ be a coherent sheaf on $X$ with support $Y\subset X$.
Let $\cE$ be a vector bundle on $X$ of rank $r$. Prove that there
exists $Y'\subset Y$ of codimension $1$ such that 
$$
\cE\otimes \cF - r\, \cF 
$$
is in the image of $K(Y') \to K(X)$. 
\end{Exercise}

This exercise illustrates a very useful finite filtration on $K(X)$ formed 
by the images of $K(Y)\to K(X)$ over all subvarieties of given 
codimension. 

\begin{Exercise}\label{ex_nilp} 
Let $G$ be trivial and 
$\cE$ be a vector bundle on $X$ of rank $r$. Prove that $(\cE - r)
\otimes$  is nilpotent as an operator on $K(X)$. 
\end{Exercise}

\begin{Exercise}
Take $X=\bP^1$ and $G=GL(2)$. Compute the minimal polynomial 
of the operator $\cO(1) \otimes$ and see, in particular, that it is 
not unipotent. 
\end{Exercise}

\begin{Exercise}\label{ex_otimes_cL} 
In general, if $G$ is connected and $\cL$ is a line bundle on $X$, then all eigenvalues of 
$\cL\otimes$ on $K_G(X)$ are the weights of $\cL$ at the fixed 
points of a maximal torus of $G$. 
\end{Exercise}

\subsection{Localization}

\subsubsection{}

Let a torus $T$ act on a scheme $X$ and let $X^\bT$ be subscheme 
of $\bT$-fixed points, that is, let 
$$
\cO_X \to \cO_{X^T} \to 0 
$$
be the largest quotient on which $T$ acts trivially. For what follows,
both $X$ and $X^T$ may be replaced by their reduced subschemes.

Consider the kernel and cokernel of the map 
$$
\iota_* : K_T(X^T) \to K_T(X)\,.
$$
This kernel and cokernel are $K_T(\pt)$-modules and have 
some support in the torus $T$.  A very general 
localization theorem of Thomason \cite{Thomason} states 
\begin{equation}
\supp \Coker \iota_*  \subset \bigcup_{\mu} \, \{ t^\mu =1 \} 
\label{localiz1} 
\end{equation}
where the union over finitely many nontrivial 
characters $\mu$. The same is 
true of $\Ker \iota_*$, but since 
\begin{equation}
K_T(X^T) = K(X) \otimes_\Z K_T(\pt) \label{KTXT} 
\end{equation}
has no such torsion, this forces $\Ker \iota_*=0$. To summarize, 
$\iota_*$ becomes an isomorphism after inverting finitely 
many coefficients of the form $t^\mu -1$. 

This localization theorem is an algebraic analog of the 
classical localization theorems in topological K-theory that go back 
to \cite{AB,Segal}. 

\begin{Exercise}
Compute $\Coker \iota_*$ for $X=\bP^1$ and $T\subset GL(2)$ 
the maximal torus. Compare your answer with what you computed
in Exercise \ref{Ex_exK}. 
\end{Exercise}

\subsubsection{}

For general $X$, it is not so easy to make the localization theorem 
explicit, but a very nice formula exists if $X$ is nonsingular. 
This forces $X^\bT$ to be also nonsingular. 

Let $N=N_{X/X^T}$ 
denote the normal bundle to $X^T$ in $X$. The total 
space of $N$ has a natural action of $s\in \Ct$ by scaling 
the normal directions. Using this scaling action, we may 
define 
\begin{equation}
\cO_{N,\textup{graded}} = \sum_{k=0} s^{-k} \, \bS^k N^\vee = 
\bigotimes_\mu \frac1{\Ld \left(s^{-1} t^{-\mu} N_\mu^\vee \right)}
\in K_T(X^T) [[s^{-1}]] \,, 
\label{cOgrad} 
\end{equation}
where 
$$
N = \bigoplus t^\mu N_\mu
$$
is the decomposition of $N$ into eigenspaces of $T$-action
according to \eqref{KTXT}.

\begin{Exercise}
Using Exercise \ref{ex_nilp} prove that 
$$
\cO_{N} = \cO_{N,\textup{graded}}  
\Big|_{s=1} = \bSd N^\vee \in K^\circ_\bT(X^\bT) \left[\frac1{1-t^\mu}\right] 
$$
where $\mu$ are the weights of $T$ in $N$. 
\end{Exercise}

\begin{Exercise}
Prove that 
$$
\iota^* \iota_* \cG = \cG \otimes \Ld \, N^\vee 
$$
for any $\cG\in K_T(X^T)$ and that the operator 
$\Ld \, N^\vee \otimes$ becomes an isomorphism after 
inverting $1-t^\mu$ for all weights of $N$.  Conclude 
the localization theorem \eqref{localiz1} implies
\begin{equation}
\iota_*^{-1} \cF = \bSd N^\vee \otimes \iota^* \cF 
\label{equiv_localiz} 
\end{equation}
for any $\cF$ in localized equivariant K-theory. 
\end{Exercise}

\subsubsection{}

A $T$-equivariant map $f: X \to Y$ induces a diagram 
\begin{equation}
\label{fT} 
\xymatrix{
X^T \ar[rr]^{\iota_X} \ar[d]_{f^T}  && X \ar[d]^f \\
Y^T \ar[rr]^{\iota_Y} && Y
}
\end{equation}
with 
\begin{equation}
f_* \circ \iota_{X,*} = \iota_{Y,*} \circ f_*^T  \,. \label{iffi} 
\end{equation}
Normally, we don't care much about torsion, or we may know 
ahead of time that there is no torsion in $f_*$, like when $f$ is a 
proper map to a point, or some other trivial $T$-variety. 
Then, we can write 
\begin{equation}
f_* = \iota_{Y,*} \circ f_*^T  \circ \iota_{X,*}^{-1} \,. \label{fifi} 
\end{equation}
This is what it means to compute the pushforward by localization. 

\begin{Exercise}\label{Ex_loc_Pn} 
Redo Exercise \ref{ex_chiPn}, that is, 
compute $\chi(\bP^n,\cO(k))$ by localization. 
\end{Exercise}

\begin{Exercise}\label{Ex_loc_GB} 
Let $G$ be a reductive group and $X=G/B$ the corresponding flag
variety. Every character $\lambda$ of the maximal torus $T$ gives 
a character of $B$ and hence a line bundle 
$$
\cL_\lambda :   \left(G \times \C_\lambda\right)/B \to X 
$$
over $X$. Compute $\chi(X,\cL_\lambda)$ by localization. A theorem 
of Bott, see e.g.\ \cite{Demaz}
 states that at most one cohomology group $H^i(X,\cL_\lambda)$ is 
nonvanishing, in which case it is an irreducible representation of $G$. 
So be prepared to rederive Weyl character formula from your
computation. 
\end{Exercise}

\begin{Exercise}
Explain how Exercise \ref{Ex_loc_Pn}  is a special case of 
Exercise \ref{Ex_loc_GB}. 
\end{Exercise}

\subsubsection{}

Using \eqref{fifi}, one may \emph{define} pushforward 
$f_*\cF$ in localized equivariant 
cohomology as long as $f^T$ is proper on $(\supp \cF)^T$. 
This satisfies all usual properties and leads to meaningful 
results, like 
$$
\chi(\C^n, \cO) = \prod_i \frac1{1-t_i^{-1}} 
$$
as a module over the maximal torus $T\subset GL(n)$.

\subsubsection{}

The statement of the localization theorem goes over with little 
or no change to certain more general $X$, for example, to 
orbifolds. Those are modelled locally on $\tX/\Gamma$, where 
$\tX$ is nonsingular and $\Gamma$ is finite. By definition, 
coherent sheaves on $\tX/\Gamma$ are $\Gamma$-equivariant 
coherent sheaves on $\tX$. 

A torus action on $\tX/\Gamma$ is a $T\times \Gamma$ action 
on $\tX$ and, in particular, the normal bundle $N_{\tX/\tX^T}$ is 
$\Gamma$-equivariant, which means it descends to 
 to an orbifold normal bundle to $\left[\tX/\Gamma\right]^T$. 

\begin{Exercise}
For $a,b>0$, consider the weighted projective line 
$$
X_{a,b} = \C^2 \setminus \{0\} \left/ 
\begin{pmatrix}
z^a \\
& z^b 
\end{pmatrix}\,, \right. \quad z \in \Ct \,. 
$$
Show it can be covered by two orbifold charts. Like any
$\Ct$-quotient, it inherits an orbifold line bundle $\cO(1)$ whose sections 
are functions $\phi$ on the prequotient such that 
$$
\phi (z\cdot x) = z\,  \phi(x) \,.
$$
Show that 
\begin{equation}
\sum_{k\ge 0} \chi(X_{a,b}, \cO(k)) \, s^k = 
\frac{1}{(1-t_1^{-1} s^a) (1-t_2^{-1} s^b)}\label{skcoeff} 
\end{equation}
as a module over diagonal matrices.  Compute 
$\chi(X_{a,b}, \cO(k))$ by localization. 
Compare your answer to the computation 
of the $s^k$-coefficient in \eqref{skcoeff} by residues. 
\end{Exercise}

\subsubsection{}
What we will really need in these lectures is the \emph{virtual 
localization formula} from \cite{GP,CF3}. It will be discussed 
after we get some familiarity with virtual classes. 

In particular, in this greater generality 
 the normal bundle $N$ to the fixed locus is a virtual 
vector bundle, that is, an element of $K^\circ_T(X^T)$ of the form 
$$
N = N_{\Def}- N_{\Obs}\,,
$$
where the $N_{\Def}$ is responsible for first-order deformations,
while $N_{\Obs}$ contains obstructions to extending those. 
Naturally, 
$$
\bSd N^\vee = \bSd N_{\Def}^\vee \otimes \Ld \,N_{\Obs}^\vee \,,
$$
so a virtual localization formula has both denominators and
numerators.

\subsection{Rigidity}\label{s_princ} 

\subsubsection{}

If the support of a $T$-equivariant sheaf $\cF$ is proper then 
$\chi(\cF)$ is an element of $K_T(\pt)$ and so a Laurent 
polynomial in $t\in T$. In general, this polynomial is 
nontrivial which, of course, is precisely
 what makes equivariant K-theory interesting. 

However, for the development of the theory, 
one would like its certain building blocks 
to depend on few or no equivariant variables. 
This phenomenon is known as rigidity. 
A classical \cite{AtHirz,Kr1,Kr2} and surprisingly effective way to show rigidity 
 is to use 
the following elementary observation: 
$$
    \textup{$p(z)$ is bounded as $z^{\pm 1} \to \infty$} 
\quad 
\Leftrightarrow 
\quad 
p = \textup{const} 
$$
for any $p \in \C[z^{\pm 1}]$. The behavior of $\chi(\cF)$ at 
the infinity of the torus $T$ can be often
 read off directly 
from the localization formula. 

\begin{Exercise}\label{Ex_mOmega}
Let $X$ be proper and smooth with an action of a 
connected reductive group $G$. Write a localization formula
for the action of $T \subset G$ on 
$$
\sum_p (-m)^p \chi(X,\Omega^p) 
$$
and conclude that every term in this sum is a trivial $G$-module. 
\end{Exercise}

Of course, Hodge theory gives the triviality of $G$-action on 
each 
$$
H^q(X,\Omega^p)\subset H^{p+q}(X,\C) 
$$
for a compact K\"ahler $X$ and a connected group $G$. 

\subsubsection{}

When the above approach works it also means that the localization 
formula may be simplified by sending the equivariant 
variable to a suitable infinity of the torus. 

\begin{Exercise}\label{exer_BB} 
In Exercise \ref{Ex_mOmega}, pick a generic 
$1$-parameter subgroup 
$$
z\in \Ct \to T
$$
and compute the asymptotics of your localization formula 
as $z\to 0$.
\end{Exercise}

It is instructive to compare the result of Exercise \ref{exer_BB} 
with the Bia\l{}ynicki-Birula decomposition, which goes
as follows. Assume $X\subset \bP(\C^N)$ is smooth and 
invariant under the action of a 1-parameter subgroup 
$\Ct\to GL(N)$. 
Let 
$$
X^{\Ct} = \bigsqcup_i F_i
$$
be the decomposition of the fixed locus into connected components. 
It induces a decomposition of $X$ 
\begin{equation}
  \label{BB}
X= \bigsqcup X_i\,, \quad 
X_i = \left\{ x \left| \lim_{z\to 0} z\cdot x \in F_i \right. \right\}
\end{equation}
into locally closed 
sets. The key property of this decomposition is that the 
natural map 
$$
X_i \xrightarrow{\,\, \lim \,\,} F_i
$$
is a fibration by affine spaces of dimension 
$$
d_i = \rk \big(N_{X/F_i} \big)_{+}
$$
where plus denotes the subbundle spanned by vectors of positive 
$z$-weight, see for example \cite{Bros} for a recent discussion. 
As also explained there, the 
decomposition \eqref{BB} is, in fact, motivic, and in particular 
the Hodge structure of $X$ is that sum of those for $F_i$ 
shifted by $(d_i,d_i)$. 

The same decomposition of $X$ can be obtained from Morse 
theory applied to the Hamiltonian $H$ that generated the 
action of $U(1) \subset \Ct$ on the (real) symplectic manifold $X$. 
Concretely, if $z$ acts by 
$$
[x_1:x_2:\dots:x_n] \xrightarrow{\,\, z \,\,} 
[z^{m_1}\, x_1: z^{m_2}\, x_2:\dots:z^{m_n}\, x_n] 
$$
then
$$
H(x) = \sum m_i |x_i|^2 \left/ \sum |x_i|^2 \right. \,.
$$

\subsubsection{}

In certain instances, the same argument gives more.

\begin{Exercise}
Let $X$ be proper nonsingular variety with a nontrivial action of $T\cong
\Ct$.  Assume that a fractional power $\cK^p$ for $0<p<1$ of 
the canonical bundle $\cK_X$ exists in $\Pic(X)$.  Replacing $T$ by 
a finite cover, we can make it act on $\cK^p$. Show that 
$$
\chi(X,\cK^{p}) = 0 \,. 
$$
What does this say about projective spaces ? Concretely, 
which are the bundles $\cK^p$, $0<p<1$, 
 for $X=\bP^n$ and what do we 
know about their cohomology ? 
\end{Exercise}

\section{The Hilbert scheme of points of 3-folds} 
\label{S_Hilb3} 

\subsection{Our very first DT moduli space}

\subsubsection{}

For a moment, let $X$ be a nonsingular quasiprojective 3-fold; very
soon we will specialize the discussion to the case $X=\C^3$. 
Our interest is in the enumerative geometry of subschemes in $X$, 
and usually we will want these subscheme projective and 
$1$-dimensional. 

A subscheme $Z\subset X$ is 
defined by a sheaf of ideals $\cI_Z \subset \cO_X$ in the sheaf
$\cO_X$ of 
functions on $X$ and, by 
construction, there is an exact sequence 
\begin{equation}
  \label{eq:2}
  0 \to \cI_Z \to \cO_X \to \cO_Z \to 0 
\end{equation}
of coherent sheaves on $X$. 
Either the injection $\cI_Z \hookrightarrow \cO_X$, or the surjection 
$\cO_X \twoheadrightarrow \cO_Z$ determines
$Z$ and can be used to parametrize subschemes of $X$. 
The result, known as the Hilbert scheme, 
 is a countable union of quasiprojective algebraic 
varieties, one for each possible topological $K$-theory class of
$O_Z$. The construction of the Hilbert scheme goes back to 
A.~Grothendieck and is explained, for example, in \cite{FDA,Koll}. 

In particular, for $1$-dimensional $Z$, the class $[O_Z]$ is specified by 
$$
\deg Z = - c_2(\cO_Z) \in H_2(X,\Z)_\textup{effective} 
$$
and by the Euler characteristic $\chi(\cO_Z)\in\Z$. In this section, we
consider the case $\deg Z=0$, that is, the case of the Hilbert scheme
of points.

\subsubsection{}

If $X$ is affine then the Hilbert scheme of points parametrizes
modules $M$ over the ring $\cO_X$ such that
$$
\dim_\C M = n \,, \quad n=\chi(\cO_Z)\,,
$$
together with a surjection from a free module. Such Hilbert schemes 
$\Hilb(R,n)$ may, 
in fact, be defined for an arbitrary finitely-generated algebra
$$
R = \C \lan x_1, \dots, x_k \ran \big/ \textup{relations} 
$$
and consists of $k$-tuples of $n\times n$ matrices 
\begin{equation}
\bX_1\,\dots, \bX_k \in \End(\C^n)  \label{bXk} 
\end{equation}
satisfying the 
relations of $R$, together with a cyclic vector $v \in \C^n$, 
all modulo the action of $GL(n)$ by conjugation. Here, a vector 
is called cyclic if it generates $\C^n$ under the action of $\bX_i$'s. 
Clearly, a surjection $R_1 \twoheadrightarrow R_2$ leads to 
inclusion $\Hilb(R_2,n) \hookrightarrow \Hilb(R_1,n)$ and, in
particular, 
$$
\Hilb(R,n) \subset \Hilb(\Free_k,n) 
$$
if $R$ is generated by $k$ elements. 

\begin{Exercise}
Prove that $\Hilb(\Free_k,n)$ is a smooth algebraic variety of
dimension $(k-1)n^2+n$. Show that $\Hilb(\Free_1,n)$ is isomorphic 
to $S^n \C \cong \C^n$ by the map that takes $x_1$ to its
eigenvalues. 
\end{Exercise}

\noindent 
By contrast, $\Hilb(\C^3,n)$ is a very singular variety of unknown 
dimension. 

\begin{Exercise}
Let $\fm\subset \cO_{\C^d}$ be the ideal of the origin. Following 
Iarrobino, observe, that any linear linear subspace $I$ such that 
$\fm^r \supset I \supset \fm^{r+1}$ for some $r$ is an ideal in
$\cO_{\C^d}$. Conclude that the dimension of $\Hilb(\C^d,n)$ 
grows at least like a constant times $n^{2-2/d}$ as $n\to\infty$. 
This is, of course, consistent with 
$$
\dim \Hilb(\C^1,n) = n \,, \quad \dim \Hilb(\C^2,n) = 2n
$$
but shows that $\Hilb(\C^d,n)$ is not the closure of the locus of 
$n$ distinct points for $d\ge 3$ and large enough $n$. 
\end{Exercise}

\subsubsection{}
Consider the embedding 
$$
\Hilb(\C^3,n) \subset  \Hilb(\Free_3,n) 
$$
as the locus of matrices that commute, that is 
$\bX_i \bX_j = \bX_j \bX_i$.  For 3 matrices, and only for 3 matrices, 
these relations can be written as equations for a critical point: 
$$
d\phi = 0 \,, \quad \phi(\bX) = \tr \left( \bX_1 \bX_2 \bX_3 - 
\bX_1 \bX_3 \bX_2 \right) \,.
$$
Note that $\phi$ is a well-defined function on $\Hilb(\Free_3,n)$ 
which transforms in the 1-dimensional representation $\bk^{-1}$, where
$$
\bk = \Lambda^3 \C^3 = {\det}_{GL(3)} \,, 
$$
under the natural action of $GL(3)$ on $\Hilb(\Free_3,n)$. Here we
have to remind ourselves that the action of a group $G$ on functions 
is \emph{dual} to its action on coordinates. 

This means that our moduli space 
$\mM = \Hilb(\C^3,n)$ is cut out inside an ambient smooth 
space $\mMh= \Hilb(\Free_3,n)$ by a section 
$$
\cO_{\mMh} \, \xrightarrow{\, d\phi \otimes \bk \, }
\bk \otimes T^*_\mMh \,, 
$$
of a vector bundle on $\mMh$. The twist by $\bk$ is necessary to 
make this section $GL(3)$-equivariant. 

This illustrates two important points about moduli problems 
in general, and moduli of coherent sheaves 
on nonsingular threefolds in particular. 
First, locally near any point, deformation theory describes 
many moduli spaces in a similar way: 
\begin{equation}
\mM = s^{-1}(0) \subset \mMh, \quad  s \in \Gamma(\mMh,\cE)
\,,\label{cMs0}
\end{equation}
for a certain \emph{obstruction} bundle $\cE$. Second, for 
coherent sheaves on 3-folds, there is a certain kinship between the 
obstruction bundle $\cE$ and the cotangent bundle of $\mMh$, 
stemming from Serre duality beween the groups $\Ext^1$, which 
control deformations, and the groups $\Ext^2$, which control
obstructions.  The kinship is only approximate, unless the 
canonical class $\cK_X$ is \emph{equivariantly} trivial, which is not 
the case even for $X=\C^3$ and leads to the twist by $\bk$ above. 

\subsection{$\cO^\vir$ and $\tO^\vir$} 

\subsubsection{}

The description \eqref{cMs0} means that $\cO_\mM$ is the 
0th cohomology of the Koszul complex 
\begin{equation}
0 \to \Lambda^{\rk \cE} \cE^\vee \xrightarrow{\,d\,} \dots 
\to \Lambda^2 \cE^\vee \xrightarrow{\,d\,} \cE^\vee \xrightarrow{\,d\,} 
\cO_\mMh \to 0 
\label{KoszulE} 
\end{equation}
in which $\cO_\mMh$ is placed in cohomological degree $0$ and the 
differential is the contraction with the section $s$ of $\cE$.  

The Koszul complex is an example of a sheaf of \emph{differential graded 
algebras}, 
which by definition is a sheaf $\cA^\bullet$ of graded algebras with the 
differential
\begin{equation}
\dots \xrightarrow{\,d\,} \cA^{-2} \xrightarrow{\,d\,} 
\cA^{-1} \xrightarrow{\,d\,} \cA^{0} \to 0 \label{DGA} 
\end{equation}
satisfying the Leibniz rule 
$$
d(a\cdot b)= da \cdot b + (-1)^{\deg a} a \cdot db \,. 
$$
The notion of a DGA has become one of the cornerstone notions in
deformation theory, see for example how it used in 
the papers \cite{BCFHR,CF1,CF2,CF3} for a very incomplete set of
references.

In particular, the structure sheaves $\cO_\mM$ of great many 
moduli spaces are described as $H^0(\cA^\bullet)$ of a certain 
natural DGAs. 

\subsubsection{}

Central to $K$-theoretic enumerative geometry is the concept of 
the \emph{virtual structure sheaf} denoted $\cO_{\mM}^\textup{vir}$. 
While $\cO_\mM$ is the 0th cohomology of a complex \eqref{DGA}, 
 the virtual structure sheaf is its Euler characteristic
 \begin{align}
  \cO_{\mM}^\textup{vir} &= \sum_i (-1)^i \cA^{i}  \notag\\
    & = \sum_i (-1)^i H^i(\cA^\bullet) \label{defOvir} \,,
 \end{align}
see \cite{Krat,CF3}. By
Leibniz rule, each $H^i(\cA^\bullet)$ is acted upon by $\cA^0$ and 
annihilated by 
$d\cA^{-1}$, hence defines a quasicoherent sheaf on 
$$
\mM = \Spec \cA^{0}\big/ d\cA^{-1} \,. 
$$
If cohomology groups are coherent $\cA^0$-modules 
and vanish for $i\ll 0$ then the second line in \eqref{defOvir} 
gives a well-defined element of $K(\mM)$, or of $K_G(\mM)$ 
if all constructions are equivariant with respect to a group $G$. 

The definition of $\cO_{\mM}^\textup{vir}$ is, in several 
respects, simpler than the definition \cite{BF} of the virtual fundamental 
cycle in cohomology. The agreement between the two is explained in 
Section 3 of \cite{CF3}. 

\subsubsection{}

There are many reasons to prefer $\cO_{\mM}^\textup{vir}$ over 
$\cO_\mM$, and one of them is the invariance of virtual 
counts under deformations. 

For instance, in a family $X_t$ of threefolds, 
special fibers may have many more curves
than a generic fiber, and even the dimensions of the Hilbert 
scheme of curves in $X_t$ may be jumping wildly. 
This is reflected by the fact that in a family of complexes
$\cA^\bullet_t$ 
each individual cohomology group is only semicontinuous and 
can jump up for special values of $t$.  However, in a flat family  
of complexes the (equivariant) Euler characteristic is constant, and 
equivariant virtual counts are
invariants of equivariant deformations.

\subsubsection{}

Also, not the actual but rather the virtual counts are 
usually of interest in mathematical physics. 

A supersymmetric physical theory starts out as a 
Hilbert space and an operator of the form \eqref{HDH}, where 
at the beginning the Hilbert space $\cH$ is something 
enormous, as it describes the fluctuations of many fields extended
over many spatial dimensions. However, all those infinitely many 
degrees of freedom that correspond to nonzero eigenvalues of
the operator $\Dir^2$ pair off and make no contribution to 
supertraces \eqref{strA}. What remains, in cases of interest to
us, may be identified with a direct sum (over various topological 
data) of complexes of finite-dimensional $C^\infty$ vector bundles over 
finite-dimensional K\"ahler manifolds. These complexes combine the 
features of
\begin{itemize}
\item[(a)] a Koszul complex for a section $s$ of a certain vector
  bundle, 
\item[(b)] a Lie algebra, or BSRT cohomology complex when a certain
  symmetry needs to be quotiented out, and 
\item[(c)] a Dolbeault cohomology, or more precisely a related 
Dirac cohomology complex, which turns the supertraces into 
\emph{holomorphic} Euler characteristics of K-theory classes 
defined by (a) and (b). 
\end{itemize}
\noindent
If $M$ is a K\"ahler manifold, then spinor bundles of $M$ are 
the bundles 
$$
\mathscr{S}_\pm = \cK_M^{1/2} \otimes 
\bigoplus_{\textup{$n$ even/odd}} \Omega^{0,n}_M 
$$
with the Dirac operator $\Dir = \dbar+\dbar^*$. Here $\cK_M$ 
is the canonical line bundle of $M$, which needs to be a square
in order for $M$ to be spin.

 In item (c) above, the difference 
between Dolbeault and Dirac cohomology is precisely the 
extra factor of $\cK_M^{1/2}$. While this detail may look 
insignificant compared to many other layers of complexity 
in the problem, it will prove to be of fundamental importance 
in what follows and will make 
many computations work. The basic reason for this was 
already discussed in Section \ref{s_princ}, and will be revisited 
shortly: the twist by $\cK_M^{1/2}$ 
makes formulas more self-dual and, thereby, more rigid
than otherwise. 

\subsubsection{}
Back in the Hilbert scheme context, the distinction between 
Dolbeault and Dirac is the distinction between 
$\cO_{\mM}^\textup{vir}$ and 
$$
\tO_{\mM}^\textup{vir}  = (-1)^n \, \cK^{1/2}_\vir \otimes
\cO_{\mM}^\textup{vir}
$$
where the sign will be explained below and 
the virtual canonical bundle $\cK^{1/2}_\vir $ is constructed as
the dual of the determinant of the virtual tangent bundle 
\begin{align}
T^\vir_\mM &= \Def - \Obs \label{Tvir} \\
&= \left(T_\mMh - \bk \otimes T^*_\mMh\right)\Big|_\mM 
\label{Tvir2} \,. 
\end{align}
{}From \eqref{Tvir2} we conclude 
$$
\cK^{1/2}_\vir = \bk^{\frac{\dim}2}  \otimes \cK_{\mMh} 
\Big|_\mM 
$$
where $\dim=\dim \mMh$. This illustrates a general result of \cite{NO} 
that for DT moduli spaces the virtual canonical line bundle $\cK_\vir$
is a square in the equivariant Picard group up to a character of
$\Aut(X)$ and certain additional twists which will be discussed 
presently. 

{} From the canonical isomorphism 
$$
\Lambda^k T_\mMh \otimes \cK_{\mMh} \, \cong \, \Omega^{\dim -k }_\mMh 
$$
and the congruence 
$$
\dim \mMh \equiv n \mod 2 
$$
we conclude that 
\begin{equation}
\tO_{\mM}^\textup{vir} = 
\bk^{-\frac{\dim}2} \sum_i (-\bk)^i \, \Omega^i_{\mMh} \,. 
\label{OOm} 
\end{equation}
We call $\tO_{\mM}^\textup{vir}$ the \emph{modified} or the
\emph{symmetrized} 
virtual structure sheaf. The hat which this K-class is wearing 
 should remind the reader of 
the $\widehat{A}$-genus and hence of the Dirac operator. 

%
%

\subsubsection{}

Formula \eqref{OOm} merits a few remarks.
  As discussed before, Serre duality between $\Ext^1$ and 
$\Ext^2$ groups on threefolds yields a certain kinship between the
deformations and the \emph{dual} of the obstructions. This makes 
the terms of the complex \eqref{KoszulE} and hence 
of the virtual structure sheaf 
$\cO^\vir_\mM$ look like polyvector fields $\Lambda^i T_\mMh$ 
on the ambient space
$\mMh$. 

The twist by $\cK_\mMh \approx \cK^{1/2}_\vir$ turns polyvector fields into
differential forms, and differential forms are everybody's favorite
sheaves for cohomological computations. For example, if 
$\mMh$ is a compact K\"ahler manifold then $H^q(\Omega^q_\mMh)$ is 
a piece of the Hodge decomposition of $\Hd(\mMh,\C)$, and 
in particular, rigid for any connected group of automorphisms. But 
even when these cohomology groups are not rigid, or when 
the terms of $\tO^\vir$ are not exactly differential forms, they still
prove to be much more manageable than $\cO^\vir$. 

\subsubsection{}\label{s_taut} 

The general definition of $\tO^\vir$ from \cite{NO} involves, 
in addition to $\cO^\vir$ and $\cK_\vir$, a certain
\emph{tautological} 
class on the DT moduli spaces. 

The vector space $\C^n$ in which the matrices \eqref{bXk} 
operate, descends to a rank $n$ vector bundle over 
the quotient $\Hilb(\Free,n)$.
Its fiber over $Z\in \Hilb(\C^3,n)$ is naturally
identified with global sections of $\cO_Z$, that is, the pushforward
of the \emph{universal sheaf} $\cO_\fZ$ along $X$, where 
$$
\fZ \subset \Hilb(X,n) \times X
$$
is the universal subscheme. 

Analogous universal sheaves exists for Hilbert schemes of 
subschemes of any dimension. In particular, for the 
Hilbert scheme of curves, and other DT moduli spaces $\mM$, there 
exists a universal 1-dimensional sheaf $\fF$ on $\mM\times X$
such that 
$$
\fF\big|_{m \times X} = \cF
$$
where $m=(\cF,\dots)\in \mM$ is the moduli point representing a $1$-dimensional sheaf 
$\cF$ on $X$ with possibly some extra data. 

By construction, the sheaf $\fF$ is \emph{flat} over $\mM$, therefore 
\emph{perfect}, that is, has a finite resolution by vector bundles on 
$\mM \times X$. This is a nontrivial property, since $\mM$ is highly 
singular and for singular schemes $K^\circ \subsetneq K$, where 
$K^\circ$ is the subgroup generated by vector bundles. From elements
of $K^\circ$ one can form tensor polynomials, for example 
\begin{equation}
P = S^2 \fF \otimes \gamma_1 + \Lambda^3 \fF \otimes \gamma_2
 \in K^\circ(\mM \times
X) \label{ex_tens_poly} \,,
\end{equation}
for any $\gamma_i \in K(X)=K^\circ(X)$. The support of any tensor 
polynomial like \eqref{ex_tens_poly} is proper over $\mM$ and 
therefore
$$
\pi_{\mM,*} P \in K^\circ(\mM) \,,
$$
where $\pi_\mM$ is the projection along $X$. This is because 
a proper pushforward of a flat sheaf is perfect, see 
for example Theorem 8.3.8 in \cite{FDA}.
  It makes sense to apply 
further tensor operations to $\pi_{\mM,*} P$,  or to take it 
determinant. 

In this way one can manufacture a large supply of 
 classes in $K^\circ(\mM)$ 
which are, in their totality, called \emph{tautological}. 
One should think of them as K-theoretic 
version of imposing various geometric conditions on curves, like 
meeting a cycle in $X$ specified by $\gamma\in K(X)$.

\subsubsection{}
An example of the tautological class is the determinant term in the 
following definition 
\begin{equation}
\tO^\vir = \textup{prefactor} \,\, \cO^\vir \otimes 
\left(\cK_\vir \otimes \det \pi_{\mM,*} (\fF\otimes (\cL_4 -
  \cL_5))\right)^{1/2}\,,
\label{deftOvir} 
\end{equation}
where $\cL_4 \in \Pic(X)$ is an arbitrary line bundle and $\cL_5$ is a 
line bundle determined from the equation 
$$
\cL_4 \otimes \cL_5 = \cK_X \,.
$$
These are the same $\cL_4$ and $\cL_5$ as in Section \ref{sL4L5}. 
The prefactor in \eqref{deftOvir} contains the $z$-dependence 
\begin{equation}
\textup{prefactor} = (-1)^{(\cL_4,\beta)+n}
z^{n-(\cK_X,\beta)/2}\,, \label{prefactor} 
\end{equation}
where 
$$
\beta = \deg \cF \in H_2(X,\Z) \,, \quad n = \chi(\cF) \in \Z
$$
are locally constant functions of a 1-dimensional sheaf $\cF$ on 
$X$. 

The existence of square root in \eqref{deftOvir} is shown in
\cite{NO}. With this definition, the general problem of computing 
the K-theoretic DT invariants may be phrased as 
\begin{equation}
\chi\left(\mM, \tO^\vir \otimes \textup{tautological} \right) =
\textup{\large ?}\,, \label{?}
\end{equation}
where $\mM$ is one of the many possible DT moduli spaces. 
In a relative situation, one can put further insertions in \eqref{?}. 

\subsection{Nekrasov's formula}

\subsubsection{}
Let $X$ be a nonsingular quasiprojective threefold and 
consider its Hilbert scheme of points 
$$
\mM = \bigsqcup_{n\ge 0} \Hilb(X,n) \,. 
$$
In the prefactor \eqref{prefactor} we have $\beta=0$ and therefore 
\begin{equation}
\textup{prefactor} = (-z)^n  \,. \label{pref_points}
\end{equation}
In \cite{Zth}, Nekrasov conjectured a formula for 
\begin{equation}
\bZ_{X,\points} = \chi\left(\mM, \tO^\vir\right) \,. 
\label{defcZ}
\end{equation}
Because of the prefactor, this is well-defined as 
a formal power series in $z$, as long as $\chi\!\left(\Hilb(n),
  \tO^\vir\right)$ is well-defined for each $n$. For that we need 
to assume that either $X$ is proper or, more generally, that there exists 
$g\in \Aut(X)_0$ such that it fixed point set $X^g$ is proper (if $X$
is already proper we can always take $g=1$). Then 
$$
\bZ_{X,\points}  \in 
\begin{cases}
  K_{\Aut(X)}(\pt) [[z]]\,, & g=1\\
K_{\Aut(X)}(\pt)_\textup{loc} [[z]]\,, & \textup{otherwise} \,. 
\end{cases}
$$
To be precise, \cite{Zth} considers the 
case $X=\C^3$, but the generalization to arbitrary $X$ is immediate.

\subsubsection{}

Nekrasov's formula computes $\bZ_{X,\points}$ in the 
form 
\begin{equation}
  \bZ_{X,\points} = \bSd \chi(X,\star) \,, \quad \star \in K_\bT(X)[[z]]
\end{equation}
where $\bSd$ is the symmetric algebra from Section \ref{s_bSd}. 

Here, and this is 
very important, the boxcounting variable $z$ is viewed as a part of the 
torus $\bT$, that is, it is also raised to the power $n$ in the
formula \eqref{SV}. This is very natural from the 5-dimensional 
perspective, since the $z$ really acts on the 5-fold \eqref{ZLL}, with 
the fixed point set $X$, as discussed in Section \ref{sL4L5}. 

Now we are ready to state the following result, 
conjectured by Nekrasov in \cite{Zth}

\begin{Theorem}\label{t_Hilb3}
We have 
\begin{equation}
\bZ_{X,\points} = \bSd \chi\left(X, \frac{z\cL_4 \,(T_X + \cK_X - T_X^\vee
    - \cK_X^{-1})}{(1-z \cL_4)(1-z \cL^{-1}_5)} \right)\,. 
\label{f_Hilb3} 
\end{equation}
\end{Theorem}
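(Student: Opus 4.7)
The plan is to prove Theorem \ref{t_Hilb3} by reducing to $X=\C^3$ via equivariant localization on toric threefolds, and then extending to arbitrary smooth quasiprojective $X$ by a cobordism / degeneration argument. For a smooth projective toric threefold $X$ with 3-torus $T\cong(\Ct)^3$ acting, the $T$-fixed locus of $\Hilb(X,n)$ is finite and indexed by tuples of 3d partitions $\{\pi_p\}_{p\in X^T}$, one supported at each torus-fixed point $p$, with $\sum_p |\pi_p|=n$. Because the partitions are finite, the corresponding subschemes do not interact along the $T$-invariant curves, and $\bZ_{X,\points}$ factorizes over $X^T$ as a product of local contributions, each equal to the ``bare vertex'' $\bZ_{\C^3,\points}$ evaluated with the tangent weights of $X$ at the corresponding $p$.

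The right-hand side of \eqref{f_Hilb3} factorizes in exactly the same way. Indeed, for any $\star\in K_T(X)$, equivariant localization gives
$$
\chi(X,\star) \;=\; \sum_{p\in X^T} \bSd\!\left(N_p^\vee\right)\cdot \iota_p^*\star ,
$$
and since $\bSd$ converts sums to products, the right-hand side of \eqref{f_Hilb3} becomes a product over $p\in X^T$ of the corresponding expressions for $\C^3$ with the weights of $T_p X$. Thus it suffices to establish \eqref{f_Hilb3} in the case $X=\C^3$, which is the heart of the theorem.

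For $X=\C^3$, virtual localization expresses $\bZ_{\C^3,\points}$ as a sum over 3d partitions $\pi$ of contributions $\tO^\vir|_\pi / \bSd(-T^\vir|_\pi)$, weighted by the prefactor $(-z)^{|\pi|}$ from \eqref{pref_points}. The character of $T^\vir|_\pi$ in the equivariant weights $t_1,t_2,t_3$ admits a classical closed-form description via Serre duality on $\C^3$, and the symmetrizing twist by $\cK_\vir^{1/2}$ and the tautological factor $\det\!\left(\pi_{\mM,*}(\fF\otimes(\cL_4-\cL_5))\right)^{1/2}$ built into $\tO^\vir$ via \eqref{deftOvir} present each summand as a manifestly self-dual plethystic exponential of a half-dimensional K-class in $t_i,\cL_4,\cL_5=\bk/\cL_4$. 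One must then identify the resulting sum over plane partitions with the plethystic exponential on the right-hand side of \eqref{f_Hilb3}.

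This last identification is the hard part: it is a K-theoretic enhancement, with the extra parameters $(z,\cL_4,\cL_5,\bk)$, of MacMahon's classical product for 3d partitions. Rather than attack it by direct combinatorics, I would proceed by rigidity in the spirit of Section \ref{s_princ}. Both sides are rational functions of $z$ and of $t_i,\cL_4$, share the same pole structure prescribed by $(1-z\cL_4)(1-z\cL_5^{-1})$, and inherit the same involutive symmetry under $\cL_4\leftrightarrow\cL_5^{-1}\bk$ coming from the Serre-duality self-duality of $\tO^\vir$. Matching their behaviour at a few strategic limits (for example $\cL_4\to 0$, $\cL_5\to 0$, or $z\to 0$), where one side collapses to a known generating function, should pin down the identity. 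With the $\C^3$ case in hand, the passage from toric to arbitrary smooth quasiprojective $X$ is supplied by the Levine--Pandharipande \cite{LevPand} cobordism / degeneration argument: both sides of \eqref{f_Hilb3} are multiplicative under flat degenerations of $X$ into toric pieces and depend on $X$ only through characteristic numbers of $T_X$ and $\cL_4$, which are already matched on a family of toric threefolds that spans the relevant cobordism classes.
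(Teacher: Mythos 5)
Your high-level architecture matches the paper's: factorize over torus-fixed points of a toric $X$ to reduce to $X=\C^3$, treat $\C^3$ by virtual localization over $3$-dimensional partitions, and then pass to general $X$ via the Levine--Pandharipande cobordism approach. The factorization step over $X^T$ is correct, and your observation that $\bSd$ turns the localization sum on the right-hand side into a matching product is exactly right. The gap is in the part you flag as ``the hard part'' and then try to dispatch by rigidity alone.

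First, before any rigidity argument can even be set up, one needs to know \emph{a priori} that $\bZ_{\C^3,\points}$ has the form $\bSd\chi(\C^3,\cG)$ for \emph{some} $\cG=\sum z^i\cG_i$ with $\cG_i\in K_{GL(3)}(\C^3)$. This is not automatic from the localization sum over plane partitions; it is the content of Proposition \ref{P_tOvir_factors}, proven by showing that $\pi_*\tO^\vir$ factorizes over the symmetric products $S^nX$ (Lemma \ref{l_factor}, a geometric inclusion-exclusion) via a Morse-theoretic analysis of the superpotential $\phi=\tr[\bX_1,\bX_2]\bX_3$ near block-diagonal loci. Without this step you cannot legitimately speak of a single well-defined ``plethystic logarithm'' $\star$ with coefficients that are Laurent polynomials, so the rigidity argument has nothing to act on. Second, the rigidity argument in the paper is carried out in the \emph{tangent weights} $t_1,t_2,t_3$, sending $t_i^{\pm1}\to\infty$ with $\bk=t_1t_2t_3$ fixed, using the boundedness of $\aroof$ of balanced classes. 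The limits you propose ($\cL_4\to0$, $\cL_5\to0$) do not make sense on $\C^3$, where $\cL_4$ and $\cL_5$ are forced to be equivariantly trivial line bundles of weight $\bk^{-1/2}$ and cannot be varied independently; and $z\to0$ just gives $\bZ\to1$, which pins down nothing. Third, and most importantly, rigidity does \emph{not} let you avoid combinatorics. After rigidity reduces $\strr$ to a function of $\bk$ and $z$ alone, the paper must still evaluate it in a carefully chosen anisotropic limit $t_1,t_3\to0$, $|t_1|\ll|t_3|$ with $\bk$ fixed (the Iqbal--Kozcaz--Vafa refined vertex limit), where the localization weights become indices and the sum over plane partitions reduces to Theorem \ref{t_McM}, a genuine generalization of MacMahon's enumeration that is proven by a transfer-matrix computation with vertex operators $\Gamma_\pm$. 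That combinatorial input is essential; rigidity only reduces to it, it does not replace it.

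In short: your proposal is sound in architecture but stops precisely where the real work begins. To repair it, add (i) the factorization Proposition \ref{P_tOvir_factors}, (ii) the specific $t_i\to\infty$-with-$\bk$-fixed rigidity (Propositions \ref{p_strr}, \ref{p_index_Z}), and (iii) the index-limit computation culminating in Theorem \ref{t_McM}.
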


\subsubsection{}
In \cite{Zth}, this conjecture appeared together with an important 
physical interpretation, as the supertrace of $\Aut(Z,\Omega^5)$-action on the 
fields of \emph{M-theory} on \eqref{ZLL}. M-theory is a supergravity 
theory in 10+1 real spacetime dimensions.  Its fields are: 
\begin{itemize}
\item[---] the metric, also known as the graviton, 
\item[---] its fermionic 
superpartner, gravitino, which is a field of spin $3/2$, and
\item[---] one more 
bosonic field, a $3$-form
analogous to a connection, or a gauge boson, in gauge 
theories such as electromagnetism.
\end{itemize}
These are considered up to a gauge 
equivalence that includes: diffeomorphisms, changing the 3-form by 
an exact form, and changing gravitino by a derivative of a spinor. 

In addition to these fields, 
$M$-theory has extended objects, namely:
\begin{itemize}
\item[---] membranes, which have a
3-dimensional worldvolume and hence are naturally charged under 
the $3$-form, and also 
\item[---]magnetically dual M5-branes.
\end{itemize}
While membranes naturally 
appear in connection with DT invariants of positive degree
\cite{NO}, see for example Section \ref{s_MemSn} below, 
possible algebro-geometric interpretations of
M5-branes are still very much in the initial exploration stage. 

In Hamiltonian description, the field sector of the Hilbert space of 
M-theory is, formally, the $L^2$ space of functions on 
a very infinite-dimensional configuration supermanifold, namely 
\begin{equation}
\cH \eqf
L^2 \left(\textup{bosons} \oplus \frac12 \, \textup{fermions on $Z$}
\Big/ \textup{gauge}\right) \,, 
\label{cHL2} 
\end{equation}
where 
\begin{itemize}
\item[---] $Z$ is a fixed time slice of the 11-dimensional 
spacetime, 
\item[---] $\frac12$ of the fermions denotes a choice of polarization, that is, 
a splitting of fermionic operators into operators of creation and 
annihilation, 
\item[---] the $\eqf$ sign denotes an equality which is formal,
  ignores key analytic and dynamical questions, but may be suitable for 
equivariant K-theory which is often insensitive to 
finer details.  
\end{itemize}
As a time slice, one is allowed to take a 
Calabi-Yau 5-fold, for example the one in \eqref{ZLL}. 
Automorphisms of $Z$ preserving the $5$-form $\Omega^5$ 
are symmetries
of the theory and hence act on its Hilbert space. 

Since the configuration space is a linear representation of 
$\Aut(Z,\Omega^5)$, we have 
$$
\cH \eqf \bSd \textup{Configuration space} 
$$
in $K$-theory of $\Aut(Z,\Omega^5)$. The character of 
the latter may be computed, see \cite{Zth}, the exposition of 
the results of \cite{Zth} in  
Section 2.4 of \cite{NO}, and below, with the result that 
\begin{equation}
\cH \eqf \Ld \chi(Z, T_Z) \,,
\label{conf_space_trace} 
\end{equation}
or its dual 
$$
\overline{ \Ld \chi(Z, T_Z)} = \bSd \chi(Z, T^*_Z) \,, 
$$
depending on the choice of the polarization 
in \eqref{cHL2}. 

Note that 
\begin{equation*}
\chi(Z, T^*_Z-T_Z)  = \chi\left(X, \cK_X - \cO_X + 
\frac{z\cL_4 \,(T_X + \cK_X - T_X^*
    - \cK_X^{-1})}{(1-z \cL_4)(1-z \cL^{-1}_5)} \right) \,. 
\end{equation*}
This implies 
\begin{equation}
  \bSd \chi\left(X, \cK_X - \cO_X\right) 
\otimes \bZ_{X,\points} \eqf \cH \otimes \overline{\cH} \,,
\label{ZHH} 
\end{equation}
which is a formula with at least two issues, one minor and the other 
more interesting. The minor issue is the prefactor in 
the LHS, which is ill-defined as written. We will see below how 
this prefactor appears in DT computations and why in the 
natural regularization it is simply removed. 

The interesting issue in \eqref{ZHH} is the doubling the 
contribution of $\cH$. As already pointed out by Nekrasov, it should
 be reexamined with a more careful analysis of the
physical Hilbert 
space of M-theory.  

\subsubsection{}

The following exercises will guide the reader through the proof of
\eqref{conf_space_trace}.  Since this computation is about
finite-dimensional representations of reductive groups, we may 
complexify problem and consider 
$$
V=Z \otimes_\R \C \cong \C^{10} \,.
$$
This is a vector space with a nondegenerate 
quadratic form. We may assume the form is given by 
$$
\| (x_1,\dots,x_{10}) \|^2 = \sum_{i=1}^k x_i x_{11-i} 
$$
in which case 
$$
B = SO(V) \cap \{ \textup{upper triangular matrices} \}
$$
is a Borel subgroup with a maximal torus 
$$
T=\left\{\diag(t_1,\dots,t_5,t_5^{-1},\dots,t_1^{-1})\right\} \cong 
\left(\Ct\right)^5\,. 
$$
By definition, the spinor representations 
$S_\pm$ of $SO(V)$ have the highest weight 
$$
\left(\frac12,\frac12,\frac12,\frac12,\pm \frac12\right) \,. 
$$

\begin{Exercise}
Show that the character of $S_{\pm}$ is given by 
$$
\tr_{S_+} t - \tr_{S_-} t = (t_1 \cdots t_5)^{1/2} \prod_{i=1}^5
(1-t_i^{-1}) \, .
$$
\end{Exercise} 

\begin{Exercise} Prove that the dual 
of one spinor representation is the other. 
\end{Exercise} 

\begin{Exercise}
Show there exists a nonzero $SO(V)$ intertwiner 
$$
V \otimes S_{\pm} \to S_{\mp} \,. 
$$
\end{Exercise} 

\begin{Exercise}
Using e.g.\ Weyl dimension formula, show 
$$
V \otimes S_{\pm} = S_{\mp} \oplus RS_{\pm}\,, 
$$
where $RS_{\pm}$ is an irreducible $SO(V)$ module with highest weight 
$$
\left(\frac32,\frac12,\frac12,\frac12,\pm \frac12\right) \,. 
$$
Fields transforming in this representation of the orthogonal 
group are called \emph{Rarita-Schwinger} fields. 
\end{Exercise} 

Viewed as $SO(V)$ module, the configuration superspace 
in \eqref{conf_space_trace} is the space of functions on $V$ with 
values in the following virtual representation of $SO(V)$ 
\begin{alignat}{2}
  \mathbf{M}_\pm=&S^2 V - 1&\qquad&\textup{traceless
    metric}  \label{fieldsM} \\
&-V &\qquad&\textup{modulo diffeomorphism} \notag  \\
&\Lambda^3V &\qquad&\textup{a $3$-form} \notag  \\
&-\Lambda^2V+V-1 &\qquad&\textup{modulo exact forms} \notag  \\
&-RS_{\pm}   &\qquad&\textup{a Rarita-Schwinger field}  \notag  \\
&+S_\pm  &\qquad&\textup{modulo derivative of a spinor}  \notag  \,. 
\end{alignat}
Note the change of sign in the last two lines in \eqref{fieldsM}. This
is because fields of half-integer spin are fermions, and thus count
with the minus sign in the supertrace. 

Also note that the dual of $\mathbf{M}_\pm$ is
$\mathbf{M}_\mp$.  This is the place where the 
choice of polarization 
in \eqref{cHL2} makes a difference. 

\subsubsection{}
The span $W \subset V$ of the first 5 coordinate lines is isotropic,
and we can write 
$$
V = W \oplus W^* \cong T^{1,0}Z \oplus T^{0,1}Z 
$$
where 
$$
TZ \otimes_\R \C = T^{1,0}Z \oplus T^{0,1}Z
$$
is the decomposition of tangent vectors by their Hodge type. 
As a result, we have an embedding 
$$
GL(W) \subset SO(V) \,.
$$
The subgroup $SL(W) \subset GL(W)$ corresponds to those 
operators that act complex-linearly on $Z\cong \C^5$ and preserve 
a nondegenerate $5$-form. 

\begin{Exercise}
Prove that 
 \begin{align}
\mathbf{M}_+ &= - W \otimes \Ld W \,, \label{MSLW} \\
\mathbf{M}_- &=  W^* \otimes \Ld W \,, \notag 
 \end{align}
as $SL(W)$ modules. By duality, it is enough to prove either of
those. 
\end{Exercise}

Since $W \cong T^{1,0} Z \cong \Omega^{0,1} Z$ as $SL(W)$-modules, we 
have 
\begin{align*}
\textup{Configuration (super)space} &= \chi(Z,C^{\infty}(\mathbf{M}_+)) \\
&= - \chi(\textup{Dolbeault}(TZ)) \\
&= -\chi(Z,TZ)
\end{align*}
where $C^{\infty}(\mathbf{M}_+)$ is the sheaf of smooth functions with 
values in $\mathbf{M}_+$, and in the second line we have the 
Dolbeault complex of the holomorphic tangent bundle of $Z$. 
 This completes the proof of 
\eqref{conf_space_trace}.

\subsubsection{}

Below we will see how \eqref{f_Hilb3} reproduces an earlier result in 
cohomology proven in
\cite{mnop2} for $X$ toric varieties and in \cite{LevPand} for general 
3-folds. The cohomological version of \eqref{defcZ} is 
\begin{equation}
\bZ_{X,\points,\textup{coh}} = \sum_{n\ge 0} 
(-z)^n \int_{[\Hilb(X,n)]_\vir} 1 \,, 
\label{defcZcoh}
\end{equation}
where $[\Hilb(X,n)]_\vir$ is the virtual fundamental cycle \cite{BF}. 
It may be defined as the cycle corresponding to the K-theory class
$\cO^\vir$ which, 
as \cite{CF3} prove, has the expected dimension of support.

In general, for DT moduli spaces, the expected dimension is 
$$
\vir \, \dim = - (\deg \cF, \cK_X) \,,
$$
where $\cF$ is the $1$-dimensional sheaf on $X$. For Hilbert schemes
of points this vanishes and $[\Hilb(X,n)]_\vir$ is an equivariant $0$-cycle. 
For $\Hilb(\C^3)$ this cycle is the Euler class of the obstruction 
bundle. 

The following result was conjectured in \cite{mnop2} and proven there
for toric $3$-folds. The general algebraic cobordism approach of 
Levine and Pandharipande reduces the case of a general $3$-fold 
to the special case of toric varieties. 

\begin{Theorem}[\cite{mnop2,LevPand}] \label{t_Z_coh} 
We have 
$$
\bZ_{X,\points,\textup{coh}}  = M(z)^{\int_X c_3(T_X \otimes
  \cK_X)}\,, 
$$
where 
$$
M(z) = \bSd \frac{z}{(1-z)^2} = \prod_{n>0} (1-z^n)^{-n} \,,
$$
is McMahon's generating function for 3-dimensional partitions. 
\end{Theorem}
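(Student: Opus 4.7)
The plan is to derive Theorem~\ref{t_Z_coh} as the cohomological limit of Theorem~\ref{t_Hilb3}. Under the degeneration of equivariant K-theory to cohomology (formally $t^{\mu}\mapsto e^{\varepsilon\mu}$, then $\varepsilon\to 0$), the K-theoretic invariant $\chi(\Hilb(X,n),\tO^\vir)$ specializes to $(-z)^n\int_{[\Hilb(X,n)]_\vir} 1$: the $(-z)^n$ is already the prefactor \eqref{prefactor} with $\beta=0$, while the $\cK^{1/2}_\vir$ twist in the definition \eqref{deftOvir} of $\tO^\vir$ contributes trivially at leading order because the virtual dimension vanishes. Consequently $\bZ_{X,\points}$ specializes to $\bZ_{X,\points,\textup{coh}}$, and it remains to compute the cohomological limit of the right-hand side of \eqref{f_Hilb3}.

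Let $y_1,y_2,y_3$ be the Chern roots of $T_X$ and write $s:=c_1(T_X)=y_1+y_2+y_3$, so $c_1(\cK_X)=-s$. The key input is the Chern character identity
\[
  \textup{ch}\bigl(T_X + \cK_X - T_X^\vee - \cK_X^{-1}\bigr) = 2\sum_i\sinh(y_i)-2\sinh(s).
\]
Using $\sinh(y)=y+y^3/6+\cdots$ and the elementary identity
\[
  \sum_i y_i^3 - s^3 = -3(y_1+y_2)(y_1+y_3)(y_2+y_3),
\]
the degree-$3$ component equals $-(y_1+y_2)(y_1+y_3)(y_2+y_3)=c_3(T_X\otimes\cK_X)$, while all remaining terms live in polynomial degree $\geq 5$.

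Now let $F$ denote the K-theory class inside $\chi(X,\cdot)$ on the right of \eqref{f_Hilb3}. The denominator $(1-z\cL_4)(1-z\cL_5^{-1})$ has degree-$0$ Chern-character piece $(1-z)^2$, the factor $z\cL_4$ contributes $z$ in degree $0$, and $\textup{td}_0(X)=1$. Since $\dim_\C X=3$, only the degree-$3$ part of $\textup{ch}(F)\,\textup{td}(X)$ survives $\int_X$; because the numerator vanishes to order $3$, this piece is uniquely produced as (degree-$3$ numerator)$\times$(degree-$0$ denominator)$\times\textup{td}_0(X)$. Hence in the cohomological limit
\[
  \chi(X,F)\;\longrightarrow\;\frac{z}{(1-z)^2}\int_X c_3(T_X\otimes\cK_X)=\frac{cz}{(1-z)^2},\qquad c:=\int_X c_3(T_X\otimes\cK_X),
\]
independently of the specific $\cL_4$ chosen subject to $\cL_4\otimes\cL_5=\cK_X$.

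Finally, applying $\bSd$ as in \eqref{e_SV} to this scalar series,
\[
  \bSd\frac{cz}{(1-z)^2}=\bSd\sum_{n\ge 1}cn\,z^n=\prod_{n\ge 1}(1-z^n)^{-cn}=M(z)^c,
\]
which is the asserted formula. The main obstacle in this plan is the initial step: one must rigorously justify that $\bZ_{X,\points,\textup{coh}}$ really is the cohomological limit of $\bZ_{X,\points}$, i.e.\ that the twists in $\tO^\vir$ beyond $\cO^\vir$ (namely $\cK^{1/2}_\vir$ and the determinantal tautological class in \eqref{deftOvir}) all drop out at the leading order in $\varepsilon$ when the virtual dimension is zero. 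Granted this, the remainder is the Chern character computation sketched above.
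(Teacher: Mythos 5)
Your argument is correct, and it is essentially the exercise the paper poses immediately after~\eqref{tOvirHilbloc}, which asks to take the limit $t_1,t_2,t_3\to 1$ in Nekrasov's formula for $\C^3$ and recover Theorem~\ref{t_Z_coh}. Note that the paper does not itself prove Theorem~\ref{t_Z_coh} --- it is cited from~\cite{mnop2,LevPand} --- so yours is a self-contained alternative derivation granted Theorem~\ref{t_Hilb3}. Bear in mind, though, that for non-toric $X$ the paper's Theorem~\ref{t_Hilb3} is itself reduced to the toric case by the same algebraic-cobordism argument of~\cite{LevPand} that underlies the cited proof of Theorem~\ref{t_Z_coh}, so the two results are genuinely logically independent only over toric $X$, where Nekrasov's formula is established in the paper by rigidity and the transfer-matrix enumeration of Theorem~\ref{t_McM}, and not by the cohomological vertex computation of~\cite{mnop2}.

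The ``obstacle'' you flag at the end is easy to close and in fact requires no $\varepsilon\to 0$ limit when $X$ is proper. Since the virtual dimension of $\Hilb(X,n)$ vanishes, virtual Riemann--Roch~\cite{FG,CF3} gives, for any K-theory class $\cE$ with $\mathrm{ch}_0(\cE)=1$,
\begin{equation*}
\chi\bigl(\Hilb(X,n),\ \cE\otimes\cO^\vir\bigr)
=\int_{[\Hilb(X,n)]_\vir} \mathrm{ch}(\cE)\,\mathrm{td}(T^\vir)
=\int_{[\Hilb(X,n)]_\vir} 1\,,
\end{equation*}
because over a virtual $0$-cycle only the degree-zero pieces of $\mathrm{ch}(\cE)$ and $\mathrm{td}(T^\vir)$ survive. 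The full twist factor in~\eqref{deftOvir} is a (fractional) line bundle, hence has $\mathrm{ch}_0=1$, so $\bZ_{X,\points}=\bZ_{X,\points,\textup{coh}}$ exactly; for noncompact $X$ the same argument applies equivariantly. The rest of your write-up --- that $\mathrm{ch}(T_X+\cK_X-T_X^\vee-\cK_X^{-1})=2\sum_i\sinh(y_i)-2\sinh(s)$ is odd and vanishes below degree $3$, that its degree-$3$ part is $\tfrac13(\sum_i y_i^3-s^3)=-(y_1+y_2)(y_1+y_3)(y_2+y_3)=c_3(T_X\otimes\cK_X)$, that therefore $\chi(X,F)=\tfrac{z}{(1-z)^2}\int_X c_3(T_X\otimes\cK_X)$ exactly by ordinary Riemann--Roch on the threefold, and that $\bSd\sum_{n\ge 1}cn\,z^n=\prod_n(1-z^n)^{-cn}=M(z)^c$ --- is all correct.
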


\noindent
The appearance of 3-dimensional partitions here is very natural --- 
they index the torus fixed points in $\Hilb(\C^3,n)$. These appear 
naturally in equivariant virtual localization, which is the subject to 
which we turn next.

\subsection{Tangent bundle and localization}

\subsubsection{}
Consider the action of the maximal torus 
$$
T=\left\{
  \begin{pmatrix}
    t_1 \\
& t_2 \\
& & \ddots \\
& & & t_d
  \end{pmatrix}
\right\} 
\subset GL(d) 
$$
on $\Hilb(\C^d,\textup{points})$, that is, on ideals of 
finite codimension in the ring $\C[x_1,\dots,x_d]$. 

\begin{Exercise}
Prove that the fixed points set $\Hilb(\C^d,\textup{points})^T$ is 
$0$-dimensional and formed by \emph{monomial ideals}, that is, 
ideals generated by monomials in the variables $x_i$. 
In particular, the points of $\Hilb(\C^d,n)^T$ are in natural 
bijection with $d$-dimensional partitions $\pi$ of the number $n$.  
\end{Exercise}

\subsubsection{}
For $d=1,2$, Hilbert schemes are smooth and our next goal is to 
compute the character of the torus action on $T_\pi \Hilb$. This will 
be later generalized to the computation of the torus character of 
the \emph{virtual} tangent space for $d=3$. 

By construction or by the functorial definition of the Hilbert scheme,
its Zariski tangent space at any subscheme $Z\subset X$ 
is given by 
$$
T_Z \Hilb = \Hom(\cI_Z,\cO_Z) 
$$
where $\cI_Z$ is the sheaf of ideals of $Z$ and $\cO_Z =\cO_X/\cI_Z$ 
is its structure sheaf. Indeed, the functorial description of Hilbert 
scheme says that 
$$
\textup{Maps}(B,\Hilb(X)) = \left\{
  \begin{matrix}
    \textup{subschemes of $B\times X$} \\
\textup{flat and proper over $B$} 
  \end{matrix}
\right\} 
$$
for any scheme $B$. In particular, a map from 
$$
B = \Spec \C[\varepsilon]/\varepsilon^2
$$
is a point of $\Hilb(X)$ together with a tangent vector, and this
leads to the formula above. 

\begin{Exercise}\label{ex_tan_hilb} 
   Check this. 
\end{Exercise}

\begin{Exercise}\label{ex_Hilb_curve} 
Let $X$ be a smooth curve and $Z\subset X$ a $0$-dimensional subscheme. 
Prove that 
$$
T_Z \Hilb = H^0( T^*X \otimes \cO_Z)^* \,. 
$$
In particular, for $X=\C^1$ and 
the torus-fixed ideal $I=(x_1^n)$ we have 
$$
T_{(x_1^n)} = t_1 + \dots + t_1^n \,, 
$$
in agreement with global coordinates on
$\Hilb(\C^1,n)\cong \C^n$ 
given by 
$$
I = (f), \quad f = x^n + a_1 \, x^{n-1} + \dots + a_n \,. 
$$
\end{Exercise}

\subsubsection{}

Now let $X$ be a nonsingular surface and $Z\subset X$ a
$0$-dimensional subscheme. By construction, we have a 
short exact sequence 
$$
0 \to \cI_Z \to \cO_X \to \cO_Z \to 0 
$$
which we can apply in the first argument to get the following 
long exact sequence of $\Ext$-groups 
\begin{equation}
\scalebox{0.85}{\xymatrix{
0 \ar[r] & \Hom(\cO_Z,\cO_Z) \ar[r]^\sim & \Hom(\cO_X,\cO_Z) 
\ar[r]^0 & \Hom(\cI_Z,\cO_Z) \ar[r] &  \\
\ar[r] & \Ext^1(\cO_Z,\cO_Z) \ar[r] & \bcancel{\Ext^1(\cO_X,\cO_Z)}
\ar[r] & \Ext^1(\cI_Z,\cO_Z) \ar[r] & \\
\ar[r] & \Ext^2(\cO_Z,\cO_Z) \ar[r] & \bcancel{\Ext^2(\cO_X,\cO_Z)}
\ar[r] & \bcancel{\Ext^2(\cI_Z,\cO_Z)} \ar[r] & 0 \,, 
}}
\label{longExt}
\end{equation}
all vanishing and isomorphisms in which follow from 
$$
\Ext^i(\cO_X,\cO_Z) = H^i(\cO_Z) = 
\begin{cases}
  H^0(\cO_Z) = \Hom(\cO_Z,\cO_Z)\,, & i =0 \,,\\
0 \,, & i>0 \,,
\end{cases} 
$$
because $Z$ is affine. 

Since the support of $Z$ is proper, we can use Serre duality,  which gives
\begin{multline}
\Ext^2(\cO_Z,\cO_Z) = \Hom(\cO_Z,\cO_Z\otimes \cK_X)^* =  \\ 
= H^0(\cO_Z\otimes \cK_X)^* = \chi(\cO_Z\otimes \cK_X)^* =
\chi(\cO_Z,\cO_X) \,,
\label{Ext2S} 
\end{multline}
where 
\begin{equation}
\chi(\cA,\cB) = \sum (-1)^i \Ext^i(\cA,\cB) \,. \label{chiAB}
\end{equation}
This is an sesquilinear pairing on the equivariant K-theory of $X$
satisfying 
$$
\chi(\cA,\cB)^* = (-1)^{\dim X} \chi(\cB,\cA\otimes \cK_X) \,.
$$
Putting \eqref{longExt} and \eqref{Ext2S} together, we obtain 
the following 

\begin{Proposition}\label{p_TanH}
If $X$ is a nonsingular surface then 
\begin{align}
  T_Z \Hilb(X,\textup{points}) &= \chi(\cO_Z) + \chi(\cO_Z,\cO_X) - \chi(\cO_Z,\cO_Z) 
\notag \\
& = \chi(\cO_X) - \chi(\cI_Z,\cI_Z)  \label{TchiI} \,. 
\end{align}
\end{Proposition}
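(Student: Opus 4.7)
The plan is to extract everything from the machinery already assembled above, namely the long exact sequence \eqref{longExt}, the Serre duality identification \eqref{Ext2S}, and the short exact sequence $0 \to \cI_Z \to \cO_X \to \cO_Z \to 0$. I will establish the first equality by reading off the isomorphisms hidden in \eqref{longExt}, and then derive the second equality formally by bilinearity of the pairing $\chi(\cdot,\cdot)$.

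For the first equality, I would start from the identification $T_Z\Hilb = \Hom(\cI_Z,\cO_Z)$ supplied by Exercise \ref{ex_tan_hilb}. The long exact sequence \eqref{longExt}, after incorporating the vanishings $\Ext^i(\cO_X,\cO_Z) = H^i(\cO_Z)=0$ for $i>0$ and the consequent cancellations, collapses into two short exact sequences. The first gives an isomorphism $T_Z\Hilb = \Hom(\cI_Z,\cO_Z) \xrightarrow{\sim} \Ext^1(\cO_Z,\cO_Z)$, while the second gives $\Ext^1(\cI_Z,\cO_Z) \xrightarrow{\sim} \Ext^2(\cO_Z,\cO_Z)$ together with the vanishing $\Ext^2(\cI_Z,\cO_Z)=0$. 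Substituting these three identifications, together with $\Ext^2(\cO_Z,\cO_Z) = \chi(\cO_Z,\cO_X)$ from \eqref{Ext2S} and $\Hom(\cO_Z,\cO_Z) = H^0(\cO_Z) = \chi(\cO_Z)$ (since $Z$ is $0$-dimensional), into the definition \eqref{chiAB} yields
\[
\chi(\cO_Z,\cO_Z) = \chi(\cO_Z) - T_Z\Hilb + \chi(\cO_Z,\cO_X),
\]
which rearranges to the desired formula.

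For the second equality, the idea is purely formal: the class $[\cO_Z]=[\cO_X]-[\cI_Z]$ in $K_G(X)$, together with the bilinearity and additivity of $\chi(\cdot,\cdot)$, should reduce everything to a telescoping computation. I would expand each of the three terms $\chi(\cO_Z)$, $\chi(\cO_Z,\cO_X)$, $\chi(\cO_Z,\cO_Z)$ in terms of $\chi(\cO_X)$, $\chi(\cO_X,\cI_Z)$, $\chi(\cI_Z,\cO_X)$, and $\chi(\cI_Z,\cI_Z)$, and then check that all the cross terms $\chi(\cO_X,\cI_Z)$ and $\chi(\cI_Z,\cO_X)$ cancel, leaving precisely $\chi(\cO_X) - \chi(\cI_Z,\cI_Z)$.

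The only delicate point is the bookkeeping in the first step: one must correctly track the maps $\Ext^i(\cO_X,\cO_Z) \to \Ext^i(\cI_Z,\cO_Z)$ and verify that $\Ext^2(\cI_Z,\cO_Z)=0$ (which follows because it is a quotient of the vanishing group $\Ext^2(\cO_X,\cO_Z)=H^2(\cO_Z)=0$ at the tail of \eqref{longExt}). Beyond that, there is no real obstacle: smoothness of $\Hilb(X,\textup{points})$ for a surface is implicit in the fact that the first formula computes the Zariski tangent space honestly, and the second formula is just a K-theoretic manipulation.
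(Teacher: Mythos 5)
Your proof is correct and follows the same route as the paper: you read off the isomorphisms and vanishings from the long exact sequence \eqref{longExt}, substitute the Serre duality identification \eqref{Ext2S} of $\Ext^2(\cO_Z,\cO_Z)$ with $\chi(\cO_Z,\cO_X)$, and obtain the first line of \eqref{TchiI}, then pass to the second line by bilinearity of $\chi(\cdot,\cdot)$ and the K-theoretic relation $[\cO_Z]=[\cO_X]-[\cI_Z]$. The paper compresses all of this into ``putting \eqref{longExt} and \eqref{Ext2S} together''; you have simply made the bookkeeping explicit, including the remark that $\Ext^2(\cI_Z,\cO_Z)$ vanishes as a quotient of $\Ext^2(\cO_X,\cO_Z)=0$ (the long exact sequence terminates there since $\dim X = 2$).
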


\subsubsection{}
Proposition \ref{p_TanH} lets us easily compute the characters of the 
tangent spaces to the Hilbert scheme at monomial ideals. To 
any $\cF \in K_T(\C^2)$ we can naturally associate two $T$-modules: 
$\chi(\cF)$ and the K-theoretic stalk of $\cF$ at $0\in \C^2$, which we 
can write as $\chi(\cF\otimes \cO_0)$.  They are related by 
\begin{equation}
\chi(\cF) = \chi(\cF\otimes \cO_0) \, \chi(\cO_{\C^2}) 
\label{chiF1} 
\end{equation}
where, keeping in mind that linear functions on $\C^2$ form a $GL(2)$-module 
\emph{dual} to $\C^2$ itself, 
$$
\chi(\cO_{\C^2})  = \bSd \left(\C^2 \right)^* = 
\frac{1}{(1-t_1^{-1}) (1-t_2^{-1})} \,. 
$$
The formula 
\begin{equation}
\chi(\cF,\cG) = \chi(\cF\otimes \cO_0)^*  \chi(\cG\otimes \cO_0)\, 
\chi(\cO_{\C^2})  = \frac{\chi(\cF)^* \,\chi(\cG)} {\chi(\cO)^*} 
\label{chiF2} 
\end{equation}
generalizes \eqref{chiF1}. 

Let $I_\lambda$ be a monomial ideal and let $\lambda$ be 
the corresponding partition with diagram 
$$
\textup{diagram}(\lambda) = \left\{(i,j) \, \Big | \,x_1^i x_2^j \notin I_\lambda
\right\} \subset \Z_{\ge 0}^2 \,,
$$
see Figure \ref{f_partition_diagram}. 
More traditionally, the boxes or the dots in the diagram of $\lambda$ 
are indexed by pairs $(i,j) \in \Z_{> 0}^2$, but this, certainly, a
minor detail. In what follows, we don't make a distinction between 
a partition and its diagram. 
\begin{figure}[!htbp]
  \centering
   \includegraphics[scale=0.75]{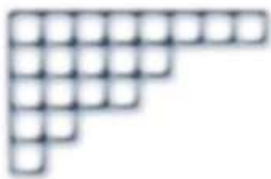}
 \caption{Diagram of the partition $\lambda=(8,5,4,2,1)$.}
  \label{f_partition_diagram}
\end{figure}

Let $\cV = \chi(\cO_Z)$ denote the tautological rank $n$ 
vector bundle over
$\Hilb(\C^2,n)$ as in Section \ref{s_taut}, and let 
\begin{align}
V_\lambda
&= \chi(\cO_{Z_\lambda}) \notag \\
&= \sum_{(i,j)\in \lambda} t_1^{-i} t_2^{-j} \label{Glambda}
\end{align}
be the character of its stalk at $I_\lambda$. Clearly, this 
is nothing but the generating function for the diagram $\lambda$. 
{} From \eqref{TchiI} and \eqref{chiF2}  we deduce the 
following 

\begin{Proposition}
 Let $I_\lambda\subset\C[x_1,x_2]$ be a monomial ideal and let $V$ denote the 
generating function \eqref{Glambda} for the diagram of $\lambda$. 
We have
\begin{equation}
T_{I_\lambda} \Hilb = 
V + \overline{V} \, t_1 t_2 - V  \overline{V}  \, (1-t_1)(1-t_2) \,. 
\label{Tlam} 
\end{equation}
as a $T$-module, where $\overline{V} = V^*$ denotes the dual. 
\end{Proposition}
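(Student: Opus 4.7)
The plan is simply to apply Proposition \ref{p_TanH} to $I_\lambda$ and then evaluate each of the three resulting Euler characteristics by means of the sesquilinear pairing formula \eqref{chiF2}. The only inputs needed are the explicit character $\chi(\cO_{\C^2}) = (1-t_1^{-1})^{-1}(1-t_2^{-1})^{-1}$, the definition $V = \chi(\cO_{Z_\lambda})$ from \eqref{Glambda}, and the elementary identity $(1-t)/(1-t^{-1}) = -t$.

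Running through the three terms: the first contribution is $\chi(\cO_{Z_\lambda}) = V$ by definition. For the middle term, \eqref{chiF2} gives
$$\chi(\cO_{Z_\lambda}, \cO_X) = \frac{\overline{V}\, \chi(\cO_X)}{\chi(\cO_{\C^2})^*} = \overline{V} \cdot \frac{(1-t_1)(1-t_2)}{(1-t_1^{-1})(1-t_2^{-1})} = \overline{V}\, t_1 t_2,$$
after applying the identity $(1-t)/(1-t^{-1}) = -t$ to both factors. For the self-pairing, the same formula produces
$$\chi(\cO_{Z_\lambda}, \cO_{Z_\lambda}) = \frac{V\overline{V}}{\chi(\cO_{\C^2})^*} = V\overline{V}(1-t_1)(1-t_2).$$
Assembling the three pieces with the signs dictated by Proposition \ref{p_TanH} reproduces the right-hand side of \eqref{Tlam}.

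Frankly there is no real obstacle: the heavy lifting has already been done in Proposition \ref{p_TanH} and in the identity \eqref{chiF2}. The only conceptual remark worth making is that $\chi(\cO_X)$ for the non-proper $X = \C^2$ is understood in the completed/localized equivariant K-theory, but this is precisely the setting in which \eqref{chiF1} and \eqref{chiF2} are derived, so no additional regularization is required. As a sanity check I would verify \eqref{Tlam} on $\lambda = (1)$: here $V = 1$ and the right-hand side collapses to $1 + t_1 t_2 - (1-t_1)(1-t_2) = t_1 + t_2$, which correctly recovers the tangent weights of $\C^2 = \Hilb(\C^2, 1)$ at the origin.
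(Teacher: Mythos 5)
Your proof is correct and follows exactly the route the paper intends: plug $Z=Z_\lambda$ into Proposition \ref{p_TanH} and evaluate each Euler characteristic via \eqref{chiF2}, together with the elementary simplification $(1-t)/(1-t^{-1}) = -t$. (The paper cites the second form $\chi(\cO_X)-\chi(\cI_Z,\cI_Z)$ of Proposition \ref{p_TanH} while you expand the first form $\chi(\cO_Z)+\chi(\cO_Z,\cO_X)-\chi(\cO_Z,\cO_Z)$, but these are the same thing by bilinearity of $\chi(\,\cdot\,,\,\cdot\,)$, so the argument is substantively identical.)
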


For example, take 
$$
\lambda = \square\,, \quad I_\lambda = \fm = (x_1,x_2)\,,  \quad 
V = 1 
$$
then 
$$
T_{\square} \Hilb = 1 + t_1 t_2 -  (1-t_1)(1-t_2) = 
t_1 +t_2 = \C^2 
$$
in agreement with $\Hilb(X,1) \cong X$. 

\subsubsection{}
Formula \eqref{Tlam} may be given the following combinatorial 
polish. For a square $\square=(i,j)$ in the diagram of $\lambda$ 
define its arm-length and leg-length by 
\begin{alignat}{2}
  a(\square) &= &\# \{j'>j \, &\big| \, (i,j') \in \lambda \} \,, \notag
  \\ 
l(\square) &= &\# \{i'>i \, &\big| \, (i',j) \in \lambda \} \,, 
\end{alignat}
see Figure \ref{f_arms_legs}. 
\begin{figure}[!htbp]
  \centering
   \includegraphics[scale=0.75]{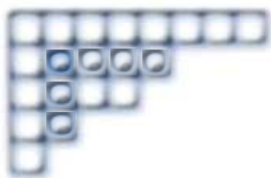}
 \caption{For the box $\square=(1,1)$ in this diagram, we have 
$a(\square)=3$ and $l(\square)=2$.}
  \label{f_arms_legs}
\end{figure}

\begin{Exercise}
Prove that 
\begin{equation}
T_{I_\lambda} \Hilb = \sum_{\square\in \lambda} 
t_1^{-l(\square)} t_2^{a(\square)+1} + t_1^{l(\square)+1}
t_2^{-a(\square)} 
\,. \label{Tarmleg} 
\end{equation}
\end{Exercise}

\begin{Exercise}
Prove a generalization of \eqref{Tlam} and \eqref{Tarmleg} for 
the character of $\chi(\cO_X)- \chi(I_\lambda,I_\mu)$. If in need 
of a hint, open \cite{CO}. 
\end{Exercise}

\begin{Exercise}\label{ex_Z_surface}
Using the formulas for $T_{I_\lambda} \Hilb$ and equivariant 
localization, write a code for the computation of the series 
$$
\bZ_{\Hilb(\C^2)} = \sum_{n,i\ge 0} z^n (-m)^i \, \chi(\Hilb(\C^2,n), 
\Omega^i)
$$
and check experimentally that is $\bSd$ of a nice rational function of the 
variables $z,m,t_1,t_2$.  We will compute this function 
theoretically in Section \ref{s_Z_surface}. 
\end{Exercise}

\subsubsection{}\label{def_Hilb3} 
We now advance to the discussion of the case when $X$ is a
nonsingular threefold and $Z\subset X$ is a $1$-dimensional 
subscheme and $\cI_Z$ is its sheaf of ideals. The sheaf $\cI_Z$ 
is clearly torsion-free, as a subsheaf of $\cO_X$, and 
$$
\det \cI_Z = \cO_X
$$
because the two sheaves differ in codimension $\ge 2$.

Donaldson-Thomas
theory views $\Hilb(X,\textup{curves})$ as the moduli space 
of torsion-free sheaves rank 1 sheaves $\cI$ with trivial
determinant. For any such sheaf we have 
$$
\cI \hookrightarrow \cI^{\vee\vee} = \det \cI = \cO_X
$$
and so $\cI$ is the ideal sheaf of a subscheme $Z\subset X$. 
We have 
$$
0 = c_1(\det \cI) = c_1(\cI) = [Z] \in H_4(X,\Z)
$$
and therefore $\dim Z=1$.  The deformation theory of sheaves 
gives 
\begin{align}
T^\vir_\cI\Hilb = \Def(\cI) - \Obs(\cI) &= \chi(\cO_X) -
                              \chi(\cI,\cI)\,, \notag \\
&=\chi(\cO_Z) + \chi(\cO_Z,\cO_X) - \chi(\cO_Z,\cO_Z) 
\label{DefObscI} 
\end{align}
just like in Proposition \ref{p_TanH}. 

The group $\Ext^1(\cI,\cI)$ 
which enters \eqref{DefObscI} parametrizes sheaves on 
$B \times X$ flat over $B$ for $B=\C[\varepsilon]/\varepsilon^2$, 
just like in Exercise \ref{ex_tan_hilb}. It describes
the deformations of the sheaf $\cI$. The obstructions to 
these deformations lie in $\Ext^2(\cI,\cI)$. 

We now examine how 
this works for the Hilbert scheme of points in $\C^3$. 

\subsubsection{}
Let $\pi$ be a 3-dimensional partition and let $I_\pi \subset 
\C[x_1,x_2,x_3]$ be the corresponding monomial ideal. The 
passage from \eqref{TchiI} to  \eqref{Tlam} is exactly the same 
as before, with the correction for 
$$
\chi(\cO_{\C^3})  = 
\frac{1}{(1-t_1^{-1}) (1-t_2^{-1})  (1-t_3^{-1})} \,. 
$$
We obtain the following 

\begin{Proposition}
 Let $I_\pi\subset\C[x_1,x_2,x_3]$ be a monomial ideal and let $V$
 denote the
generating function for $\pi$, that is, the character of
$\cO_{Z_\pi}$. 
We have
\begin{equation}
T_{I_\pi}^\vir \Hilb = 
V - \overline{V} \, t_1 t_2 t_3 - V  \overline{V}  \, \prod_{1}^3 (1-t_i) \,. 
\label{Tpi} 
\end{equation}
as a $T$-module. 
\end{Proposition}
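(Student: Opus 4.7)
The plan is to mimic, step by step, the surface derivation that led to Proposition \ref{p_TanH}, but with $\C^3$ in place of $\C^2$ and the virtual deformation--obstruction package already set up in \eqref{DefObscI}. Concretely, I would begin from the identity
$$T^\vir_{I_\pi} \Hilb = \chi(\cO_Z) + \chi(\cO_Z, \cO_X) - \chi(\cO_Z, \cO_Z),$$
which holds in the threefold case by the same formal manipulation as in \eqref{TchiI}: write $\cI_\pi = \cO_X - \cO_Z$, expand bilinearly, and use that on the affine variety $\C^3$ one has $\chi(\cO_X, \cF) = \chi(\cF)$, so that the formally divergent piece $\chi(\cO_X, \cO_X)$ cancels against $\chi(\cO_X)$. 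The first summand is $V$ by the very definition \eqref{Glambda} of the generating function (adapted to $\C^3$).

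Next I would promote the sesquilinear pairing formula \eqref{chiF2} from $\C^2$ to $\C^3$, namely
$$\chi(\cF, \cG) = \frac{\chi(\cF)^* \, \chi(\cG)}{\chi(\cO_{\C^3})^*},$$
which follows from $\chi(\cF) = \chi(\cF \otimes \cO_0)\, \chi(\cO_{\C^3})$ exactly as before. With $\chi(\cO_{\C^3}) = \prod_i (1 - t_i^{-1})^{-1}$, the key ratio simplifies as
$$\frac{\chi(\cO_{\C^3})}{\chi(\cO_{\C^3})^*} = \prod_{i=1}^{3} \frac{1 - t_i}{1 - t_i^{-1}} = (-t_1)(-t_2)(-t_3) = -\,t_1 t_2 t_3,$$
yielding $\chi(\cO_Z, \cO_X) = -\,\overline{V}\, t_1 t_2 t_3$ and $\chi(\cO_Z, \cO_Z) = V\,\overline{V}\, \prod_i (1 - t_i)$. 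Adding the three contributions produces exactly the formula \eqref{Tpi}.

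No genuine obstacle arises: the derivation is formally identical to the $\C^2$ case, and the only new ingredient is the scalar factor $-t_1 t_2 t_3$, which encodes the nontriviality of the canonical bundle $\cK_{\C^3} = \bk$. The emergent symmetry between $V$ and $-\overline{V}\,t_1 t_2 t_3$ in the answer is the concrete manifestation of the Serre-duality kinship between $\Ext^1$ (deformations) and $\Ext^2$ (obstructions) on a threefold that was emphasized at the start of Section \ref{def_Hilb3}, while the factor $\prod_{i=1}^{3}(1 - t_i)$ is the Koszul signature of the three-dimensional ambient space. The only step that requires any genuine care, rather than pure bookkeeping, is verifying that the formal cancellations on $\C^3$ are indeed consistent with the localization interpretation of $\chi(\cO_{\C^3})$ as an element of completed/localized $K_T$, but this is built into the framework of Section \ref{s_bSd}.
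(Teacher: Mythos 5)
Your proof is correct and is precisely the paper's own primary derivation: the paper simply remarks that ``the passage from \eqref{TchiI} to \eqref{Tlam} is exactly the same as before, with the correction for $\chi(\cO_{\C^3})$,'' and your argument carries out exactly that passage, starting from \eqref{DefObscI} and using the threefold analogue of the sesquilinear formula \eqref{chiF2}. (The paper then also records a second, ``more direct'' derivation of \eqref{Tpi} via $T^\vir = T_{\mMh} - \bk \otimes T^*_{\mMh}$ on the ambient smooth space, but the route you took is the one the paper presents first.)
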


As an example, take 
$$
\pi = \bx\,, \quad I_\pi = \fm = (x_1,x_2,x_2)\,,  \quad 
V = 1 
$$
then 
$$
T^\vir_{\bx} \Hilb = t_1 + t_2 + t_3 - t_1 t_2 - t_1 t_3 - t_2 t_3 = 
\C^3 - \det \C^3 \otimes \left(\C^3\right)^*
$$
in agreement with the identification 
$$
\mM(1)=\Hilb(X,1) \cong X  \cong \Hilb(\Free_3,1) = \mMh(1) 
$$
for the Hilbert schemes of $1$ point in $X=\C^3$ and 
the description of the obstruction bundle as 
$$
\Obs = \bk \otimes T^*_{\mMh} =  \bk \otimes \left(\C^3\right)^*
$$
where $\bk = \det \C^3$. 

In general, for $\mMh = \Hilb(\Free_3,n)$ we have 
$$
T_\mMh = (\C^3 - 1) \otimes \overline{V} \otimes V + V
$$
by the construction of $\mMh$ as the space of 3 operators and 
a vector in $V$ modulo the action of $GL(V)$.  Therefore
\begin{align}
T^\vir &= T_\mMh  - \bk^{-1} \otimes T^*_{\mMh}  \notag \\
 &=  V - \det \C^3 \otimes \overline V - \overline{V}\otimes {V} 
\otimes \sum (-1)^i
   \Lambda^i \C^3 \,, \label{Tpi2}
\end{align}
which gives a different and more direct proof of \eqref{Tpi}. 

\begin{Exercise}
$\dots$ or, rather, a combinatorial challenge. Is there a combinatorial 
formula for the character of $T_\mMh$ at torus-fixed points 
and can one find some systematic cancellations in the first line 
of \eqref{Tpi2} ? 
\end{Exercise}

\subsubsection{}
Let 
$$
\tilde\iota:  \mM \hookrightarrow \mMh 
$$
be the inclusion of $\mM = \Hilb(\C^3,\textup{points})$ into the 
Hilbert scheme of a free algebra. By our earlier discussion, 
$$
\tilde\iota_* \cO^\vir = \Ld \Obs^*
$$
and therefore we can use equivariant localization on the 
smooth ambient space $\mMh$ to compute $\chi(\mM,\cO^\vir)$. 

Let $\pi$ be 3-dimensional partition and $I_\pi \in \Hilb\subset \mMh$ 
the corresponding fixed point.  Since 
$$
T^\vir_\pi = T_\pi \mMh - \bk \otimes 
\left(  T_\pi \mMh\right)^*\,,
$$
we have 
$$
T^\vir_\pi  = \sum_{w} \left(w - \frac{\bk}{w} \right)
$$
where the sum is over the weights $w$ of $T_\pi \mMh$. Therefore 
\begin{equation}
\cO^\vir_{\Hilb(\C^3,n)} = 
\sum_{|\pi|=n} \cO_{I_\pi} \prod \frac{1-w/\bk}{1-w^{-1}} 
\label{OvirHilbloc} 
\end{equation}
in localized equivariant K-theory. 
This formula illustrates a very important notion of \emph{virtual
localization}, see in particular \cite{GP,FG,CF3}, which we now 
discuss. 

\subsubsection{}

Let a torus $T$ act on a scheme $\mM$ with a $T$-equivariant 
perfect obstruction theory. For example, $\mM$ be DT moduli 
space for a nonsingular threefold $X$ on which a torus $T$ act. 
Let $\mM^T\subset \mM$ be the subscheme of fixed points. 
We can decompose 
\begin{align}
  \left(\Def-\Obs\right)|_{\mM^T} =&\, 
  \Def^\textup{fixed}-\Obs^\textup{fixed}  \notag \\
+&\,  \Def^\textup{moving}-\Obs^\textup{moving}
\end{align}
in which the fixed part is formed by trivial $T$-modules and 
the moving part by nontrivial ones. 

\begin{Exercise}
Check that for $\Hilb(\C^3,\textup{points})$ and the maximal torus 
$T\subset GL(3)$  the fixed part of the obstruction theory vanishes. 
\end{Exercise}

By a result of \cite{GP}, the fixed part of the obstruction theory 
is perfect obstruction theory for $\mM^T$ and defines
$\cO^\vir_{\mM^T}$. The virtual localization theorem of \cite{CF3}, 
see also \cite{GP} for a cohomological version, states that 
\begin{equation}
\cO^\vir_\mM = \iota_*  \left(\cO^\vir_{\mM^T} 
\otimes \bSd \left(\Def^\textup{moving}-\Obs^\textup{moving}
\right)^*\right) \label{virloc} 
\end{equation}
where 
$$
\iota: \mM^T \hookrightarrow \mM 
$$
is the inclusion. Since, by construction, the moving part of the 
obstruction theory contains only nonzero $T$-weights, its 
symmetric algebra $\bSd$ is well defined. 

\begin{Exercise}
 Let $f(\cV)$ be a Schur functor of the tautological bundle $\cV$ 
over $\Hilb(\C^3,n)$, for example 
$$
f(\cV) = S^2 \cV, \Lambda^3 \cV, \dots \,. 
$$
Write a localization formula for $\chi(\Hilb(\C^3,n), \cO^\vir \otimes
f(\cV))$. 
\end{Exercise}

\subsubsection{}\label{s_aroof} 
It remains to twist \eqref{OvirHilbloc} by 
$$
\cK_\vir^{1/2}  = {\det}^{-1/2}\,  T^\vir 
$$
to deduce a virtual localization formula for $\tO^\vir$.  It is 
convenient to define the transformation $\aroof(\dots)$, a 
version of the $\widehat{A}$-genus, by 
$$
\aroof(x+y) = \aroof(x)\,  \aroof(y)\,, 
\quad \aroof(w) = \frac{1}{w^{1/2}-w^{-1/2}}
$$
where $w$ is a monomial, that is, a weight of $T$. For 
example 
\begin{equation}
\aroof\left(T^\vir_\pi\right) = \prod_w 
\frac
{(\bk/w)^{1/2}- (w/\bk)^{1/2}}
{w^{1/2}-w^{-1/2}} \,, \label{aroofTvir} 
\end{equation}
where the product is over the same weights $w$ as in
\eqref{OvirHilbloc}. 

With this notation, we can state the following 

\begin{Proposition} We have
\begin{equation}
\tO^\vir_{\Hilb(\C^3,n)} = 
(-1)^n \, \sum_{|\pi|=n} \aroof\left(T^\vir_\pi\right) \, \cO_{I_\pi} 
\label{tOvirHilbloc} 
\end{equation}
in localized equivariant K-theory. 
\end{Proposition}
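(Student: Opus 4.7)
The plan is to derive the formula directly from the virtual localization formula \eqref{OvirHilbloc} for $\cO^\vir$ that was already established, by multiplying by $\cK^{1/2}_\vir = (\det T^\vir)^{-1/2}$ and keeping track of a per-weight algebraic identity.

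First, I would recall the two ingredients. From \eqref{OvirHilbloc},
\[
\cO^\vir_{\Hilb(\C^3,n)} = \sum_{|\pi|=n} \cO_{I_\pi} \prod_w \frac{1-w/\bk}{1-w^{-1}},
\]
where, for each fixed point $I_\pi$, the product runs over the weights $w$ of $T_\pi \mMh$. From the description $T^\vir_\pi = \sum_w (w - \bk/w)$ used in deriving \eqref{OvirHilbloc}, the virtual determinant is
\[
\det T^\vir_\pi = \prod_w \frac{w}{\bk/w} = \prod_w \frac{w^2}{\bk},
\]
so the square root $\cK^{1/2}_{\vir}|_{I_\pi} = (\det T^\vir_\pi)^{-1/2}$ equals $\prod_w \bk^{1/2}/w$ up to a choice of sign (the ambiguity is absorbed by the overall $(-1)^n$ in the definition of $\tO^\vir$, and this is the one bookkeeping step that needs minor care).

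Next I would verify the key per-weight identity
\[
(\det T^\vir_\pi)^{-1/2}\,\frac{1-w/\bk}{1-w^{-1}} \;=\; \aroof(w-\bk/w) \;=\; \frac{(\bk/w)^{1/2}-(w/\bk)^{1/2}}{w^{1/2}-w^{-1/2}}.
\]
This is a direct algebraic check: multiplying the right-hand side through by $w^{1/2}\bk^{1/2}$ in both numerator and denominator yields $(\bk-w)/(w-1)\cdot w^{-1}\bk^{1/2}$, while the left-hand side equals $(\bk^{1/2} - w\,\bk^{-1/2})/(w-1) = (\bk-w)/(w-1)\cdot w^{-1}\bk^{1/2}$. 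The two expressions agree, so taking the product over all weights $w$ and using multiplicativity of $\aroof$ on sums gives
\[
\cK^{1/2}_\vir \otimes \cO^\vir_{\Hilb(\C^3,n)} = \sum_{|\pi|=n} \aroof(T^\vir_\pi)\,\cO_{I_\pi}.
\]

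Finally, multiplying by the sign $(-1)^n$ from the definition $\tO^\vir = (-1)^n\,\cK^{1/2}_\vir \otimes \cO^\vir$ yields \eqref{tOvirHilbloc}. The only real obstacle is the one mentioned above: fixing the square-root branch consistently at each fixed point. Since $T^\vir_\pi$ is manifestly of the form $V - \bk \otimes V^\vee$ (so every weight $w$ is paired with $\bk/w$), the determinant is genuinely a square in the equivariant Picard group with a canonical square root, which is exactly what makes the $(\det T^\vir_\pi)^{-1/2}$ factor well defined; this places us squarely in the self-dual situation that was emphasized earlier in the discussion of $\tO^\vir$.
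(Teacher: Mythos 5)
Your proof is correct and follows the same route the paper intends (the paper does not spell out a separate argument; it introduces $\aroof$ in Section \ref{s_aroof} precisely so that this proposition follows by twisting \eqref{OvirHilbloc} by $\cK^{1/2}_\vir$, which is exactly your step). A small remark: in your intermediate simplification the factor ``$w^{-1}\bk^{1/2}$'' should read ``$\bk^{-1/2}$'' on both sides; since you made the same slip twice the conclusion is unaffected, and the per-weight identity $\dfrac{\bk^{1/2}}{w}\cdot\dfrac{1-w/\bk}{1-w^{-1}}=\dfrac{(\bk/w)^{1/2}-(w/\bk)^{1/2}}{w^{1/2}-w^{-1/2}}$ does check out.
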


\begin{Exercise}
Write a code to check a few first terms in $z$ of Nekrasov's formula
\eqref{f_Hilb3}. 
\end{Exercise}

\begin{Exercise}
Take the limit $t_1,t_2,t_3 \to 1$ in Nekrasov's formula for $\C^3$ and 
show that it correctly reproduces the formula for
$\bZ_{\C^3,\points,\textup{coh}}$ from Theorem \ref{t_Z_coh}. 
\end{Exercise}


\subsection{Proof of Nekrasov's formula}\label{s_proofN} 

\subsubsection{}
Our next goal is to prove Nekrasov's formula for $X=\C^3$. By
localization, this immediately generalizes to nonsingular toric 
threefolds. Later, when we discuss relative invariants, we will 
see the generalization to the relative setting. The path from 
there to general threefolds is the same as in \cite{LevPand}. 

Our proof of Theorem \eqref{t_Hilb3} will have two parts. 
In the first step, we prove 
\begin{equation}
\bZ_{\C^3,\points} = \bSd  \frac{\star}{(1-t_1^{-1}) (1-t_2^{-1})
  (1-t_3^{-1})}
\label{bZS} 
\end{equation}
where 
$$
\star \in 
\Z\left[t_1^{\pm1},t_2^{\pm1},t_3^{\pm1},(t_1 t_2 t_3)^{1/2}\right] [[z]] 
$$
is a formal power series in $z$, without a constant term, with 
coefficients in Laurent polynomials. In the second step, we 
identify the series $\star$ by a combinatorial argument 
involving equivariant localization. 

The first step is geometric and, in fact, we prove that 
\begin{equation}
\bZ_{\C^3,\points} = \bSd \chi(\C^3,\cG) \,, \quad 
\cG=\sum_{i=1}^\infty z^i \, \cG_i \,,  \quad \cG_i \in
K_{GL(3)}(\C^3) \,,  \label{bZSR} 
\end{equation}
where each $\cG_i$ is constructed iteratively starting from 
$$
\cG_1 = - \tO^\vir_{\Hilb(\C,1)} \, .
$$
The same argument applies to many similar sheaves and moduli 
spaces, including, for example, the generating function 
\begin{equation}
\sum_n z^n \chi(\Hilb(\C^3,n), \cO^\vir) \label{_Zovir} 
\end{equation}
in which we have $z^n$ instead of $(-z)^n$ and plain $\cO^\vir$ instead
of the symmetrized virtual structure sheaf. The second part of 
the proof, however, doesn't work for $\cO^\vir$ and I don't know 
a reasonable formula for the series \eqref{_Zovir}. 

It is natural to prove \eqref{bZSR} in that greater generality and
this will be done in Proposition \ref{P_tOvir_factors}  in 
Section \ref{s_tOvir_factors} below. For now, we assume \eqref{bZSR} 
and proceed to the second part of the proof, that is, to the 
identification of the polynomial $\star$ in \eqref{bZS}. 

\subsubsection{}

For $\Hilb(\C^3,n)$, the line bundles $\cL_4$ and $\cL_5$ in 
\eqref{deftOvir} are necessarily trivial with, perhaps, a nontrivial 
equivariant weight. Hence the determinant term in
\eqref{deftOvir} is a trivial bundle with weight 
\begin{equation}
{\det}^{1/2} \pi_{\mM,*} (\fF\otimes (\cL_4 -
  \cL_5))= \left(
\frac{\textup{weight}(\cL_4)}
{\textup{weight}(\cL_5)} \right)^{n/2} \,,  \label{detHHC} 
\end{equation}
which can be absorbed in the variable $z$. Therefore, without 
loss of generality, we can assume that 
$$
\cL_4  = \cL_5 = \bk^{-1/2} = \frac{1}{(t_1 t_2 t_3)^{1/2}}  \,,
$$
where $t_i$'s are the weights of the $GL(3)$ action on $\C^3$, in 
which case the term \eqref{detHHC} is absent. 

We define 
$$
t_4 = \frac{z}{\bk^{1/2}} \,, \quad t_5 = \frac{1}{z \, \bk^{1/2}} \,, 
$$
so that $t_1,\dots,t_5$ may be interpreted as the weights of the 
action of
$SL(5)\supset \Ct_z$ on $Z\cong \C^5$. With this notation, what 
needs to be proven is
\begin{align}
\bZ_{\C^3,\points} &= \bSd \, \, \aroof\left(\sum_{i=1}^5 t_i - 
\sum_{i<j\le 3} t_i t_j \right) \notag \\
& = \bSd \, \frac{\prod_{i<j\le 3} ((t_i t_j)^{1/2} -  (t_i
  t_j)^{-1/2})}
{\prod_{i\le 5} (t_i^{1/2}- t_i^{-1/2})} \,. 
\end{align}

\subsubsection{}\label{s_strr} 

\begin{Exercise}
Prove that the localization weight \eqref{aroofTvir} of any 
nonempty 3-dimensional partition is divisible by $t_1 t_2  - 1$. 
In fact, the order of vanishing of this weight at $t_1 t_2 =1$ 
is computed in Section 4.5 of \cite{OP2}. 
\end{Exercise}

By symmetry, the same is clearly true for all $t_i t_j-1$ with 
$i<j\le 3$. Note that plethystic substitutions $\{t_i\} \mapsto \{t_i^k\}$
preserve vanishing at $t_i t_j=1$. Therefore, using \eqref{bZS}, 
we may define 
$$
\strr \in \Z[t_1^{\pm 1/2},t_2^{\pm 1/2}, t_3^{\pm 1/2}] [[z]]
$$
so that 
\begin{equation}
 \bZ_{\C^3,\points}=  \bSd \, \left(\strr \, 
\prod_{i=1}^3 
\frac{(\bk/t_i)^{1/2}- (t_i/\bk)^{1/2}}
{t_i^{1/2}- t_i^{-1/2}}
\right)\,. 
\label{def_strr} 
\end{equation}

\begin{Proposition} We have 
$$
\strr \in \Z[\bk^{\pm 1}][[z]]\,, 
$$
that is,
this polynomial depends only on the product $t_1 t_2 t_3$, and not on the 
individual $t_i$'s. 
\end{Proposition}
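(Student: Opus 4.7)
The approach is rigidity, in the style of Section \ref{s_princ}. By the paragraph above, each $z^n$--coefficient $\strr_n$ of $\strr$ is a Laurent polynomial in $t_1^{1/2}, t_2^{1/2}, t_3^{1/2}$. To show $\strr$ depends only on $\bk = t_1 t_2 t_3$, it suffices to show that the restriction of $\strr_n$ to the two-dimensional subtorus $\{t_1 t_2 t_3 = \bk\}$ has bounded values as any coordinate tends to $0$ or $\infty$: by the usual rigidity principle, a Laurent polynomial on a torus bounded in all these directions is constant, and thus $\strr_n$ depends only on $\bk$.

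The boundedness will be established in two steps. First, I use virtual localization \eqref{tOvirHilbloc} to write
\[
\bZ_{\C^3,\points} = \sum_{\pi} (-z)^{|\pi|} \prod_{w} \frac{(\bk/w)^{1/2} - (w/\bk)^{1/2}}{w^{1/2} - w^{-1/2}},
\]
where $w$ runs over the tangent weights of $T_\pi \mMh$. Consider the one-parameter limit $t_1 \to \infty$ with $\bk$ and $t_2$ held fixed (so $t_3 = \bk/(t_1 t_2) \to 0$). Every monomial weight $w = t_1^{a_1} t_2^{a_2} t_3^{a_3}$, rewritten as $t_1^{a_1-a_3} t_2^{a_2-a_3} \bk^{a_3}$, tends to $\infty$, $0$, or a finite value depending on the sign of $a_1 - a_3$; in these three cases the corresponding factor $\phi(w) = \bigl((\bk/w)^{1/2} - (w/\bk)^{1/2}\bigr)/\bigl(w^{1/2} - w^{-1/2}\bigr)$ tends to $-\bk^{-1/2}$, $-\bk^{1/2}$, or a finite value, respectively. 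Consequently each $\aroof(T^\vir_\pi)$ stays bounded, and since only finitely many partitions contribute at each order in $z$, the $z^n$-coefficient of $\bZ_{\C^3,\points}$ is bounded in this limit.

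Second, I transfer this boundedness to $\strr_n$ by inverting the plethystic exponential. At each order in $z$, the coefficient of $\log_{\bSd}(\bZ_{\C^3,\points})$ is a polynomial expression in $\bZ_{\C^3,\points}$ and its Adams images $\psi^d \bZ_{\C^3,\points} = \bZ_{\C^3,\points}|_{t_i \mapsto t_i^d,\, z \mapsto z^d}$. The Adams substitution carries the limit direction $t_1 \to \infty$, $\bk, t_2$ fixed to the same kind of limit ($t_1^d \to \infty$, $\bk^d, t_2^d$ fixed), so every Adams image is bounded, and hence so is $\log_{\bSd}(\bZ_{\C^3,\points})$. Dividing by the explicit product
\[
P = \prod_{i=1}^3 \frac{(\bk/t_i)^{1/2} - (t_i/\bk)^{1/2}}{t_i^{1/2} - t_i^{-1/2}}
\]
from \eqref{def_strr}, which by the same factor-by-factor analysis has a finite nonzero limit in this direction, yields boundedness of $\strr_n$. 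Using the $S_3$-symmetry permuting $t_1, t_2, t_3$, one gets the analogous boundedness along every coordinate limit on the subtorus, and rigidity forces $\strr_n$ to be independent of the individual $t_i$'s.

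I expect the main technical obstacle to be the careful bookkeeping required when passing through $\log_{\bSd}$: the plethystic logarithm is nonlinear and one needs the order-by-order finiteness to be stable under the Adams operations. A secondary, smaller point is the sharpening from ``$\strr_n$ is a Laurent polynomial in $\bk^{1/2}$'' to ``$\strr_n \in \Z[\bk^{\pm 1}]$''; I expect this to come from the self-duality $T^\vir \cong -\bk\,(T^\vir)^*$ of the virtual tangent bundle, which pairs each weight $w$ with $\bk/w$ and should produce a $\Z/2$-parity constraint ruling out half-integer powers of $\bk$ in the final answer.
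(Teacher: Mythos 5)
Your core strategy coincides with the paper's: observe that
$\displaystyle\phi(w)=\frac{(\bk/w)^{1/2}-(w/\bk)^{1/2}}{w^{1/2}-w^{-1/2}}$
remains bounded (and nonzero) as $w^{\pm1}\to\infty$ with $\bk$ fixed, so that every localization weight $\aroof(T^\vir_\pi)$ and the product $P=\prod_i\phi(t_i)$ stay bounded along any infinity of the subtorus $\{t_1t_2t_3=\bk\}$, and then invoke rigidity. The paper states this in one stroke and does not spell out how boundedness of the $z$-coefficients of $\bZ_{\C^3,\points}$ transfers through the plethystic logarithm to $\strr$; you fill that gap explicitly via Adams operations, correctly observing that $\psi^d$ sends the subtorus $\{\bk\text{ fixed}\}$ and its infinities to themselves. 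That is a legitimate elaboration of a step the paper leaves implicit.

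Two smaller remarks. First, your reduction to the single direction $t_1\to\infty$, $t_2,\bk$ fixed, followed by $S_3$-symmetry, is sound but more roundabout than needed: the observation about $\phi(w)$ already applies uniformly to \emph{every} one-parameter infinity of $\{\bk\text{ fixed}\}$, since each weight $w$ then tends to $0$, $\infty$, or a fixed nonzero value, and $\phi$ is bounded in all three cases. So you may dispense with the symmetry argument.

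Second, and more importantly, your closing paragraph chases a sharpening that is in fact false. Proposition \ref{p_strr} gives
$\strr=-z\big/\bigl((1-\bk^{1/2}z)(1-\bk^{-1/2}z)\bigr)$,
whose $z^2$-coefficient is $-(\bk^{1/2}+\bk^{-1/2})\notin\Z[\bk^{\pm1}]$. The content of the Proposition is that $\strr$ depends only on the product $\bk=t_1t_2t_3$ and not on the individual $t_i$'s; the ring should be read as $\Z[\bk^{\pm1/2}][[z]]$ (equivalently, the subring of $\Z[t_i^{\pm1/2}][[z]]$ fixed by the subtorus $\{t_1t_2t_3=1\}$). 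You should delete the proposed self-duality parity argument, since there is no half-integer parity constraint to prove.
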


\begin{proof}
A fraction of the form 
$$
\frac{(\bk/w)^{1/2}- (w/\bk)^{1/2}}
{w^{1/2}-w^{-1/2}}
$$
remains bounded and nonzero as $w^{\pm 1} \to \infty$ with $\bk$ fixed. 
Therefore, both the localization contributions \eqref{aroofTvir}
and the fraction in \eqref{def_strr} remain bounded as $t_i^{\pm 1} 
\to \infty$ in such a way that $\bk$ remains fixed. We conclude 
that the Laurent polynomial $\strr$ is bounded at all such 
infinities and this means it depends on $\bk$ only. 
\end{proof}

\noindent 
This is our first real example of rigidity. 

\subsubsection{}\label{s_strr_next} 

The proof of Nekrasov's formula for $\C^3$ will be complete 
if we show the following 

\begin{Proposition}\label{p_strr}
$$
\strr =  \frac1{(t_4^{1/2}-t_4^{-1/2}) (t_5^{1/2}-t_5^{-1/2})} 
= - \frac{z}{(1-\bk^{1/2} \,z)(1-\bk^{-1/2} \,z)}
$$
\end{Proposition}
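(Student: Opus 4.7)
The plan is to upgrade the rigidity of the previous proposition into a finite-dimensional identification of each coefficient $[z^n]\strr \in \Z[\bk^{\pm 1}]$ and then pin down the coefficients. First I would bound the $\bk$-degree of $[z^n]\strr$. Studying the localization sum \eqref{tOvirHilbloc} along rays in the torus where $t_3 \to \infty$ while $t_1, t_2$ are held fixed (so $\bk \to \infty$ as well), each contribution $\aroof(T^\vir_\pi)$ has a definite scaling governed by the weight count of $T^\vir_\pi$; combined with the known scaling of the normalizing prefactor in \eqref{def_strr}, this should yield a bound of the shape $\deg_{\bk^{1/2}} [z^n]\strr \leq n-1$ after summing the contributions of all partitions of size $\leq n$.

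Second, I would combine two structural constraints. Exchanging $\cL_4 \leftrightarrow \cL_5$ in \eqref{deftOvir} induces $\bk \leftrightarrow \bk^{-1}$ and forces each $[z^n]\strr$ to be palindromic in $\bk^{1/2}$. The Calabi--Yau specialization $\bk=1$ is tractable in closed form: there $f=-1$ (since each factor of $f$ degenerates to $-1$), so $\bZ_{\C^3,\points}\big|_{\bk=1} = \bSd(-\strr|_{\bk=1})$, and reducing Nekrasov's problem to its cohomological shadow via Theorem \ref{t_Z_coh} (noting that $\int_{\C^3} c_3(T_X \otimes \cK_X)\big|_{\bk=1} = 1$) yields $\bZ|_{\bk=1} = M(z) = \bSd(z/(1-z)^2)$, hence $\strr|_{\bk=1} = -z/(1-z)^2 = -\sum_n n z^n$.

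These three structural constraints --- the degree bound, palindromy, and the value at $\bk=1$ --- restrict $[z^n]\strr$ to a small affine space of palindromic Laurent polynomials. A direct computation of one additional datum, such as the leading $\bk^{(n-1)/2}$ coefficient obtained by isolating the dominant partition contribution in the $t_3 \to \infty$ limit, then pins down $[z^n]\strr = -[n]_q$ with $q = \bk^{1/2}$ and $[n]_q = q^{-(n-1)} + q^{-(n-3)} + \cdots + q^{n-1}$. Summing this series in $z$ recovers the claimed rational function. The main obstacle throughout is Step 1, namely controlling the infinite localization sum in the large-$t_i$ limit: each term is bounded individually, but one must exploit the self-dual structure of $\aroof(T^\vir_\pi)$ --- the same cancellations that produce the symmetric-algebra form of $\bZ_{\C^3,\points}$ in \eqref{bZS} --- to obtain a uniform bound after summation, rather than relying on any termwise estimate.
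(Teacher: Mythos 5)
Your strategy genuinely departs from the paper's. The paper computes $\strr$ outright by taking the limit $t_1,t_3\to 0$ with $|t_1|\ll|t_3|$ and $\bk$ fixed, in which each localization weight $\aroof(T^\vir_\pi)$ degenerates to a signed monomial $(-\bk^{1/2})^{\indx(\pi)}$. The problem collapses to the combinatorial product formula of Theorem \ref{t_McM} for the refined count $\sum_\pi\prod_{\bx\in\pi} q_{i_2-i_1}$, which is proven by transfer matrices/vertex operators. In contrast, you want to isolate $[z^n]\strr$ by a finite set of constraints and interpolate.

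The interpolation step has a genuine gap in the parameter count. After a degree bound $\deg_{\bk^{1/2}}[z^n]\strr\le n-1$ and palindromy, $[z^n]\strr$ lives in an $n$-dimensional space spanned by $\{q^k+q^{-k}\}_{k=0}^{n-1}$, $q=\bk^{1/2}$. The Calabi--Yau specialization gives one linear condition ($c_0+2\sum_{k\ge 1}c_k=-n$), and the leading coefficient gives one more ($c_{n-1}=-1$). That leaves $n-2$ undetermined parameters for $n\ge 3$: for $n=3$, the condition $c_0+2c_1=-1$ is satisfied by $(c_0,c_1)=(-1,0)$ (the right answer) as well as $(1,-1)$, $(3,-2)$, etc. The sub-leading coefficients carry real content that none of your constraints see. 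Pinning them all down by further asymptotics is essentially equivalent to computing the full index limit, which is what the paper does anyway.

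Two secondary issues. First, your degree bound is meant to be obtained by estimating the localization sum for $\bZ$ and then passing to $\strr$, but $\strr$ sits inside $\bZ$ through a plethystic logarithm followed by division by a $t$-dependent prefactor; the $[z^n]$-coefficients of $\bZ$ and $\strr$ are entangled by plethystic substitutions $\bk\mapsto\bk^m$, so a termwise estimate on $\bZ$ does not obviously transfer. Second, the palindromy justification via exchanging $\cL_4\leftrightarrow\cL_5$ is vacuous in this setting: for $X=\C^3$ the paper has already fixed $\cL_4=\cL_5=\bk^{-1/2}$, so the exchange is the identity. Palindromy does hold in the end, but it should be traced to the self-duality built into $\tO^\vir$ (the $\bk$-quasi-self-duality of $T^\vir_\pi$ and the resulting $\aroof$-structure), and that argument needs to be made.

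The one part of your plan that does bring in something new is the $\bk=1$ reduction to Theorem \ref{t_Z_coh}. It is a nice observation that every factor $\frac{(\bk/t_i)^{1/2}-(t_i/\bk)^{1/2}}{t_i^{1/2}-t_i^{-1/2}}$ becomes $-1$ when $\bk=1$, so that $\bZ|_{\bk=1}=\bSd(-\strr|_{\bk=1})$, and then Theorem \ref{t_Z_coh} at the CY specialization (where $\int_{\C^3}c_3(T_X\otimes\cK_X)=1$) gives $\strr|_{\bk=1}=-z/(1-z)^2$. This could serve as a useful consistency check of the paper's formula, but it supplies only a single $1$-parameter specialization of $\strr(\bk)$, not the full function.
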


To compute $\strr$, we may let the variables 
$t_i$ go to infinity with $\bk$ fixed. Since 
$$
\frac{(\bk/w)^{1/2}- (w/\bk)^{1/2}}
{w^{1/2}-w^{-1/2}} 
\to 
\begin{cases}
- \bk^{-1/2} \,, & w\to \infty \,,\\ 
- \bk^{1/2} \,, & w \to 0 \,, 
\end{cases}
$$
we conclude that 
$$
\aroof \left( \sum w_i  - \bk \sum w_i^{-1} \right) \to 
(-\bk^{1/2})^{\indx}
$$
where 
$$
\indx = \# \left\{ i \,\big|\, w_i \to 0 \right\}  - 
\# \left\{ i \,\big|\, w_i \to \infty \right\} \,. 
$$

For the computation of $\strr$ we are free to send $t_i$ to 
infinity in any way we like, as long as their product stays fixed. 
As we will see, a  particularly nice choice is
\begin{equation}
t_1,t_3 \to 0\,, \quad |t_1| \ll |t_3| \,, \quad \bk = \textup{fixed} \,.
\label{tiinf} 
\end{equation}
For the fraction in \eqref{def_strr} we have 
$$
\prod_{i=1}^3 
\frac{(\bk/t_i)^{1/2}- (t_i/\bk)^{1/2}}
{t_i^{1/2}- t_i^{-1/2}} 
\to (-\bk^{1/2})^{\indx(\C^3)} = - \bk^{1/2} \,.
$$
Thus Proposition \ref{p_strr} becomes a corollary of the 
following 

\begin{Proposition}\label{p_index_Z} 
Let the variables $t_i$ go to infinity of the torus as in
\eqref{tiinf}. Then 
\begin{equation}
\bZ_{X,\points} \to \bSd \, 
\frac{\bk^{1/2}\, z}{(1-\bk^{1/2} \,z)(1-\bk^{-1/2} \,z)} \,. 
\label{f_p_indexZ} 
\end{equation}
\end{Proposition}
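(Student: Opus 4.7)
The approach is to apply virtual localization at the torus-fixed $3$D partitions, evaluate the limit \eqref{tiinf} termwise, and recognize the resulting series as the claimed plethystic exponential. By \eqref{tOvirHilbloc} combined with \eqref{aroofTvir},
$$\bZ_{\C^3,\points}=\sum_{n\ge 0}(-z)^n\sum_{|\pi|=n}\;\prod_{w\in T_\Hilb|_\pi}\frac{(\bk/w)^{1/2}-(w/\bk)^{1/2}}{w^{1/2}-w^{-1/2}},$$
where $w$ runs over the weights of the tangent space to the smooth ambient $\mMh(n)=\Hilb(\Free_3,n)$ at the monomial ideal $I_\pi$, given explicitly by $T_\Hilb|_\pi=V+(\C^3-1)V\bar V$ in the notation of \eqref{Tpi2}.

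In the limit \eqref{tiinf}, a factor $\bigl((\bk/w)^{1/2}-(w/\bk)^{1/2}\bigr)/\bigl(w^{1/2}-w^{-1/2}\bigr)$ tends to $-\bk^{-1/2}$ when $w\to\infty$ and to $-\bk^{1/2}$ when $w\to 0$. Weights fixed by the $1$-parameter subgroup (namely $w=1$ and $w=\bk$) would a priori produce $0/0$ contributions, but they cancel in pairs by the self-Serre-duality $(T^\vir_\pi)^\vee\otimes\bk=-T^\vir_\pi$, which follows from $\prod_i(1-t_i^{-1})=-\bk^{-1}\prod_i(1-t_i)$ applied to \eqref{Tpi} and which forces the multiplicities of $1$ and $\bk$ in $T_\Hilb|_\pi$ to agree. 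Writing $\indx^T(\pi)$ for the signed count of weights of $T_\Hilb|_\pi$ attracted to $0$ versus $\infty$, the limit of $\aroof(T^\vir_\pi)$ equals $(-1)^{\dim\mMh(n)}\,\bk^{\indx^T(\pi)/2}$, and since $\dim\mMh(n)=2n^2+n\equiv n\pmod 2$ the sign combines with $(-z)^n$ to give $z^n$. This reduces the proposition to the combinatorial identity
$$\sum_{n\ge 0}z^n\sum_{|\pi|=n}\bk^{\indx^T(\pi)/2}\;=\;\bSd\frac{\bk^{1/2}z}{(1-\bk^{1/2}z)(1-\bk^{-1/2}z)}.$$

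Establishing this identity is the main obstacle. A hands-on route exploits the flexibility emphasized in Section \ref{s_strr}: since the Laurent polynomial $\strr$ depends only on $\bk$, the simultaneous limit \eqref{tiinf} may be replaced by the iterated limit $t_1\to 0$ followed by $t_3\to 0$. The first collapse forces generic $3$D partitions to the slice $i_1=0$, reducing the count to a two-variable vertex computation, and the second collapse leaves a one-parameter family of ``one-leg'' contributions whose generating function is the desired $\bSd$ expression via a Cauchy/MacMahon-type plethystic identity. A more conceptual route is to identify the left-hand side as the empty refined K-theoretic DT vertex (in the sense of \cite{moop}) of the $SL(5)$-equivariant theory of Section \ref{sL4L5}, whose product form is predicted by the $M$-theoretic interpretation in \cite{Zth}. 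In either route I would cross-check the first few terms directly from localization: the single box yields $z\bk^{1/2}$, and the three $|\pi|=2$ partitions (the rows in the $t_1$, $t_2$, $t_3$ directions) yield contributions $z^2$, $z^2\bk$, $z^2\bk$, summing to $(2\bk+1)z^2$, which matches the $z^2$-coefficient of $\bSd(\bk^{1/2}z/((1-\bk^{1/2}z)(1-\bk^{-1/2}z)))$.
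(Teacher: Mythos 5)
Your localization setup and bookkeeping of signs and $\bk$-powers are correct, and the reduction to the combinatorial identity
$$\sum_{n\ge 0}z^n\sum_{|\pi|=n}\bk^{\indx^T(\pi)/2}\;=\;\bSd\frac{\bk^{1/2}z}{(1-\bk^{1/2}z)(1-\bk^{-1/2}z)}$$
is the right intermediate statement. But at exactly the point you identify as ``the main obstacle'' the proof stops. The index $\indx^T(\pi)$ as you define it is the signed count of attracting versus repelling weights in the full $(2n^2+n)$-dimensional tangent space $T_{I_\pi}\mMh = V + (\C^3-1)V\overline V$, and you never compute it. The paper's proof turns on a nontrivial Lemma (proved in the Appendix to \cite{NO}) asserting that in precisely the limit \eqref{tiinf} the $2n^2$ cross terms from $(\C^3-1)V\overline V$ contribute zero to the index, so that $\indx(T_{I_\pi}\mMh) = \indx(t_3^k V) = \sum_{(i_1,i_2,i_3)\in\pi}\sgn(i_2-i_1+0)$. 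Without this cancellation you cannot even write the left-hand side as a weighted count of boxes, yet your cross-check of the $z^2$ coefficient silently uses that formula. The aside about self-duality of $T^\vir$ forcing multiplicities of $w=1$ and $w=\bk$ to agree is a red herring here: the product in \eqref{aroofTvir} runs over the weights of $T_\mMh$, not of $T^\vir$, and the limit \eqref{tiinf} is generic enough that no weight of $T_\mMh$ tends to $1$ or $\bk$ in the first place.

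The second gap is that neither of your two ``routes'' establishes the resulting identity. Taking the limit $t_1\to 0$ followed by $t_3\to 0$ does not change which $3$D partitions appear in the sum -- it only affects the weight assigned to each -- so the claim that the first collapse ``forces generic 3D partitions to the slice $i_1=0$'' is not correct and does not reduce the problem to a two-variable vertex. The ``conceptual route'' via the $M$-theoretic interpretation of \cite{Zth} is a consistency check, not an argument. What the paper actually does after the index simplification is invoke Theorem \ref{t_McM}, the identity $\sum_\pi\prod_{\bx\in\pi}q_{i_2-i_1}=\bSd\sum_{a\le 0\le b}q_a\cdots q_b$ of \cite{OR1,OR2}, at the specialization $q_0=q_1=\cdots=z\bk^{1/2}$, $q_{-1}=q_{-2}=\cdots=z\bk^{-1/2}$; that identity in turn is proved by the transfer-matrix/vertex-operator argument on the Fock space laid out in the subsequent exercises. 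To complete your proof you would need both the index-cancellation Lemma and a proof (transfer-matrix or otherwise) of the resulting MacMahon-type identity.
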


This is a special case of the computations of \emph{index vertices} from 
Section 7 of \cite{NO} and the special case of the limit \eqref{tiinf} 
corresponds to the \emph{refined vertex} of 
 Iqbal, Kozcaz, and Vafa in \cite{IKV}. We will now show how 
this works in the example at hand.

\subsubsection{}

Recall the formula 
$$
T_{I_\pi} \mMh =  (\C^3 - 1) \otimes \overline{V} \otimes V + V 
$$
for tangent space to $\mMh$ at a point corresponding to 
a 3-dimensional partition $\pi$ with the generating function 
$V$.  For a partition of size $n$, this 
is a sum of $2n^2+n$ terms and, in principle, we need to 
compute the index of this very large torus module to know the asymptotics
of $\aroof(T^\vir_\pi)$ as $t\to \infty$. 

A special feature of the limit \eqref{tiinf} is that one can see
a cancellation of $2n^2$ terms in the index, with the following
result 

\begin{Lemma}
In the limit \eqref{tiinf}, 
$$
\indx \left(T_{I_\pi} \mMh \right)= \indx \left( t_3^k \, V \right)\,,
$$
for any $k$ is such that $k > |\pi|$ but $|t_1| \ll |t_3|^k$. 
\end{Lemma}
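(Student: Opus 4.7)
The plan is to evaluate both $\indx(T_{I_\pi}\mMh)$ and $\indx(t_3^k V)$ as signed sums over the boxes of $\pi$ and match them directly. Since $\bk=t_1t_2t_3$ is kept fixed, I substitute $t_2=\bk/(t_1t_3)$ so that every weight becomes a monomial $t_1^{\alpha}t_3^{\gamma}\bk^{\delta}$; in the lexicographic limit \eqref{tiinf} its index contribution is governed entirely by $(\alpha,\gamma)$, equal to $+1$ when $\alpha>0$ or $(\alpha,\gamma)=(0,\text{positive})$, $-1$ in the mirror cases, and $0$ exactly when $\alpha=\gamma=0$. The right-hand side is immediate: each $(a,b,c)\in\pi$ contributes to $t_3^kV$ the weight $t_1^{b-a}t_3^{k-c+b}\bk^{-b}$, and since $k-c+b\ge k-|\pi|>0$, its sign depends only on $b-a$, yielding
\[
\indx(t_3^kV)=\#\{(a,b,c)\in\pi:b\ge a\}-\#\{(a,b,c)\in\pi:b<a\}.
\]

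For $T_{I_\pi}\mMh=V+(\C^3-1)V\overline{V}$ I would expand $V\overline{V}=\sum_{\square,\square'\in\pi}t^{\square-\square'}$ and split into the diagonal ($|\pi|$ terms with $\square=\square'$) and off-diagonal parts. The diagonal of $(\C^3-1)V\overline{V}$ equals $|\pi|(t_1+t_2+t_3-1)$, of index $|\pi|$ in the limit (since $t_1,t_2,t_3,-1$ have individual indices $+1,-1,+1,0$). For the off-diagonal pairs I would apply the involution $(\square,\square')\leftrightarrow(\square',\square)$: this flips $d:=\square-\square'$, hence $(\alpha,\gamma)\mapsto(-\alpha,-\gamma)$, so the unshifted pair $t^d+t^{-d}$ contributes $0$ to the index. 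The four monomials in $\C^3-1$ translate, after the $t_2$-substitution, into shifts of $(\alpha,\gamma)$ by $(1,0),\,(-1,-1),\,(0,1),\,(0,0)$ respectively, mildly breaking the pairing symmetry. The technical heart of the argument is to show that the resulting ``shift discrepancy,'' summed over all off-diagonal pairs and all four shifts, equals exactly $\indx((t_3^k-1)V)-|\pi|$, so that combining with the diagonal piece and the linear term $V$ recovers $\indx(t_3^kV)$.

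The main obstacle is this off-diagonal cancellation: because the four shift terms have distinct effects on $(\alpha,\gamma)$, the cancellation is not immediate from the $d\leftrightarrow-d$ involution alone, and one cannot simply pair weight-by-weight. A natural organizing principle is to slice $\pi$ along the $b$-axis (the $t_2$-direction, which is the one blowing up under \eqref{tiinf}), writing $V=\sum_b t_2^{-b}V_{\pi_b}$ with 2D subpartitions $\pi_b\subset\Z_{\ge 0}^2$. Intra-slice contributions then reduce to the 2D Hilbert-scheme tangent formula \eqref{Tarmleg}, where the cancellations are transparent, while inter-slice contributions carry strictly nonzero powers of $t_2$, hence strictly nonzero $\bk^\delta$ and correspondingly constrained $(\alpha,\gamma)$, and are killed by the involution. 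The large-$k$ hypothesis ensures no accidental contributions survive on the $t_3^kV$ side, so the final match reduces to the clean box-count $\#\{b\ge a\}-\#\{b<a\}$ appearing on both sides.
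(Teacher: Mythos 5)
The paper does not actually contain a proof of this lemma---it cites the Appendix of \cite{NO}---so I will assess your argument on its own terms.

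Your setup is correct. The substitution $t_2=\bk/(t_1 t_3)$, the lexicographic rule for the index of a monomial $t_1^\alpha t_3^\gamma \bk^\delta$, the computation $\indx(t_3^kV)=\#\{b\geq a\}-\#\{b<a\}$, the identification $\indx(\text{diagonal part})=|\pi|$, and the reduction of the lemma to the identity
$$
\indx\Big(\text{off-diagonal part of }(\C^3-1)\,\overline{V}V\Big)=\indx\big((t_3^k-1)V\big)-|\pi|
$$
are all right. You also correctly note that the $d\leftrightarrow -d$ involution is broken by the four $\mu$-shifts of $\C^3-1$. This is the genuine content of the lemma, and you stop exactly at the point where work has to be done.

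The gap is in the proposed mechanism for the off-diagonal cancellation. The claim that ``inter-slice contributions carry strictly nonzero powers of $t_2$, hence strictly nonzero $\bk^\delta$ and correspondingly constrained $(\alpha,\gamma)$, and are killed by the involution'' is not correct, for two reasons. First, the $\bk$-exponent $\delta$ is irrelevant to the index, since $\bk$ is held fixed; a nonzero $\delta$ places no constraint on $(\alpha,\gamma)$ whatsoever. Second, the involution $(\square,\square')\leftrightarrow(\square',\square)$ sends $(\alpha,\gamma)$ not to $(-\alpha,-\gamma)$ but to $(-\alpha+2(\mu_1-\mu_2),\,-\gamma+2(\mu_3-\mu_2))$, and the resulting pair sum is generically nonzero. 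For a concrete counterexample take $\pi=\{(0,0,0),(0,1,0)\}$: the inter-slice off-diagonal part of $(\C^3-1)\overline V V$ is $(t_1+t_2+t_3-1)(t_2+t_2^{-1})$, whose index in the limit is $-1$, not $0$ (and one checks $\indx((t_3^k-1)V)-|\pi|=1-2=-1$, confirming the target identity). So the inter-slice terms are not killed by the involution; they carry exactly the nontrivial part of the count. The intra-slice claim is also unsupported: the restriction of $V+(\C^3-1)\overline V V$ to a single $b$-slice does not equal the 2D Hilbert-scheme tangent space, since the $t_2$ and $-1$ terms in $\C^3-1$ do not combine into $-(1-t_1)(1-t_3)$.

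In short, you have correctly reduced the lemma to a box-counting identity and correctly diagnosed why the naive involution fails, but the sketch that follows does not bridge the gap. The actual cancellation is genuinely subtler and requires tracking the boundary contributions (those with $\alpha\in\{0,1,2\}$ in your notation) directly; this is where the hypothesis $k>|\pi|$, $|t_1|\ll|t_3|^k$ and the precise shape of the 3D partition must enter. That is the content of the argument in the Appendix of \cite{NO} which the paper defers to.
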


\noindent 
This Lemma is proven in the Appendix to \cite{NO}.  Clearly 
$$
 \indx \left( t_3^k \, V \right) = \sum_{\bx=(i_1,i_2,i_3)\in \pi}
\textup{sgn}\,(i_2-i_1+0) 
$$
and therefore Proposition \ref{p_index_Z} becomes 
the 
\begin{align}
  z \bk^{1/2} & = q_0 = q_1 = q_2 = \dots \notag \\
  z \bk^{-1/2} & = q_{-1} = q_{-2} = \dots \label{qspeczk} 
\end{align}
case of the following generalization of McMahon's enumeration 

\begin{Theorem}[\cite{OR1,OR2}] \label{t_McM} 
  \begin{equation}
\sum_\pi \prod_{\bx=(i_1,i_2,i_3)\in \pi}  q_{i_2-i_1} = 
\bSd \sum_{a\le 0 \le b} q_a q_{a+1} \cdots q_{b-1} q_b 
\label{fOR12} 
\end{equation}
\end{Theorem}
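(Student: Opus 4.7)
The plan is to follow the half-vertex-operator approach of Okounkov-Reshetikhin \cite{OR1,OR2}. First, I would slice $\pi$ along the family of diagonal planes $i_2 - i_1 = k$, $k \in \Z$, producing a doubly infinite sequence $(\lambda^{(k)})_{k \in \Z}$ of ordinary partitions with $\lambda^{(k)} = \varnothing$ for $|k| \gg 0$. Coordinate-wise monotonicity of $\pi$ translates exactly into interlacing: $\lambda^{(k)} \succeq \lambda^{(k+1)}$ for $k \geq 0$ and $\lambda^{(k)} \preceq \lambda^{(k+1)}$ for $k \leq -1$. Under this bijection the weight factorizes as $\prod_{\bx \in \pi} q_{i_2-i_1} = \prod_{k \in \Z} q_k^{|\lambda^{(k)}|}$, so the left-hand side of \eqref{fOR12} becomes a sum over all such interlacing sequences of partitions.

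Next, I would encode this sum as a vacuum matrix element on the charge-zero fermionic Fock space. The half-vertex operators $\Gamma_\pm(x)$, characterized by
$$
\Gamma_-(x) | \lambda \rangle = \sum_{\mu \succeq \lambda} x^{|\mu|-|\lambda|} | \mu \rangle, \qquad \Gamma_+(x) | \mu \rangle = \sum_{\lambda : \mu \succeq \lambda} x^{|\mu|-|\lambda|} | \lambda \rangle,
$$
exactly generate one interlacing step each, while the energy operator $q^H | \lambda \rangle = q^{|\lambda|} | \lambda \rangle$ supplies the per-slice weight $q_k^{|\lambda^{(k)}|}$. A standard transfer-matrix argument (analogous to the computation of Schur measure correlators) then rewrites the generating function as a bi-infinite vacuum expectation in which $\Gamma_+$'s, separated by $q_k^H$'s with $k \geq 1$, sit on the left of the string, while $\Gamma_-$'s, separated by $q_k^H$'s with $k \leq 0$, sit on the right, meeting at the equator through $q_0^H$.

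The final step is to normal-order. Using the conjugation rule $q^H \Gamma_\pm(x) q^{-H} = \Gamma_\pm(qx)$, push every $q_k^H$ outward to the vacua, where it acts trivially; this absorption turns each $\Gamma_+(1)$ originally at position $b \geq 0$ into $\Gamma_+(X_b)$ with $X_b = q_0 q_1 \cdots q_b$, and each $\Gamma_-(1)$ originally at position $a \leq 0$ into $\Gamma_-(Y_a)$ with $Y_a = q_a q_{a+1} \cdots q_{-1}$ (interpreted as $1$ when $a = 0$). Then apply the commutation relation
$$
\Gamma_+(x) \Gamma_-(y) = (1 - xy)^{-1} \Gamma_-(y) \Gamma_+(x)
$$
to move every $\Gamma_+$ past every $\Gamma_-$ to its right. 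Each swap produces a scalar factor $(1 - X_b Y_a)^{-1} = (1 - q_a q_{a+1} \cdots q_b)^{-1}$, and the leftover $\Gamma_-$'s on the far left, $\Gamma_+$'s on the far right, act as the identity on the appropriate vacuum. The matrix element collapses to
$$
\prod_{a \leq 0 \leq b} (1 - q_a q_{a+1} \cdots q_b)^{-1} = \bSd \sum_{a \leq 0 \leq b} q_a q_{a+1} \cdots q_b,
$$
which is the right-hand side of \eqref{fOR12}.

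The main obstacle, I expect, is the combinatorial bookkeeping at the equator $k = 0$: one must check that the conventions assigning $q_0^H$ to the boundary slice and declaring $Y_0 = 1$ (empty product) yield precisely the pairs $(a,b)$ with $a \leq 0 \leq b$, each counted exactly once, and in particular that the pair $(0,0)$ contributing the factor $(1 - q_0)^{-1}$ arises correctly. A secondary issue is convergence: the $\Gamma$-products are infinite, so the matrix element must be regularized, e.g.\ by working formally with $q_k$ as indeterminates (with appropriate smallness or grading) or by first truncating to finitely many nonzero slices and passing to the limit. Both points are standard and are treated carefully in \cite{OR1,OR2}.
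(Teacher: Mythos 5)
Your proposal is correct and follows the same transfer-matrix and half-vertex-operator route that the paper lays out in its exercises: slicing $\pi$ along diagonal planes to get an interlacing chain of partitions, encoding the steps by $\Gamma_\pm$ on Fock space, conjugating the $q^H$'s to the vacuum, and normal-ordering via $\Gamma_+(x)\Gamma_-(y) = (1-xy)^{-1}\Gamma_-(y)\Gamma_+(x)$. The only slip is in your stated conjugation rule $q^H\Gamma_\pm(x)q^{-H}=\Gamma_\pm(qx)$, which for $\Gamma_+$ should read $q^H\Gamma_+(x)q^{-H}=\Gamma_+(x/q)$; but since you in fact push $q^H$ leftward through $\Gamma_+$'s, the identity you actually use is $\Gamma_+(x)q^H = q^H\Gamma_+(qx)$, so your $X_b$, $Y_a$ and the final product over $a\le 0\le b$ come out correctly.
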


With the specialization \eqref{qspeczk} the sum under $\bSd$ in 
\eqref{fOR12} become the series expansion of the 
fraction in \eqref{f_p_indexZ}.

\subsubsection{}

We conclude with a sequence of exercises that will lead the 
reader through the proof of Theorem \ref{t_McM}. It is 
based on introducing a \emph{transfer matrix}, 
a very much tried-and-true tool in statistical mechanics 
and combinatorics. 

A textbook example of transfer matrix arises in 2-dimensional 
Ising model on a torus. The configuration space in that model 
are assignments of $\pm 1$ spins to sites $x$ of a rectangular 
grid, that is, functions 
$$
\sigma: \{1,\dots,M\} \times \{1,\dots,N\} \to \{\pm 1\} 
$$
Each is weighted with 
$$
\Prob(\sigma) \propto W(\sigma)=\exp(-\beta E(\sigma)) \,,
$$
where $\beta$ is the inverse temperature and the energy $E$ is defined by 
$$
E(\sigma)  = - \sum_{\textup{nearest neighbors $x$ and $x'$}} \sigma(x) \sigma(x') \,,
$$
with periodic boundary conditions. 

Now introduce a vector space 
$$
V \cong (\C^2)^{N}
$$
with a basis formed by all possible 
spin configurations $\vec \sigma$ in one column of our grid. 
Define a diagonal matrix $\bW^{\textup{v}}$ and a dense matrix 
$\bW^{\textup{h}}$ by 
\begin{align}
\bW^{\textup{v}}_{\vec\sigma,\vec\sigma} 
&= \exp\left(-\beta \sum_{\substack{\textup{vertical 
neighbors}\\ \textup{ in column 
$\vec\sigma$}}} \sigma(x) \, \sigma(x')\right) \\
\bW^{\textup{h}}_{\vec\sigma_1,\vec\sigma_2} 
&= \exp\left(-\beta \sum_{\substack{\textup{horizontal 
neighbors}\\ \textup{ in columns 
$\vec\sigma_1$ and $\vec\sigma_2$
}}}
\sigma_1(x) \, \sigma_2(x')\right)\,,
\end{align}
see Figure \ref{f_Ising}. 

\begin{Exercise}
  Prove that 
$$
\sum_{\{\sigma\}} W(\sigma) = \tr \, (\bW^{\textup{v}} \bW^{\textup{h}})^M 
$$
where the summation is over all $2^{MN}$ spin configurations
$\{\sigma\}$. 
\end{Exercise}

\begin{figure}[!htbp]
  \centering
   \includegraphics[scale=0.64]{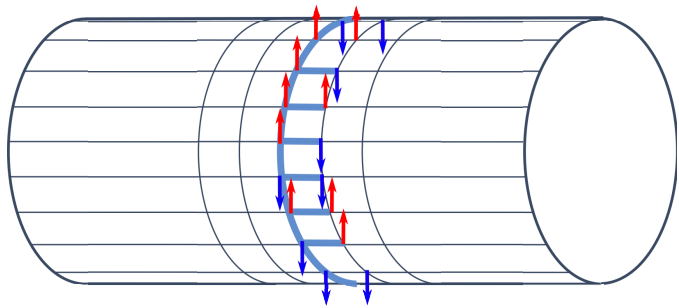}
 \caption{The transfer matrix $\bW^{\textup{v}} \bW^{\textup{h}}$ 
of the Ising model captures the
   interactions of the spins in two adjacent columns along the 
highlighted edges. Iterations of the transfer matrix cover all 
spins and all edges.}
  \label{f_Ising}
\end{figure}

The matrix $\bW^{\textup{v}} \bW^{\textup{h}}$ is an example of 
a transfer matrix. 
Onsager's great discovery was the diagonalization of this matrix. 
Essentially, he has shown that the transfer matrix 
 is in the image of a certain 
matrix $g\in O(2N)$ in the spinor representation of this group. 

\subsubsection{}

Now in place of the spinor representation of $O(2N)$ we will have
the action of the Heisenberg algebra $\glh(1)$ on 
\begin{multline*}
  V = \textup{Fock space} = \textup{symmetric functions} =\\
= \textup{polynomial representations of
    $GL(\infty)$} \,. 
\end{multline*}
This space has an 
orthonormal basis $\{s_\lambda\}$ of Schur 
functions, that is, the characters of the Schur functors $S^\lambda \C^\infty$
of the defining representation of $GL(\infty)$. 
It is  indexed by 2-dimensional partitions $\lambda$ which will 
arise as slices of $3$-dimensional
partitions $\pi$ by planes $i_2-i_1 = \textup{const}$. 

The diagonal operator $\bW^{\textup{v}}$ will be replaced by 
the operator 
$$
q^{|\,\cdot \,|} \, s_\lambda = q^{|\lambda|} s_\lambda \,. 
$$
In place of the nondiagonal operator $\bW^{\textup{h}}$, we will 
use the operators 
$$
\Gamma_-(z) = \left(\sum_{k\ge 0} z^k \, \bS^k \C^\infty \right) \otimes 
$$
and its transpose $\Gamma_+$. The character of $\bS^k \C^\infty $ is 
the Schur function $s_k$. It is well known that
$$
\Gamma_-(z) \, s_\lambda = \sum_\mu z^{|\mu|-|\lambda|} \, s_\mu 
$$
where the summation is over all partitions $\mu$ such that 
$\mu$ and $\lambda$ interlace, which means that 
$$
\mu_1 \ge \lambda_1 \ge \mu_2 \ge \lambda_2 \ge \dots \,. 
$$

\begin{Exercise}
Prove that 
\begin{multline*}
  \sum_\pi \prod_{\bx=(i_1,i_2,i_3)\in \pi} q_{i_2-i_1} = \\
=\left(
    \cdots \Gamma_+(1) \, q_{-1}^{|\,\cdot \,|} \,\Gamma_+ (1)\,
    q_{0}^{|\,\cdot \,|} \,\Gamma_- (1) \, q_{1}^{|\,\cdot \,|} \,
    \Gamma_-(1) \, q_{2}^{|\,\cdot \,|} \cdots s_{\varnothing},
    s_{\varnothing}\right) \,.
\end{multline*}
\end{Exercise}

\begin{Exercise}
Prove that $q^{|\,\cdot \,|} \, \Gamma_-(z) = 
 \Gamma_-(qz) \, q^{|\,\cdot \,|}$ and that 
$$
 \Gamma_+(z)  \, \Gamma_-(w) = 
\frac1{1-zw} \Gamma_-(w) \,\Gamma_+(z) 
$$
if $|zw|<1$. Deduce Theorem \ref{t_McM}. 

\end{Exercise}

\subsubsection{}\label{s_transfer_1} 

Ultimately, the idea of transfer matrix rests on certain 
fundamental principles of mathematical physics which may be 
applied both in the discrete and continuous situations. 

The first such principle is \emph{gluing} local field theories, which 
in the discrete situation means the following. Suppose we 
want to compute a partition function of the form 
$$
Z = \sum_{\phi\in \Phi^\Omega} e^{-E(\phi)} 
$$
where $\Omega$ is a graph, $\Phi^\Omega$ is the space of functions 
$$
\phi: \Omega \to \Phi 
$$
and 
$$
E(\phi) = \sum_{x,y\in \Omega} V_2(x,y,\phi(x),\phi(y))  + 
\sum_{x\in\Omega} V_1(x,\phi(x)) \,. 
$$
For example, for the Ising model we have $\Phi=\{\pm 1\}$,
$$
V_2(x,y,\phi(x),\phi(y)) = 
\begin{cases}
- \beta \phi(x) \phi(y) \,, \quad & \textup{$x$ and $y$ are
  neighbors}\,, \\
0  & \textup{otherwise} \,,
\end{cases}
$$
and $V_1=0$ if there is no external magnetic field. 

Suppose the pair potential $V_2$ has finite range, that is, suppose 
there exists a constant $R$ such that 
$$
\dist(x,y) > R  \Rightarrow V_2(x,y,\dots) = 0  \,. 
$$
For example, for the Ising model $R=1$. Suppose 
$$
\Omega = \Omega_1 \sqcup B \sqcup \Omega_2 
$$
where the boundary region $B$ disconnects $\Omega_1$ from $\Omega_2$ 
in the sense that 
$$
\dist(\Omega_1,\Omega_2) > R 
$$
as in Figure \ref{f_domain1}. 
Then 
\begin{equation}
Z 
=\sum_{\phi_B \in \Phi^{B}} e^{-E(\phi_B)} \, Z(\Omega_1 \big| \phi_B) \, 
Z(\Omega_2 \big| \phi_B) \,, \label{ZZB} 
\end{equation}
where the partition functions  $Z(\Omega_1 \big| \phi_B)$ with 
boundary conditions imposed on $B$ are defined by 
$$
Z(\Omega_i \big| \phi_B) = \sum_{\phi_{\Omega_i} \in \Phi^{\Omega_i}} 
e^{-E(\phi_{\Omega_i}) - E(\phi_{\Omega_i}\big| \phi_B)} 
$$
with 
$$
E(\phi_{\Omega_i}\big| \phi_B) = \sum_{x\in \Omega_i, y \in B} 
V_2(x,y,\phi(x),\phi(y)) \,. 
$$
Effectively, fixing the boundary conditions on $B$ modifies the 
1-particle potential $V_1$ in an tube of radius $R$ around $B$. 

If $V_1$ and $V_2$ are real, then clearly \eqref{ZZB} may be written as
$$
Z  = \Big( Z(\Omega_1\big| \, \cdot \,), Z(\Omega_2\big| \, \cdot \,)
\Big)_{L^2(\Phi^B,e^{-E(\phi_B)})}  \,.
$$
In any event, it may be more prudent to interpret \eqref{ZZB} as 
a pairing between a pair of dual vector spaces, as in the continuous 
limit the two partition functions paired in \eqref{ZZB} 
may turn out to be objects of rather different nature. In this very 
abstract form, gluing says that if $B$ disconnects $\Omega$, then 
partition functions $Z(\Omega_1\big| \, \cdot\,)$ give a pair of vectors
in dual vector spaces so that 
\begin{equation}
Z  = \left\langle Z(\Omega_1\big| \, \cdot \,), Z(\Omega_2\big| \, \cdot \,)
\right\rangle  \,. 
\end{equation}
This is probably very familiar to most readers from TQFT context, but
is not restricted to topological theories. 
\begin{figure}[!htbp]
  \centering
   \includegraphics[scale=0.64]{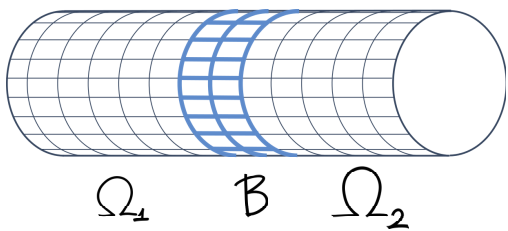}
 \caption{If the domain $\Omega$ is glued from two pieces 
   then the partition function is a pairing of a vector and a covector.}
  \label{f_domain1}
\end{figure}

The second principle on which transfer matrices rest is that if $B$
itself is disconnected 
$$
B = B_1 \sqcup B_2\,, \quad \dist(B_1,B_2) > R \,, 
$$
then 
$$
L^2(\Phi^B) = L^2(\Phi^{B_1}) \otimes L^2(\Phi^{B_2})\,, 
$$
as inner product spaces. 
Again, in continuous limit, it may be better to talk about operators 
from one functional space to another --- the transfer matrix, or 
the evolution operator. Multiplying such operators, we can 
build larger domains from small pieces (e.g. a long cylinder 
from a short one) like we did in the two 
examples above.  
\begin{figure}[!htbp]
  \centering
   \includegraphics[scale=0.64]{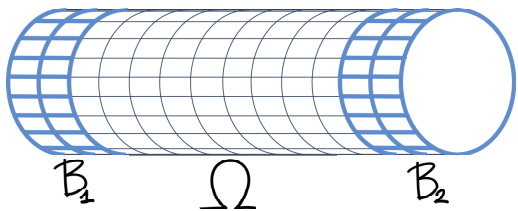}
 \caption{If the boundary has two components then 
the partition function is an operator. }
  \label{f_domain2}
\end{figure}

A larger supply of operators may be obtained
by modifying the partition function $Z(\Omega \big| B_1 \cup B_2)$ 
by, for example, inserting a local observable. 

\subsubsection{}\label{s_transfer_2} 
The purpose of this abstract discussion of transfer matrices is that 
our goals in these notes may be directly compared to understanding 
the transfer matrix of the Ising model or 3-dimensional partitions 
in terms of representation theory of a certain algebra. 

For a mathematical physicist, the study of quasimaps to Nakajima
varieties appears as a low energy limit in the study of supersymmetric gauge 
theories. A gauge theory is specified by a choice of a gauge group $G$
and 
a representation in which the matter fields of theory
transform. 
If the choices are made like in the next
section, then a Nakajima variety $X$ appears as the one of the components 
(the Higgs branch) of the moduli spaces of vacua.
In the low energy
limit (also known as  the thermodynamic, or infrared limit), the state
of the system is described as a modulated vacuum state, or as a map 
from the spacetime to the moduli spaces of vacua. 

K-theory of
quasimaps $f: C \dasharrow X$, where $C$ is a Riemann surface, 
 is relevant for the study of 
3-dimensional supersymmetric gauge theories on manifold of the form 
$C \times S^1$. General ideas about cutting and gluing apply
 to the surface $C$. In 
particular, $K(X)$ will be the vector space associated to a puncture in
$C$ and various vectors and operators in this space that will 
occupy us below may be interpreted as partition functions with 
given boundary conditions. The eventual goal will be to understand 
these operators in terms of representation theory of a certain 
quantum loop algebra $\cU_\hbar(\fgh)$ acting on $K(X)$, see 
Sections \ref{s_Stab_Q}, \ref{s_KZ}, and \cite{OS}.  

This quantum loop algebra action 
extends the actions constructed by Nakajima \cite{Nak3}. 
Nekrasov and Shatashvili \cite{NS1,NS2} were the first to realize this 
connection between supersymmetric gauge theories, 
quantum integrable systems, and Nakajima theory.

\section{Nakajima varieties} \label{s_Nak} 

\subsection{Algebraic symplectic reduction} 

\subsubsection{} 

Symplectic reduction was invented in classical mechanics \cite{ArnMech}
to deal with the following situation. Let $M$ be the 
configuration space of a mechanical system and $T^*M$ ---
the corresponding phase space. A function $H$, called 
Hamiltonian, generates dynamics by 
$$
\frac{d}{dt} f = \{H, f\}
$$
where $f$ is an arbitrary function on $T^*M$ and $\{\,\cdot\,, 
\,\cdot\,\}$ is the Poisson bracket. If this dynamics commutes
with a Hamiltonian action of a Lie group $G$, 
it descends to a certain reduced phase space $T^*M\rd G$. 
The reduced space could be a more complicated variety but of smaller 
dimension, namely 
$$
\dim T^*M\rd G = 2 \dim M  - 2 \dim G \,. 
$$

\subsubsection{} 

In the algebraic context, 
let a reductive group $G$ act on a smooth algebraic 
variety $M$. The induced action on $T^*M$, which is 
a algebraic symplectic variety, is Hamiltonian: 
the function 
\begin{equation}
  \label{Xrd}
  \mu_\xi (p,q) = \lan p, \xi \cdot q \ran \,, \quad q\in M\,, 
p\in T^*_q M\,, 
\end{equation}
generates the vector field $\xi\in \Lie(G) $.  This gives a map 
$$
\mu: T^*M \to \Lie(G)^*\,,
$$
known as the \emph{moment} map, because in its mechanical origins 
$G$ typically included translational or rotational symmetry. 

\subsubsection{}

We can form the algebraic symplectic reduction 
\begin{equation}
X=T^*M \rd G  = \mu^{-1}(0) \rdd G = \mu^{-1}(0)_\textup{semistable}
/G \,,
\label{alg_sym}
\end{equation}
where a certain choice of stability is understood.

\subsubsection{} 

The zero section $M \subset T^*M$ is automatically inside
$\mu^{-1}(0)$ and 
$$
T^*(M_\textup{free}/G) \subset X
$$
is an open, but possibly empty, subset. Here $M_\textup{free} \subset
M$ is the set of semistable points with trivial stabilizer. Thus 
algebraic symplectic reduction is an improved version of the 
cotangent bundle to a $G$-quotient.

\subsubsection{}
The Poisson bracket on $T^*M$ induces a Poisson bracket on 
$X$, which is symplectic on the open set of points with trivial 
stabilizer. In general, however, there will be other, singular, points 
in $X$. 

Finite stabilizers are particularly hard to avoid. Algebraic
symplectic reduction is a source of great many Poisson orbifolds, 
but it very rarely outputs an algebraic symplectic 
variety. 

\subsection{Nakajima quiver varieties \cite{Nak1,Nak2,GinzNak}} 

\subsubsection{}

Nakajima varieties are a remarkable class of symplectic reductions 
for which finite stabilizers can be avoided. For them, 
$$
G = \prod GL(V_i) 
$$
and $M$ is a linear representation of the form 
\begin{equation}
M = \bigoplus_{i,j} \Hom(V_i,V_j) \otimes Q_{ij}  \oplus 
\bigoplus_{i} \Hom(W_i,V_i)  \,. \label{repM}
\end{equation}
Here $Q_{ij}$ and $W_i$ are multiplicity spaces, so that 
\begin{equation}
\prod GL(Q_{ij}) \times  \prod GL(W_i)  \times
\Ct_\hbar  \to \Aut(X) \,, \label{AutNak}
\end{equation}
where the $\Ct_\hbar$-factor scales the cotangent directions with weight 
$\hbar^{-1}$, and hence scales the symplectic form on $X$ with 
weight $\hbar$. 

\subsubsection{}

In English, the only representations allowed in $M$ are 
\begin{itemize}
\item[---] the 
defining representations $V_i$ of the $GL(V_i)$-factors, 
\item[---] the 
adjoint representations of the same factors,
\item[---] representations of the form 
$\Hom(V_i,V_j)$ with $i\ne j$. 
\end{itemize}
Latter are customary called
bifundamental representations in gauge theory context. 
Note that $T^*M$ will also include the duals $V_i^*$ of the 
defining representations. 

What is special about these representation is that the 
stabilizer $G_y$ of any point $y\in T^*M$ is the set of 
invertible elements in a certain associative algebra
$$
E_y \subset \bigoplus \End(V_i) 
$$
over the base field $\C$, and hence cannot be a nontrivial 
finite group. 

\subsubsection{}
The data of the representation \eqref{repM} is conveniently 
encoded by a graph, also called a quiver, 
in which we join the $i$th vertex with 
the $j$th vertex by $\dim Q_{ij}$ arrows. After passing 
to $T^*M$, the orientation of these arrows doesn't matter because 
$$
\Hom(V_i,V_j)^* = \Hom(V_j,V_i)  \,. 
$$
Therefore, it is convenient to assume that only one of the spaces
$Q_{ij}$ and $Q_{ji}$ is nonzero for $i\ne j$. 

To the vertices of the quiver, one associates two dimension vectors 
$$
\bv = (\dim V_i) \,, \,\, \bw = (\dim W_i) \in \Z_{\ge 0}^I \,, 
$$
where $I=\{i\}$ is the set of vertices. 

\subsubsection{} \label{stheta} 

The quotient in \eqref{Xrd} is a GIT quotient and choice of 
stability condition is a choice of a character of $G$, that is, a choice 
of vector $\theta = \Z^I$, up to positive 
proportionality. For $\theta$ away 
from certain hyperplanes, there are no strictly semistable points 
and Nakajima varieties are holomorphic symplectic 
varieties. 

\begin{Exercise}
Let $Q$ be a quiver with one vertex and no arrows. Show that, for 
either choice of the stability condition, 
the corresponding Nakajima varieties are the cotangent bundles 
of Grassmannians. 
\end{Exercise}

\begin{Exercise}\label{Ex_Hilb_Quiv} 
Let $Q$ be a quiver with one vertex and one loop. For $\bw=1$, 
identify the Nakajima varieties with $\Hilb(\C^2,n)$ where $n=\bv$. 
\end{Exercise}

\subsection{Quasimaps to Nakajima varieties}

\subsubsection{}\label{s_sigma} 
The general notion of a quasimap to a GIT quotient is discussed in
detail in \cite{CKM}, here we specialize it to the case of Nakajima 
varieties $X$. 
Twisted quasimaps, which will play an important technical 
role below, are a slight variation on the theme. 

Let $\bT$ be the maximal torus of the group \eqref{AutNak} and 
$\bA = \Ker \hbar$ the subtorus preserving the symplectic form. 
Let 
\begin{equation}
  \label{def_sigma}
  \sigma: \Ct \to \bA 
\end{equation}
be a cocharacter of $\bA$, it will determine how the quasimap
to $X$ is twisted. In principle, nothing prevents one from similarly
twisting  by a cocharacter of $\bT$, but this will not be done in 
what follows. 

\subsubsection{}\label{s_O(p)}

Let $C \cong \bP^1$ denote the domain of the quasimap. 
Since $\bA$ acts on multiplicity spaces $W_i$ and $Q_{ij}$, a choice 
of $\sigma$ determines bundles $\cW_i$ and $\cQ_{ij}$ over 
$C$ as bundles associated to $\cO(1)$.  To fix 
equivariant structure we need to linearize $\cO(1)$ and the 
natural choice is $\cO(p_1)$, where $p_1\in C$ is a fixed point 
of the torus action.

\subsubsection{}

By definition, a twisted quasimap 
$$
f: C \dasharrow X
$$
is a collection of 
vector bundles $\cV_i$ on $C$ of ranks $\bv$ and a section 
$$
f  \in H^0(C,\cM \oplus \cM^* \otimes \hbar^{-1}) 
$$
satisfying $\mu=0$, where
\begin{equation}
\cM = \bigoplus_{i,j} \cHom(\cV_i,\cV_j) \otimes \cQ_{ij}  \oplus 
\bigoplus_{i} \cHom(\cW_i,\cV_i)  \,. \label{repcM}
\end{equation}
Here $\hbar^{-1}$ is a trivial line bundle with weight $\hbar^{-1}$, inserted 
to record the $\bT$-action on quasimaps (in general, the centralizer
of $\sigma$ in $\Aut(X)$ acts on twisted quasimaps).  One can 
replace $\hbar^{-1}$ by an arbitrary line bundle and that will
correspond to $\bT$-twisted quasimaps. 

\subsubsection{}
Let $p\in C$ be a point in the domain of $f$ and fix a local 
trivialization of $\cQ_{ij}$ and $\cW_i$ at $p$. 

The value $f(p)$ of a quasimap at a point $p\in C$ gives a 
well-defined $G$-orbit in $\mu^{-1}(0) \in T^*M$ or, in a more 
precise language, it defines a map 
\begin{equation}
\ev_p(f) = f(p) \in \big[ \mu^{-1}(0)/G \big] \supset X \label{evp}
\end{equation}
to the quotient stack, which contains $X =
\mu^{-1}(0)_\textup{stable}/G$ as an open set. 
By definition, a quasimap is \emph{stable} if 
$$
f(p) \in X
$$
for all but finitely many points of $C$. These exceptional 
points are called the
\emph{singularities} of the quasimap.

\subsubsection{}
We consider twisted quasimaps up to isomorphism that is 
required to be an identity on $C$ and on the 
multiplicity bundles $\cQ_{ij}$ and $\cW_i$.  
In other words, we consider quasimaps from parametrized domains, 
and twisted in a fixed way. We define 
\begin{equation}
  \label{QM}
  \QM(X) = 
\{ \textup{stable twisted quasimaps to $X$} \} \big/ \cong \ 
\end{equation}
with the understanding that  it is the data
of the bundles $\cV_i$ and of the section $f$ that varies in these
moduli 
spaces, while the 
curve $C$ and the twisting bundles $\cQ_{ij}$ and $\cW_i$ are 
fixed\footnote{
More precisely, for relative quasimaps, to be discussed below, 
the curve $C$ is allowed to change to $C'$, where 
$$
\pi: C' \to C 
$$
collapses some chains of $\bP^1$s. The bundles $\cQ_{ij}$ and $\cW_i$
are then pulled back by $\pi$.}. 

As defined, $\QM(X)$ is a union of countably many moduli spaces
of quasimaps of given degree, see below.

\subsubsection{}
The degree of a quasimap is the vector 
\begin{equation}
\deg f = (\deg \cV_i) \in \Z^I \,. \label{degf}
\end{equation}
For nonsingular quasimaps, this agrees with the usual notion of degree
modulo the expected generation of $H^2(X,\Z)$ by first Chern classes
of the tautological bundles. 

The graph of a nonsingular twisted quasimap is a curve in a nontrivial 
$X$ bundle over $C$, the cycles of effective curves in which lie in an 
extension of $H_2(C,\Z)$ by $H_2(X,\Z)$. 
Formula \eqref{degf} is a particular way to split this 
extension\footnote{
There is no truly canonical notion of a degree zero twisted quasimap
and the prescription \eqref{degf} depends on previously made choices. 
Concretely, if $\sigma$ and $\sigma'$ differ by something in 
the kernel of \eqref{AutNak} then the corresponding twisted quasimap 
moduli spaces are naturally 
isomorphic. This isomorphism, however, may not 
preserve degree.}. 

\subsubsection{}\label{s_const_map} 
Every fixed point $x \in X^\sigma$ 
defines a ``constant'' twisted quasimap with $f(c)=x$ for 
all $c\in C$. The degree of this constant map, which  is nontrivial, 
is computed as follows. 

A fixed point of $a\in \bA$ in a quiver variety means $a$ acts 
in the vector spaces $V_i$ so that all arrow maps are
$a$-equivariant. This gives the $i$-tautological line bundle 
\begin{equation}
  \label{cLi}
  \cL_i = \det V_i 
\end{equation}
an action of $a$, producing a locally constant map 
\begin{equation}
  \label{def_mu}
\bmu:  X^\bA \to \Pic(X)^\vee \otimes \bA^\vee\,,
\end{equation}
compatible with restriction to subgroups of $\bA$.  We have
\begin{equation}
\deg \big(f\equiv x\big)  = \lan \bmu(x), \textup{---} \otimes \sigma
\ran 
\label{deg_const}
\end{equation}
where we used the pairing of characters with cocharacters.

The map 
\eqref{def_mu} can be seen as the universal \emph{real} moment 
map: the moment maps for different K\"ahler forms 
$$
\omega_\R \in H^{1,1}(X) = \Pic(X) \otimes_{\Z} \R 
$$
map fixed points to different points of 
$$
\Lie(\bA_\textup{compact})^\vee \cong \bA^\vee \otimes_{\Z} \R  \,.
$$

\subsubsection{}
Moduli spaces of stable quasimaps have a perfect obstruction theory 
with 
\begin{equation}
T_\vir = \Hd(\cM \oplus \hbar^{-1} \cM^*) - 
(1+\hbar^{-1}) \sum \Extd(\cV_i,\cV_i) \,.\label{TvirQM}
\end{equation}
The second term accounts for the moment map equations as well 
as for
\begin{align*}
  - \Hom(\cV_i,\cV_i) &= - \Lie \Aut(\cV_i)  \\
    \Ext^1(\cV_i,\cV_i) & = \textup{deformations of $\cV_i$} \,. 
\end{align*}
With our assumptions on the twist, the degree terms vanish in 
the Riemann-Roch formula, and we get 
$$
\rk T_\vir = \dim X \,, 
$$
as the virtual dimension. 

\subsubsection{}

In general, \cite{CKM} consider quasimaps to quotients 
$X=Y\rdd G$ where $Y$ is an affine algebraic variety with an 
action of a reductive group $G$ such that 
$$
\varnothing \ne Y_\textup{$G$-stable}    = Y_\textup{$G$-semistable} 
\subset Y_\textup{nonsingular} \,. 
$$
Deformation theory of a quasimap $f: C\dasharrow X$ may be 
studied by
\begin{itemize}
\item[---] first, deforming the quasimap in affine charts of $C$, and then
\item[---] patching these deformations together. 
\end{itemize}
Global deformations and obstructions arise in the second step 
as cohomology of a
certain complex constructed in first step. For the obstruction theory 
of quasimaps to be perfect, it is thus crucial that the local 
deformation theory is perfect, and this is achieved by 
requiring that $Y$ is a \emph{local complete intersection}. Further,
to insure that $H^1(C,\textup{local obstructions})=0$, we need
$$
\dim \supp \textup{local obstructions} = 0 
$$
and this follows from stability of $f$, by which the generic point of
$C$ is mapped to the smooth orbifold $Y_\textup{$G$-stable}\rdd G$.

\subsubsection{}

Let $C$ be a smooth curve of arbitrary genus and 
\begin{equation}
Y = 
\begin{matrix}
\cL_1 \oplus \cL_2  \\
\downarrow\\
C
\end{matrix} \,, 
\label{YLL} 
\end{equation}
the total space of two line bundles over $C$. We can use
$\cL_1$ and $\cL_2$ to define $\bT$-twisted quasimaps 
from $C$ to $\Hilb(\C^2,n)$ as follows. By definition 
$$
f: C \dasharrow \Hilb(\C^2,n)
$$
is a vector bundle  $\cV$ on $C$ of rank $n$ together with 
a section 
$$
f = (\bv, \bv^\vee, \bX_1,\bX_2) 
$$
of the bundles
\begin{align*}
\bv &\in H^0(\cV) \,, \\
\bv^\vee &\in  H^0(\cL^{-1}_1 \otimes \cL^{-1}_2 \otimes \cV^\vee)\,,
  \\
\bX_i &\in H^0(\End(\cV)\otimes 
\cL^{-1}_i) \,,
\end{align*}
satisfying the equation 
$$
\left[\bX_1,\bX_2\right] + \bv \, \bv^\vee = 0 
$$
of the Hilbert scheme and the stability condition. 
The stability 
condition forces 
$$
\left[\bX_1,\bX_2\right] = 0 \,, 
$$
as in Exercise \ref{Ex_Hilb_Quiv}. 

\begin{Exercise}
Show this data defines a coherent sheaf $\cF$ on the threefold $Y$, 
together with a section 
$$
s: \cO_Y \to \cF \,.
$$
\end{Exercise}

\begin{Exercise}
Show the quasimap data is in bijection with 
complexes 
$$
\cO_Y \xrightarrow{\quad s\quad} \cF \,, 
$$
of sheaves on $Y$ such that: 
\begin{itemize}
\item[---] the sheaf $\cF$ is 1-dimensional and \emph{pure}, that is,
  has
no $0$-dimensional subsheaves, and 
\item[---] the cokernel of the section $s$ is $0$-dimensional. 
\end{itemize}
By definition, such complexes are parametrized by the
Pandharipande-Thomas (PT) moduli spaces for $Y$. 
\end{Exercise}

\begin{Exercise}
Compute the virtual dimension of the quasimap/PT moduli spaces. 
\end{Exercise}

\section{Symmetric powers}\label{s_PT1}

\subsection{PT theory for smooth curves}\label{sub_PT1} 

\subsubsection{}

Let $X$ be a nonsingular threefold. By definition, a point in the 
Pandharipande-Thomas moduli space is a pure $1$-dimensional 
sheaf with a section
\begin{equation}
\cO_X \xrightarrow{\,\, s \,\,} \cF \label{PTmod} 
\end{equation}
such that $\dim \Coker s = 0$. Here pure means that $\cF$ has 
no $0$-dimensional subsheaves. 

\begin{Exercise}\label{Ex_C_pure} 
Consider the the structure sheaf 
$$
\cO_{C} = \cO_{X} / \Ann(\cF) 
$$
of the scheme-theoretic support of $\cF$. Show it is also pure
$1$-dimensional. 
\end{Exercise}

Another way of saying the conclusion of Exercise \ref{Ex_C_pure}   is 
that $C$ is a $1$-dimensional 
Cohen-Macaulay subscheme of $X$, a scheme that for any 
point $c\in C$ has a function that vanishes at $c$ but is not 
a zero divisor. 

\subsubsection{}

In this section, we consider the simplest case when $C$ is a
reduced smooth curve in $X$.  This forces \eqref{PTmod} to have
the form 
$$
\cO_X \to \cO_C \to \cO_C(D) = \cF
$$
where $D\subset C$ is a divisor, or equivalently, a zero-dimensional 
subscheme, and the maps are the canonical ones. 
The moduli space, for fixed $C$, is thus
\begin{equation}
\mM = \bigsqcup_{n\ge 0} S^n C = \bigsqcup_{n\ge 0} \Hilb(C,n)  \,. 
\label{mMSnC} 
\end{equation}
The full PT moduli space also has directions that correspond to 
deforming the curve $C$ inside $X$, with deformation theory given 
by 
$$
\Def(C) - \Obs(C) = \Hd(C,N_{X/C}) \,, 
$$
where $N_{X/C}$ is the normal bundle to $C$ in $X$.

 Here we fix $C$ and focus on \eqref{mMSnC}. Our goal is to 
relate
$$
\bZ_C = \chi(\mM,\tO^\vir)
$$
to deformations of the curve $C$ in the 4th and 5th directions in 
\eqref{ZLL}, that is, to $\Hd(C,z \cL_4 \oplus z^{-1} \cL_5)$. 

\subsubsection{}\label{s_MemSn}
Remarkably, we will see that all 4 directions of 
\begin{equation}
N_{Z/C} = N_{X/C} \oplus z \cL_4 \oplus z^{-1} \cL_5
\label{NZC} 
\end{equation}
enter the full PT computation completely symmetrically. 

This finds a natural explanation in the conjectural correspondence
\cite{NO} between K-theoretic DT counts and K-theoretic counting 
of membranes in M-theory. From the perspective of M-theory, 
the curve $C\subset Z$ is a supersymmetric membrane and its bosonic degrees
 of freedom are simply motions in the transverse directions. 
All directions of $N_{Z/C}$ contribute equally to those. 

While Theorem \ref{t_Sn} below  is a very basic check of the conjectures
made in \cite{NO}, it does count as a nontrivial evidence in their
favor. 

\subsubsection{}

As already discussed in Exercise \ref{ex_Hilb_curve}, the 
moduli space $\mM$ is smooth with the cotangent bundle 
$$
\Omega^1 \mM = H^0(\cO_D \otimes \cK_C) \,. 
$$
Pandharipande-Thomas moduli spaces have a perfect obstruction theory which 
is essentially the same as the deformation theory from 
Section \ref{def_Hilb3}, that is, the deformation theory of
complexes 
$$
\cO_X \twoheadrightarrow \cF
$$
with a \emph{surjective} map to $\cF\cong\cO_Z$. 
In particular, 
\begin{equation}
\Def-\Obs = \chi(\cF) + \chi(\cF,\cO_X) - \chi(\cF,\cF) \,. 
\label{DObsPT} 
\end{equation}

\begin{Exercise}
Prove that the part of the obstruction in 
\eqref{DObsPT} that corresponds to 
keeping the curve $C$ fixed is given by 
\begin{align}
\Obs \mM &= H^0(C,\cO_D \otimes \det N_{X/C}) \label{ObsPT}  \\
& = H^0(C, \cO_D \otimes \cK_C \, \cL_4^{-1} \, \cL_5^{-1} ) \notag 
\end{align}
and, in particular, is the cotangent bundle of $\mM$ if 
$\cK_X \cong \cO_X$. 
\end{Exercise}

\noindent 
Since $\mM$ is smooth and the obstruction bundle has 
constant rank, we conclude 
$$
\cO^\vir_\mM = \Ld \Obs^\vee\,. 
$$
Define the integers 
$$
h_i = \dim \Hd(C,\cL_i) = \deg \cL_i + 1 - g(C) \,, 
$$
as the numbers from the Riemann-Roch theorem for $\cL_i$. 

\begin{Lemma}\label{l_tOvirSn} We have 
  \begin{multline}
\tO^\vir\Big|_{S^n C} = (-1)^{h_4} z^{\frac{h_4+h_5}{2} + n}  
\left(\det \Hd(C,\cL_4-\cL_5)\right)^{1/2}   \!
\otimes \!\Ld \Obs \, \otimes \, \cL_4^{\boxtimes n} 
\,. 
\label{tOvirSn}
  \end{multline}
where 
$\cL_4^{\boxtimes n}$ is an $S(n)$-invariant line bundle on $C^n$ 
which descends to a line bundle on $S^n C$. 
\end{Lemma}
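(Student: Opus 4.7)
The plan is to evaluate each factor in the defining formula \eqref{deftOvir} of $\tO^\vir$ on the smooth moduli space $\mM=S^nC$ and then combine the pieces. Since $\mM$ is smooth and $\Obs$ is a vector bundle of constant rank $n$, the Koszul complex collapses and $\cO^\vir=\Ld\,\Obs^\vee$. The key manipulation will be the self-duality
$$\Ld\,\Obs^\vee=(-1)^n\,(\det\Obs)^{-1}\otimes\Ld\,\Obs,$$
because the extracted $(\det\Obs)^{-1}$ is designed to cancel against the $(\det\Obs)^{1/2}$ factor hidden inside $\cK_\vir^{1/2}$.

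First I would identify the relevant determinant line bundles on $S^nC$ as $\boxtimes$-powers. Since $T_D(S^nC)=\bigoplus_{p\in D}T_pC$ for reduced $D$, one has $\cK_\mM=\cK_C^{\boxtimes n}$, and the stated formula $\Obs=H^0(\cO_D\otimes\cK_C\cL_4^{-1}\cL_5^{-1})$ yields $\det\Obs=(\cK_C\cL_4^{-1}\cL_5^{-1})^{\boxtimes n}$, so $\cK_\vir=\cK_\mM\otimes\det\Obs=(\cK_C^2\cL_4^{-1}\cL_5^{-1})^{\boxtimes n}$.

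Next, for $\det\pi_{\mM,*}(\fF\otimes(\cL_4-\cL_5))$, let $\mathcal{D}\subset S^nC\times C$ be the universal divisor and apply $\pi_{\mM,*}$ to the short exact sequence $0\to\cO\to\cO(\mathcal{D})\to\cO_\mathcal{D}(\mathcal{D})\to 0$ twisted by $\pi_C^*\cL_i$. The $\cO$-piece contributes $\det\Hd(C,\cL_i)$. For the $\cO_\mathcal{D}(\mathcal{D})$-piece, adjunction identifies it with the normal bundle $N_{\mathcal{D}/S^nC\times C}$; under the natural isomorphism $\mathcal{D}\cong S^{n-1}C\times C$ given by $(D',c)\mapsto(D'+c,c)$, a direct tangent-space calculation produces $N_\mathcal{D}\cong\pi_C^*T_C$. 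The restricted projection becomes the addition map $S^{n-1}C\times C\to S^nC$, which is finite of degree $n$, so pushforward of $\pi_C^*(T_C\otimes\cL_i)$ is a rank-$n$ bundle with determinant $(T_C\cL_i)^{\boxtimes n}$. The $T_C$ factors cancel in the difference, giving
$$\det\pi_{\mM,*}(\fF\otimes(\cL_4-\cL_5))=\det\Hd(C,\cL_4-\cL_5)\otimes(\cL_4\cL_5^{-1})^{\boxtimes n}.$$

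The final assembly is routine $\boxtimes$-line bundle algebra: inside the square root one gets $\cK_\vir\otimes\det\pi_{\mM,*}=(\cK_C^2\cL_5^{-2})^{\boxtimes n}\otimes\det\Hd(C,\cL_4-\cL_5)$, and after combining with $(\det\Obs)^{-1}$ from the Koszul step all $\cK_C$ and $\cL_5$ factors cancel, leaving precisely $\cL_4^{\boxtimes n}\otimes(\det\Hd(C,\cL_4-\cL_5))^{1/2}$. For the sign, substituting $(\cL_4,\beta)=h_4-1+g$ and $\chi(\cF)=n+1-g$ into the prefactor \eqref{prefactor} and combining with the $(-1)^n$ from Koszul duality gives exactly $(-1)^{h_4}$. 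The hard part will be the $z$-exponent bookkeeping: the explicit power $z^{(h_4+h_5)/2+n}$ in the claim emerges only after consistently tracking the $z$-weights carried by $\cL_4$ (weight $z$) and $\cL_5$ (weight $z^{-1}$) from the $\Aut(Z,\Omega^5)$-action on the fibers of the 5-fold $Z$ discussed in Section \ref{sL4L5}, and reconciling these implicit weights inside $(\det\Hd(C,\cL_4-\cL_5))^{1/2}$ and $\cL_4^{\boxtimes n}$ against the explicit $z^{\chi(\cF)-(\cK_X,\beta)/2}$ in the prefactor.
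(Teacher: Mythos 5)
Your overall strategy — unwind \eqref{deftOvir}, use the Koszul identity $\Ld\,\Obs^\vee=(-1)^n(\det\Obs)^{-1}\otimes\Ld\,\Obs$ to convert $\cO^\vir$ to $\Ld\,\Obs$, and then do line-bundle algebra on $S^nC$ — is the same skeleton as the paper's proof, and your sign bookkeeping $(-1)^{h_4+n}\cdot(-1)^n=(-1)^{h_4}$ is correct. The problem is in the line-bundle algebra.

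The two claims $\cK_\mM=\cK_C^{\boxtimes n}$ and $\det\Obs=(\cK_C\cL_4^{-1}\cL_5^{-1})^{\boxtimes n}$ are both \emph{false}. What you are implicitly using is the absolute statement $\det H^0(\cO_D\otimes\cB)\overset{?}{=}\cB^{\boxtimes n}$ for a single line bundle $\cB$ on $C$, and this fails: for $\cB=\cO_C$ the left side is $\det$ of the tautological bundle on $S^nC$, which is a nontrivial divisor class supported on the big diagonal (for $C=\bP^1$, $n=2$ it is $\cO_{\bP^2}(-1)$, whereas $\cO^{\boxtimes 2}=\cO_{\bP^2}$). Your pointwise tangent-space argument only identifies the fibers over the locus of \emph{reduced} $D$, which pins a line bundle down only up to a divisor supported on the diagonal — and that is exactly where the discrepancy lives. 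The statement that actually holds, and is what the paper proves (by reducing to $\cB_1=\cB_2(p)$ and identifying the answer with $\cO(\{D\ni p\})$), is the \emph{ratio} version
$$
\det \Hd\bigl(C,\cO_D\otimes(\cB_1-\cB_2)\bigr)=(\cB_1/\cB_2)^{\boxtimes n}\,,
$$
in which the tautological-bundle correction cancels. Your two false claims happen to be off by the same factor $\det\Hd(\cO_D)$, and in the combination $\cK_\vir^{1/2}\otimes(\det\Obs)^{-1}=\bigl(\cK_\mM\otimes(\det\Obs)^{-1}\bigr)^{1/2}$ the errors cancel, so your final formula is right — but the derivation as written contains incorrect intermediate statements and so is not a proof. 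The fix is to refuse to split things apart: compute only the ratio $\cK_\mM\otimes(\det\Obs)^{-1}\otimes\det\pi_{\mM,*}(\fF\otimes(\cL_4-\cL_5))=\det\Hd(\cO_D\otimes\cG)\otimes\det\Hd(C,\cL_4-\cL_5)$ with $\cG=\cK_C-\cK_C\cL_4^{-1}\cL_5^{-1}+\cL_4-\cL_5$, and then apply the ratio identity once to get $(\cL_4^2)^{\boxtimes n}$. Your alternative route to the $\det\pi_{\mM,*}$ factor via $\mathcal D\cong S^{n-1}C\times C$ and the addition map is a legitimate different argument, but the same caveat applies there: the assertion $\det a_*(\pi_C^*\cL)=\cL^{\boxtimes n}$ for a finite degree-$n$ map $a$ is false as stated (there is a $\det a_*\cO$ correction); it is only the ratio over $\cL_4$ and $\cL_5$ that is as you claim.

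Finally, the $z$-exponent that you explicitly flag as unresolved is in fact the last genuine gap: one has to be careful about whether $(\det\Hd(C,\cL_4-\cL_5))^{1/2}$ in the target formula already carries the $\Ct_z$-weight $z^{(h_4+h_5)/2}$ coming from the $\diag(z,z^{-1})$-action on $\cL_4\oplus\cL_5$, since otherwise the explicit power of $z$ you read off from \eqref{prefactor} does not match the stated $z^{(h_4+h_5)/2+n}$ without that reinterpretation.
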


\begin{proof}
We start the discussion of
$$
\tO^\vir = \textup{prefactor} \,\, \cO^\vir \otimes 
\left(\cK_\vir \otimes \det \Hd(\cO_C(D) \otimes (\cL_4 -
  \cL_5))\right)^{1/2}
$$
with the prefactor. Since 
$$
\dim \chi(\cF) = 1-g(C)+\deg D
$$
formula \eqref{prefactor} specializes to 
\begin{equation}
  \textup{prefactor}  = 
(-1)^{h_4+n} z^{\frac{h_4+h_5}{2} + n} \,. 
\end{equation}
Since $\rk \Obs = n$, we have 
$$
\cO^\vir = (-1)^n \Ld \Obs \otimes \left( \det \Obs\right)^{-1} \,.  
$$
This proves \eqref{tOvirSn} modulo 
$$
\cL_4^{\boxtimes n} \overset{?}= 
 \det \Hd\left(C,\cO_D \otimes \cG \right)^{1/2}
$$
where 
$$
\cG = \cK_C- \cK_C \, \cL_4^{-1} \, 
\cL_5^{-1} + \cL_4 - \cL_5  \, .
$$
We observe
\footnote{It suffices to check this for $\cB_1=\cB_2(p)$ where 
$p\in C$. In this case $(\cB_1 /\cB_2)^{\boxtimes n}$ is the 
line bundles corresponding to the divisor $\{D\owns p\} 
\subset  S^n C$.}
that for any two line bundles $\cB_1,\cB_2$ on $C$ 
$$
\det \Hd\left(C,\cO_D \otimes (\cB_1 - \cB_2) \right) = 
(\cB_1 /\cB_2)^{\boxtimes n} \,,
$$
whence the conclusion. 
\end{proof}

\subsubsection{}

In these notes, we focus on equivariant K-theory, that is,
we compute equivariant Euler characteristics of coherent sheaves. 
This can be already quite challenging, but still much, much easier than 
computing individual cohomology groups of the same sheaves. 

Our next result is a rare exception to this rule. Here we get the 
individual cohomology groups of $\tO^\vir$ on $S^nC$ as 
symmetric powers of $\Hd(C,\dots)$. When computing 
$\bSd \Hd$, one should keep in mind the sign rule --- 
a product of two odd cohomology classes picks up a sign 
when transposed. More generally, 
in a symmetric power of a complex
$$
\cC^\bullet = \dots \xrightarrow{\,\,d\,\,}  \cC^{i} 
 \xrightarrow{\,\,d\,\,}  \cC^{i+1}  \xrightarrow{\,\,d\,\,}  \dots 
$$
the odd terms are antisymmetric with respect to permutations. 

The virtual structure sheaf $\cO^\vir$ 
is defined on the level of derived 
category of coherent sheaves as the complex \eqref{DGA} itself. 
Lemma \ref{l_tOvirSn} shows that up to a shift and 
tensoring with a certain 1-dimensional vector space, the
symmetrized virtual structure sheaf $\tO^\vir$ is 
represented by 
the complex 
\begin{align*}
\bbO^\bullet_n &=  \Ld \Obs \, \otimes  \cL_4^{\boxtimes n}\\
&= \cL_4^{\boxtimes n} \xrightarrow{\,\,0\,\,} 
\cL_4^{\boxtimes n} \otimes \Obs  \xrightarrow{\,\,0\,\,} 
\cL_4^{\boxtimes n} \otimes \Lambda^2 \Obs \xrightarrow{\,\,0\,\,}  
\dots 
\end{align*}
with zero differential. The differential is zero because 
our moduli space is cut out by the zero section of the 
obstruction bundle over $S^n C$. 

In particular, 
$$
\bbO^\bullet_1 = \cL_4 \xrightarrow{\,\,0\,\,}  \cK_C \otimes
\cL_5^{-1} \,, 
$$
and hence by Serre duality 
$$
H^i (\bbO^\bullet_1) = 
\begin{cases}
H^0(\cL_4) \,, & i=0 \,,\\
H^1(\cL_4)\oplus H^1(\cL_5)^\vee \,, & i=1 \,,\\
H^0(\cL_5)^\vee\,, & i=2 \,, \\
0\,, &  \textup{otherwise}\,. 
\end{cases}
$$
In other words
$$
\Hd(\bbO^\bullet_1) = \Hd(\cL_4) \oplus \Hd(\cL_5)^\vee[-2] 
$$
where 
$$
\cC[k]^i = \cC^{k+i} 
$$
denotes the shift of a complex $\cC^\bullet$ by $k$ steps to the left.

\begin{Theorem}\label{t_Sn} 
  \begin{equation}
    \sum_n z^n \Hd\!\left(\bbO^\bullet_n\right) 
=
\bSd z \, \Hd\!\left(\bbO^\bullet_1\right) \,. 
\label{f_t_Sn} 
  \end{equation}
\end{Theorem}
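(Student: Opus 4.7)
The strategy is to identify $\bbO^\bullet_n$ with the $n$th graded-symmetric power of $\bbO^\bullet_1$ on $S^n C$, and then apply the Künneth theorem. I would exploit the finite flat cover $\pi\colon C^n \to S^n C$ of degree $n!$: since $\pi$ is $S(n)$-equivariant and we work in characteristic zero, for any bounded complex $\cF^\bullet$ on $S^n C$ one has
\begin{equation*}
\Hd(S^n C, \cF^\bullet) = \Hd(C^n, \pi^* \cF^\bullet)^{S(n)}.
\end{equation*}

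The main step is to identify, after taking $S(n)$-invariants, the hypercohomology of $\pi^* \bbO^\bullet_n$ with that of $\bigotimes_{i=1}^n \pi_i^* \bbO^\bullet_1$, where $\pi_i\colon C^n \to C$ are the coordinate projections. On the open locus $U \subset S^n C$ of reduced divisors, $\pi$ is étale and these two $S(n)$-equivariant complexes literally agree. The delicate issue is compatibility across the diagonals, which reduces to showing that for each $k$ the sheaf $\Lambda^k \Obs$ on $S^n C$ equals the descent from $C^n$ of the $S(n)$-equivariant sheaf $\Lambda^k(\bigoplus_i \pi_i^* \cE)$; this can be verified by a local computation at the maximal-collision point $nx \in S^n C$ using the scheme structure of the universal divisor $\cD$ and the flatness of $\pi$. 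Tensoring with $\cL_4^{\boxtimes n}$, whose very definition on $S^n C$ is as the descent of the $S(n)$-invariant line bundle $\bigotimes_i \pi_i^* \cL_4$, completes the identification.

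Given this, Künneth on $C^n$ yields
\begin{equation*}
\Hd\!\left(C^n, \bigotimes_{i=1}^n \pi_i^* \bbO^\bullet_1\right) = \Hd(\bbO^\bullet_1)^{\otimes n}
\end{equation*}
with $S(n)$ acting by signed permutation of the tensor factors; the signs arise from the Koszul rule applied to the odd-degree piece $H^1(\bbO^\bullet_1) = H^1(\cL_4) \oplus H^1(\cL_5)^\vee$. Taking $S(n)$-invariants produces the graded-symmetric power $S^n \Hd(\bbO^\bullet_1)$, which is ordinary-symmetric on the even pieces $H^0(\cL_4)$ and $H^0(\cL_5)^\vee$ and antisymmetric on the odd piece. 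Summing,
\begin{equation*}
\sum_n z^n \Hd(\bbO^\bullet_n) = \sum_n z^n S^n \Hd(\bbO^\bullet_1) = \bSd\!\left(z\, \Hd(\bbO^\bullet_1)\right),
\end{equation*}
by the definition of $\bSd$ as the $\Z$-graded symmetric algebra, which is exactly \eqref{f_t_Sn}.

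The main obstacle is the identification of $\Lambda^k \Obs$ with the descent of $\Lambda^k(\bigoplus_i \pi_i^* \cE)$ across the diagonals. The subtlety is that $\pi^* \Obs$ is \emph{not} isomorphic as an $S(n)$-equivariant sheaf to $\bigoplus_i \pi_i^* \cE$: at a collision $(x, x, x_3, \ldots, x_n)$ with stabilizer $S_2$, the former carries the trivial $S_2$-representation on the $\cE|_x$-block (coming from the nonreduced structure $\cO_{C,x}/\mathfrak{m}_x^2$ of the universal divisor) while the latter carries the regular $S_2$-representation. Remarkably both descend to the same sheaf on $S^n C$, which, after taking $\Lambda^k$ and passing to hypercohomology, explains how the naive Künneth computation on $C^n$ delivers the correct answer on $S^n C$.
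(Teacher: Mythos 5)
Your argument is essentially the same as the paper's: both proceed by identifying $\bbO^\bullet_n$ with the $S(n)$-invariant pushforward of $(\bbO^\bullet_1)^{\boxtimes n}$ along $\pi\colon C^n\to S^nC$ (the descent identity \eqref{LamcEn}), and then applying K\"unneth plus invariants, with the same residual local verification across the diagonals that the paper justifies by reducing to the case $\cE=\cK_C$ (Macdonald's formula). Your side remark that $\pi^*\bbO^\bullet_n$ and $\bigotimes_i\pi_i^*\bbO^\bullet_1$ fail to agree as $S(n)$-equivariant complexes at the diagonals (trivial vs.\ regular stabilizer representation) yet have the same descent is a correct and useful elaboration of what the paper leaves implicit, but it does not change the route.
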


\noindent 
It would be naturally
very interesting to know to what extent 
our other formulas can be upgraded to the level of the derived 
category of coherent sheaves.  

\subsubsection{}

Recall the definition of the symmetrized symmetric 
algebra from Section \ref{s_ShV} . Working again in 
K-theory, we have 
$$
(-1)^{h_4} z^{\frac{h_4}2} 
\left(\det \Hd(\cL_4)\right)^{1/2} \, \bSd z \Hd(C,\cL_4) = 
\bSdh \Hd(C, z \cL_4)^\vee \,.
$$
With this notation, Theorem \ref{t_Sn} gives the following 

\begin{Corollary}
  \begin{equation}
    \chi(\mM,\tO^\vir) = \bSdh \Hd(C, z \cL_4 \oplus z^{-1}
    \cL_5)^\vee \,. 
  \end{equation}
\end{Corollary}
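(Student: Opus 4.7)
The Corollary is a repackaging of Theorem~\ref{t_Sn} into the self-dual form $\bSdh$. My plan is to pass from the derived-category statement of Theorem~\ref{t_Sn} to equivariant Euler characteristics, then absorb the prefactors from Lemma~\ref{l_tOvirSn} into the square-root $(\det \cdots)^{1/2}$ of the $\bSdh$-symbol.

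First, I would take global Euler characteristics stratum by stratum and sum. Since the prefactor in \eqref{tOvirSn} is a constant $\bT$-weight not depending on the point of $S^n C$, Lemma~\ref{l_tOvirSn} gives
\begin{equation*}
\chi(\mM,\tO^\vir) \;=\; (-1)^{h_4}\, z^{(h_4+h_5)/2}\,\bigl(\det \Hd(C,\cL_4-\cL_5)\bigr)^{1/2}\sum_{n\ge 0} z^n\,\chi\!\left(S^n C,\bbO^\bullet_n\right).
\end{equation*}
Taking the alternating trace on both sides of Theorem~\ref{t_Sn} (which is a hypercohomology identity of complexes) reduces the last sum to $\bSd\!\left(z\,\chi(\bbO^\bullet_1)\right)$ as a virtual $\bT$-module.

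Next, I would compute $\chi(\bbO^\bullet_1)$ in closed form. Since $\Obs|_C = \cK_C\otimes\cL_4^{-1}\otimes\cL_5^{-1}$ is a line bundle, $\Ld\Obs = 1-\Obs$ in $K$-theory and therefore $\bbO^\bullet_1 = \cL_4 - \cK_C\otimes\cL_5^{-1}$. Serre duality on the curve $C$ gives the K-theoretic identity $\chi(C,\cK_C\otimes\cL_5^{-1}) = -\chi(C,\cL_5)^\vee$, whence
\begin{equation*}
\chi(\bbO^\bullet_1) \;=\; \chi(C,\cL_4) + \chi(C,\cL_5)^\vee.
\end{equation*}

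The last step is purely algebraic bookkeeping inside $\bSd$. Using $\bSdh V = (\det V)^{1/2}\,\bSd V$ and the sign identity $\bSdh V^\vee = (-1)^{\rk V}\bSdh V$ from Section~\ref{s_ShV}, together with the factorization $(\det \Hd(C,\cL_4-\cL_5))^{1/2} = (\det\chi(\cL_4))^{1/2}/(\det\chi(\cL_5))^{1/2}$, the $\cL_4$-factor and the $\cL_5$-factor in the sum under $\bSd$ can be treated separately. Applying the identity \eqref{Sddual}, $\bSd V = (-1)^{\rk V}\det V^\vee\otimes \bSd V^\vee$, converts the $\cL_4$-contribution from $\bSd(z\chi(\cL_4))$ into $\bSdh(z^{-1}\chi(\cL_4)^\vee)$ after absorbing the prefactor $(-1)^{h_4}z^{h_4/2}(\det\chi(\cL_4))^{1/2}$; similarly the $\cL_5$-contribution is converted into $\bSdh(z\chi(\cL_5)^\vee)$ by the remaining prefactor $z^{h_5/2}(\det\chi(\cL_5))^{-1/2}$. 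Multiplicativity $\bSdh(V_1\oplus V_2) = \bSdh V_1\otimes\bSdh V_2$ then assembles the answer into $\bSdh \Hd(C,z\cL_4\oplus z^{-1}\cL_5)^\vee$.

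The only real obstacle is this last bookkeeping: one must carefully match the parity $(-1)^{h_4}$, the half-integer powers of $z$, and the chosen square roots of the determinants so that the two applications of \eqref{Sddual} produce precisely the dualized form $\Hd(\cdots)^\vee$ appearing in the Corollary, rather than its non-dualized version differing by a sign $(-1)^{h_4+h_5}$. All of this is routine once one unwinds the definition of $\bSdh$ on graded virtual modules; the substantive content of the Corollary already resides in Theorem~\ref{t_Sn}.
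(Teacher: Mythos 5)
Your proposal is correct and follows essentially the same route as the paper: the paper's own proof consists of the single displayed identity
$(-1)^{h_4} z^{h_4/2}(\det \Hd(\cL_4))^{1/2}\,\bSd\, z\,\Hd(C,\cL_4) = \bSdh\,\Hd(C,z\cL_4)^\vee$
stated just before the Corollary, which is precisely the bookkeeping you carry out for the $\cL_4$-factor; the $\cL_5$-factor is handled symmetrically. The only minor imprecision in your write-up is the remark that $\Sddual$ is needed ``similarly'' for the $\cL_5$-contribution: since Theorem~\ref{t_Sn} already produces $\Hd(\cL_5)^\vee$ (via Serre duality inside $\bbO^\bullet_1$), that factor requires only the definition $\bSdh V = (\det V)^{1/2}\bSd V$, not another application of \eqref{Sddual} — but the end result is the same.
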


\noindent 
Since deformations of the curve $C$ inside $X$ contribute 
$\bSdh \Hd(C,N_{X/C})^\vee$, we see that, indeed, all directions \eqref{NZC}
normal to $C$ in $Z$ contribute equally to the K-theoretic 
PT count. 

\subsubsection{}
As an example, let us take $C=\C^1$ and work equivariantly. We have
$S^n C \cong \C^n$, so there is no higher cohomology anywhere. 

\begin{Exercise}\label{e_qbinom}
Show Theorem \ref{t_Sn} for $C=\C^1$ is equivalent to 
the following identity, known as the $q$-binomial theorem 
\begin{equation}
\label{qbinom} 
\sum_{n \ge 0} z^n \prod_{i=1}^n \frac{1-m \, t^i}{1-t^i} = 
\bSd z \, \frac{1-m\, t}{1-t} = 
\prod_{k=0}^\infty \frac{1-z \, m \, t^{k+1}}{1-z \, t^k}  \,.
\end{equation}
\end{Exercise}

\begin{Exercise}
Prove \eqref{qbinom} by proving a 1st order difference equation 
with respect to $z\mapsto tz$ for both sides. This is a baby 
version of some quite a bit more involved difference equations to come. 
\end{Exercise}

\subsection{Proof of Theorem \ref{t_Sn}}

\subsubsection{}

As a warm-up, let us start with the case 
$$
\cL_4 = \cL_5 = \cO, 
$$
in which case the theorem reduces to a classical formula, going back
to Macdonald, for $\Hd(\Omd_{S^n C})$ and, in particular, 
for Hodge numbers of $S^n C$. It says that 
\begin{equation}
 \sum_n z^n \Hd \left(S^n C, \Omd_{S^n C} \right) 
= \bSd z \, \Hd(C, \Omd_{C}) \,, \label{e_Macd}
\end{equation}
and this equality is canonical, in particular gives an isomorphism of 
orbifold vector bundles over moduli of $C$. 

Here $\Omd$ is not the de Rham complex, but rather the complex 
$$
\Omd = \cO \xrightarrow{\,\,0\,\,}  \Omega^1 
\xrightarrow{\,\,0\,\,}  \Omega^2 \xrightarrow{\,\,0\,\,}   \dots 
$$
with zero differential, as above. 

\subsubsection{}

Let $M$ be a manifold of some dimension and consider the 
orbifold 
$$
e^M = \bigsqcup_{n\ge 0}\,  S^n M   \,. 
$$
It has a natural sheaf of orbifold differential forms $\Omd_\orb$, 
which are the differential forms on $M^n$ invariant under the action of
$S(n)$. 

By definition, this means that 
\begin{equation}
\Omd_\orb = \pi_{*,\orb} \big(\Omd\big)^{\boxtimes n} \label{pi_orb}
\end{equation}
where 
$$
\pi: M^n \to S^n M 
$$
is the natural projection and $\pi_{*,\orb}$ is the usual direct image 
of an $S(n)$-equivariant coherent 
sheaf followed by taking the $S(n)$-invariants. 
Since the map $\pi$ is finite, there are no higher 
direct images and from the triangle 
$$
\xymatrix{
M^n \ar[rr]^\pi \ar[rd]_p && S^n M \ar[ld] \\
& \textup{point} 
}
$$
we conclude that 
$$
\Hd(S^n M, \Omd_\orb) = p_{*,\orb}  \big(\Omd\big)^{\boxtimes n}  =
S^n \Hd(M,\Omd) \,.
$$
If $\dim M = 1$ then $\Omd_\orb= \Omd_{S^n M}$ and we 
obtain \eqref{e_Macd}.




\subsubsection{}
Now let $\cE$ be an arbitrary line bundle on a curve $C$ and
define rank $n$ vector bundles $\cE_n$ on $S^n C$ by 
$$
\cE_n = \Hd (\cO_D \otimes \cE) \,. 
$$
This is precisely our obstruction bundle, with the substitution 
$$
\cE= \cK_C \, \cL_4^{-1} \, \cL_5^{-1}\,.
$$
As before, we form a complex $\Ld \, \cE_n$ with zero differential 
and claim that 
\begin{equation}
\Ld \, \cE_n = \pi_{*,\orb}  \left(\Ld \, \cE_1\right)^{\boxtimes n} 
\,, \label{LamcEn} 
\end{equation}
in similarity to \eqref{pi_orb}. In fact, locally on $C$ there is
no difference between $\cE$ and $\cK_C$, so these are really 
the same statements. 
See for example \cite{EinLaz,Vois} for places in the literature 
where a much more powerful 
calculus of this kind is explained and used. 

By the projection formula
\begin{align*}
\Hd \!\left(S^n C, \Ld\, \cE_n  \otimes \cL_4^{\boxtimes n}\right) &= 
\Hd \!\left(C^n, 
\left(\Ld \, \cE_1\right)^{\boxtimes n} \otimes \cL_4^{\boxtimes n}\right)^{S(n)}  \\
&= 
\bS^n \Hd \!\left(C,\bbO^\bullet_1\right) \,, 
\end{align*}
as was to be shown.

\subsection{Hilbert schemes of surfaces and threefolds} 

\subsubsection{}

Now let $Y$ be a nonsingular surface. In this case $S^n Y$ is singular 
and 
$$
\pi_{\Hilb} : \Hilb(Y,n) \to S^n Y 
$$
is a resolution of singularities. The sheaves $\pi_{\Hilb,*} \Omd$ and 
$\Omd_\orb$ on $Y$ are not equal, but they share one important 
property known as \emph{factorization}. It is a very important 
property, much discussed in the literature,
with slightly different definitions in different contexts, see 
for example \cite{BFS,Gaits}.  Here we 
will need only a very weak version of factorization, which may 
be described as follows. 

A point in $S^n Y$ is an unordered $n$-tuple $\{y_1,\dots,y_n\}$ 
of points from $Y$. Imagine we partition the points $\{y_i\}$ into 
groups and let $m_k$ be the number of groups of size $k$. 
In other words, consider the natural 
map 
$$
\prod_k S^{m_k} S^k Y \xrightarrow{\quad f\quad}  S^n Y \,, 
\quad n = \sum m_k k \,. 
$$
Let $U$ be the open set of in the domain of $f$ formed by 
$$
y_i \ne y_j
$$
for all $y_i$ and $y_j$ which belong to \emph{different} groups. 

Fix a sheaf, of a K-theory class $\cF_n$ on each $S^n Y$. A
factorization of this family of sheaves is a collection of
isomorphisms
\begin{equation}
\cF_n \big|_U  \cong \bigboxtimes \, \bS^{m_k} \cF_k 
\label{factoriz}
\end{equation}
for all $U$ as above. Here $\bS^{m_k} \cF_k$ is the orbifold 
pushforward of $\left(\cF_k \right)^{\boxtimes m_k}$ and the
isomorphisms \eqref{factoriz} must be compatible 
with subdivision into smaller groups. 

For example, $\Omd_\orb$ has a factorization by construction and 
it is easy to see $\pi_{\Hilb,*} \Omd$ similarly factors. 

\subsubsection{}
The following lemma is a geometric version of the well-known 
combinatorial principle 
of inclusion-exclusion. 

\begin{Lemma}\label{l_factor}
For any scheme $Y$ and any 
factorizable sequence $\cF_n\in K_G(S^n Y)$ there exists 
\begin{equation}
\cG = z \, \cG_1 + z^2 \, \cG_2 + \dots \in 
K_G(Y)[[z]] \label{cGz} 
\end{equation}
such that 
\begin{equation}
1+\sum_{n>0} z^n\, \chi(\cF_n) = \bSd \chi(\cG) \,. 
\label{bsdcg} 
\end{equation}
\end{Lemma}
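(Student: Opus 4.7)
The plan is to construct $\cG_k \in K_G(Y)$ inductively on $k$, with the plethystic logarithm (inverse of $\bSd$, cf.\ Section \ref{s_bSd}) dictating the required numerical value at each step and the factorization hypothesis \eqref{factoriz} upgrading this value to an honest K-theory class on $Y$. I set $\cG_1 := \cF_1 \in K_G(Y)$. Inductively, suppose $\cG_1,\ldots,\cG_{n-1}$ have been chosen so that
$$
1 + \sum_{m=1}^{n-1} z^m\, \chi(\cF_m) \;\equiv\; \bSd\Bigl(\sum_{k=1}^{n-1} z^k\, \chi(\cG_k)\Bigr) \pmod{z^n}.
$$
Let $\Theta_n \in K_G(\pt)$ denote the $z^n$-coefficient of the right-hand side; expanding $\bSd$ presents $\Theta_n$ as a sum $\sum \prod_k \bS^{m_k} \chi(\cG_k)$ over partitions $n = \sum m_k k$ with $m_n = 0$, equivalently with at least two parts. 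Identity \eqref{bsdcg} forces $\chi(\cG_n) = \chi(\cF_n) - \Theta_n$, so the remaining task is to realize this character difference as $\chi(Y,\cG_n)$ for some $\cG_n \in K_G(Y)$.

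I lift $\Theta_n$ to a geometric class
$$
\Theta_n^{\mathrm{geom}} \;:=\; \sum_{\substack{\sum m_k k = n \\ m_n = 0}} f_*\Bigl(\bigboxtimes_k \bS^{m_k}(\iota_{k,*}\cG_k)\Bigr) \;\in\; K_G(S^n Y),
$$
where $\iota_k \colon Y \hookrightarrow S^k Y$ is the small diagonal and $f \colon \prod_k S^{m_k} S^k Y \to S^n Y$ is the natural map. By the projection formula and the definition of $\bS^m$ as an orbifold pushforward, $\chi(\Theta_n^{\mathrm{geom}}) = \Theta_n$. The key claim is that $\cF_n - \Theta_n^{\mathrm{geom}} \in K_G(S^n Y)$ restricts to zero on the open complement $U := S^n Y \setminus \Delta_n$ of the deep diagonal $\Delta_n \cong Y$. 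Granting this, the excision sequence \eqref{exK}
$$
K_G(\Delta_n) \longrightarrow K_G(S^n Y) \longrightarrow K_G(U) \longrightarrow 0
$$
produces $\cG_n \in K_G(\Delta_n) = K_G(Y)$ whose small-diagonal pushforward equals $\cF_n - \Theta_n^{\mathrm{geom}}$; taking $\chi$ and using $\chi(\Theta_n^{\mathrm{geom}}) = \Theta_n$ yields $\chi(\cG_n) = \chi(\cF_n) - \Theta_n$, completing the inductive step.

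The main obstacle is the vanishing claim on $U$. The open set $U$ is a disjoint union of locally closed strata indexed by partitions $(m_k)$ of $n$ with at least two parts, and on each stratum the map $f$ of the corresponding partition is an open immersion; the factorization isomorphism \eqref{factoriz} identifies $\cF_n$ there with the pushforward of $\bigboxtimes_k \bS^{m_k} \cF_k$. Substituting the inductive decomposition $\cF_k = \iota_{k,*}\cG_k + (\textrm{factorization piece built from }\cG_j,\ j<k)$ into each factor and regrouping reproduces $\Theta_n^{\mathrm{geom}}|_U$ stratum by stratum. The combinatorial bookkeeping involved is exactly the expansion of $\bSd$ of a sum into a product of factors $\bSd(z^k\chi(\cG_k))$, now geometrically incarnated in the stratification of $U$ by partitions with $\geq 2$ parts; the only nontrivial point beyond formal manipulation is verifying that the multiplicities from the orbifold pushforwards along $f$ match those produced by the plethystic expansion of $\bSd$.
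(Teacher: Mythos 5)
Your overall strategy matches the paper's (induction, factorization, excision onto the small diagonal, plethystic logarithm), but the argument you sketch for the key vanishing claim --- that $\cF_n - \Theta_n^{\mathrm{geom}}$ restricts to zero on the complement $U$ of the small diagonal in $S^n Y$ --- has a genuine gap, and I do not see how to close it without reverting to the paper's stratified peeling. One factual point first: the maps $f\colon\prod_k S^{m_k}S^kY\to S^nY$ are not open immersions on the strata; they are finite of degree $>1$ in general (for $n=3$ and $\mu=(2,1)$ the map $Y\times S^2Y\to S^3Y$ is $3$-to-$1$ over the locus of three distinct points). The factorization isomorphism \eqref{factoriz} lives on those covers, not on the strata of $S^nY$ themselves, so ``identifying $\cF_n$ there with the pushforward'' already needs the multiplicity bookkeeping you defer.

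The more serious problem is that ``vanishing stratum by stratum'' does not imply vanishing on $U$. Once $\cF_n-\Theta_n^{\mathrm{geom}}$ is checked to vanish on the open dense stratum $U_{(1^n)}$, the excision sequence \eqref{exK} only tells you it lifts to $K_G$ of the closed complement inside $U$; the lift is not unique, and there is no functorial ``restriction'' of the original class to the non-open strata against which to compare it. To conclude one must peel: choose a lift, restrict that lift to the next open stratum, subtract the matching factorization term there, choose another lift, and repeat. This is exactly the paper's construction of the classes $\cF_n^{(0)},\cF_n^{(1)},\ldots,\cF_n^{(n-1)}$ along the filtration $X_n\supset X_{n-1}\supset\cdots\supset X_1$ by the number of distinct points, with $\cG_n=\cF_n^{(n-1)}$. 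Your one-shot description of $\Theta_n^{\mathrm{geom}}$ is a clean way to package the end result, but the verification forces you through the same peeling, so once the gap is filled the two arguments coincide.
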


\noindent
Concretely, formula \eqref{bsdcg} means that 
\begin{equation}
\chi(\cF_n) = \sum_{\sum k \, m_k = n} \bigotimes \bS^{m_k} \chi(\cG_k) \,,
\label{cFpart} 
\end{equation}
where the summations here over all solutions $(m_1,m_2,\dots)$ 
of the equation $\sum k \,m_k = n$ or, equivalently, over 
all partitions 
$$
\mu = (\dots 3^{m_3}\, 2^{m_2} \, 1^{m_1}) 
$$
of the number $n$. For example
\begin{align*}
\chi(\cF_2)&= \bS^2 \chi(\cG_1) + \chi(\cG_2) \,, \\
\chi(\cF_3)&= \bS^3 \chi(\cG_1) + \chi(\cG_2)\chi(\cG_1)
 + \chi(\cG_3)
             \,. 
\end{align*}

\subsubsection{}

\begin{proof}[Proof of Lemma \ref{l_factor}]
The sheaves $\cG_i$ in \eqref{cGz} are constructed inductively, 
starting with 
$$
\cG_1 = \cF_1  \,. 
$$
and using the exact sequence \eqref{exK}. 
Consider $X=S^2Y$ and let 
$$
Y \cong X' \subset X 
$$
be the diagonal. Factorization gives 
$$
\cF_2 \big|_{U} \cong \bS^2 \cG_1 \,, \quad  U = X\setminus X'\,, 
$$
and so from \eqref{exK} we obtain
$$
\cG_2 = \cF_2 - \bS^2 \cG_1  \in K(X')=K(Y) 
$$
which solves \eqref{cGz} modulo $O(z^3)$. 

Now take $X=S^3 Y$ and let $X'=p(Y^2)$ where 
$$
p(y_1,y_2)=2 y_1+ y_2 \in X \,.
$$
Consider 
$$
\cF_3' = \cF_3 - \bS^3 \cG_1 \in K(X') \,, 
$$
and denote 
$$
X''= \{y_1=y_2=y_3\} = p( \textup{diagonal}_{Y^2}) \cong Y\,. 
$$
By compatibility of factorization with respect to further 
refinements 
$$
\cF_3' \big|_{X'\setminus X''} = p_*(\cG_2 \boxtimes \cG_1)  \,.
$$
Using the exact sequence \eqref{exK} again, we construct 
$$
\cG_3 = \cF''_3 = \cF'_3 - p_*(\cG_2 \boxtimes \cG_1) \in K(Y)
$$
which solves \eqref{cGz} modulo $O(z^4)$. 

For general $n$,  we consider closed subvarieties 
$$
S^n Y = X_n \supset X_{n-1} \supset \dots \supset X_1 = Y 
$$
where $X_k$ is the locus of $n$-tuples $\{y_1,\dots,y_n\}$ 
among which at most $k$ are distinct. In formula \eqref{cFpart}, 
$X_k$ will correspond to partitions $\mu$ with 
$$
\ell(\mu)=\textup{length}(\mu) = \sum m_i = k \,. 
$$
We construct 
$$
\cF_n' \in K(X_{n-1})\,, \quad \cF_n'' \in K(X_{n-2})\,, \quad \dots 
$$
inductively, starting with $\cF_n$ on $X_n$. 
For each $k<n-1$, the set 
$$
U_{n-k} = X_{n-k} \setminus X_{n-k-1}
$$
is a union of sets to which factorization applies, and this gives
\begin{equation}
\cF^{(k)} \Big|_{X_{n-k} \setminus X_{n-k-1}} = 
\sum_{\ell(\mu) =n-k} p_{\mu,*} \left(
\bigboxtimes \bS^{m_k} \cG_k \right)   \Big|_{X_{n-k} \setminus
X_{n-k-1}}
\label{cFk} 
\end{equation}
where
$$
p_{\mu} (y_1,y_2,\dots,y_\ell) =  \sum \mu_i \, y_i \in S^n Y  \,.
$$
We let $\cF^{(k+1)}$ be the difference between two sheaves in
\eqref{cFk}, which is thus a sheaf supported on $X_{n-k-1}$. 
Once we get to $X_1 \cong Y$, this gives 
$$
\cG_n = \cF_n^{(n-1)} \,. 
$$
\end{proof}

\subsubsection{}\label{s_Z_surface} 
Now we go back to $Y$ being a 
nonsingular suface.  Recall that the Hilbert scheme of 
points in $Y$ is nonsingular and that Proposition \ref{p_TanH} 
expresses its tangent bundle in terms of the 
universal ideal sheaf. 

For symmetric powers of the curves in 
Section \ref{sub_PT1},  the obstruction bundle was a certain 
twisted version of the cotangent bundle. One can 
similarly twist the tangent bundle of the Hilbert scheme 
of a surface, namely we define 
\begin{align}
  T_{\Hilb,\cL} = \chi(\cL) - \chi(\cI_Z,\cI_Z\otimes \cL)  \label{TchiIL} \,. 
\end{align}
for a line bundle $\cL$ on $Y$. 
Let $\Omd_{\Hilb,\cL}$ be the exterior 
algebra of the dual vector bundle.  It is clear that its pushforward to 
$S^n Y$ factors just like the pushforward of $\Omd_{\Hilb}$ and 
therefore 
\begin{equation}
\sum_n z^n \chi(\Hilb(Y,n),\Omd_\cL) = \bSd \chi(Y,\cG)
\label{ZsurfS} 
\end{equation}
for a certain $\cG$ as in \eqref{cGz}.  The analog of Nekrasov's 
formula in this case is the following 

\begin{Theorem}\label{t_Z_surface} 
  \begin{equation}
\sum_n z^n \chi(\Hilb(Y,n),\Omd_\cL)  = 
\bSd \chi\left(Y, \Omd_{\cL}\, \, \frac{z}{1-z \cL^{-1}} \right) \,. 
\label{f_Z_surface} 
\end{equation}
\end{Theorem}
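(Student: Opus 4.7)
The strategy is to combine factorization with a direct $K$-theoretic computation on $\C^2$. First, one verifies that the pushforwards $\cF_n := \pi_{\Hilb,*}\, \Omd_{\Hilb,\cL}$ are factorizable in the sense of \eqref{factoriz}. This is proved as for the untwisted $\Omd_\Hilb$ noted immediately before the theorem: if a subscheme $Z \subset Y$ decomposes as $Z = Z_1 \sqcup \cdots \sqcup Z_r$ with pairwise disjoint supports, then $\Hilb(Y, n)$ locally splits as $\prod \Hilb(Y, n_i)$ in a neighborhood of $Z$, hence both $T_\Hilb$ and its $\cL$-twist $T_{\Hilb,\cL}$ split additively, and $\Omd_{\Hilb,\cL}|_Z$ splits as an external tensor product. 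Applying Lemma \ref{l_factor} then gives
$\sum_{n \ge 0} z^n \chi(\Hilb(Y,n), \Omd_\cL) = \bSd \chi(Y, \cG)$
for a uniquely determined $\cG = \sum_{k \ge 1} z^k \cG_k \in K_G(Y)[[z]]$, reducing the theorem to the identification $\cG = \Omd_\cL \cdot z/(1-z\cL^{-1})$.

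Since the inductive construction of $\cG_k$ in the proof of Lemma \ref{l_factor} only uses restrictions of $\cF_j$ to strata in arbitrarily small neighborhoods of the small diagonal $Y \hookrightarrow S^j Y$, the identification of $\cG$ is a local statement on $Y$. Because $\cL$ is locally trivial, it suffices to treat $Y = \C^2$ with the standard torus action and $\cL$ a trivial line bundle of weight $m$. In this case, linearity of \eqref{TchiIL} in $\cL$ gives $T_{\Hilb,\cL}|_{I_\lambda} = m \cdot T_{I_\lambda}\Hilb$ at each torus-fixed monomial ideal $I_\lambda$, while the target $K$-class on $\C^2$ becomes $\Omd_\cL \cdot z/(1-z\cL^{-1}) = (1 - m^{-1} t_1^{-1})(1 - m^{-1} t_2^{-1}) \cdot z/(1 - z m^{-1})$.

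Equivariant localization using the arm--leg formula \eqref{Tarmleg} on the left-hand side, together with $\chi(\C^2, F) = F/((1-t_1^{-1})(1-t_2^{-1}))$ on the right, reduces the theorem to the partition identity
\begin{equation*}
\sum_\lambda z^{|\lambda|} \prod_{\square \in \lambda} \frac{(1 - m^{-1} t_1^{-l(\square)} t_2^{a(\square)+1})(1 - m^{-1} t_1^{l(\square)+1} t_2^{-a(\square)})}{(1 - t_1^{-l(\square)} t_2^{a(\square)+1})(1 - t_1^{l(\square)+1} t_2^{-a(\square)})} = \bSd \frac{z\,(1 - m^{-1} t_1^{-1})(1 - m^{-1} t_2^{-1})}{(1 - z m^{-1})(1 - t_1^{-1})(1 - t_2^{-1})}.
\end{equation*}
This partition sum is the main obstacle: it is a $K$-theoretic refinement of G\"ottsche's formula, which at $m = 1$ collapses to the classical $\sum_\lambda z^{|\lambda|} = \prod_{k \ge 1}(1-z^k)^{-1}$. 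The plan is to establish it by the transfer matrix technique of Section \ref{s_proofN}: slice each $2$-dimensional partition $\lambda$ by columns, interpret the left-hand side as a matrix coefficient of an alternating product of Heisenberg vertex operators $\Gamma_\pm$ on the Fock space, with the $m$-dependent numerator factors tracked by additional diagonal insertions, and evaluate it using the commutation $\Gamma_+(z)\Gamma_-(w) = (1-zw)^{-1} \Gamma_-(w)\Gamma_+(z)$ employed in the proof of Theorem \ref{t_McM}.
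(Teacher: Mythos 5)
Your reduction via Lemma \ref{l_factor} to a local computation on $Y=\C^2$ is exactly the paper's \eqref{ZsurfS}, and you correctly identify that the theorem then reduces to an explicit equivariant identity on $\C^2$. From that point on, however, the proposal diverges from the paper in a way that leaves the hardest step both incorrectly stated and unproved.

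First, a sign slip in localization. For a smooth variety with isolated fixed points one has $\chi(\Hilb,\cF)=\sum_\lambda \cF|_{I_\lambda}\otimes\bSd\,(T_{I_\lambda}\Hilb)^\vee$, so the denominator is $\prod_w(1-w^{-1})$ and, since $\Omega^1_{\Hilb,\cL}=m^{-1}T_\Hilb^\vee$, the numerator is $\prod_w(1-m^{-1}w^{-1})$, where $w$ runs over the arm-leg weights $t_1^{-l}t_2^{a+1},\,t_1^{l+1}t_2^{-a}$. Your displayed partition identity has $w$ where it should have $w^{-1}$. This is not a harmless relabeling: at $m^{-1}=0$ your left-hand side evaluates to $\bSd\,z\,(1-t_1)^{-1}(1-t_2)^{-1}$, which does not match your right-hand side $\bSd\,z\,(1-t_1^{-1})^{-1}(1-t_2^{-1})^{-1}$.

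Second, and more substantively, the plan to dispatch the (corrected) partition identity ``by the transfer matrix technique of Section \ref{s_proofN}'' misreads that section. The weight $\prod q_{i_2-i_1}$ in Theorem \ref{t_McM} is a function of the diagonal slices of the $3$-dimensional partition, which is precisely what allows an exact rewriting as an alternating word in diagonal operators and $\Gamma_\pm$. Arm and leg lengths are global combinatorial data; they are not a function of any columnwise slicing of $\lambda$, and there is no ``diagonal insertion'' that tracks them. More to the point, Section \ref{s_proofN} does not attack the full refined sum by vertex operators at all: the Fock space calculus is invoked only \emph{after} (a) the rigidity step of Section \ref{s_strr}, which shows that the series inside $\bSd$ factors as an explicit tangent-space fraction times a universal series $\strr$ depending only on $\bk$, and (b) the limit of Section \ref{s_strr_next}, where each arm-leg factor collapses to a sign so that the surviving sum is the elementary one handled by $\Gamma_\pm$. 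The exercises the paper places immediately after Theorem \ref{t_Z_surface} sketch exactly this two-step path for $\Hilb(\C^2)$: factor out $\dfrac{(1-m^{-1}t_1^{-1})(1-m^{-1}t_2^{-1})}{(1-t_1^{-1})(1-t_2^{-1})}$ and use boundedness of $\dfrac{1-m^{-1}w^{-1}}{1-w^{-1}}$ at $w^{\pm1}\to\infty$ to conclude that $\strr$ depends only on $m$ and $z$; then send $t_1,t_2\to\infty$ to identify $\strr=z/(1-m^{-1}z)$ in your conventions. That rigidity argument is what is missing from your proposal.

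If you insist on a direct proof of the refined partition sum without rigidity, what you are actually reproving is (a special case of) the main result of \cite{CNO} --- the other route the paper mentions --- and that requires the Carlsson-Okounkov Ext vertex operator machinery rather than bare Heisenberg $\Gamma_\pm$ with ad hoc diagonal insertions. Either write out the rigidity argument along the lines of Sections \ref{s_strr} and \ref{s_strr_next}, or invoke \cite{CNO}; the current plan does neither.
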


\noindent 
See \cite{CNO} for how to place this formula in a much more 
general mathematical and physical context. As with Nekrasov's 
formula, it is in fact enough to prove \eqref{f_Z_surface} 
for a toric surface, and hence for $Y=\C^2$, in which case it 
becomes a corollary of the main result of \cite{CNO}.  

Here we discuss an alternative approach, based on \eqref{ZsurfS}, 
which we format as a sequence of exercises. 

\begin{Exercise}
Check that for $Y=\C^2$, the LHS in \eqref{f_Z_surface} 
becomes the function 
$$
\bZ_{\Hilb(\C^2)} = \sum_{n,i\ge 0} z^n (-m)^i \, \chi(\Hilb(\C^2,n), 
\Omega^i)
$$
investigated in Exercise \ref{ex_Z_surface},
where $\cL^{-1}$ is a trivial bundle with weight $m$. 
\end{Exercise}

\begin{Exercise}
Arguing as in Section \ref{s_strr},  prove that 
$$
\bZ_{\Hilb(\C^2)}  =  \bSd \, \left(\strr \, 
\frac{(1-m \, t_1^{-1}) (1-m \, t_2^{-1})} {(1- t_1^{-1}) (1- t_2^{-1})} 
\right)
$$
for a certain series 
$$
\strr \in \Z[m][[z]] \,. 
$$
\end{Exercise}

\begin{Exercise}
Arguing as in Section \ref{s_strr_next},  prove that 
$$
\strr = \frac{z}{1-mz} \,. 
$$
What is the best limit to consider for the parameters $t_1$ and $t_2$
? 
\end{Exercise}

\subsubsection{} \label{s_tOvir_factors} 
Now let $X$ be a nonsingular threefold and let
$$
\pi:  \Hilb(X,n)\to S^n
$$
be the Hilbert-Chow map. 
To complete the proof of 
Nekrasov's formula given in Section \ref{s_proofN}, we 
need to show \eqref{bZSR}, which follows from the following 

\begin{Proposition} \label{P_tOvir_factors} 
The sequence 
$$
\cF_n = \pi_* \, \tO^\vir \in K(S^nX) 
$$
factors. 
\end{Proposition}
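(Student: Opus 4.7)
The plan is to exhibit the factorization isomorphisms \eqref{factoriz} directly from the geometry of the Hilbert-Chow map, using that all ingredients of $\tO^\vir$ split additively or multiplicatively when a subscheme decomposes as a disjoint union supported at pairwise-separated points. First I would work étale-locally over the open set $U \subset \prod_k S^{m_k}S^k X$ of Lemma \ref{l_factor}. Over $U$, a point of $S^n X$ comes equipped with a canonical partition of its support into clusters at pairwise-disjoint locations, and any subscheme $Z \subset X$ lying over such a point decomposes uniquely as $Z = \bigsqcup_{k,j} Z_{k,j}$ with $Z_{k,j} \in \Hilb(X,k)$. Thus, étale-locally on $U$,
$$
\Hilb(X,n) \times_{S^n X} U \;\cong\; \Big(\prod_k \Hilb(X,k)^{m_k}_{\textup{near}}\Big) \big/ \textstyle\prod_k S(m_k),
$$
and under this identification the Hilbert-Chow map $\pi$ is just the product of the Hilbert-Chow maps on each factor, followed by descent along the $\prod_k S(m_k)$-cover.

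The next step is to check that $\tO^\vir$ is compatible with this product structure. The virtual tangent bundle $T^\vir = \chi(\cO_X) - \chi(\cI_Z,\cI_Z)$ splits as $\bigoplus_{k,j} T^\vir_{Z_{k,j}}$, because for sheaves with disjoint supports all Ext groups vanish and hence $\chi(\cO_{Z_{k,j}},\cO_{Z_{k',j'}}) = 0$ for $(k,j) \ne (k',j')$. Consequently $\cO^\vir$, $\cK_\vir$, and the Euler characteristic $n = \sum n_{k,j}$ appearing in the prefactor \eqref{prefactor} all decompose compatibly, while the tautological sheaf $\fF$ restricts to the direct sum of universal sheaves on the factors, so $\det \pi_{\mM,*}(\fF\otimes(\cL_4-\cL_5))$ factors as an external tensor product. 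Assembling these,
$$
\tO^\vir_{\Hilb(X,n)}\,\Big|_{\textup{chart}} \;\cong\; \bigboxtimes_{k,j} \tO^\vir_{\Hilb(X,k)},
$$
and pushing forward along $\pi$ and then invariants along $\prod_k S(m_k)$ gives $\cF_n|_U \cong \bigboxtimes_k \bS^{m_k}\cF_k$ by the projection formula and finite pushforward. Compatibility with further refinements of the grouping is automatic because every construction above is natural in the partition.

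The main obstacle is bookkeeping for the square root in the definition \eqref{deftOvir} of $\tO^\vir$: the line bundle $\cK_\vir \otimes \det \pi_{\mM,*}(\fF\otimes(\cL_4 - \cL_5))$ is only guaranteed to be a square globally on $\Hilb(X,n)$ by the result of \cite{NO}, and one must verify that a choice of square root on $\Hilb(X,n)$ restricts over the chart to the external tensor product of square roots chosen on each $\Hilb(X,k)$, possibly up to a global sign. That sign is harmless — it is absorbed into the prefactor $(-1)^{(\cL_4,\beta)+n}$, which is multiplicative in $n$ and hence already respects the decomposition — so once one fixes a choice of square root on each $\Hilb(X,k)$ and uses the induced one on $\Hilb(X,n)$ over $U$, the factorization isomorphisms are unambiguous. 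With that verification in hand, Proposition \ref{P_tOvir_factors} follows, and Lemma \ref{l_factor} then produces the classes $\cG_i$ needed for \eqref{bZSR}.
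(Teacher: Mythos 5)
Your route is genuinely different from the paper's, but as written it has a gap at the load-bearing step. You infer the factorization of $\cO^\vir$ from the additive splitting of $T^\vir = \chi(\cO_X) - \chi(\cI_Z,\cI_Z)$ in K-theory; however, the virtual structure sheaf is not determined by the K-theory class $T^\vir$. It is determined by the perfect obstruction theory as an actual two-term complex mapped to the cotangent complex, and two obstruction theories with the same K-class can give different $\cO^\vir$. What is needed, and what is true, is that over $U$ the obstruction theory of $\Hilb(X,n)$ itself decomposes as the external product of the obstruction theories on the factors. Your observation that $\Ext^\bullet(\cO_{Z_{k,j}},\cO_{Z_{k',j'}})$ vanishes for sheaves with disjoint support is exactly the right input, but it must be applied to the deformation complex, not merely to its Euler characteristic. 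Separately, ``the sign is absorbed into the prefactor'' is too quick: the discrepancy between a square root of $\cK_\vir$ chosen on $\Hilb(X,n)$ and the external product of square roots on $\prod_k \Hilb(X,k)^{m_k}$ is a locally constant sign that may depend on the cluster type $(m_k)$, while the prefactor $(-1)^n$ depends only on $n$; a mismatch across cluster types would break the compatibility with refinements that Lemma~\ref{l_factor} requires.

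The paper proves the proposition by a rather different and more hands-on argument: it uses the critical-locus presentation of $\Hilb(\C^3,n)$ inside the smooth ambient space $\mMh=\Hilb(\Free_3,n)$, with $\tO^\vir$ realized as a twisted de~Rham complex for $\phi(\bX)=\tr(\bX_1\bX_2\bX_3-\bX_1\bX_3\bX_2)$. Over the locus where the spectrum of $\bX_1$ splits into disjoint blocks, the off-diagonal entries of $\bX_2,\bX_3$ contribute Morse-type quadratic terms $uv$ to $\phi$ whose weights pair to $\bk^{-1}$, and the corresponding Koszul complex is exact away from top degree; those directions therefore collapse, exhibiting $\tO^\vir|_U$ as the external tensor product up to an even shift. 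This sidesteps abstract obstruction-theory compatibility and controls the sign explicitly, at the cost of being tied to the matrix-model description of $\Hilb(\C^3)$. Your argument, once the obstruction-theoretic and sign steps are actually carried out, would have the advantage of applying uniformly to any smooth threefold without invoking that presentation.
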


\noindent  
There is, clearly, something to check here, because, 
for example, this sequence would not factor without the 
minus sign in \eqref{pref_points}. 

\begin{proof}
Recall from Section \ref{S_Hilb3} that 
$$
\tO^\vir = \dots \xrightarrow{\,\, d\,\,} 
\bk^{-\frac{\dim}{2} +i} \, \Omega^i_{\mMh} 
\xrightarrow{\,\, d\,\,} \, \bk^{-\frac{\dim}{2} +i+1}
\Omega^{i+1}_{\mMh} \xrightarrow{\,\, d\,\,}  
$$
where $i$ is also the cohomological dimension and 
$$
d \omega = \bk \, d\phi \wedge \omega \,. 
$$
Here 
$$
\phi(\bX) = \tr \left( \bX_1 \bX_2 \bX_3 - 
\bX_1 \bX_3 \bX_2 \right)
$$
is the function whose critical locus in $\mMh$ is the Hilbert scheme. 

Let $U$ be the locus where 
the spectrum of $\bX_1$ can be decomposed into 
two mutually disjoint blocks of sizes $n'$ and $n''$, respectively. 
This means $\bX_1$ can be put it in the form 
$$
\bX_1 = 
\begin{pmatrix}
  \bX_1'  & 0  \\
0 & \bX_1'' 
\end{pmatrix}
$$
up-to conjugation by 
$GL(n') \times GL(n'')$ or 
$S(2) \ltimes GL(n')^2$ if $n'=n''$. Thus block off-diagonal elements 
of $\bX_1$ and of the gauge group are eliminated simultaneously. 

If $\lambda_i'$ and $\lambda''_j$ 
are the eigenvalues of $\bX_1'$ and $\bX_1''$ respectively, then 
as a function of the off-diagonal elements of $\bX_2$ and $\bX_3$ 
the function $\phi$ can be brought to the form 
$$
\phi = \sum_{ij} (\lambda'_i - \lambda''_j) \, \bX_{2,ij} \bX_{3,ji}
+ 
\dots \,,
$$
and thus has many Morse terms of the form 
$$
\phi_2(u,v) = u v
$$
where the weights of $u$ and $v$ multiply to $\bk^{-1}$. 
For the Morse critical point on $\C^2$, the complex 
$$
\bk^{-1} \cO \xrightarrow{\,\, \bk \, d\phi_2 \wedge \,\,}  
\Omega^1 \xrightarrow{\,\, \bk \, d\phi_2\wedge\,\,}  \bk \, \Omega^2
$$
is exact, except at the last term, where its cohomology is
$\cO_{u=v=0}$. This is also true if we replace $\C^2$ by 
a vector bundle over some base because the existence of a Morse 
function forces the determinant of this bundle to be 
trivial, up to a twist 
by $\bk$. 

Therefore, all off-diagonal matrix elements of $\bX_2$ and
$\bX_3$ are eliminated and $\tO^\vir$, restricted to $U$, 
is, up to an even shift,
 the tensor product of the corresponding complexes for  
$\Hilb(X,n')$ and $\Hilb(X,n'')$.

\end{proof}

\section{More on quasimaps}\label{s_more} 

\subsection{Balanced classes and square roots}

\subsubsection{}

Let $\cK_C$ be the canonical bundle of the domain $C$. For our 
specific domain $C\cong\bP^1$, we have, equivariantly, 
$$
\cK_C - \cO_C = -\cO_{p_1} - \cO_{p_2} 
$$
where $p_1,p_2 \in C$ are the fixed points of a torus in $\Aut(C)$. 
It is customary to choose $\{p_1,p_2\} = \{0,\infty\}$.

\subsubsection{}\label{s_polar} 

The choice of a Lagrangian subspace $M$, namely \eqref{repM}, inside
the symplectic representation $T^*M$ of $G$ determines
a \emph{polarization} 
\begin{equation}
  \label{T12}
  T^{1/2} X = M - \sum_i \End(V_i) 
\end{equation}
which, by definition, is an equivariant K-theory class such that
$$
T X = T^{1/2} X + \hbar^{-1} \, \left(T^{1/2} X\right)^\vee
$$
in $K(X)$.  The chosen polarization induces a virtual bundle  
$$
\cT^{1/2} = \cM - \sum_i \cHom(\cV_i,\cV_i) 
$$
over $C\times \QM(X)$. 

\begin{Lemma}\label{lsquare} We have
  \begin{equation}
 T_\vir = \sum_{i=1}^2 \cT^{1/2} \big|_{p_i}+
H- \hbar^{-1} H^\vee \,, \label{TvirP}
\end{equation}
where $H=\Hd(\cT^{1/2} \otimes \cK_C)$, equivariantly with 
respect all automorphisms of $C$ and $X$ that preserve 
$p_1$, $p_2$, and the symplectic form. 
\end{Lemma}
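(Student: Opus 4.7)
The plan is to massage the virtual tangent bundle formula \eqref{TvirQM} into an expression purely in terms of $\cT^{1/2}$, then invoke Serre duality on $C=\bP^1$ together with the standard restriction short exact sequence $0\to\cK_C\to\cO_C\to\cO_{p_1}\oplus\cO_{p_2}\to 0$ to recognize the three terms on the right-hand side of \eqref{TvirP}.

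First I would reduce \eqref{TvirQM} to a formula purely in $\cT^{1/2}$. Since $C$ is a curve, higher Ext vanishes and $\Extd(\cV_i,\cV_i)=\Hd(\cHom(\cV_i,\cV_i))$. As the class $\cHom(\cV_i,\cV_i)=\cV_i^\vee\otimes\cV_i$ is self-dual, the decomposition $\cM=\cT^{1/2}+\sum_i\cHom(\cV_i,\cV_i)$ (and the analogous decomposition for $\cM^*$) collapses \eqref{TvirQM} to
\begin{equation*}
T_\vir = \Hd(\cT^{1/2}) + \hbar^{-1}\,\Hd\bigl((\cT^{1/2})^\vee\bigr).
\end{equation*}

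Next I would invoke Serre duality on $C$: from $H^i(C,\cF)^\vee \cong H^{1-i}(C,\cF^\vee\otimes\cK_C)$ one gets $\Hd(\cF)^\vee = -\Hd(\cF^\vee\otimes\cK_C)$ in equivariant K-theory, the sign coming from the swap of $H^0$ and $H^1$. Applied to $\cF=\cT^{1/2}\otimes\cK_C$ this gives $H^\vee = -\Hd((\cT^{1/2})^\vee)$, so
\begin{equation*}
T_\vir = \Hd(\cT^{1/2}) - \hbar^{-1}\,H^\vee.
\end{equation*}

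Finally I would tensor the equivariant short exact sequence
\begin{equation*}
0 \to \cK_C \to \cO_C \to \cO_{p_1}\oplus\cO_{p_2} \to 0
\end{equation*}
on $C=\bP^1$ (equivariant for any subgroup fixing $p_1,p_2$, via the canonical identification $\cK_{\bP^1}\cong\cO(-p_1-p_2)$) with $\cT^{1/2}$ and push forward to $\QM$, obtaining
\begin{equation*}
\Hd(\cT^{1/2}) = H + \cT^{1/2}\big|_{p_1} + \cT^{1/2}\big|_{p_2}.
\end{equation*}
Substituting this into the previous display yields \eqref{TvirP}. The only point that requires care is the equivariance of $\cK_{\bP^1}\cong\cO(-p_1-p_2)$; this is verified by matching weights at the two fixed points ($\cK_C|_{p_i}=T^*_{p_i}C$ on both sides), and is precisely where the hypothesis of preserving $p_1,p_2$ (and the symplectic form, so that the $\hbar$-weighting in the formula remains intact) enters. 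No step is really hard in isolation, but keeping the equivariant bookkeeping straight throughout steps two and three is the main obstacle.
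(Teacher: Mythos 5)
Your proposal is correct and follows the same route as the paper: the paper's proof is simply the one-line invocation of Serre duality $\Hd(\cM\otimes\cK_C)^\vee = -\Hd(\cM^*)$ together with the equivariant K-theory identity $\cK_C - \cO_C = -\cO_{p_1}-\cO_{p_2}$ on $C\cong\bP^1$ which is stated immediately before the lemma. You spell out the same computation, passing through $\cT^{1/2}$ rather than $\cM$, which is a harmless reorganization of the algebra.
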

\begin{proof}
  Serre duality gives
$$
\Hd(\cM \otimes  \cK_C)^* = - \Hd(\cM^*) \,,
$$
whence the conclusion. 
\end{proof}

Naturally, a formula similar to \eqref{TvirP} 
 exists for an arbitrary curve $C$ with 
the canonical divisor replacing $-p_1-p_2$. 

\subsubsection{}
\begin{Definition}\label{def_balanced}
  We say that a K-theory class $\cF$ is \emph{balanced} if 
$$
\cF =\cG - \hbar^{-1} \cG^\vee  
$$
for some K-theory class $\cG$. 
\end{Definition}

\noindent 
Lemma \ref{lsquare} may be rephrased to say that $T_\vir$ 
equals the polarization at the marked points modulo balanced 
classes.

\subsubsection{}

Lemma \ref{lsquare} implies 
$$
\frac{\det T_\vir}{ \det \cT^{1/2} \big|_{p_1}  \det \cT^{1/2} \big|_{p_2}}  = 
\hbar^{\rk H} (\det \Hd(\cT^{1/2} \otimes \cK_C) )^2 \,,
$$ 
which is a square if we replace $\Ct_\hbar$ by its double cover. We
define 
\begin{equation}
\tO_\vir = \cO_\vir \otimes \left( \cK_\vir \, 
\frac{\det \cT^{1/2} \big|_{p_2} }{ \det \cT^{1/2} \big|_{p_1}  }
\right)^{1/2} \,.\label{def_tO}
\end{equation}
where $\cK_\vir = \det^{-1} T_\vir$. 

\subsubsection{}

{}From \ref{s_aroof}, recall the function 
$$
\aroof(x) = \frac{1}{x^{1/2} - x^{-1/2}}\,,
$$
which we extended to $\cG\in K(X)$ by the rule
$$
\aroof(\cG) = \prod_{\textup{Chern roots $x_i$ of $\cG$}} \aroof(x_i) \,,
$$
assuming a square root of $\det\cG$ exists and has been fixed. 
As we already saw e.g.\ in \eqref{tOvirHilbloc}, the contribution of
$\tO_\vir$ to localization formulas is, up to small details, 
$\aroof(T_\vir)$. 

The key technical point about Lemma \ref{lsquare} is that for any 
balanced $K$-theory class we have 
$$
\aroof\left(H-\hbar^{-1} H^\vee\right) = ( - \hbar^{-1/2})^{\rk H} 
\prod_{x_i} \frac{1-\hbar x_i}{1-x_i} \,,
$$
where $x_i$ are the Chern roots of $H$. This rational function 
remains \emph{bounded} as $x_i^{\pm} \to \infty$, which will be 
the essential step of several rigidity arguments below.  

\subsubsection{}

This technical point is the reason we work with $\tO_\vir$ and not 
$\cO_\vir$. It is also the reason we work with quasimaps and not with 
other moduli spaces of rational curves in $X$.

For $\cO_\vir$, without 
the square root of the virtual canonical, the rigidity arguments used
e.g.\ in the proof of Nekrasov's formula breaks down, and it is a
challenge to derive a reasonable formula for the series 
\eqref{_Zovir}. In fact, even the analog of Exercise
\ref{e_qbinom} for $\cO_\vir$ in place of $\tO_\vir$ appears
problematic. 

\subsubsection{}
In particular, it is  important in Lemma \ref{lsquare} that the
duality holds not just with respect to the automorphisms 
of the target $X$ of the quasimaps, but also with respect to the 
automorphisms of the 
domain $C$ that preserve the two marked points. 
 This is because later, when 
we talk about relative quasimaps, we will need to 
take quotients with respect to $\Aut(C,p_1,p_2)$. For duality 
to descend to the quotient, it needs to hold
$\Aut(C,p_1,p_2)$-equivariantly for ordinary quasimaps.

\subsubsection{}\label{s_M0n}

Now suppose that instead of quasimaps to a Nakajima variety $X$ we
take the moduli space $\Mbar_{0,2}(X)$ of stable  
maps 
$$
f: C \to X \,,
$$
where $C$ is a 2-pointed rational curve. 
The essential component of its deformation 
theory is again $\Hd(C,f^* TX)$ and following the argument of 
Lemma \ref{lsquare} we can write 
$$
\Hd(C,f^* TX)  = \textup{like in \eqref{TvirP}} + \Delta
$$
where 
$$
\Delta=\Hd(C,f^*\cT^{1/2}\otimes (\cO(-p_1-p_2)-\cK_C)) \,.
$$
This discrepancy class $\Delta$ is nontrivial K-theory class on 
$\Mbar_{0,2}(X)$ supported on the divisor $D$ where $\cK_C \not\cong
\cO(-p_1-p_2)$. This divisor is formed by curves of the form 
$$
C = C' \cup C''
$$
where $C'$ is the minimal chain of rational curves containing $p_1$ and
$p_2$ and $C''\ne \varnothing$. For example, we have the following 

\begin{Lemma}\label{lsquare2} The multiplicity of $\det \Delta$ along 
a component of $D$ equals minus the 
degree of the polarization $\cT^{1/2}$ 
on the component $C''$. 
\end{Lemma}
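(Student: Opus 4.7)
My plan is to reduce the computation to a short exact sequence of line bundles on the universal curve, combined with a Riemann--Roch computation on $\bP^1$ and the standard identity $\det i_*\cG = \cO(n D)$ for pushforward of a virtual rank $n$ class along a smooth divisor $i:D\hookrightarrow \Mbar_{0,2}(X)$.

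I would work in a one-parameter family $\pi: \cC \to B$ transverse to the component of $D$ in question, with smooth total space, smooth generic fibers, and nodal central fiber $C_0 = C' \cup_q C''$ with $p_1, p_2 \in C'$. The residue pairing gives a distinguished global section of $\pi_*(\cK_\pi(p_1+p_2))$: on each smooth fiber it is the unique (up to scale) meromorphic 1-form with residues $+1$ and $-1$ at $p_1$ and $p_2$. On the nodal fiber we have $h^0(C_0, \cK_{C_0}(p_1+p_2)) = 1$ and $h^1 = 0$: such a section must vanish on $C''$ (forced by $\cK_{C''}(q)\cong \cO_{\bP^1}(-1)$) and on $C'$ it is the unique section of $\cO_{C'}(q)$ vanishing at $q$. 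By cohomology and base change this yields a morphism $\alpha: \cO(-p_1-p_2) \to \cK_\pi$ on $\cC$ that is an isomorphism on smooth fibers.

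The key step is identifying $\Coker(\alpha)$. Since $\cK_\pi|_{C''} \cong \cO_{\bP^1}(-1)$, the restriction $\alpha|_{C''}$ vanishes identically, so $\alpha$ vanishes along the divisor $C''_\textup{univ} \subset \cC$ whose fibers over $D$ are the extra components. A local computation in the node model $\{xy = t\}$, with $\cO(-p_1-p_2)$ trivialized by $1$ and $\cK_\pi$ by $dx/x$, shows $\alpha = x \cdot u$ with $u$ a unit: the vanishing along $\{x=0\}$ is forced, and being an isomorphism on smooth fibers (where $x$ is a unit) forces $u$ to be a unit. Hence the vanishing along $C''_\textup{univ}$ is of order exactly one, giving the exact sequence
\[
0 \to \cO(-p_1-p_2) \xrightarrow{\alpha} \cK_\pi \to \cK_\pi \otimes \cO_{C''_\textup{univ}} \to 0.
\]

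Substituting into the definition of $\Delta$ yields $\Delta = -R\pi_*(f^*\cT^{1/2} \otimes \cK_\pi \otimes \cO_{C''_\textup{univ}})$, a $K$-theory class on $\Mbar_{0,2}(X)$ supported on the component of $D$. Over a generic fiber $C''\cong\bP^1$ of $C''_\textup{univ} \to D$, the sheaf $f^*\cT^{1/2}|_{C''} \otimes \cK_{C''}(q)$ has rank $r$ and total degree $d''-r$ with $d'' := \deg_{C''}(f^*\cT^{1/2})$, so Riemann--Roch on $\bP^1$ gives Euler characteristic $d''$. Since $\Delta$ is then minus the pushforward from $D$ of a perfect complex of virtual rank $d''$, the formula $\det i_*\cG = \cO((\rk\cG)\,D)$ (which follows from writing $i_*\cG = \tilde\cG - \tilde\cG(-D)$ in $K$-theory for any local extension $\tilde\cG$) gives $\det \Delta = \cO(-d''\, D)$, so the multiplicity of $\det\Delta$ along this component of $D$ equals $-d''$, as claimed. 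The main technical point is the local analysis of $\alpha$ at the node: verifying the order-one vanishing along $C''_\textup{univ}$ and confirming that no spurious divisor components appear.
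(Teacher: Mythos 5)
Your argument is correct and lands on the same geometric identity as the paper's proof, but packages it differently. The paper writes a transverse family as a blowup of the trivial family $C\times B$ at a point away from $p_1,p_2$ and reads off directly that $\cK_\pi=\cK_{\bfC}\otimes\pi^*\cK_B^{-1}\cong\cO_{\bfC}(-p_1-p_2+C'')$ from the canonical bundle of a blowup, so that $\cO(-p_1-p_2)-\cK_\pi=-\cO_{C''}(-1)$ in K-theory; then one applies Riemann--Roch on $C''\cong\bP^1$ exactly as you do. You instead construct the comparison map $\alpha:\cO(-p_1-p_2)\to\cK_\pi$ from the relative residue section, and then identify its vanishing divisor by a local computation in the node model $xy=t$; this reproduces the same short exact sequence and the same conclusion. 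Your route is a bit longer but arguably more robust (it does not presuppose the specific blowup presentation of the family and would apply verbatim to any one-parameter smoothing with smooth total space), whereas the paper's is shorter because it chooses a particularly convenient model of a transverse slice. One small point worth tightening in your local analysis: after writing $\alpha = x\cdot u$, you should note that $u$ cannot vanish along $\{y=0\}=C'$ either, since the restriction of $\alpha$ to $C'$ is the unique section of $\cO_{\bP^1}(1)$ vanishing to order exactly one at $q$; alternatively you could avoid the local computation entirely by observing that the zero divisor of $\alpha$, as a section of $\cO(C'')$, must be exactly $C''$ because $(m-1)C''\not\sim 0$ for $m\ge 2$ (as $C''^2=-1$).
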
 

\begin{proof}
We may compute this multiplicity on the stack of 2-pointed rational 
curves with a vector bundle $V$. The vector bundle $V$ will 
generalize the pull-back of
polarization $\cT^{1/2}$ to $C$. 

To write a curve in this stack that
intersects $D$ transversally, we take a trivial 
family $C \times B$ with base $B$ and blow up a point in the fiber $C \times \{b\}$ 
away from $p_1$ and $p_2$, see Figure \ref{f_component_no_p}. 
\begin{figure}[!htbp]
  \centering
   \includegraphics[scale=0.64]{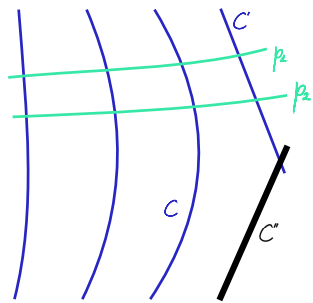}
 \caption{A curve $C$ sprouts off a tail $C''$.}
  \label{f_component_no_p}
\end{figure}
We denote by 
$$
\pi: \bfC \to B 
$$
the corresponding family of $2$-pointed rational curves. The dualizing 
sheaves of the fibers are obtained by restricting 
$$
\cK_{\bfC} \otimes \pi^* \cK_B^{-1} \cong \cO_{\bfC}(-p_1-p_2+C'')
$$
to the fibers, where $p_i$ denote the sections of $\pi$ given by
marked points and $C''\subset \bfC$ is the exceptional divisor.  We have 
$$
\cO_{\bfC}(-p_1-p_2) - \cO_{\bfC}(-p_1-p_2+C'')=-\cO_{C''}(-1) 
$$
and 
$$
\pi_* \left(V \otimes \cO_{C''}(-1)\right) = \left(\deg V\big|_{C''} \right) \cdot 
\cO_b
$$
by Riemann-Roch. This equals the degree of the bundle 
$\det \pi_* \left(V \otimes \cO_{C''}(-1)\right)$ on $B$, and hence
minus the multiplicity of the corresponding component. 
\end{proof}

\begin{Exercise}
What will happen in the above computation if we blow up one the 
points $p_1$ or $p_2$ ? 
\end{Exercise}

While $\det \cT^{1/2}$ may be a square in $\Pic(X)$ which will make
$\det \Delta$ also a square by Lemma \ref{lsquare2}, there is nothing 
selfdual about the class $\Delta$ which would allow the rigidity
arguments to go through. A further modification of the enumerative 
problem is required for that.

\subsection{Relative quasimaps in an example}\label{s_qm_example}

\subsubsection{}

The evaluation at $p\in C$ in \eqref{evp} is a rational map
$$
\ev_p: \QM
\dashrightarrow X 
$$
defined on the open set $\QM_\textup{nonsing $p$}$ of quasimaps nonsingular at $p$. 
Moduli spaces of relative quasimaps is a resolution of this map, 
that is,  they fit into a diagram 
\begin{equation}
  \label{eq9}
  \xymatrix{
& \QM_\textup{relative $p$}\ar[dr]^{\ev}\\
\QM_\textup{nonsing $p$} \ar[rr]\ar@{^{(}->}[ru]&& X 
}
\end{equation}
with a \emph{proper} evaluation map to $X$.  Their
construction follows an established path in Gromov-Witten 
theory \cite{Li1,Li2} and, later, DT theory \cite{LiWu}. 

\subsubsection{}

The basic idea behind relative quasimaps may be explained in the 
most basic example of quasimaps to 
$$
X = \C/ \Ct \,.
$$
{}From definitions
$$
\QM(C \to \C/ \Ct) = \{(\cL,s)\}\,,
$$
where $\cL$ is a line bundle on $C$ and 
$$
s: \cO_C \to \cL 
$$
is a section which is not identically zero. This gives 
$$
\cL = \cO(D) 
$$
where $D=(s) \in S^d C$ is the divisor of $s$ and $d=\deg \cL$. Thus
$$
\QM(C \to \C/ \Ct) = \bigsqcup_{d\ge 0} S^d C \,.
$$

\subsubsection{}
The set $\QM_\textup{nonsing $p$}$ is formed by divisors $D$ disjoint from
$p\in C$. Relative quasimaps compactify this set by allowing the curve
$C$ to break when the support of $D$ approaches $p$. In the language 
of algebraic geometry this means the following. 

Let $D_t$ be a flat $1$-parameter family of divisors parametrized by 
$t\in B\cong \C$, such that 
$$
p \in D_0\,, \quad p\notin D_t, \quad \textup{for $t$ generic} \,.
$$
We may assume that $p=0 \in C\cong \C$, in which 
case 
$$
D_t = \{f(x,t)=0\}
$$
with 
\begin{equation}
f(0,0)=0\,, \quad f(0,t) \ne 0 \,, \quad f(x,0)\ne 0
 \label{cond_fxt} \,. 
\end{equation}
We will now replace the surface $\bfC=B \times C$ by a blow-up 
$$
\bfC^\diamond \to \bfC 
$$
such that 
the proper transform of $D_t$ is disjoint from the proper transform 
of $\{x=0\}$ and from the nodes of the exceptional divisor. 

\subsubsection{}\label{s_toric_blowup}
Let 
$$
f=\sum_{n,m} f_{n,m} x^n t^m 
$$
be the (finite) Taylor expansion of $f$ at $(x,t)=0$. 
Consider the Newton diagram 
$$
\textup{Newton}(f) = \textup{Conv}\left(\{(n,m),f_{n,m} \ne 0\} \cup 
\{(\infty,0),(0,\infty)\}\right)
$$
of the polynomial $f$, see Figure \ref{f_Newton_diagram}.
\begin{figure}[!htbp]
  \centering
   \includegraphics[scale=0.5]{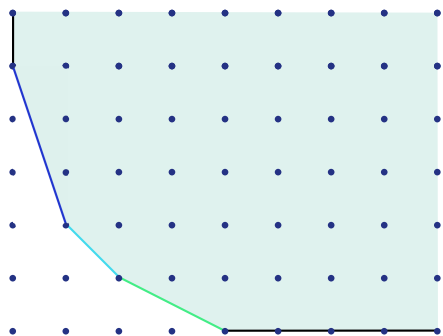}
 \caption{The Newton diagram of a polynomial of the form $
x^5+\star \, 
t x^2+\star\,  t^2 x+\star\, t^4+\dots$, where dots stand for
monomials in the shaded area and stars for nonzero numbers. The
bounded edges have (minus) slopes $\{\frac12,1,3\}$.}
  \label{f_Newton_diagram}
\end{figure}
Let $I$ be the monomial 
ideal 
$$
I = (x^a t^b)_{(a,b)\in \textup{Newton}(f)}\,,
$$
which by \eqref{cond_fxt} is nontrivial zero-dimensional ideal. 
As $\bfC^\diamond$, we take the blowup 
$$
\bfC^\diamond = \Bl_I \bfC  
$$
of $\bfC$ in the ideal $I$, that is, the closure of the graph 
of the map 
$$
\bfC \dashrightarrow \bP^{\textup{\# of generators}-1}
$$
given by the generators of
$I$. 
This is a toric surface whose toric diagram is the Newton diagram. 
The  bounded edges in this 
diagram correspond to components of the exceptional divisor. 

\subsubsection{}
It is easy to see that, by construction, that the 
proper transform of $D_t$ in $\bfC^\diamond$ satisfies the following 
properties: 
\begin{itemize}
\item[---] it is disjoint from the proper transform 
of $\{x=0\}$, 
\item[---] it intersects every component of the exceptional divisor, 
\item[---] it is disjoint from the nodes of the exceptional
  divisor. 
\end{itemize}

\begin{Exercise}
Check this. 
\end{Exercise}

\noindent 
The only 
shortcoming of $\bfC^\diamond$ is that fiber $C^\diamond$ over $0$ of the induced map 
$$
\xymatrix{
C^\diamond \ar[r] \ar[dd] & \bfC^\diamond \ar[dd] \ar[dr] \\
&& \bfC \ar[dl]^t \\
0 \ar[r] & B 
}
$$
may have multiple components and hence cannot serve as 
a degeneration of $C$.  Indeed, we have the following 

\begin{Exercise}
If $I=(x^a,t^b)$
then the multiplicity of the exceptional divisor in $C^\diamond$ is
$a/\gcd(a,b)$. 
More generally, if an compact edge in Figure \ref{f_Newton_diagram} has 
slope $a/b$ with $\gcd(a,b)=1$ then the corresponding 
component of the exceptional divisor has multiplicity $a$ in $C^\diamond$. 
\end{Exercise}

This is remedied by a passing to a degree $m$ branched cover, 
also known as \emph{base change}, 
$$
\xymatrix{
\bfC'\ar[r] \ar[d] & \bfC^\diamond \ar[d] \\
B' \ar[r] & B 
}
$$
where $t=(t')^m$ and 
$$
m = \textup{lcm}\,  \left\{ \frac{a_i}{\gcd(a_i,b_i)} \right\}_{\textup{slopes
    $\frac{a_i}{b_i}$}} \,.
$$
The central fiber $C'$ in the resulting family 
$$
\xymatrix{
C'\ar[r] \ar[d] & \bfC' \ar[d] \\
0 \ar[r] & B'
}
$$
is the curve $C$ with a chain of rational curves (the exceptional
divisor) attached, all with multiplicity one. 

\subsubsection{}

We denote by $p'\in C'$ the intersection of $C'$ with the proper 
transform of $\{x=0\}$ and by $D'_0$ the intersection of $C'$ 
with the proper transform of $D_t$. 

\begin{Exercise} 
Show $D'_0$ is disjoint from $p'$ and the nodes of $C'$. 
What is the degree of this divisor on each component of $C'$ ? 
\end{Exercise} 

\subsubsection{}\label{s_points_approach}
In simple English, the blowup $\bfC^\diamond$ and its
branched cover $\bfC'$ serve the following purpose. 

One can 
write the solutions $x_i(t)$ of the equation $f(x,t)=0$ as Puiseux series
\begin{equation}
x_i(t) = c_i t^{r_i} + o(t^{r_i})\,, \quad r_i = \frac{b_i}{a_i} \,,
\quad c_i \ne 0 \,,  
\label{x(t)} 
\end{equation}
where $i=1,\dots,\deg_x f$. We can assume them ordered 
in the decreasing order of the rates $r_i$ at which they approach 
$0$ as $t\to 0$. 

These rates are the reciprocals of the slopes in Figure \ref{f_Newton_diagram} and 
the solutions $x_i(t)$ that go to zero at the same rate will end 
up on the same component of the exceptional divisor, see
Figure \ref{f_roots}\,. 
\begin{figure}[!htbp]
  \centering
   \includegraphics[scale=0.3]{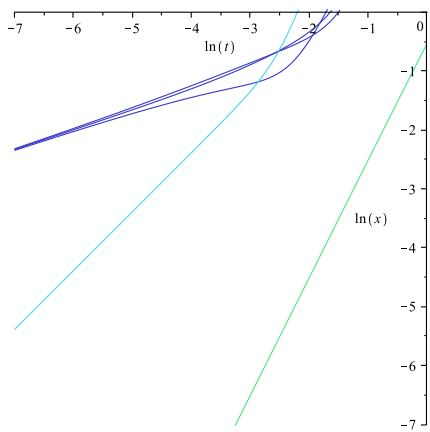}
 \caption{Logarithmic plot of $|x_i(t)|$ for a polynomial 
from Figure \ref{f_Newton_diagram} as $(x,t)\to (0,0)$. The slopes in this figure are
dual, or perpendicular, to the slopes in Figure
\ref{f_Newton_diagram}}
  \label{f_roots}
\end{figure}
Suppose, for example, that 
$$
r_1 = r_2 = r_3 > r_4 \ge \dots \ge 0 
$$
Then the component $C'_{p'}$ of $C'$ that contains $p'$ will contain 
$3$ points of $D'_0$, with coordinates $c_1,c_2,c_3$, up 
to proportionality. The up-to-proportionality business appears 
here because there is no canonical identification $C'_{p'} \cong \bP$ 
as the only distinguished points of this component are $p'$ and 
the node of $C'$.  

Put it in a different way, the rescaling
$$
t \mapsto \lambda t 
$$
acts on the leading coefficients in \eqref{x(t)} by 
$$
(c_1,c_2,c_3) \mapsto \lambda^{-r_1} (c_1,c_2,c_3), 
$$
and so acts nontrivially even on the central fiber. 

\subsubsection{}\label{s_central_fiber}

This is an important point, and so worth repeating one more 
time. In the central fiber, we have 
\begin{enumerate}
\item[---] a chain of rational curves $C'$, one end of which is 
identified with $C$ while the other contains the point $p'$,
\item[---] a divisor disjoint from $p'$ and the nodes of $C'$, and such
  that its degree on any components other than $C$ is positive, 
\item[---] this data is considered up to isomorphism which must
be identity on $C$. 
\end{enumerate}

A pointed nodal curve $C'$ is called stable if it automorphism group is 
finite. If the automorphism group is reductive, the curve is 
called \emph{semistable}. 
Semistability means some components of $C'$
contain only 2 special (that is, marked or nodal) points and the 
sequence 
$$
1 \to \left(\Ct\right)^{\textup{\# such components}} \to 
\Aut(C') \to \textup{finite group} \to 1 
$$
is exact. 

We extend 
this terminology to cover cases when one or more components of $C'$ 
are considered as rigid, equivalently, parametrized. In our example, 
the component $C\subset C'$ is rigid. This may be
reduced to the usual case by adding enough marked points to rigid 
components. 

There is a stabilization map which collapses nonrigid components with only 2 
special points. In the case at hand, $(C,p)$ is the stabilization of
$(C',p')$. 

\subsubsection{}\label{s_stages} 
Instead of constructing $\bfC^\diamond$ and then $\bfC'$ all at once, 
we could have constructed them in stages as follows. Let 
$r_1$ be the maximal rate at which $x_i(t)\to 0$, which means 
that $1/r_1$ is the minimal slope in Figure \ref{f_Newton_diagram}. Write 
$r_1 = b_1/a_1$, $\gcd(a_1,b_1)=1$, 
blow up the ideal $(x^{a_1}, t^{b_1})$, and 
pass to a branched cover of degree $a_1$. This takes care 
of the points that were approaching $p$ the fastest. Now 
they all land on the component of $C'$ that contains $p'$, 
while all other points of $D_t$ approach some point of 
$C$ as $t\to 0$, including the node of the central fiber. 

Obviously, we can deal with points approaching the node inductively, 
by looking at those that approach the fastest, doing the 
corresponding blowup, etc. 

\subsubsection{}

Looking at the list in Section \ref{s_central_fiber}, one can make 
the following important observation. The data of the divisor in 
central fiber is taken modulo the action of 
$$
\Aut(C',\{C,p\}) = \left(\Ct\right)^\textup{\# of bubbles}
$$
where bubbles refer to components of $C'$ other than $C$. The 
action of this group on divisor can and will have finite stabilizers, 
so moduli space of objects in Section \ref{s_central_fiber} is 
an orbifold. 

In fact, the need to go to branched covers precisely correlates
with these orbifold singularities. 

\begin{Exercise}
Let $a/b$ with $\gcd(a,b)=1$ be the slope of one of the 
edges in Figure \ref{f_Newton_diagram}. Show that the restriction of $D'_0$ to 
the corresponding component of $C'$ is invariant under the group 
$$
\mu_a = \{\zeta \,\big| \, \zeta^a =1\} \subset \Ct \,. 
$$
Conversely, if a divisor $D'_0$ has a nontrivial group of automorphisms, 
then there exists a family a $1$-parameter family $D_t$ producing it 
for which base change is necessary. 
\end{Exercise}

\subsection{Stable reduction for relative quasimaps}
\label{s_qm_reduction}

\subsubsection{}\label{s_def_stab_rel_QM} 
We now generalize the discussion of Section \ref{s_qm_example} 
to quasimaps to a Nakajima variety $X$. A stable quasimap 
$$
f: C \dasharrow X
$$
relative $p\in C$ is defined as a diagram 
\begin{equation}
\xymatrix{
p'\ar@{|->}[r] \ar[d] & C' \ar[d]^\pi 
\ar@{-->}^{f'}[rr]&& X\\
p\ar@{|->}[r] & C
}\label{def_rel}
\end{equation}
in which
\begin{enumerate}
\item[---] $\pi$ is the stabilization of 
a semistable curve $(C',p')$, 
\item[---] $f'$ is nonsingular at $p$ and the
  nodes of $C'$, 
\item[---] the automorphism group of $f'$ is finite. 
\end{enumerate}
Figure \ref{f_relative_quasimap} is a pictorial representation of this data.  
\begin{figure}[!htbp]
  \centering
   \includegraphics[scale=0.75]{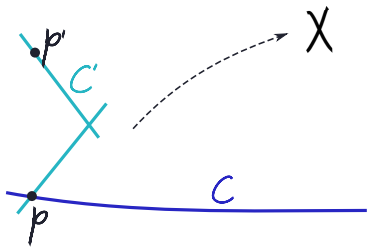}
 \caption{A quasimap relative a point $p\in C$ is a quasimap from a
   semistable curve $C'$ whose stabilization collapses a chain of 
rational curves to $p$.}
  \label{f_relative_quasimap}
\end{figure}
Note that 
nonsingularity at $p$ and the nodes implies $f'$ takes the generic 
point of each component of $C'$ to $X$. 

Two
quasimaps are isomorphic, if they fit into a diagram of the 
form 
$$
\xymatrix{
& C'_1 \ar[dd]^{\phi} \ar@{-->}^{f_1}[dr] \ar[dl]_{\pi_1}\\
C && X \\
& C'_2 \ar@{-->}_{f_2}[ur] \ar[ul]^{\pi_2}}
$$
where $\phi$ is an isomorphism which preserves the marked point.
Since 
$$
\Aut(C',\pi,p) = \left(\Ct\right)^{\textup{\# of new components}}
$$
a quasimap has a finite group of automorphism if and only if each 
component of $C'$ is either mapped nonconstantly to $X$ or 
has at least one singular point. 

\subsubsection{}
A quasimap to $X$ may be composed pointwise with the
 projection to the affine 
quotient 
$$
X_0 = \mu^{-1}(0)/G = \Spec \left(\textup{$G$-invariants} \right)\,,
$$
which has to be
constant since $X_0$ is affine. This gives a map $\QM(X) \to X_0$, and 
similarly for relative quasimaps. 

By a general result of \cite{CKM}, the moduli space of both ordinary and
relative quasimaps is proper over $X_0$. Our goal 
in this section is to get a feeling for how this works and, in 
particular, to explain the logic behind the definition of 
a relative quasimap.

\subsubsection{}

The key issue here is that of completeness, which means that
 we should be 
able to fill in central fibers for maps 
$$
g: B^\times \to \QM(X)_\textup{relative $p$}\,,
$$
where $B^\times = B \setminus \{0\}$ and $B$ is a smooth affine curve 
with a point $0$, under the assumption that the 
corresponding map to $X_0$ 
extends to $B$. 

\subsubsection{}\label{s_exten_bundle} 

By definition, a map 
$$
g: B^\times \to \QM(f: C \to X) 
$$
is given by the bundles $\{g^*\cV_i\}$ and the section $f\circ g$ 
defined on 
$$
\bfC^\times = C \times B^\times \,. 
$$
The first step to find \emph{some} extension of the bundles and the
section to $\bfC$. This preliminary extension will satisfy stability
on the central fiber, but may fail nonsingularity at $p$. 

For simplicity, assume that $C$ is irreducible (otherwise, consider
each component of $C$ separately and use nonsingularity of quasimaps 
at the nodes). The map $g$ yields a rational map in the following 
diagram 
$$
\xymatrix{& X\ar[d] \\
 \bfC \ar@{-->}[ur]^{g_\textup{rational}} \ar[r] & X_0 
} 
$$
Since $X$ is projective over $X_0$, the locus of indeterminacy of 
$g_\textup{rational}$ 
has codimension $\ge 2$, which means it consists of finitely 
many points. After shrinking the curve $B$, we may assume all 
points of indeterminacy lie in the central fiber. This extends the 
bundles and the section to the generic point of the central fiber. 

On a smooth surface $\bfC$, one can extend a vector bundle from a 
complement $U$ of a finite set of points by just pushing forward the 
sheaf of sections under $U\to \bfC$ (e.g.\ because such 
push-forward is reflexive and thus has to be locally free). This 
procedure automatically extends sections of any 
associated vector bundle. One usually talks about Hartogs-type
theorems when discussing such extensions. 

In summary, we have extended the quasimap to the central fiber, except 
it may be singular at $p$.

\subsubsection{}

One may consider quasimaps to general quotients of the form 
$$
X = W \rdd G
$$
and for properness \cite{CKM} need to assume that $W$ is affine and $G$ 
is reductive. Both assumptions are essential for the ability to 
extend a $G$-bundle and the section of the associated $W$-bundle
to $\bfC$. 

Indeed, suppose $W$ is not affine. Then it may not 
be possible to extend a map to $W$ in codimension $2$, as examples
$W=\C^2\setminus \{0\}$ or $W=\bP^1$ clearly show. 
The geometric 
reason the extension will fail for any $W$ that contains a rational 
curve is that rational curves in $W$, parametrized or not, will break
in families like the curves in Figures \ref{f_component_no_p} or 
\ref{f_degeneration_4pts}. Indeed,
take a parametrized curve, precompose with an automorphism, and 
send the automorphism to infinity. Or, take 
a double cover of a rational curve and make the branch points collide,
etc. Special fibers of the corresponding map $\bfC\to W$ are thus
forced to be reducible.

On the other hand, suppose $G$ is not reductive. 
After choosing a faithful linear representation, 
 a $G$-bundle may be seen as rank $n$ vector bundle 
together with its reduction to $G \subset GL(n)$. Such reduction 
is a section of the associated $GL(n)/G$-bundle and by Matsushima and
Onishchik 
$$
\textup{$G$ is reductive} \, \Leftrightarrow 
\, 
\textup{$GL(n)/G$ is affine} \,,
$$
 see e.g.\ Section 4.7 in
\cite{VinPop}.
And so if $G$ were not reductive, there would be $G$ bundles that cannot be extended 
in codimension 2. 

Stable reduction for moduli spaces $\Mbar_{0,n}(X)$ of stable pointed
maps deals with these issues by doing enough blowups, in the course
of which the central fiber may become an arbitrary tree of rational 
curves.  As 
explained in Section \ref{s_M0n}, having an arbitrary tree of
rational curves is precisely what we are trying to avoid. We need 
to keep the central fiber a chain of rational curves. 

\subsubsection{}\label{s_ext_bun_sing}

Also note that for a normal but singular surface $S$ there may be 
no way to extend a vector bundle on nonsingular locus to a vector
bundle on all of $S$. 

For instance, the singularities of the surface 
from Figure \ref{f_Newton_diagram} are the $2$-dimensional 
\emph{toric} singularities, which may be described as follows 
\begin{align}
  \label{toric sing1}
  S &=\Spec \C C \,, \quad C = \textup{cone in $\Lambda\cong\Z^2$} \,, \\
    & = \C^2/\textup{finite abelian subgroup $\Gamma 
\subset GL(2)$} \,,  \notag \\
    & = \C^2/ \diag(\zeta,\zeta^a) \,, \quad \zeta^n=1, \gcd(n,a)=1\,, 
\label{toric sing2}
 \end{align}
where $\C C$ in \eqref{toric sing1} denotes the semigroup algebra of 
a cone $C$ and
\eqref{toric sing2} is the explicit description of $\Gamma/\Gamma'$ 
where $\Gamma'\subset \Gamma$ is the subgroup generated by complex
reflections, i.e. elements with codimension 1 fixed loci. The
invariant ring in \eqref{toric sing2} is of the form $\C C$ where 
\begin{equation}
 \Lambda = \langle (n,0),(-a,1) \rangle \subset \Z^2 \,, \quad 
 C=\Lambda \cap \Z_{\ge 0}^2 
\end{equation}
and any cone in a rank 2 lattice can be written like that.

\begin{Exercise}
 Check the claim about the singularities and find Weil divisors on these surfaces which 
are not Cartier divisors, that is, cannot be defined by one equation 
near the singularity. Conclude that there are line bundles on the 
nonsingular locus such that their pushforward to all of $S$ is not 
locally free. 
\end{Exercise}

Another way to state the content of the above exercise is the
following. Any bundle on $\C^2 \setminus \{0\}$ is trivial as 
the restriction of its extension to $\C^2$. For the singularity 
\eqref{toric sing2}, 
we have 
$$
S \setminus \{0\} = \left(\C^2 \setminus \{0\}\right) \big/ \mu_{n}
\,,
\quad 
\mu_n =\{ \diag (\zeta,\zeta^a) \} \,. 
$$
Therefore, a vector bundle on $S\setminus\{0\}$ may
be seen as a representation of $\mu_{n}$, and it extends to a vector
bundle on the singular surface if and only if this representation is
trivial. 

\subsubsection{}

After these remarks, we return to our task of making the 
quasimap in the central fiber nonsingular at the marked point $p$
by pulling it back under a well-chosen map 
$$
\bfC_1 \to \bfC \,, 
$$
which will be a combination of a (weighted) blowup, that is, 
the blowup of an ideal of the form 
$$
I=(x^a,t^b)\,, \quad \gcd(a,b)=1 \,, 
$$
and a degree $a$ branched cover $t=(t')^a$, as in Section
\ref{s_qm_example}.  The central fiber 
$$
C_1 = E_1 \cup_{p_1} C 
$$
is the union of the exceptional divisor and the component isomorphic to
$C$. We denote by $p_1$ the node of the central fiber. The marked
point $p\in C_1$ lies on $E_1$, it is the intersection of $E_1$ with
the proper transform of $\{x=0\}$, see Figure
\ref{f_weighted_blowup}. 
\begin{figure}[!hbtp]
  \centering
   \includegraphics[scale=0.5]{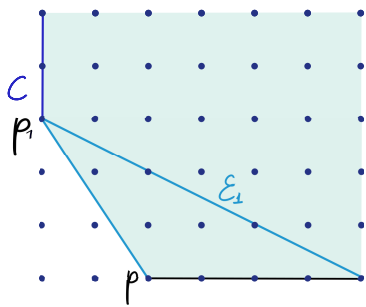}
 \caption{A weighted blowup at slope $3/2$ followed by a base change:
   we blow up of the 
ideal $I=(x^3,t^2)$ and set $t=(t')^3$.}
  \label{f_weighted_blowup}
\end{figure}

Since the vertical map in 
$$
\xymatrix{& &X\ar[d] \\
\bfC_1\ar[r] \ar@{-->}[urr]^{g_{1,\textup{rational}}} & \bfC \ar@{-->}[ur] \ar[r] & X_0 
} 
$$
is projective, the rational map $g_{1,\textup{rational}}$ is defined
at the generic point of $E_1$. In English, the restriction $g_{1,\textup{rational}}$ to
$E_1$ is obtained by taking the lowest degree terms with respect to 
the grading defined by 
$$
\deg t' =1\,, \quad \deg x=b \,. 
$$
These lowest degree terms are then a function of the coordinate $x/(t')^b$
on $E_1$. 

If the slope $a/b$ is too small, then the map
\begin{equation}
E_1 \dasharrow X \label{E1toX}
\end{equation}
is a constant map which contracts $E_1$ to 
$\lim_{t\to 0} g(0,t)$. (Why does this limit exist ?) Moreover, this 
map is nonsingular away from the node $p_1$. We choose $s$
to be the minimal slope for which this doesn't happen, that is, 
the map \eqref{E1toX} is either nonconstant or singular at some
point of $E_1 \setminus \{p_1\}$. It can be seen that
\begin{enumerate}
\item[---] a minimal slope exists, 
\item[---] the 
minimality of the slope implies $g_1$ is nonsingular at $p$.
\end{enumerate}
Indeed, if $g_1$ were singular at $p$, we could get something nontrivial with 
a further blowup, which would then have a smaller slope, etc. 

Picking a minimal slope means focusing on singularities that approach
the marked point the fastest as $t\to 0$, like in Section
\ref{s_points_approach}.

\subsubsection{}

With the singularities of $g_1$ at $E_1 \setminus \{p_1\}$ we deal 
as in Section \ref{s_exten_bundle} to get a quasimap defined on 
all of $E_1 \setminus \{p_1\}$. 

What remains is the singularity at $p_1$. We deal with it inductively,
by blowing up this point at the smallest nontrivial slope, and so on. As the 
degree of the map is consumed at each step, the process will terminate 
in finitely many steps. 

The inductive step in this process is the data of a quasimap on 
$\bfC_{n} \setminus \{p_n\}$, where $p_n$ is the $n$th node of the 
central fiber. The point $p_n$ is a singularity of $\bfC_{n}$ and, in 
light of the discussion in Section  \ref{s_ext_bun_sing}, we don't 
attempt to extend the bundles $\cV_i$ to $p_n$. Eventually, the 
quasimap will be nonsingular at all nodes and  the bundles $\cV_i$
will be extended by pull-back from $X$. 

\subsubsection{}

Note that unnecessary blowups, like blowing up at too small a
slope, or blowing up a node which is already nonsingular, precisely 
produce quasimaps which are constant and nonsingular along 
one of the components. Such quasimaps have continuous 
automorphims and so violate the definition of a stable 
relative quasimap from Section \ref{s_def_stab_rel_QM}

\subsubsection{}

It is also clear that stable reduction, in the form described above, 
involves no arbitrary choices and its final result is forced on us 
by the geometry of the quasimap in a punctured neighborhood of 
$(x,t)=(0,0)$. This can be easily formalized to show that 
the moduli space of stable relative quasimaps is separated, which 
means that the limits in $1$-parameter families are unique. 

The existence and uniqueness of limits in $1$-parameter families
is equivalent, by what is known as the valuative criterion of
properness, to 
$\QM(X)_{\textup{relative $p$}}$ being proper over $X_0$. To make 
this stament precise, we need an actual moduli space for 
quasimaps, and this is where we turn our attention now.

\subsection{Moduli of relative quasimaps}

\subsubsection{}

The construction of the moduli space of stable relative 
quasimaps may be done, in some sense, by doing 
stable reduction in reverse, starting with a universal 
deformation of the curve $C'$. 

The local model for a deformation of a node is 
\begin{equation}
xy = \varepsilon \label{xyeps}
\end{equation}
where the precise flavor of the deformation problem depends on 
how much we rigidify the two coordinate axes. If we rigidify them 
both completely by adding 2 marked points to each like in Figure
\ref{f_degeneration_4pts}, then 
$$
\varepsilon \,\, \propto \,\, \textup{crossratio}(x_1,x_2; 
\frac{\varepsilon}{y_1},\frac{\varepsilon}{y_2}) - 1\,, \quad 
\varepsilon \to 0 \,, 
$$
is the local coordinate on the moduli space $\Mbar_{0,4}$. 
\begin{figure}[!htbp]
  \centering
   \includegraphics[scale=0.64]{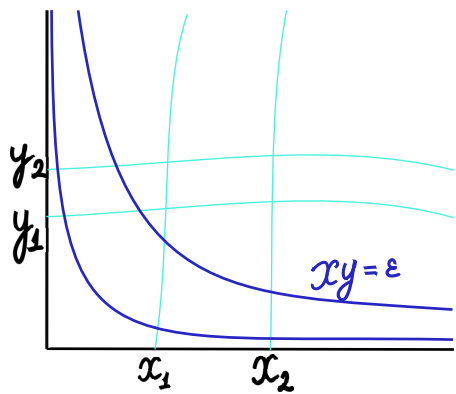}
 \caption{Degeneration of a 4-pointed curve to a stable nodal curve.}
  \label{f_degeneration_4pts}
\end{figure}

If we remove one marked point on one of the axes, then the 
nodal curve gets a $\Ct$ worth of automorphisms, while all 
$\varepsilon\ne 0$ curves are become isomorphic. We thus 
get a moduli stack of the form $\C/\Ct$, where 
$\varepsilon$ is the coordinate on $\C$. We will indicate 
this by $\C_\varepsilon$ to keep track of the name of the 
coordinate. 

The domain $C'$ of a quasimap relative one point $p$ may 
be exactly this sort of curve, with one rigid component, and 
the other component with a marked point and $\Ct$ acting 
by automorphims. A more uniform way to write its 
universal deformation is to take the trivial family $C \times
\C_\varepsilon$ and blow up the point $p$ in the central fiber. 

\subsubsection{}

The universal deformation of the domain $C'$ of a general 
relative quasimap may be written analogously. 
For concreteness, assume $C'$ has two nodes. Consider 
$$
\widetilde{\boldsymbol{C}}= 
\Bl_{\substack{
\,\,  \textup{strict transform} \\
\textup{of } \{\varepsilon_2=0\}\times \{p\} }} 
\Bl_{\{\varepsilon_1=0\}\times \{p\}} 
\C^2_{\varepsilon_1,\varepsilon_2} \times C 
$$
The toric picture of this 3-fold may be seen in Figure \ref{f_twonodes}. 
An informal, but accurate way to describe this geometry 
is to compare with an accordion, in which various sections 
of the bellows open over the divisors $\{\varepsilon_i = 0\}$. 
\begin{figure}[!htbp]
  \centering
   \includegraphics[scale=0.5]{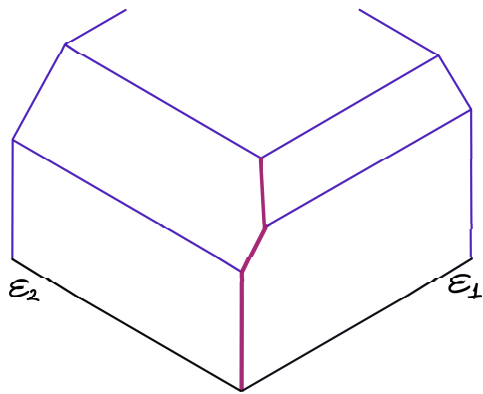}
 \caption{Universal deformation of a curve with two bubbles. 
The $i$th node remains intact over $\varepsilon_i=0$.}
  \label{f_twonodes}
\end{figure}

The fibers of the map $\bolde$ in the diagram
\begin{equation}
\xymatrix{
C'\ar[d] \ar@{^{(}->}[r]&\widetilde{\boldsymbol{C}} \ar[r]^\pi 
\ar[d]_\bolde & C \\ 
(0,0) \ar@{|->}[r] &
\C^2_{\varepsilon_1,\varepsilon_2} 
}\label{tbC}
\end{equation}
are deformations of the central fiber $C'$ and the group 
$\left(\Ct\right)^2$ acts on the base and on the fibers
by isomorphisms. This exhibits the stack of deformations of 
$C'$ as 
$$
\Def(C') = \C^2_{\varepsilon_1,\varepsilon_2} \big/ \left(\Ct\right)^2
$$
and $\widetilde{\boldsymbol{C}}$ 
as the universal family over it. 

The divisors $\varepsilon_i=0$ 
are the loci where the $i$th node remains intact.
 The strict transform of $\pi^{-1}(p)$ defines
a marked point $p'$ in the fibers of $\bolde$.

\subsubsection{}
Assume we already have a construction of ordinary, nonrelative 
quasimap moduli spaces\footnote{
In reality, \cite{CKM} construct quasimap moduli, relative or not, in one stroke 
over the moduli stack of pointed curves with a principal
$G$-bundle.}. 
Like any good moduli 
space, quasimaps moduli are defined not 
just for an individual source curve $C$, but for families 
of those, that is, for curves over some base scheme. 

An example of such family is 
the universal curve $\widetilde{\boldsymbol{C}}$ over 
the base $\C^2_{\varepsilon_1,\varepsilon_2}$ in \eqref{tbC}. 
The 
quasimaps from the fibers of $\bolde$ fit together in one 
variety over $\C^2_{\varepsilon_1,\varepsilon_2}$ and we can 
form the quotient 
\begin{equation}
\QM_{\substack{\textup{relative $p$} \\ \textup{$\le 2$ nodes}}}= 
\left\{
  \begin{matrix*}[c]
    \textup{stable quasimaps} \\
\textup{from fibers of $\bolde$ to X}
  \end{matrix*} 
\right\}  \bigg/ \left(\Ct\right)^2\label{QMC} \,. 
\end{equation}
Here stable means:
\begin{itemize}
\item[$\textup{---}$] nonsingular at $p'$ and at the nodes, 
\item[$\textup{---}$] the $\left(\Ct\right)^2$-stabilizer  is
  finite. 
\end{itemize}
The quotient \eqref{QMC} is an open set in the moduli 
spaces of quasimaps relative $p\in C$.  It contains those quasimaps 
for which the domain breaks at most twice.

\subsubsection{}
By construction, relative quasimaps come with an evaluation
map $\ev_p$ at the relative point $p'\in C'$. 

\begin{Lemma}
 The map $\ev_p$ is proper for quasimaps of fixed degree. 
\end{Lemma}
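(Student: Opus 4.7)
My plan is to verify properness via the valuative criterion: given a discrete valuation ring $R$ with fraction field $K$, a $K$-point of $\QM_{\textup{relative }p}$ whose composition with $\ev_p$ extends to an $R$-point of $X$, I would produce a unique $R$-point of $\QM_{\textup{relative }p}$ restricting to the given data. Geometrically, $\Spec R$ corresponds to a smooth pointed curve $(B,0)$ (or its Henselization), and I am given a family of stable relative quasimaps over $B^\times = B \setminus \{0\}$ together with a section $\sigma: B \to X$ whose restriction to $B^\times$ agrees with the evaluation at the relative point. The task is to fill in the central fiber so that the evaluation at $p'$ in the central fiber equals $\sigma(0)$.

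The existence of a central fiber is essentially the output of the stable reduction procedure detailed in Section \ref{s_qm_reduction}. First I would extend the bundles $\cV_i$ and the section $f$ across the codimension-$2$ locus in the surface $\bfC = B \times C'$ (or rather, the surface swept out by the domains, accounting for the extant relative bubbles) by the Hartogs-type argument of Section \ref{s_exten_bundle}, giving a preliminary quasimap on a central fiber that is stable in the sense of \cite{CKM} but possibly singular at $p'$ or the existing nodes. Then, precisely as in Section \ref{s_qm_reduction}, I perform a weighted blowup at the minimal slope at each such singular node or marked point, followed by the forced base change; since the degree of the quasimap is fixed and each blowup strictly consumes degree in the newly created bubble, the process terminates after finitely many steps, producing a stable relative quasimap extending the family. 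The fact that the evaluation at $p'$ in the resulting central fiber equals $\sigma(0)$ is automatic: the stable reduction produces a quasimap nonsingular at $p'$, so $\ev_p$ extends to a morphism on the entire filled-in family, and by separatedness of $X$ the unique extension of $\sigma|_{B^\times}$ as a morphism to $X$ is $\sigma$ itself.

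Uniqueness — the separatedness half of the valuative criterion — comes from the fact that stable reduction involves no arbitrary choices: at each inductive step, the slope of the weighted blowup and the degree of the base change are dictated by the leading-order behavior of the family at the point being resolved (the Newton polygon of Section \ref{s_toric_blowup}), and unnecessary blowups produce continuous automorphisms by contracting rigidified components, violating stability in the sense of Section \ref{s_def_stab_rel_QM}. Hence any two extensions of the $K$-family to stable relative quasimaps over $R$ must agree after passing to a common refinement, and the rigidity of the parametrized component together with the stability condition force that refinement to be trivial.

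The main obstacle I anticipate is bookkeeping rather than a conceptual gap: one must check that after base change $R \leadsto R'$ (necessary when the minimal slope has a nontrivial denominator), the resulting extended family descends back to $R$ because stable relative quasimaps are defined as orbifold objects modulo $\Aut(C',\{C,p\}) = (\Ct)^{\#\text{bubbles}}$, and the cyclic ambiguity introduced by the base change is absorbed into this automorphism group, as indicated in the exercise on $\mu_a$-symmetry. Once this is in place, properness of $\ev_p$ in fixed degree is a direct consequence of the valuative criterion together with the evident quasi-compactness of the fixed-degree strata, which themselves follow from the GIT construction of \cite{CKM}.
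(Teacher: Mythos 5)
Your proof is correct in substance but takes a markedly longer route than the paper's. The paper deduces the lemma by pure diagram chase: both $\QM_{\textup{relative }p}$ and $X$ map to the affinization $X_0$ (every quasimap has constant image there, since $X_0$ is affine), and one has the commutative triangle
$$
\xymatrix{
\QM(X)_{\textup{relative }p} \ar[rr]^{\ev_p} \ar[dr]_\pi && X \ar[dl] \\
& X_0
}
$$
in which $\pi$ is proper by \cite{CKM} and $X \to X_0$ is projective, hence separated. By the standard cancellation lemma (if $g \circ f$ is proper and $g$ is separated, then $f$ is proper), $\ev_p$ is proper. That is the entire argument.

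You instead verify the valuative criterion for $\ev_p$ from scratch, which in effect re-derives the properness of $\pi$ via the stable-reduction procedure of Section \ref{s_qm_reduction}. This is a legitimate alternative and has the merit of being self-contained — you do not black-box the \cite{CKM} result, and your argument makes visible exactly where the data of an $R$-point of $X$ gets used (it supplies the extension across the generic point of the central fiber before blow-ups, via the $X \to X_0$ projectivity you invoke implicitly when extending the rational map to $\bfC$). The cost is that your proof is essentially a proof of the stronger statement (properness of $\QM_{\textup{relative}} \to X_0$) followed by the same separatedness reduction the paper uses, just folded in rather than stated. The points you flag as ``bookkeeping'' — descent after base change, absorbed into the orbifold automorphisms $\Aut(C',\pi,p) = (\Ct)^{\#\textup{bubbles}}$, and quasi-compactness of fixed-degree strata from the GIT construction — are indeed the delicate parts of the longer route, and they are precisely what \cite{CKM} handles; once you are citing their machinery that heavily, the paper's soft argument is the economical choice.
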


\begin{proof}
Follows from the properness of the map $\pi$ in the diagram 
$$
\xymatrix{
\QM(X)_{\textup{relative $p$}} \ar[rr]^{\ev_p} \ar[dr]_\pi && X \ar[dl] \\
& X_0 
}
$$
shown in \cite{CKM} and explained in Section \ref{s_qm_reduction}. 
\end{proof}

\subsubsection{}

The deformation theory of the prequotient in \eqref{QMC} 
is given by the deformation theory \eqref{TvirQM} of 
quasimaps from a fixed domain plus the lateral movements
along the base $\C^2_{\varepsilon_1,\varepsilon_2}$, see \cite{CKM}. Here 
it is  important that the total space $\widetilde{\boldsymbol{C}}$ is 
smooth and stable quasimaps are nonsingular at the nodes. 

Taking the quotient, we get 
\begin{equation}
T_\vir \, \QM_\textup{relative} = T^{\textup{fixed domain $C'$}}_{\vir} + 
T \Def(C') 
\,,\label{Tvir_rel}
\end{equation}
where the first 
term\footnote{The standard name for this term in algebraic geometry is 
the \emph{relative} virtual tangent space, where the adjective relative 
refers to the map to the stack of domain deformations. 
Since the world relative is already infused with a very specific and 
different meaning for us, we avoid using it here.} 
is given by \eqref{TvirQM} and 
$$
\Def(C') \cong \C^{\textup{\# of nodes}} \Big/ 
\left(\Ct\right)^{\textup{\# of nodes}}
$$
with 
$$
T \Def(C')  = \sum_{\textup{nodes $p_i\in \C'$}}  
N_{\{\varepsilon_i=0\}} - \Lie \Aut(C',\pi,p')  \,.
$$
Here $\{\varepsilon_i=0\}$ is the Cartier divisor of domains with 
$i$th node $p_i$ intacts. 

\subsubsection{}

If $p_i$ is the intersection of components
$C_{i-1}$ and $C_i$ of $C'$ then \eqref{xyeps} shows 
\begin{equation}
N_{\{\varepsilon_i=0\}} = T_{p_i} C_{i-1} \otimes  T_{p_i} C_{i}
\,. \label{N_node}
\end{equation}
Since the group $\Lie \Aut(C',\pi,p')$ acts on these tangent 
lines, they descend to line bundles on the quotient, typically 
nontrivial. We will also need the line bundle 
\begin{equation}
\psi_{p} = T_{p'}^* C' \label{psi_p}
\end{equation}
in what follows. 

\subsubsection{}
After this discussion of the deformation theory for 
relative quasimaps, we define 
\begin{equation}
\tO_\vir = \cO_\vir \otimes \left( \cK^{\textup{fixed domain}}_\vir \, 
\otimes \,  
\frac{\det \cT^{1/2} \big|_{p_2} }{ \det \cT^{1/2} \big|_{p_1}  }
\right)^{1/2} \,.\label{def_tO_rel}
\end{equation}
The existence of the square root follows from 
Lemma \ref{lsquare}.

\subsection{Degeneration formula and the glue operator} 
\label{s_degen_glue} 

\subsubsection{}

Degeneration formula in Gromov-Witten theory was proven 
in \cite{Li1,Li2}, see also \cite{LiWu} for corresponding constructions in
Donaldson-Thomas
theory. In K-theoretic GW computations, there is a correction 
to gluing, discovered in \cite{Giv} and discussed in detail in \cite{Lee}. 
All these ideas are immediately applicable to quasimaps. 

The setting of the degeneration formula is the following. 
Let a smooth curve $C_\varepsilon$ degenerate to a nodal curve 
$$
C_0=C_{0,1} \cup_p C_{0,2} \,.
$$
like in \eqref{xyeps}. 
Degeneration formula counts quasimaps from $C_\varepsilon$ in terms of 
\emph{relative} quasimaps from $C_{0,1}$ and $C_{0,2}$, where in both 
cases relative conditions are imposed at the gluing point $p$. 

\subsubsection{}

The basic idea behind the degeneration formula is that as
$C_\varepsilon$ degenerates, the quasimaps $\QM(C_\varepsilon \to X)$
degenerate to quasimaps whose domain is 
a certain destabilization of $C_0$, in which the gluing point $p$ 
is replaced by a chain of rational curves, like in Figure \ref{f_springs}. 
 This is because we require
quasimaps to be nonsingular at the nodes and letting the domain 
curve develop new components is a way to keep the singularities
from getting into the node. 
\begin{figure}[!htbp]
  \centering
   \includegraphics[scale=0.64]{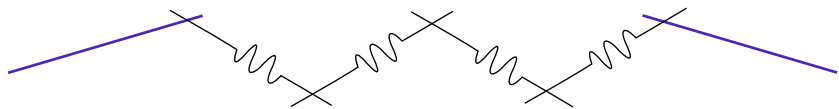}
 \caption{A semistable curve whose stabilization is the nodal curve
   $C_0$. Components with $\Ct$ automorphisms are indicated by
   springs.}
  \label{f_springs}
\end{figure}
\subsubsection{}\label{s_deg_concr}

For a concrete example, take the family \eqref{xyeps} and consider
quasimaps to $\C/\Ct$, as in Section \ref{s_qm_example}. 
A quasimap to $\C/\Ct$ is a divisor and a family of such is
given by a polynomial 
$$
f(x,y) = 0 
$$
which is not divisible by either $x$ or $y$. The problem could be 
that $f(0,0)=0$, meaning that a part of the divisor got into the 
node of $C_0$, or that the resulting quasimap $C_0 \dasharrow X$ 
is singular at the node. 

{}From Section \ref{s_toric_blowup}, we know exactly what to do. 
We take the toric blowup of the plane corresponding to the Newton 
diagram of $f$, as in Figure \ref{f_Newton_diagram}. This sends the points which 
were approaching the origin from different directions to different
components of the exceptional divisor. 

While the procedure is exactly the same, its interpretation is
different: the limiting curve is the exceptional divisor together with 
\emph{both} coordinate axes. Those are the two components of $C_0$ 
and the exceptional curves are the new components the domain had
to develop so as to keep the singularities away from the nodes. 

\subsubsection{}

This motivates defining $\QM(C_0 \to X)$ in spirit of Section 
\ref{s_def_stab_rel_QM} as the moduli spaces of quasimaps 
of the form 
\begin{equation}
\xymatrix{
C'_0 \ar[d]^\pi 
\ar@{-->}^{f'}[rr]&& X\\
C_0
}\label{def_rel2}
\end{equation}
in which
\begin{enumerate}
\item[---] the map $\pi$ collapses a chain of rational curves to the
  node of $C_0$, 
\item[---] $f'$ is nonsingular at the nodes 
of $C'_0$, 
\item[---] the automorphism group of $f'$ is finite. 
\end{enumerate}
Here the source of automorphisms is the group 
$$
\Aut(C'_0,\pi) = \left(\Ct\right)^{\textup{\# of new components}} \,. 
$$

\subsubsection{}

Domain curves of the form $C'_0$, and quasimaps from those curves, 
are commonly known as \emph{accordions}, see Figure \ref{f_accordion}. 
Indeed, they have two 
rigid components, and a chain of components rescaled by 
automorphisms. One may call those nonrigid components bellows, 
bubbles, etc. 
\begin{figure}[!htbp]
  \centering
   \includegraphics[scale=0.4]{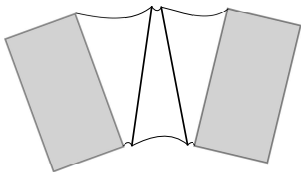}
 \caption{A poetic representation of a semistable curve which 
stabilizes to a nodal curve.}
  \label{f_accordion}
\end{figure}

\subsubsection{}
The obstruction theory is defined on the total space of the 
family 
$$
\epsilon: \QM(C_\varepsilon \to X) \to \C 
$$
and gives a sheaf whose restriction to fibers is the virtual 
structure sheaf of the fiber. 
We thus can do K-theoretic counts on any fiber by pulling 
back the corresponding point class from $\C$. In particular, 
the counts for $\varepsilon=1$ and $\varepsilon=0$ are equal. 

This is very close to what we wanted, in that it gives the counts 
of quasimaps from the generic fiber in terms of quasimaps from 
the special fiber. What remains is to separate the quasimaps 
from the special fibers into contributions of the two components. 
This is done as follows. 

\subsubsection{}

Suppose the curve $C_0'$ has $n$ nodes and let $\varepsilon_1,\dots, 
\varepsilon_n$ be smoothing parameters for each of these nodes. 
In the corresponding chart on quasimap moduli, we have the 
equality of divisors 
$$
(\varepsilon) = (\varepsilon_1 \varepsilon_2 \cdots \varepsilon_n) \,.
$$

\begin{Exercise}
See how this works in the concrete case of Section \ref{s_deg_concr}. 
\end{Exercise}

Geometrically, $\varepsilon_1 \varepsilon_2 \cdots \varepsilon_n=0$ is
the union of coordinate hyperplanes, and we have an 
inclusion-exclusion formula for the K-theory class of this union,
namely
\begin{equation}
\cO_{\varepsilon_1 \varepsilon_2 \cdots \varepsilon_n=0} = 
\sum_{\varnothing\ne S\subset \{1,\dots,n\}} (-1)^{|S|-1} 
\cO_{\bigcap_{i\in S} \{\varepsilon_i=0\}} \,, \label{O_incl_exl} 
\end{equation}
where the summation is over all nonempty subsets $S$ of hyperplanes,
that is, to a nonempty subset of nodes of $C_0'$. 

\begin{Exercise}
Prove \eqref{O_incl_exl}. 
\end{Exercise}

Geometrically, the term in \eqref{O_incl_exl} that corresponds to a
given collection of $k=|S|$ nodes computes the contribution of quasimaps 
whose domain has those $k$ nodes intact. The sum over all $S$ such 
that $|S|=k$ may be interpreted as the count of quasimaps from 
domains with $k$ marked nodes.

These quasimaps counts can be glued out of $k+1$ 
pieces as follows. 

\subsubsection{}

Let $C$ be a curve with a marked node 
obtained by gluing two curves at marked points 
$$
C = (C_1,p_1) \sqcup (C_2,p_2) \Big/ p_1 \sim p_2  \,. 
$$
A quasimap from $C$ is a quasimap from each component 
plus the requirement that it has takes well-defined and equal 
values at $p_1$ and $p_2$. In other words, we have 
a product over the evaluation maps to $X$ 
\begin{multline}
  \QM(C\to X)_\textup{marked node} = \\
 = \QM(C_1\to X)_\textup{relative $p_1$} \times_X \QM(C_2\to
 X)_\textup{relative $p_2$} 
\label{1marked_node} 
\end{multline}
and an inspection of the obstruction theory shows 
\begin{equation}
\chi(\QM(C\to X)_\textup{marked node}, \cO_\vir) = 
\chi\left(X, \ev_{p_1,*} ( \cO_\vir) \otimes \ev_{p_2,*} (\cO_\vir)\right) \,.
\label{gluing1} 
\end{equation}
The corresponding formula for the symmetrized virtual 
class $\tO_\vir$ from \eqref{def_tO} is the following. 

Note that fibers of the polarization at marked points enter
asymmetrically in the formula \eqref{def_tO}. This is very convenient 
when dealing with chains of rational curves. If the choices are 
made consistently, the polarization terms for two marked points 
glued at a node cancel. We assume this is the case, and so we are left 
with the contribution of $\cK_X$ to $\cK_\vir$. We define 
\begin{equation}
  \label{formX}
  (\cF,\cG)_X = \chi(X,\cF \otimes \cG \otimes \cK_X^{-1/2}) \,. 
\end{equation}
Then
\begin{equation}
\chi(\QM(C\to X)_\textup{marked node}, \tO_\vir) = 
\chi\left(\ev_{p_1,*} ( \tO_\vir) , \ev_{p_2,*} (\tO_\vir)\right)_X \,.
\label{gluing2} 
\end{equation}

\subsubsection{}\label{s_def_glue}

There is a parallel gluing formula for curves with $k$ marked 
nodes, except it involves a novel kind of quasimaps whose domains 
are pure accordion bellows, without rigid components. 

We define  $\QM^\text{\normalsize$\sim$}_{\textup{relative $p_1,p_2$}}$
as the moduli space of stable quasimaps with whose domain $C$ is a chain 
of rational curves joining $p_1$ and $p_2$. By definition, stability
means nonsingularity at $p_1$, $p_2$, and the nodes, together with 
being stabilized by a finite subgroup in 
$$
\Aut(C,p_1,p_2) = (\Ct)^\textup{\# of components} \,.
$$
Since a constant quasimap from a nonrigid two-pointed curve is
unstable, we make a separate definition
$$
\QM^\text{\normalsize$\sim$}_{\textup{relative $p_1,p_2$,degree 0}} =
\QM(\textup{point} \to X) = 
X \,.
$$
In other words, when bellows don't open, we define $C$ to be a point. 

We define 
\begin{align} 
\bfG &= (\ev_{p_1} \times \ev_{p_2})_* \, \tO_\vir \, z^{\deg f} \label{evevG} \\
       &= \diag \cK^{1/2}_X + O(z) \,, \notag
\end{align}
where $O(z)$ stands for the contribution of nontrivial quasimaps. 
Clearly, the counts of curves with $k$ marked nodes may be 
expressed as a certain multiple convolution of relative curve counts 
like in \eqref{gluing2} with this 
tensor. It is convenient to have an operator notation to express 
this convolution. 

\subsubsection{}

We have a natural convolution action 
$$
K(X \times X) \otimes_{K(\pt)} K(X) \to K(X)_\textup{localized}  \,.
$$
given by 
$$
(\cE,\cF) \mapsto p_{1,*} \left(\cE \otimes p_{2}^* \cF\right) \,.  
$$
For decomposable classes 
$$
K(X)^{\otimes 2} \subset K(X \times X)
$$
this action corresponds to the bilinear form 
$(\cE,\cF) \mapsto \chi(\cE\otimes \cF)$. 

In our situation, the form is twisted by $\cK^{-1/2}_X$ (which is a
pure equivariant weight, to be sure) as in \eqref{formX}, and we similarly twist the 
convolution action to 
\begin{equation}
(\cE,\cF) \mapsto p_{1,*} \left(\cE \otimes p_{2}^* \cF \otimes
  \cK^{-1/2}_X\right) \,.  \label{convolution_twisted} 
\end{equation}
With this convention (we will see in a moment that localization is not 
required) 
\begin{equation}
\label{Gas_oper}
\bfG = 1 + O(z) \in \End K(X)[[z]]
\end{equation}
as an operator on $K(X)$. We call it the \emph{gluing operator}. 
{}From \eqref{Gas_oper} we see it is invertible in $\End K(X)[[z]]$. 

\begin{Exercise}
Generalizing \eqref{gluing2} prove 
\begin{multline}
\chi(\QM(C\to X)_\textup{$k$ marked nodes}, \tO_\vir \, z^{\deg f} ) = \\ =
\chi\left((\bfG-1)^{k-1} \ev_{p_1,*} ( \tO_\vir \, z^{\deg f} ) , 
\ev_{p_2,*} (\tO_\vir \, z^{\deg f} )\right)_X \,.
\label{gluing3} 
\end{multline}
\end{Exercise}

\subsubsection{}
{} From formulas \eqref{O_incl_exl} and \eqref{gluing3} we deduce the 
following

\begin{Proposition}[\cite{Giv,Lee}]  We have 
  \begin{multline}
    \chi(\QM(C_0\to X), \tO_\vir \, z^{\deg f}  ) = \\ =
\left( \bfG^{-1} \ev_{1,*} ( \tO_\vir \, z^{\deg f} ) , \ev_{2,*}
  (\tO_\vir \, z^{\deg f} )\right)_X\,,  \label{degen_formula} 
  \end{multline}
where
$$
\ev_i : \QM(C_{0,i}\to X)_\textup{relative gluing point} \to X
$$
are the evaluation maps. 
\end{Proposition}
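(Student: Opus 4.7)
The plan is to stratify $\QM(C_0\to X)$ by the combinatorial type of the semistable accordion domain, to evaluate each stratum by iterating the single-node gluing formula \eqref{gluing2}, and then to recognize the resulting series as a geometric series that telescopes to $\bfG^{-1}$. The inverse appears as compensation for a mild over-counting: the relative quasimap moduli on each side already admits bubbling at the relative point, so gluing two full relative counts with a bubble chain in the middle would count each accordion's bubble chain multiple times, once for every way of splitting it between the two sides.

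A point of $\QM(C_0 \to X)$ has domain $C_{0,1} \cup B_1 \cup \cdots \cup B_m \cup C_{0,2}$, where the $B_j$ form a chain of $m \geq 0$ rational curves collapsed to the node of $C_0$ by stabilization. For fixed $m$, this data is canonically a triple: a relative quasimap from $(C_{0,1}, p)$ whose domain is \emph{exactly} $C_{0,1}$ (no bubbling at $p$), a pure accordion of chain length $m$ in $\QM^{\text{\normalsize$\sim$}}_{\textup{relative $p_1,p_2$}}$, and a relative quasimap from $(C_{0,2}, p)$ whose domain is \emph{exactly} $C_{0,2}$, with matching evaluations at the two ends of the chain. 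That $\tO_\vir$ decomposes multiplicatively across the $m+1$ gluing nodes follows by iterating \eqref{gluing1}--\eqref{gluing2}: the balanced-class structure of Lemma \ref{lsquare} together with the pairwise cancellation of the polarization twists $\det \cT^{1/2}|_{p_i}$ from \eqref{def_tO_rel} at shared marked points ensure that the symmetrizing factors cancel, while the nodal deformation contributions \eqref{N_node} reproduce the twisted convolution \eqref{convolution_twisted}. Writing $A_i^\circ \in K(X)$ for the evaluation pushforward of $\tO_\vir z^{\deg f}$ from the no-bubble locus of $\QM(C_{0,i} \to X)_{\textup{relative }p}$ and summing over $m \geq 0$ (the $m = 0$ summand contributing the degree-zero part $1$ of $\bfG$), one obtains
\begin{equation*}
\chi\bigl(\QM(C_0 \to X), \tO_\vir z^{\deg f}\bigr) \;=\; (A_1^\circ, \, \bfG A_2^\circ)_X \,.
\end{equation*}

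The same stratification, applied one-sidedly to the full relative moduli $\QM(C_{0,i} \to X)_{\textup{relative }p}$ (which itself allows arbitrary bubbling at $p$), gives $A_i = \bfG A_i^\circ$, and since $\bfG = 1 + O(z)$ is invertible in $\End K(X)[[z]]$ we conclude $A_i^\circ = \bfG^{-1} A_i$. Substituting,
\begin{equation*}
(A_1^\circ, \bfG A_2^\circ)_X \;=\; (\bfG^{-1} A_1, \, \bfG \bfG^{-1} A_2)_X \;=\; (\bfG^{-1} A_1, A_2)_X\,,
\end{equation*}
which is the claimed formula.

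The hard part is the bookkeeping in the second paragraph: verifying that the determinantal twist in \eqref{def_tO_rel}, the square root of $\cK_\vir$, and the nodal deformation factors \eqref{N_node} assemble into precisely the operator $\bfG$ at each internal node of the chain. This is essentially the multi-node generalization of the exercise following \eqref{gluing2}; the balanced-class self-duality from Lemma \ref{lsquare} is used in an essential way, exactly as in the one-node case. Once this multiplicative compatibility is established, the remainder of the argument is the formal identity $\bfG \cdot \bfG^{-1} = 1$ applied inside the pairing $(\cdot,\cdot)_X$.
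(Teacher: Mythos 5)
Your argument takes a genuinely different route from the paper's, but as written it has a gap that is not merely cosmetic. The paper expands the (nonreduced) Cartier divisor $\cO_{\varepsilon_1\cdots\varepsilon_n=0}$ via the inclusion--exclusion formula \eqref{O_incl_exl} over \emph{subsets} $S$ of nodes. For each $S$ the nodes in $S$ become \emph{marked} --- their smoothing parameters $\varepsilon_i$ are constrained to vanish and so are excised from the deformation theory --- which is exactly the hypothesis of \eqref{gluing1}--\eqref{gluing3}. Summing the alternating series then yields $\bfG^{-1}$ in one stroke. You instead stratify $\QM(C_0\to X)$ by the \emph{locally closed} locus where exactly $m$ bubbles appear. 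In that stratification none of the $m+1$ nodes is marked: all the smoothing directions $\varepsilon_i$ remain in $T_\vir$, and when one restricts the global $\tO_\vir$ to a locally closed stratum they reappear as normal/Koszul contributions that are precisely what the paper's inclusion--exclusion is organized to handle. Your appeal to ``iterating \eqref{gluing1}--\eqref{gluing2}'' silently uses the marked-node version of the gluing formula where it doesn't apply, and your assertion that ``the nodal deformation contributions \eqref{N_node} reproduce the twisted convolution \eqref{convolution_twisted}'' is not right --- the $\cK_X^{-1/2}$ twist in \eqref{formX} comes from the virtual canonical bundle $\cK_\vir$ at a \emph{marked} node (see the paragraph preceding \eqref{formX}), not from a nodal smoothing.

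The second step, $A_i=\bfG A_i^\circ$, is asserted by a ``one-sided'' application of the same unjustified stratification and inherits the gap above. In fact, if one \emph{defines} $A_i^\circ:=\bfG^{-1}A_i$, then your displayed equality $(A_1^\circ,\bfG A_2^\circ)_X=(\bfG^{-1}A_1,A_2)_X$ and the equality $A_i=\bfG A_i^\circ$ are tautologies equivalent to each other and to the proposition; so as presented the argument is essentially circular unless the geometric identification of $A_i^\circ$ with the no-bubble pushforward and its equality with $\bfG^{-1}A_i$ are independently established. You do correctly identify all the ingredients (balanced classes, polarization cancellation, the accordion picture), and the final formula is right, but to make this rigorous you would need either to reinstate the paper's inclusion--exclusion over closed loci, or to prove a genuine ``relative tube'' analogue of Theorem \ref{t_tube} for bubbling at the relative point of $\QM(C_{0,i})_{\textup{relative }p}$, handling the unmarked nodal directions carefully.
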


\begin{proof}
 Follows from
$$
\sum_{k \ge 1} (-1)^{k-1} (\bfG-1)^{k-1} = \bfG^{-1} \,. 
$$
\end{proof}

\subsubsection{}\label{s_glue_proper} 
We also have the following 

\begin{Proposition}\label{p_glue_nolocalized}
The glue operator $\bfG$ acts in nonlocalized $K$-theory of $X$. 
\end{Proposition}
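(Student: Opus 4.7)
The plan is to interpret $\bfG$ through its integral kernel and reduce everything to properness of evaluation. Having $\bfG \in \End K(X)[[z]]$ act on nonlocalized $K$-theory is equivalent, via the kernel representation in \eqref{convolution_twisted}, to having $\bfG \in K(X\times X)[[z]]$ with each coefficient of $z^d$ living in nonlocalized equivariant $K$-theory. So it suffices to show that for every fixed $d$ the combined evaluation
$$
\ev \eqdef (\ev_{p_1},\ev_{p_2}) \colon \QM^\text{\normalsize$\sim$}_{\textup{relative $p_1,p_2$},\,\deg = d} \longrightarrow X\times X
$$
is proper, since then $\ev_*(\tO_\vir \otimes \cK_X^{-1/2})$ already lies in nonlocalized $K(X\times X)$ and summing on $d$ weighted by $z^d$ yields a well-defined formal power series kernel.

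The degree $0$ piece is handled by the definition $\QM^\text{\normalsize$\sim$}_{\deg = 0} = X$ made in Section \ref{s_def_glue}: the map $\ev$ is just the diagonal embedding, which is closed and hence proper. For $d > 0$ I would first reduce properness of the combined $\ev$ to properness of a single endpoint evaluation $\ev_{p_1}$. Since $X$ is separated, the graph $(\mathrm{id},\ev_{p_2}) \colon \QM^\text{\normalsize$\sim$} \to \QM^\text{\normalsize$\sim$}\times X$ is a closed immersion, and composing with $\ev_{p_1}\times\mathrm{id}$ recovers $\ev$; so properness of $\ev_{p_1}$ alone suffices.

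To prove that $\ev_{p_1}$ is proper I would run the usual finite-type-plus-separated-plus-universally-closed checklist. Finite type is immediate from stability in positive degree: a nonrigid two-pointed component of an accordion must carry strictly positive degree in order to have finite automorphisms, so the number of bubbles in any stable accordion is bounded by $d$, and each combinatorial stratum is of finite type by the general CKM construction. Separatedness is built into the stability condition via finite stabilizers combined with the uniqueness part of stable reduction. Universal closedness is precisely the content of the weighted blowup and base change algorithm of Section \ref{s_qm_reduction}: given a one-parameter family of stable accordion quasimaps over a punctured disc with $\ev_{p_1}$ converging to a prescribed point of $X$, that algorithm produces a canonical central fiber in the moduli space.

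The main obstacle I anticipate is the absence of a rigid parametrized component in $\QM^\text{\normalsize$\sim$}$, since the entire domain is a chain of bubbles. The stable reduction of Section \ref{s_qm_reduction} was phrased in terms of a rigid curve $C$, and one must check that running the algorithm simultaneously from both ends is consistent and does not leave unwanted automorphisms coming from the far endpoint. My proposed workaround is to embed $\QM^\text{\normalsize$\sim$}_{p_1,p_2}$ as a closed substratum of the moduli of stable quasimaps from a rigid $\bP^1$ with \emph{both} points $p_1,p_2$ made relative; that larger space is proper over $X\times X$ by the two-point version of the properness result of \cite{CKM}, and $\QM^\text{\normalsize$\sim$}$ appears as the boundary locus where the rigid $\bP^1$ is contracted to the node joining the two bubble chains. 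Properness of the restriction then follows from properness of the ambient evaluation together with closedness of the substratum.
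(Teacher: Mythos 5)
Your instinct that the question reduces to a properness statement for the evaluation map is correct, and the paper's own proof also works with properness; but there are two genuine gaps in your argument.

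First, the opening equivalence is false. Having the kernel $\ev_*(\tO_\vir \otimes \cK_X^{-1/2})$ live in nonlocalized $K(X\times X)$ is \emph{not} enough to make the convolution \eqref{convolution_twisted} well defined on nonlocalized $K(X)$ when $X$ is not proper, which is the typical situation for Nakajima varieties. For example $\cO_X\boxtimes\cO_X \in K(X\times X)$ is perfectly nonlocalized, yet the associated operator $\cF\mapsto \chi(X,\cF)\cdot\cO_X$ requires localization to make sense. What one actually needs is that the kernel is supported on a closed subset $Z\subset X\times X$ that is \emph{proper over one factor} $X$, so that the pushforward $p_{1,*}$ in \eqref{convolution_twisted} is a proper pushforward from $\supp$. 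The paper supplies exactly this: the evaluation $\ev$ factors through the Steinberg variety $X\times_{X_0}X$ (because every quasimap is contracted by the map to the affinization $X_0$), and $X\times_{X_0}X\to X$ is proper because $X\to X_0$ is. You never mention the affinization $X_0$, and without it you have no handle on the support of the kernel.

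Second, your workaround for the absence of a rigid component in $\QM^\text{\normalsize$\sim$}$ does not work as stated. In the moduli of quasimaps from a rigid $\bP^1$ relative both $p_1$ and $p_2$, the rigid component is part of the fixed data and is never contracted; there is no boundary locus in that space whose domains are pure accordion bellows. So $\QM^\text{\normalsize$\sim$}$ is not a closed substratum of that moduli space, and you cannot inherit its properness. This is precisely the part of your proof that would need to be replaced by the paper's actual mechanism: by \cite{CKM}, the relative quasimap moduli (including the pure-bellows moduli $\QM^\text{\normalsize$\sim$}$, which is constructed in the same framework) are proper over $X_0$. Combined with $X\to X_0$ proper and cancellation for proper morphisms, this yields properness of $\ev$ onto $X\times_{X_0}X$ in one step, with no need for the finite-type/separated/universally-closed checklist or for a direct stable reduction argument.

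In short: the paper's two-ingredient proof (CKM properness over $X_0$, plus support on the Steinberg variety) both establishes the properness you want and, crucially, supplies the support condition your argument omits. I recommend restructuring your proof around those two points.
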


\begin{proof}
Recall that $X_0$ denotes the affinization of the Nakajima variety
$X$. Since every quasimap is contracted by the map to $X_0$, the 
evaluation map in \eqref{evevG} is a proper map to 
\begin{equation}
\textup{Steinberg variety}:=X\times_{X_0} X \subset X \times X  \,.
\label{Steinberg_var} 
\end{equation}
Since the map $X\to X_0$ is proper, any class supported on the 
Steinberg variety acts in nonlocalized K-theory. 
\end{proof}


\section{Nuts and bolts} \label{s_NB} 

\subsection{The Tube}

\subsubsection{}

We define 
\begin{equation}
\Tube  = \ev_*\left(\QM_\textup{relative $p_1,p_2$}\,,
\tO_\vir \,  z^{\deg f} \right) \in K(X)^{\otimes
  2} \otimes \Q[[z]] \,, \label{Tube}
\end{equation}
and make it an operator acting from the second copy of $K(X)$ to the first
using the bilinear 
form \eqref{formX} or, more precisely, the formula 
\eqref{convolution_twisted}. 

Just like in the case of the gluing matrix $\bfG$ considered in
Proposition \ref{p_glue_nolocalized}, the operator $\Tube$ acts in nonlocalized K-theory of
$X$.

\subsubsection{}\label{s_1st_tube}

\begin{Theorem}\label{t_tube} 
We have
\begin{equation}
  \Tube = \bfG 
\end{equation}
\end{Theorem}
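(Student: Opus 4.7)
\medskip

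The plan is to use $\Ct$-equivariant virtual localization for the action on the rigid domain $C \cong \bP^1$ that fixes $p_1$ and $p_2$. Since the class $\Tube$ lives in $K(X)^{\otimes 2} \otimes \Q[[z]]$ and $X$ itself carries no $\Ct$-action, the $\Ct$-equivariant refinement of $\Tube$ must be \emph{independent} of the equivariant parameter $q = T_{p_1}^* C$. This is a rigidity constraint we will exploit at the end.

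First I identify the $\Ct$-fixed locus of $\QM_{\textup{relative }p_1,p_2}$. A $\Ct$-equivariant quasimap is forced to map the rigid central component $C$ constantly to some $x \in X$, since $X$ has trivial $\Ct$-action and $\Ct$-orbits on $C \smallsetminus \{p_1,p_2\}$ are one-dimensional. The chains $C_1', C_2'$ sprouted at $p_1, p_2$ carry no intrinsic $\Ct$-action (they arose from $\Aut$-quotients in the relative compactification), so they may be arbitrary elements of the chain moduli. Hence the fixed locus is
\[
F \;=\; \QM^{\text{\normalsize$\sim$}}_{\textup{rel ends, at }p_1} \times_X \QM^{\text{\normalsize$\sim$}}_{\textup{rel ends, at }p_2},
\]
where the fiber product over $X$ identifies the common value $x$ at the two gluing nodes. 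The moving normal bundle $N_F^{\textup{moving}}$ consists of the two smoothing parameters at these gluing nodes, with $\Ct$-weights $q \, \psi_1^{-1}$ and $q^{-1}\, \psi_2^{-1}$, where $\psi_i$ is the cotangent $\psi$-class at the gluing endpoint of the $i$-th chain.

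Second I compute the fixed virtual tangent. The constant central $\bP^1 \to x$ contributes $TX|_x$ to $T_\vir$, all $\Ct$-fixed; this exactly compensates the $TX|_x$ gluing constraint from each fiber product over $X$. Using Lemma \ref{lsquare} and the symmetrization in \eqref{def_tO_rel}, the polarization half-bundles at the two endpoints of each chain combine with those at the node evaluations into a factorized product: the per-chain $\tO_\vir$ times a single central gluing factor involving $x$ and the two $\psi$-classes. Virtual localization then expresses the $\Ct$-equivariant $\Tube$ as
\[
\Tube^{\Ct} \;=\; \big(\ev_{p_1'}\times\ev_{p_2'}\big)_* \Bigl[ \bigl(\tO_\vir^{\text{chain}_1} \boxtimes \tO_\vir^{\text{chain}_2}\bigr) \cdot \aroof(N_F^{\textup{moving}})\Bigr] \cdot z^{\deg} ,
\]
summed over the splitting $\deg = d_1 + d_2$ of total degree between the two chains.

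Finally I invoke rigidity. The expression above is a rational function of $q$, but the true class $\Tube$ does not depend on $q$. Comparing with $\bfG$, whose definition \eqref{evevG} amounts to the same integral \emph{without} the distinguished central $\bP^1$, I observe that the constant central component together with its two smoothing nodes plays the role of a single interior node inside a longer bubble chain joining $p_1$ and $p_2$. The sum over splittings $d_1 + d_2 = d$ corresponds to the choice of position of this distinguished node within such a chain, and by the balanced/self-dual form of $\tO_\vir$ the $q$-dependent smoothing factors telescope so that each chain configuration appearing in $\bfG$ is counted exactly once. The $q$-independent result is therefore $\bfG$.

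The hard part is the last step: verifying that the localization formula collapses to $\bfG$ rather than a more complicated convolution such as $\bfG \cdot (\text{central})\cdot \bfG$. This combinatorial matching of the $\Ct$-stratification of $F$ with the node stratification of $\QM^{\text{\normalsize$\sim$}}_{\textup{rel }p_1,p_2}$ is precisely where the balanced structure of Definition \ref{def_balanced} and the symmetric normalization of $\tO_\vir$ enter — any other normalization would leave residual $q$-dependent factors that would violate rigidity.
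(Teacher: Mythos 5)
Your setup is sound and matches the paper's second proof: you correctly identify the $\Ct_q$-fixed locus as the fiber product over $X$ of the two accordion moduli attached at $p_1$ and $p_2$, and you get the right normal weights ($q\psi_1^{-1}$ and $q^{-1}\psi_2^{-1}$) for the two smoothing parameters. The rigidity principle — that the true $\Tube$ is $q$-independent while the localization expression is not manifestly so — is exactly the lever the paper pulls. But your final step is not a proof. You assert that the $q$-dependent smoothing factors "telescope" and that the sum over degree splittings $d_1 + d_2 = d$ matches the node stratification of the accordion moduli appearing in $\bfG$, and you flag this yourself as "the hard part." It is, and you have not done it. In fact, a direct combinatorial identification is the wrong instrument: the central rigid $\bP^1$ really is geometrically different from an interior node in a pure accordion chain, and trying to equate the two configuration-by-configuration runs into exactly the bookkeeping you are hoping the balanced structure will save you from.

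The paper closes the argument without any term-matching, by controlling the two limits. Properness of the evaluation map forces $\Tube$, computed by $\Ct_q$-localization, to be a Laurent polynomial in $q$. Lemma \ref{l_limit} then shows the accordion operator at $p_1$ (which carries the factor $(1-q^{-1}\psi_{p_1})^{-1}$) tends to $1$ as $q \to 0$ and to $\bfG$ as $q \to \infty$; the accordion operator at $p_2$ is the same with $q \mapsto q^{-1}$, so it tends to $\bfG$ as $q \to 0$ and to $1$ as $q \to \infty$. Hence $\Tube \to \bfG$ at \emph{both} infinities, and a Laurent polynomial bounded at $0$ and $\infty$ is constant. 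This is what "invoking rigidity" should mean here: not that the individual localization contributions cancel cleverly, but that boundedness plus properness turn two limit computations into an exact evaluation. You should also be aware the paper gives an even shorter first proof using only the degeneration formula: it yields $\Tube = \Tube \cdot \bfG^{-1} \cdot \Tube$, so $P = \Tube\,\bfG^{-1}$ is an idempotent of the form $1 + O(z)$, hence invertible, hence $P = 1$.
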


\noindent 
It will be instructive to go through two very different proofs of this
statement.  The first one is very short. 

\begin{proof}[First proof]
 The degeneration formula \eqref{degen_formula} gives
$$
\Tube = \Tube \, \bfG^{-1} \Tube \,. 
$$
Therefore, the operator $P=\Tube \, \bfG^{-1}$ satisfies 
$$
P^2 = P  \,.
$$
Since $P=1+O(z)$, this projector is invertible, whence $P=1$\,.
\end{proof}

\subsubsection{} 

Let $\Ct$ act on $C \cong \bP^1$ with fixed points $p_1$ and $p_2$.
Denote 
$$
q = \textup{weight of $T_{p_1} C$}  \,. 
$$
We will denote this group $\Ct_q$ to distinguish it from other tori 
present. 

For our second proof of Theorem \ref{t_tube}, we will use 
$\Ct_q$-equivariant localization.  For this, we need a description 
of the $\Ct_q$-fixed quasimaps. 

\subsubsection{}\label{s_Cq_fixed}

Let $f$ be a $\Ct_q$-fixed quasimap. By stability, its  singularities
in $C\setminus \{p_1,p_2\}$ form a set which is 
$$
\textup{finite and $\Ct_q$-invariant} \Rightarrow 
\textup{empty} \,.
$$
For maps relative $\{p_1,p_2\}$, this means that 
all singularities appear in the 
accordions and $f\big|_C$ is constant map to a point in $X$. 
In particular, all bundles $\cV_i\big|_C$ are trivial. 

In general, for the $\Ct_q$-fixed quasimaps there is a 
unique $\Ct_q$-equivariant structure on the bundles $\cV_i$ 
compatible with the quiver maps and the 
trivial\footnote{or chosen, 
in the twisted situation}
equivariant structure on the bundles $\cW_i$ and $\cQ_{ij}$. In our case, 
$\Ct_q$ acts trivially on the trivial bundles $\cV_i\big|_C$. 

\subsubsection{}\label{s_contr_accord} 
We conclude the only part of the deformation theory on which $\Ct_q$
acts are the normal direction to the fixed locus.  These correspond 
to the smoothing of the nodes at $p_i\in C$, when the nodes are 
present. For example, the contribution of the accordions 
at $p_1$ is 
\begin{equation}
\ev_* 
\left(
  \QM^\text{\normalsize$\sim$}_{\textup{relative $p'_1,p_1$}}
  \,, z^{\deg} \, \tO_\vir  \, 
\frac1{1-q^{-1} \psi_{p_1}}
\right) \in K(X)^{\otimes 2} \otimes \Q[[q^{-1}]][[z]]  
\label{accord1}
\end{equation}
because by \eqref{N_node} 
$$
N_\textup{$\exists$ node at $p_1\in C'$} = 
\psi^\vee_{p_1} \otimes q  \,.
$$

\subsubsection{}

Each $z$-coefficient in \eqref{accord1} is a 
rational function of $q$ by the following 

\begin{Lemma}\label{l_limit} 
 Let $Y$ be a projective scheme and $\cL$ an orbifold line 
bundle on $Y$. Then for any coherent sheaf $\cF$ the series
$$
\chi(q)=\chi
\left(Y,\frac{\cF}{1-q^{-1} \cL} \right)= \sum_{k\ge 0} q^{-k} \chi(Y,\cF \otimes \cL^k) 
$$
is a rational function of $q$ such that 
$$
\chi(0) = 0\,, \quad \chi(\infty) = \chi(Y,\cF) \,. 
$$
\end{Lemma}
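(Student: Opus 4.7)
The plan is to show that the sequence $c_k := \chi(Y, \cF \otimes \cL^k)$ satisfies a linear recursion with constant coefficients, from which both the rationality of $\chi(q)$ and the stated boundary values follow by an elementary generating-function computation.

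First I would establish the recursion. When $Y$ is a projective scheme, Exercise \ref{ex_nilp} says that $(\cL - 1)\otimes$ is nilpotent on $K(Y)$, so there is $N$ with $(\cL - 1)^N \cdot [\cF] = 0$ in $K(Y)$. Taking $\chi$ yields $\sum_{j=0}^N (-1)^j \binom{N}{j} c_{k+N-j} = 0$, hence $c_k$ is a polynomial in $k$ of degree $< N$. For an orbifold $Y$, one chooses $N$ so that $\cL^N$ has trivial character on every stabilizer and therefore descends to an honest line bundle on the coarse moduli space; then $(\cL^N - 1)$ is nilpotent on $K(Y)$ by the scheme case, which gives a polynomial identity $(\cL^N - 1)^M \cdot [\cF] = 0$. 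This is a recursion with characteristic polynomial $(x^N - 1)^M$, whose roots are $N$-th roots of unity, so $c_k = \sum_\zeta p_\zeta(k)\, \zeta^k$ with $p_\zeta$ polynomials and $\zeta$ running over a finite set of roots of unity.

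With this quasi-polynomial form in hand,
$$
\chi(q) = \sum_\zeta \sum_{k \ge 0} p_\zeta(k) (\zeta/q)^k
$$
is a finite sum of rational functions in $q$, using the elementary identity $\sum_{k \ge 0} \binom{k+j}{j} z^k = (1-z)^{-j-1}$. Writing the basic building block as $(1 - \zeta/q)^{-j-1} = q^{j+1}/(q-\zeta)^{j+1}$, one reads off both asymptotics at once: at $q = \infty$ each such function tends to $1$, so the full rational function tends to $\sum_\zeta p_\zeta(0) = c_0 = \chi(Y,\cF)$, matching the $k=0$ term of the formal series; at $q = 0$ each building block vanishes, so $\chi(0) = 0$.

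The main obstacle, and really the only nontrivial point, is setting up Step 1 cleanly in the orbifold setting: the scheme case is immediate from Exercise \ref{ex_nilp}, and one needs the standard observation that some power of any orbifold line bundle on a proper DM stack descends to the coarse moduli in order to bootstrap to the orbifold case. Once the quasi-polynomial structure of $c_k$ is in place, the rest is formal.
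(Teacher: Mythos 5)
Your proof is correct, and the overall strategy is the same as the paper's: establish that $c_k = \chi(Y, \cF \otimes \cL^k)$ is a quasi-polynomial in $k$ (a finite $\C$-linear combination of $k^j \zeta^k$ with $\zeta$ roots of unity), then do an elementary generating-function computation to read off rationality and the two boundary values. Where you diverge is in how quasi-polynomiality is obtained. The paper simply invokes orbifold Riemann--Roch (equivalently, the theory of Hilbert polynomials), which immediately gives $c_k$ as a polynomial in $k$ and in $\zeta_i^k$. You instead derive the recursion internally from K-theory: $(\cL-1)\otimes$ is nilpotent on $K$ of a projective scheme (the paper's Exercise \ref{ex_nilp}, whose proof via the codimension filtration gives a uniform nilpotency degree, hence $(\cL-1)^M = 0$ as an element of $K$), and you bootstrap to the orbifold case by choosing $N$ with $\cL^N = p^*\bar\cL$ descended to the coarse space $\bar Y$, so that $(\cL^N - 1)^M = p^*\big((\bar\cL - 1)^M\big) = 0$ in $K(Y)$. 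This is a nice self-contained alternative that stays inside the K-theoretic toolkit the paper develops; the price is the small amount of care needed around the coarse-moduli descent, which you handle correctly. The final computation is equivalent to the paper's, just written in the $\binom{k+j}{j}$ basis with $\sum_{k\ge 0}\binom{k+j}{j} z^k = (1-z)^{-j-1}$ rather than via $(q\,d/dq)^m$ applied to $1/(1-t/q)$; both give $\chi(\infty) = c_0$ and $\chi(0)=0$ at sight.
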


\begin{proof}
By orbifold Riemann-Roch (or by the theory of Hilbert polynomial), we 
have 
$$
\chi(Y,\cF \otimes \cL^k) = \textup{polynomial in $\{k,\zeta_i^k\}$} 
$$
for some roots of unity $\zeta_i$ (and, in equivariant situation, also in $w_i^k$,
where $w_i$'s are certain weights of $\Aut(X)$). For any function of 
the form $f(k)=k^mt^k$ we have  
$$
\sum_{k\ge 0} f(k) \, q^{-k} = \left(
q \frac{d}{d q} \right)^m \frac{1}{1-t/q}  
\to 
\begin{cases}
 0 \,, & q \to 0 \\
f(0) \,, & q \to \infty \,,
\end{cases}
$$
the lemma follows. 
\end{proof}

\subsubsection{}\label{s_2_proof_tube}

\begin{proof}[Second proof of Theorem \ref{t_tube}]
Since the pushforward in \eqref{Tube} is proper, the 
result is a Laurent polynomial in $q$. {}From Lemma \ref{l_limit}
we conclude that 
$$
\textup{operator defined by \eqref{accord1}} \to 
\begin{cases}
1 \,, & q \to 0 \\
\bfG \,, & q \to \infty \,,
\end{cases}
$$
where $1$ is the contributions of the degree $0$, that is, 
empty accordions. 

For the accordions at $p_2$, the analysis is the same with 
$q$ replaced by $q^{-1}$, therefore 
$$
\Tube \to \bfG \,, \quad q\to\{0,\infty\}
$$
and the theorem follows. 
\end{proof}

\subsubsection{}\label{s_shorthand} 

It is convenient to use the following shorthand script for the kind of 
computations that we will be doing in this section. We compute 
with quasimaps from chains of rational curves, with two or one 
marked points at the ends, and the corresponding 
K-theoretic counts are naturally interpreted as operators or 
vectors in $K(X)[[z]]$, compare with the discussion in Section 
\ref{s_transfer_2}.  To denote these operators, and the order 
in which they compose, we can simply draw the quasimap 
domains, with the marked points, nodes, etc. 

In our chain of rational curves, we can have two kinds of 
components: the rigid (or parametrized) ones, and those with only 
two special points and $\Ct$ worth of automorphisms. We 
call the latter \emph{elastic} and denote 
\begin{alignat*}{2}
    &\includegraphics[scale=0.64]{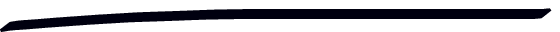}&\qquad&\textup{a
      ridig component} \\ 
 &\includegraphics[scale=0.64]{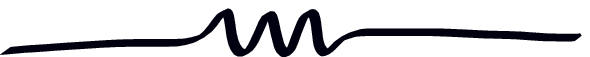}&\qquad&\textup{an 
elastic component\,.} 
  \end{alignat*}
We can have several kinds of marked points, depending on the 
constraints we put on quasimaps at that marked point. The 
basic marked points, with no constraints, is denoted 
by
$$
\includegraphics[scale=0.5]{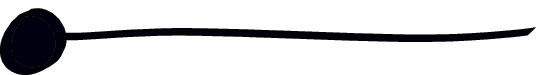}\qquad
\textup{a marked point\,.}
$$
If we require the quasimap to be nonsingular at a marked point, we
draw an open circle: 
$$
\includegraphics[scale=0.5]{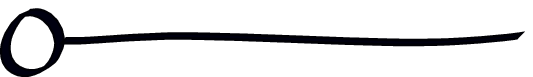}\qquad
\textup{a nonsingular point\,.}
$$
Quasimaps relative to a point will be denoted
$$
\includegraphics[scale=0.5]{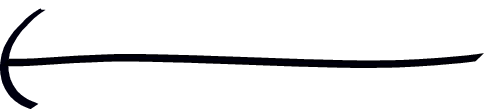}\qquad
\textup{a relative point\,.}
$$
Also, nodes count among the special points 
$$
\includegraphics[scale=0.6]{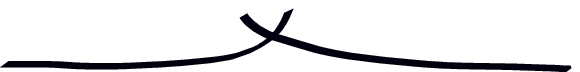}\qquad
\textup{a node\,.}
$$

\subsubsection{}
For example, in this shorthand, the degeneration and 
gluing formulas say
\begin{multline}
  \includegraphics[scale=0.5]{straight_line.eps} \quad = \\
\includegraphics[scale=0.6]{node.eps}  = \quad\\ 
\raisebox{-4pt}{\includegraphics[scale=0.5]{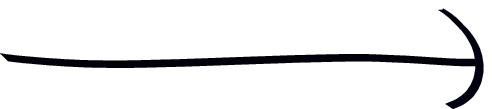}}\, 
\, \bfG^{-1} \, 
\raisebox{-4pt}{\includegraphics[scale=0.5]{relative_point.eps}}
\label{deg_glue}
\end{multline}
where 
$$
\bfG  = \raisebox{-5pt}{\includegraphics[scale=0.5]{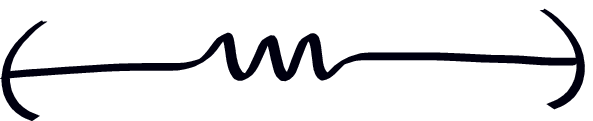}}
$$
is the gluing matrix. 

Similarly, we have 
$$
\Tube = \raisebox{-5pt}{\includegraphics[scale=0.5]{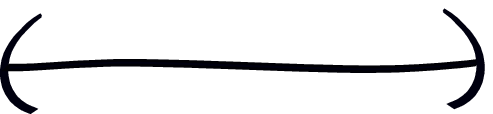}}
$$
and the argument in Section \ref{s_1st_tube} can be drawn 
as follows
\begin{multline}
  \includegraphics[scale=0.5]{tube.eps} \quad = \\
\includegraphics[scale=0.6]{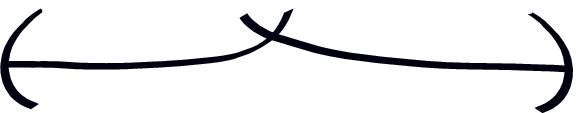}  = \quad\\ 
\raisebox{-4pt}{\includegraphics[scale=0.5]{tube.eps}}\, 
\, \bfG^{-1} \, 
\raisebox{-4pt}{\includegraphics[scale=0.5]{tube.eps}} \,. 
\label{deg_tube}
\end{multline}

\subsection{The Vertex} 

\subsubsection{}

As in \eqref{eq9}, consider the open set 
$$
\QM_\textup{nonsing $p_2$} \subset \QM 
$$
of quasimaps nonsingular at $p_2$ and define 
\begin{equation}
\Vertex  = \ev_{p_2,*} \left(\QM_\textup{nonsing $p_2$}\,,
\tO_\vir \,  z^{\deg f} \right) \in K(X)_\textup{localized} \otimes \Q[[z]] \,. \label{defVertex}
\end{equation}
This object may also be called K-theoretic Givental's $I$-function. 

While the pushforward in \eqref{defVertex} is not proper, we will see the 
fixed loci of the $\Ct_q$-action are proper, so the pushforward is 
defined in localized K-theory.

\subsubsection{}

More generally, let 
$$
\lambda \in K_G(\pt) 
$$
be a virtual representation of the group $G$ in \eqref{alg_sym}. 
It is a tensor polynomial in the defining representations $V_i$, 
something like 
$\lambda = V_1 \otimes V_2 - \Lambda^3 V_2$. 

The polynomial $\lambda$ may be evaluated on the fibers 
$\cV_i\big|_{p_1}$ over the point $p_1$. We 
can define the vertex \emph{with descendents} by 
\begin{equation}
\bra{\lambda}  = \ev_{p_2,*} \left(\QM_\textup{nonsing $p_2$}\,,
\tO_\vir \,  z^{\deg f} \, \lambda\left(\cV\big|_{p_1}\right)\right) \,.
\label{defVertex_desc}
\end{equation}
In particular
$$
\bra{1} = \Vertex  \,. 
$$
Also note that by \eqref{degf} 
\begin{equation}
\bra{\det V_i \otimes \lambda}  = 
\det V_i \otimes \bra{\lambda}  \Big|_{z_i \mapsto q z_i}
\,, \label{lambda_shift} 
\end{equation}
which is essential point behind the difference equations for 
the vertex that will be discussed at length below. 
\subsubsection{}

Let $f$ be a $\Ct_q$-fixed quasimap in $\QM_\textup{nonsing $p_2$}$. 
As in Section \ref{s_Cq_fixed}, this implies that 
$f\big|_{C\setminus\{p_1\}}$ 
is a map to a point $x\in X$. We can identify the trivial bundles 
$\cV_i\big|_{C\setminus\{p_1\}}$ and quiver maps between them with the quiver 
data for the point $x\in X$. 

Define 
$$
\bbV_i = \bigoplus_{k\in \Z}  \bbV_i[k] =
H^0(\cV_i\big|_{C\setminus\{p_2\}}) \,,
$$
where $\bbV_i[k]$ is the subspace of weight $k$ with respect to
$\Ct_q$. By invariance, all quiver maps preserve this weight 
decomposition. We define the framing spaces $\bbW[k]$  in 
the same way and obtain
\begin{equation}
\bbW_i[k] = 
\begin{cases}
W_i\,,&  k\le 0 \,,\\
0\,,&  k> 0 \,,\\
\end{cases} 
\label{bbW}
\end{equation}
because the bundles $\cW_i$ are trivial. 

Multiplication by the coordinate induces an 
embedding 
$$
\bbV_i[k]  \hookrightarrow \bbV_i[k-1] 
\hookrightarrow \dots \hookrightarrow \bbV_i[-\infty] = V_i \,,
$$
compatible with quiver maps, where $V_i$ is the quiver data for $x$. 
Thus 
\begin{equation}
\left(\QM_\textup{nonsing $p_2$}\right)^{\Ct_q} = 
\left\{
\begin{matrix}
\textup{flags of quiver subrepresentations} \\
\textup{satisfying \eqref{bbW}}
\end{matrix}
\right\} \,.  \label{flag_subr} 
\end{equation}
In particular, the space of flags \eqref{flag_subr} inherits a 
perfect obstruction theory from the ambient space of quasimaps
\footnote{
By analogy with the Hilbert scheme, it should be the case that 
 this obstruction theory may be further reduced by 
$$
0 \to \Obs_\textup{reduced} \to \Obs \to \hbar^{-1} \otimes \C^{\textup{\#
    steps}}\to 0\,,
$$
where the exponent is the number of nontrivial steps in the flag.}.

\subsubsection{}

Recall that $\theta$ denotes the stability parameter for the quiver.

\begin{Lemma}\label{l_theta}  We have 
  \begin{alignat}{4}
    \bbV[k] &= 0 \quad &&\textup{or} \quad & \theta \cdot \dim \bbV[k] &>
    0 \,,&\quad k&>0 \,, \notag\\
 \bbV[k] &= V \quad &&\textup{or} \quad & \theta \cdot \dim \bbV[k] &>
    \theta \cdot \dim V \,,& \quad k&\le 0 \,. 
\label{stability_bbV} 
  \end{alignat}
\end{Lemma}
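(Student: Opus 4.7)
The plan is to identify each $\bbV_i[k]$ as a quiver subrepresentation of $V_i$ (with the appropriate portion of the framing attached), and then to obtain the stated numerical inequalities from the $\theta$-stability of the generic quasimap data via the Hilbert--Mumford criterion applied to the 1-parameter subgroup of $G$ coming from the $\Ct_q$-equivariant structure.

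First, I would verify the subrepresentation claim. Since the quiver maps between the $\cV_i$ and $\cW_i$ on $C$ are $\Ct_q$-equivariant, they preserve the weight decomposition of $H^0(\cV_i\big|_{C\setminus\{p_2\}})$ and of $H^0(\cW_i\big|_{C\setminus\{p_2\}})$. The computation \eqref{bbW} identifies the $\bbW$-side of the flag exactly; on the $V$-side, the embeddings $\bbV_i[k]\hookrightarrow V_i$ obtained by specialization at $p_2$ are injective and compatible with the quiver maps, so each $\bbV_i[k]$ is a subrepresentation of $(V,W)$ that contains the framing if and only if $k\le 0$.

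Next, the main step: translating $\theta$-stability of $f$ at a generic point of $C$ into the stated inequalities. At any $c\in C\setminus(\{p_1,p_2\}\cup\sing f)$ the quasimap is nonsingular, so the quiver datum at $c$ is $\theta$-stable. The $\Ct_q$-equivariant trivialization of $\cV_i$ near $c$, combined with the grading of $H^0(\cV_i\big|_{C\setminus\{p_2\}})$ by $\Ct_q$-weight, defines a 1-parameter subgroup $\lambda:\Ct\to G$ whose associated weight filtration on $V_i$ is precisely the filtration $\cdots\subset\bbV_i[k+1]\subset\bbV_i[k]\subset\cdots$. The Hilbert--Mumford numerical criterion for $\theta$-stability of the framed quiver datum at $c$ then asserts that, for every subrepresentation arising as a piece of this filtration, the $\theta$-weight is strictly positive unless the subrepresentation is one of the two trivial pieces. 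Splitting the analysis according to whether the framing $\bbW[k]$ is $0$ or $W$ (i.e., $k>0$ or $k\le 0$) yields precisely the two cases in \eqref{stability_bbV}.

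The step I expect to require the most care is the translation between $\Ct_q$-equivariant quasimap data near a moving point $c\in C$ and the Hilbert--Mumford 1-parameter subgroup of $G$ at the fiber over $p_2$: one must check that moving $c$ along its $\Ct_q$-orbit relates the quiver datum at $c$ to that at $p_2$ precisely by the action of $\lambda(q^t)\in G$, so that the stability of the former is equivalent to the non-destabilization of the latter under $\lambda$. Once this dictionary is in place, the numerical inequalities are immediate from King's stability criterion for the framed quiver, applied to each $\bbV[k]$ considered as a subrepresentation with or without the full framing, and the strictness in \eqref{stability_bbV} reflects the fact that we require $\theta$-stability (not merely semistability) at the generic point.
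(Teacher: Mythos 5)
Your proof is correct in its core, but it takes a noticeably longer route than the paper's. The paper treats the fact that each $\bbV[k]$ is a quiver subrepresentation of $V$ (compatible with the framing when $k\le 0$) as already established by the discussion leading into \eqref{flag_subr}, and then disposes of the numerical inequalities in one stroke by invoking Crawley-Boevey's reformulation of framed stability: add an extra vertex $\infty$ with $\dim V_\infty=1$, set $\theta_\infty=-\theta\cdot\dim V$, and observe that $\dim\bbV_\infty[k]$ equals $1$ for $k\le 0$ and $0$ for $k>0$. The two cases of \eqref{stability_bbV} are then literally the single stability inequality $(\theta,\theta_\infty)\cdot(\dim\bbV[k],\dim\bbV_\infty[k])>0$ specialized to the two possible values of $\dim\bbV_\infty[k]$. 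You instead re-derive the same inequalities from the Hilbert--Mumford criterion applied to the one-parameter subgroup of $G$ defined by the $\Ct_q$-weight grading. This is logically sound — it is essentially a proof of King's criterion tailored to this particular 1-PS — and it has the mild advantage of making geometrically explicit where the destabilizing 1-PS comes from. But it spends effort re-proving both the subrepresentation statement (which the surrounding text already gives you: multiplication by the coordinate is injective and quiver-compatible) and the stability dictionary (which King/Crawley-Boevey packages once and for all). The paper's route buys brevity and a uniform treatment of the two cases; yours buys a self-contained GIT picture at the cost of more bookkeeping, in particular the careful step you flag about comparing the quiver datum at a moving point $c$ with the one at $p_2$ via the $\Ct_q$-orbit, which is exactly what the Crawley-Boevey formalism lets one avoid.
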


\begin{proof}
It is convenient to use the reformulation of quiver stability 
given by Crawley-Boevey, see in particular Section 3.2 in 
\cite{GinzNak}.  In this reformulation, one trades the 
framing spaces for an extra vertex $\{\infty\}$,  with 
$$
\dim V_\infty =1 
$$
and $\dim W_i$ arrows to every other vertex $i$. 

The stability parameter is extended by 
$$
\theta_\infty = - \theta \cdot \dim V 
$$
and the stability condition reads 
$$
(\theta, \theta_\infty) \cdot (\dim V', \dim V'_\infty) > 0 
$$
for every nontrivial subrepresentation $V' \subset V$. Since 
\begin{equation}
\dim \bbV_\infty[k] = 
\begin{cases}
1\,,&  k\le 0 \,,\\
0\,,&  k> 0 \,,\\
\end{cases} 
\end{equation}
the lemma follows. 
\end{proof}

\subsubsection{}

We have 
\begin{equation}
\deg f = \left(\deg \cV_i\right) = 
\sum_{k\in \Z} \Big(\dim \bbV_i[k] - 
\delta_{\{k\le 0\}} \, \dim V_i \Big) \,.
\label{deg_sum}
\end{equation}
Let 
$$
\Camp\subset H^2(X,\R)
$$
be the closed ample cone of $X$. It is the closure of a
neighborhood of $\R_{>0} \, \theta$ 
formed by those $\theta'$ which give an isomorphic GIT quotient. 
By Lemma \ref{l_theta}, 
$$
\Big(\dim \bbV_i[k] - 
\delta_{\{k\le 0\}} \, \dim V_i \Big) \in \Camp^\vee \,, \quad \forall
k \,. 
$$
This bounds the dimensions of $\bbV_i[k]$ for fixed degree 
and gives the following 

\begin{Corollary}
 The map
$$
\ev_{p_2}:  \left(\QM_\textup{nonsing $p_2$} \right)^{\Ct_q} \to X
$$
is proper for quasimaps of fixed degree. The moduli space is 
empty for degrees outside $\Camp^\vee$. 
\end{Corollary}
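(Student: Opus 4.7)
My plan is to deduce both assertions from the flag description \eqref{flag_subr} of the $\Ct_q$-fixed locus, using Lemma \ref{l_theta} and the degree formula \eqref{deg_sum}.

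For the emptiness statement, I note that the chosen stability $\theta$ lies in the interior of $\Camp$ (as we assume no strictly semistable points, cf.\ Section \ref{stheta}), and the conclusion of Lemma \ref{l_theta} is unchanged as $\theta$ varies over this interior. Consequently, each summand $\dim \bbV_i[k] - \delta_{k\le 0}\dim V_i$ in \eqref{deg_sum} either vanishes or pairs strictly positively with every such $\theta$; by continuity it lies in the closed dual cone $\Camp^\vee$. Since $\Camp^\vee$ is convex and closed under addition, summing over $(i,k)$ forces $\deg f \in \Camp^\vee$, which proves emptiness when the degree lies outside this cone.

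For properness, fix $d \in \Camp^\vee$. Since $\Camp$ is full-dimensional, its dual $\Camp^\vee$ is a strictly convex rational polyhedral cone, so the set of $s \in \Camp^\vee \cap \Z^I$ with $d - s \in \Camp^\vee$ is finite, and only finitely many ways to split $d$ into a sum of such $s$'s exist. Applied to the summands in \eqref{deg_sum}, this shows that only finitely many weights $k$ contribute nontrivially and only finitely many admissible dimension sequences $(\dim \bbV_i[k])_{i,k}$ are compatible with $\deg f = d$. For each such sequence the fixed locus of that combinatorial type embeds into the relative flag scheme $\prod_{i,k}\Gr(\dim \bbV_i[k], \cV_i)$ over $X$ as the subscheme cut out by the closed conditions of nesting $\bbV[k+1] \subset \bbV[k]$, compatibility with the tautological quiver maps, and factoring the framing morphisms $\cW_i \to \cV_i$ through the appropriate step of the flag. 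The stability inequalities of Lemma \ref{l_theta} impose no further cut here, since $\theta \cdot \dim \bbV[k]$ depends only on the discrete dimension data: either the sequence is admissible, and every point of the flag scheme gives a stable quasimap, or it is not, and the stratum is empty. Relative Grassmannian bundles being projective over $X$, their fiber products and closed subschemes are proper over $X$; hence $\ev_{p_2}$ restricted to the fixed locus of degree $d$ is a finite disjoint union of proper morphisms.

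The main subtlety, to my mind, is the last: to verify that stability of a $\Ct_q$-fixed quasimap, once the dimension sequence is fixed, reduces to a condition on that discrete data alone. This is what permits the closed-subscheme conclusion rather than a merely locally closed one, and is the true content of Lemma \ref{l_theta} for the present argument.
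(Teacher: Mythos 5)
Your proof is correct and follows the same route as the paper: use the flag description \eqref{flag_subr} together with Lemma \ref{l_theta}, observe that each summand in \eqref{deg_sum} lies in $\Camp^\vee$, deduce emptiness for degrees outside $\Camp^\vee$ and boundedness of the dimension data for a fixed degree, and then exhibit the fixed locus of each combinatorial type as a closed subscheme of a product of relative Grassmannians over $X$. The paper does not spell out the finiteness argument via strict convexity of $\Camp^\vee$ nor the Grassmannian embedding, so those details are yours, but they are the intended ones.

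One small correction on what you identify as the ``main subtlety.'' Stability of a $\Ct_q$-fixed quasimap is \emph{not} a condition one needs Lemma \ref{l_theta} to check: since $f$ is constant, equal to $x = \ev_{p_2}(f) \in X$, on all of $C \setminus \{p_1\}$, the singularity locus of $f$ is automatically finite, hence $f$ is stable, for \emph{every} flag of subrepresentations over a point of $X$. So the closed-subscheme conclusion is immediate from the closedness of the nesting, quiver-compatibility, and framing conditions; no stability stratum needs to be cut out. Lemma \ref{l_theta} runs the other way: it is a \emph{consequence} of the stability of $x \in X$ (via Crawley-Boevey) that the subrepresentations $\bbV[k]$ satisfy those inequalities, and its role in the argument is only to force each summand of \eqref{deg_sum} into $\Camp^\vee$ and thereby bound the combinatorics for fixed degree, not to guarantee closedness of the locus of stable flags.
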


\subsubsection{} 

In many instances, one can use equivariant localization with 
respect to an additional torus may be used to compute the vertex. 
If a torus $\bT$ acts on $X$ then $\Ct_q\times \bT$ fixed 
quasimaps are given in terms of flags $\bbV_i[k]$ that are 
additionally graded by the weights of $\bT$. 

The quiver 
maps are required to be $\bT$-equivariant, in particular, to 
shift the weight if the $\bT$-action on the corresponding 
arrow is nontrivial. Similarly, the framing spaces $\bbW_i$ 
acquire an additional grading.  

The characters of all these 
spaces are uniquely reconstructed by 
\begin{equation}
\bbV[k] \cong \left(\cV\big|_{p_1}\right)_\textup{$q$-weight $\ge k$}
\label{bbVcV} 
\end{equation}
from the character of $\Ct_q\times \bT$ on the fiber of $\cV$ at 
$p_1$. Similarly, the character of the deformation theory 
\eqref{TvirQM} is directly computable from the character 
of $\cV\big|_{p_1}$ by localization.

\begin{Exercise}
  Spell out the localization formula for the vertex for
  $X=T^*\Gr(k,n)$. 
\end{Exercise}

\begin{Exercise}
  Write a code that computes the vertex function for the Hilbert 
scheme of points in $\C^2$. 
\end{Exercise}

\subsubsection{}

While localization, especially in the case of isolated fixed points, 
gives a certain computational handle on the vertex, 
a much more powerful way 
to compute to compute the vertex
 is to identify a $q$-difference equation that it 
satisfies. This identification will require a certain development 
of the theory.

\subsection{The index limit} 

\subsubsection{}
Let a torus $\bA$ act on $X$ preserving the symplectic form. 
A dramatic simplification occurs in the localization formulas
if we let the equivariant variable $a\in \bA$ go to one of the 
infinities of the torus.

Very 
abstractly, suppose 
$$
X = T^*M \rd G
$$
and let a torus $\bA$ act on $M$ commuting with $G$. 
For every component $F \subset X^\bA$ we can assume that 
$\bA$ acts so that 
$$
F = T^*\!\!\left(M^\bA \right)\rd G \,.
$$
Because we consider quasimaps from a fixed domain, on 
which $\bA$ is not allowed to act, we conclude that 
$$
\iota_\QM: \QM(F)  \to \QM(X)^{\bA} 
$$
is an inclusion of a component.  Its virtual normal bundle is
\begin{equation}
N_\vir = \Hd\left(\cT^{1/2}_\mov \oplus \hbar^{-1} \left(\cT^{1/2}_\mov\right)^\vee
\right)\,, \label{NvirM} 
\end{equation}
where the subscript refers to the $\bA$-moving part of the restriction
of the polarization to $X^\bA$. 

As in Lemma \ref{lsquare}, we 
conclude 
\begin{equation}
N_\vir = \sum_{i=1}^2 \cT^{1/2}_\mov \big|_{p_i} + 
H- \hbar^{-1} H^\vee \,, \quad H=\Hd\left(\cT^{1/2}_\mov \otimes \cK_C\right) \,. 
\label{Nvir} 
\end{equation}

\subsubsection{}
Consider a balanced virtual $\bA$-module $H- \hbar^{-1} H^\vee$. 
Its contribution to $\cO_\vir \otimes \cK_\vir^{1/2}$ is 
$$
\aroof\left(H- \hbar^{-1} H^\vee\right) 
\to (-\hbar^{-1/2})^{\rk H_+ - \rk H_-} \,, \quad a\to 0\,,
$$
where 
$$
H=H_+ \oplus H_-
$$ is the decomposition of $H$ into 
attracting and repelling subspaces as $a\to 0$. By Riemann-Roch, 
$$
\rk H_+ = \deg \cT^{1/2}_{>0} - \rk \cT^{1/2}_{>0}\,,
$$
and similarly for $H_-$. 

Taking into account the asymmetry between marked points in 
\eqref{def_tO}, the contribution of the first term in \eqref{Nvir} to 
$\tO_\vir$ equals 
$$
\left(\frac{\det \cT^{1/2}_{\mov,p_2}}{\det
    \cT^{1/2}_{\mov,p_1}}\right)^{1/2} \bigotimes_{i=1}^2 \bSdh 
\left(\cT^{1/2}_\mov \big|_{p_i}\right)^\vee \,, 
$$
where the symmetric algebra with a hat was defined in
\eqref{def_bSdh}. 
Note that this has a nontrivial $q$-weight.  Its analysis leads to 
$q$-dependence in the following 

\begin{Lemma}\label{l_index_limit} 
As $a\to 0$, 
\begin{equation}
\Vertex_X \to \hbar^{\frac{\codim}{4}} \,  (-\hbar^{-1/2})^{\deg
    \cN_{>0}} q^{-\deg \cT^{1/2}_{<0}}
\, Vertex_{X^{\bA}} \,,
\label{shifta0}
\end{equation}
where 
$$
\cN_{>0}=\cT^{1/2}_{>0} + \hbar^{-1} \left(\cT^{1/2}_{<0}\right)^\vee 
$$
is the attracting part of the normal bundle to $X^\bA$. 
\end{Lemma}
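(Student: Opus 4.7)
The plan is to apply $\bA$-equivariant localization to the pushforward defining $\Vertex_X$ in \eqref{defVertex}, and then take the limit $a \to 0$ factor by factor. Since $\bA$ acts only on the target and not on the parametrized source $C$, the $\bA$-fixed locus of $\QM(X)_{\textup{nonsing\,}p_2}$ coincides with $\QM(X^\bA)_{\textup{nonsing\,}p_2}$, and the virtual normal bundle is the $\bA$-moving part of $T_\vir$. Applying Lemma \ref{lsquare} to $\cT^{1/2}_\mov$ gives the clean decomposition
\[
N_\vir = \cT^{1/2}_\mov\bigr|_{p_1} + \cT^{1/2}_\mov\bigr|_{p_2} + H - \hbar^{-1} H^\vee,
\qquad H = \Hd\!\left(\cT^{1/2}_\mov \otimes \cK_C\right),
\]
so that virtual localization expresses $\Vertex_X$ as a sum over components $F \subset X^\bA$ of $\iota_{F,*}\bigl(\Vertex_F \cdot \bSdh(N_\vir^\vee)^{-1}\bigr)$.

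The engine of the argument is the balanced piece $H - \hbar^{-1}H^\vee$: its $\tO_\vir$-contribution is $\aroof(H - \hbar^{-1}H^\vee)$, which, by the self-duality computation reviewed immediately before the lemma, remains \emph{bounded} as $a \to 0$ and tends to $(-\hbar^{-1/2})^{\rk H_+ - \rk H_-}$, where the subscripts denote attracting and repelling parts under $a\to 0$. I then convert this exponent into $\deg \cN_{>0}$ by equivariant Riemann--Roch on $C \cong \bP^1$ with $\cK_C \cong \cO(-p_1-p_2)$: the identity $\rk H_\pm = \deg \cT^{1/2}_{\mov,\pm} - \rk \cT^{1/2}_{\mov,\pm}$, combined with the definition $\cN_{>0} = \cT^{1/2}_{>0} + \hbar^{-1}(\cT^{1/2}_{<0})^\vee$, produces exactly the factor $(-\hbar^{-1/2})^{\deg\cN_{>0}}$.

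The remaining marked-point contributions $\cT^{1/2}_\mov|_{p_i}$ enter through $\bSdh$ at $p_1$ and $p_2$, and they combine with the asymmetric half-determinant twist $(\det\cT^{1/2}|_{p_2}/\det\cT^{1/2}|_{p_1})^{1/2}$ from \eqref{def_tO}. The symmetric parts of $\bSdh$ at the two marked points assemble into $\Vertex_{X^\bA}$; what does \emph{not} assemble is a pure $\hbar$-prefactor of size $\hbar^{\rk\cT^{1/2}_\mov/2} = \hbar^{\codim/4}$, together with a pure $q$-prefactor $q^{-\deg\cT^{1/2}_{<0}}$ extracted from the equivariant structure of $\cO(p_1), \cO(p_2)$ inside the half-determinant, via $\cK_C \cong \cO(-p_1-p_2)$.

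The main obstacle is the combinatorial bookkeeping of equivariant weights rather than any geometric subtlety: the rigidity that makes the balanced piece bounded is transparent, but one must carefully separate the $\hbar^{1/2}$, $q$, and sign contributions of (a) the balanced Chern roots in $\aroof(H - \hbar^{-1}H^\vee)$, (b) the marked-point $\bSdh$ factors, and (c) the $\tO_\vir$ determinant twist, and then recombine them to match $\hbar^{\codim/4} (-\hbar^{-1/2})^{\deg\cN_{>0}} q^{-\deg\cT^{1/2}_{<0}}$ using equivariant Riemann--Roch on $\bP^1$.
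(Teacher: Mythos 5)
Your plan is exactly the paper's: apply $\bA$-equivariant localization, identify $\QM(X^\bA)$ as the fixed locus, split the virtual normal bundle via Lemma~\ref{lsquare} into marked-point polarization fibers plus a balanced piece $H - \hbar^{-1}H^\vee$, use the boundedness of $\aroof$ on balanced classes, and track the leftover prefactor. This is what the text of Section~6.3 sketches. One notational slip along the way: the localization prefactor is $\bSdh N_\vir^\vee = \aroof(N_\vir)$ (and the determinant twist), not $\bSdh(N_\vir^\vee)^{-1}$.

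There is, however, a concrete arithmetic error in your attribution of the $\hbar$-powers. The balanced piece limits to $(-\hbar^{-1/2})^{\rk H_+ - \rk H_-}$, and Riemann--Roch on $\bP^1$ gives $\rk H_\pm = \deg \cT^{1/2}_{\mov,\pm} - \rk \cT^{1/2}_{\mov,\pm}$. Since $\deg \cN_{>0} = \deg\cT^{1/2}_{>0} - \deg\cT^{1/2}_{<0}$, combining these yields
\begin{equation*}
\rk H_+ - \rk H_- \;=\; \deg \cN_{>0} - \rk\cT^{1/2}_{>0} + \rk\cT^{1/2}_{<0}\,,
\end{equation*}
not $\deg\cN_{>0}$ exactly; the residual $\rk\cT^{1/2}_{<0}-\rk\cT^{1/2}_{>0}$ does not vanish in general. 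Correspondingly, the marked-point factors (after the $\det$ twist and $\bSdh$) produce $\pm\,\hbar^{\rk\cT^{1/2}_{<0}}\, q^{-\deg\cT^{1/2}_{<0}}$ rather than the $\hbar^{\codim/4}\, q^{-\deg\cT^{1/2}_{<0}}$ you claim; only the \emph{product} of the two sources reorganizes into $\hbar^{\codim/4}(-\hbar^{-1/2})^{\deg\cN_{>0}}q^{-\deg\cT^{1/2}_{<0}}$. You do caveat the bookkeeping in your last paragraph, but as written the intermediate identity ``produces exactly $(-\hbar^{-1/2})^{\deg\cN_{>0}}$'' is false and would mislead anyone attempting to close out the computation; the rank-excess terms must be tracked and moved between the two contributions before the stated prefactor emerges.
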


\noindent 
For the full vertex, that is, the sum over all degrees with weight
$z^{\deg}$, the relationship \eqref{shifta0} means a shift of the 
argument $z$ by a monomial in $(-\hbar^{-1/2})$ and $q$.

\subsection{Cap and capping}

\subsubsection{}

In parallel to \eqref{defVertex_desc}, we define the \emph{capped
descendents}
\begin{equation}
\brar{\lambda}   = \ev_{p_2,*} \left(\QM_\textup{relative $p_2$}\,,
\tO_\vir \,  z^{\deg f} \, \lambda\left(\cV\big|_{p_1}\right) 
\right) \in K(X) \otimes \Q[[z]] \,. \label{Cap}
\end{equation}
Note that in contrast to the vertex, this is a proper 
pushforward, hence an element of nonlocalized K-theory. 
Precisely because it is a element of nonlocalized K-theory, 
it is in many ways a simpler object than the vertex. 

By construction, 
\begin{equation}
\brar{\lambda} = \lambda(V) \, \cK_X^{1/2} + O(z) 
\label{cap_class}
\end{equation}
where $O(z)$ stands for contribution of nonconstant maps. 
We will see that for many  insertions $\lambda$ the cap
\eqref{Cap}  is purely classical, that is, the $O(z)$ term in 
\eqref{cap_class} vanishes. 

\subsubsection{}

In the shorthand of Section \ref{s_shorthand}, we have 
\begin{alignat*}{2}
    &\includegraphics[scale=0.5]{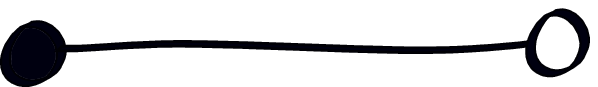}&\qquad&
\textup{the vertex\,,} \\ 
 &\includegraphics[scale=0.5]{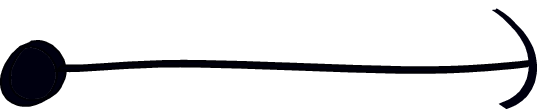}&\qquad&
\textup{the cap\,.} 
  \end{alignat*}
As in Section \ref{s_contr_accord}, localization gives
\begin{equation}
\raisebox{-2pt}{\includegraphics[scale=0.33]{cap.eps}} \quad = \quad 
\raisebox{-2pt}{\includegraphics[scale=0.33]{vertex.eps}} \, \, 
\raisebox{-9pt}{\includegraphics[scale=0.4]{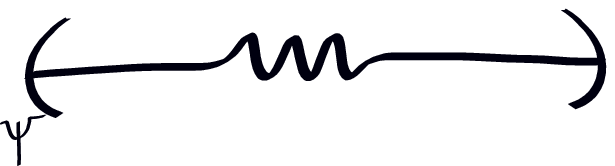}} \,, 
\label{vertex_draw}
\end{equation}
where the subscript $\psi$ in the 
\begin{equation}
\left| \textup{capping
    operator}\right|  = 
\,\, \raisebox{-11pt}{\includegraphics[scale=0.5]{capping.eps}}
\label{capping_draw} 
\end{equation}
indicates the insertion of the cotangent line in the accordion 
count as in \eqref{accord1}. 

\subsubsection{}

One can view \eqref{vertex_draw} as a linear equation for the vertex. 
The matrix \eqref{capping_draw}  of this linear equation is 
invertible and, in fact, will be identified with the fundamental 
solution of a certain $q$-difference equation in what follows. 

The constant term, or the left-hand side, of this linear 
equation will be computed using the vanishing of quantum 
terms in \eqref{cap_class} once the dimensions of the 
framing spaces are sufficiently large and the polarization is 
chosen accordingly. 

\subsection{Large framing vanishing} 

\subsubsection{}

Our proof of vanishing of the quantum terms in \eqref{cap_class} 
will follow the logic of the second proof of Theorem \ref{t_tube}, 
namely, 
it will be based on the analysis of the integrand in the 
$q\to\{0,\infty\}$ limits. In contrast to the 
situation considered in Section \ref{s_2_proof_tube} ,
 the group $\Ct_q$ may now act nontrivially 
in the fiber $\cV\big|_{p_1}$ over its fixed point $p_1$ and 
the weights of this action have to be analyzed. 

Concretely, suppose 
$$
\cV_i \cong \bigoplus_k \cO(d_{ik}) 
$$
then we get the weights $\{q^{d_{ik}}\}$ in the fiber of $\cV_i$ over
$p_1$ and hence a crude bound of the form 
\begin{equation}
\tr_{\cV_i\big|_{p_1}}  q^m = 
\begin{cases}
O\left( q^{m \deg \cV_{i,\nega}} \right)&  q\to 0\\       
O\left( q^{m \deg \cV_{i,\posi}} \right)& q\to \infty \,, 
\end{cases}
\label{q_bound}
\end{equation}
where 
\begin{equation}
\cV_{i,\nega} \cong \bigoplus_{d_{ik}<0} \cO(d_{ik}) \,, 
\quad 
\cV_{i,\posi} \cong \bigoplus_{d_{ik}>0} \cO(d_{ik}) \,.
\label{posinega} 
\end{equation}
The extension of \eqref{q_bound} 
to a general $\lambda\in K_G(\pt)$, with 
$m$ replaced by the degree of the symmetric function $\lambda$, will be 
used in what follows. 

\subsubsection{}
It is clear from \eqref{q_bound} that the key to bounding 
the degree in $q$ is to bound the degrees of $\cV_{i,\nega}$ and 
$\cV_{i,\posi}$. This is done in the following 

\begin{Lemma} \label{l_degree_split} 
Both terms in the decomposition 
$$
\deg \cV = \deg \cV_{\nega} +  \deg \cV_{\posi}
$$
belong to $\Camp^\vee$. 
\end{Lemma}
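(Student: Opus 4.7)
The plan is to express both $\deg \cV_\posi$ and $\deg \cV_\nega$ as sums (indexed by the $q$-weight $k$) of quantities of the form $\dim \bbV_i[k]$ or $\dim V_i - \dim \bbV_i[k]$, and then invoke Lemma \ref{l_theta} together with a chamber argument to land each summand in $\Camp^\vee$.

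First, I would pin down the $\Ct_q$-equivariant structure on the line-bundle summands. Writing $\cV_i = \bigoplus_j \cO(d_{ij}) \otimes \chi_j$, stability of the quasimap combined with nonsingularity at $p_2$ forces $V_i = \cV_i|_{p_2}$ to be generated, as a quiver representation, by the image of the trivially linearized framing $\cW_i$; hence $\Ct_q$ acts trivially on $V_i$. This forces $\chi_j = q^{d_{ij}}$, so that the fiber of $\cO(d_{ij}) \otimes \chi_j$ at $p_1$ has $\Ct_q$-weight $q^{d_{ij}}$. A direct inspection of $H^0(\cO(d_{ij})|_{C \setminus \{p_2\}})$ as a graded $\Ct_q$-module then gives $\dim \bbV_i[k] = \#\{j : d_{ij} \ge k\}$.

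Second, summation by parts produces the identities $\deg \cV_{\posi,i} = \sum_j \max(d_{ij},0) = \sum_{k > 0} \dim \bbV_i[k]$ and $\deg \cV_{\nega,i} = \sum_j \min(d_{ij},0) = \sum_{k \le 0}(\dim \bbV_i[k] - \dim V_i)$, both valid term-by-term in the vertex index $i$. By Lemma \ref{l_theta}, each summand on the right is either zero or pairs strictly positively with the stability vector $\theta$. Since the GIT quotient $X$ and the moduli of quasimaps are unchanged as $\theta$ varies over the open stability chamber, this positivity persists for every $\theta'$ in that chamber; passing to the closure $\Camp$ places each individual summand in $\Camp^\vee$. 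Since $\Camp^\vee$ is a convex cone, both $\deg \cV_\posi$ and $\deg \cV_\nega$ then lie in $\Camp^\vee$.

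The main technical obstacle is the bookkeeping in the first step: correctly identifying the $\Ct_q$-character at $p_1$ of each line-bundle summand with its degree. This identification rests on both the trivial linearization of the framing bundle and on the standard identification $V_i = \lim_{k \to -\infty} \bbV_i[k]$ induced by multiplication by the local coordinate at $p_1$. Once these conventions are aligned, the remaining combinatorics and the appeal to Lemma \ref{l_theta} are routine.
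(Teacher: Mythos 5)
Your proof is correct and follows essentially the same route as the paper: express $\deg \cV_\posi = \sum_{k>0} \dim\bbV[k]$ and $\deg \cV_\nega = \sum_{k\le 0}(\dim\bbV[k]-\dim V)$ via \eqref{bbVcV}, invoke Lemma~\ref{l_theta} to get positivity against $\theta$, and observe that this positivity persists across the GIT chamber, hence each summand lies in $\Camp^\vee$; the paper records exactly these two displays and then cites Lemma~\ref{l_theta}. One small caveat in the first step: you justify the triviality of the $\Ct_q$-action on $\cV_i|_{p_2}$ by saying $V_i$ is generated by the image of the framing $\cW_i$, but that particular description of stability is chamber-dependent (some $\theta$ give cogeneration instead). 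The uniform justification is simply that the quiver data at the nonsingular point $p_2$ is $\theta$-stable, hence has trivial automorphism group, so the only $\Ct_q$-equivariant structure compatible with the trivially linearized framing is the trivial one on the fiber; the conclusion you draw is the same.
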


\begin{proof}
{}From \eqref{bbVcV}, we have 
\begin{align*}
\deg \cV_{\posi} & = \sum_{k>0} \dim \bbV[k] \,, \\
\deg \cV_{\nega} & = \sum_{k\le 0} \left( \dim \bbV[k] - \dim V\right)\,,
\end{align*}
and so we conclude by Lemma \ref{l_theta}. 
\end{proof}

\begin{Corollary}\label{c_theta_star} 
For a given ample cone $\Camp$, there exists $\theta_\star \in \Camp$ 
such that 
\begin{equation}
\left|\log_q \tr_{\lambda(\cV|_{p_1})} q \right| = 
O\left((\theta_\star,\deg \cV) \deg \lambda \right) \,, 
\quad q\to\{0,\infty\} \,,
\label{q_bound2}
\end{equation}
for all $\lambda\in K_G(pt)$. 
\end{Corollary}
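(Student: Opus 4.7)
The plan is to bound the $q$-weights appearing in the $\Ct_q$-character of $\lambda(\cV|_{p_1})$ in terms of the splitting types of the bundles $\cV_i$ on $C\cong\bP^1$, and then convert that bound into a pairing against a fixed ample class using Lemma \ref{l_degree_split}. Writing $\cV_i\cong\bigoplus_k\cO(d_{ik})$ as in \eqref{posinega}, the fiber $\cV_i|_{p_1}$ has $\Ct_q$-character $\sum_k q^{d_{ik}}$. Any $\lambda\in K_G(\pt)$ is a $\Z$-linear combination of tensor products of Schur functors of the defining representations $V_i$; defining $\deg\lambda$ as the maximum of the total polynomial degree over these summands, each monomial in the character of $\lambda(\cV|_{p_1})$ is of the form $q^{\sum n_{ik}d_{ik}}$ with $n_{ik}\geq 0$ and $\sum n_{ik}\leq\deg\lambda$. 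This yields the crude uniform bound
$$
\bigl|\log_q \tr_{\lambda(\cV|_{p_1})} q\bigr|\ \leq\ \deg\lambda\cdot \max_{i,k}|d_{ik}|
$$
at $q\to\{0,\infty\}$, extending \eqref{q_bound}.

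Next I would bound the individual exponents $d_{ik}$ by $\deg\cV_{i,\posi}$ and $\deg\cV_{i,\nega}$. Since $\deg\cV_{i,\posi}=\sum_{d_{ik}>0}d_{ik}$ is a sum of positive integers and $\deg\cV_{i,\nega}=\sum_{d_{ik}<0}d_{ik}$ a sum of negative ones, the trivial inequalities
$$
\max_k d_{ik}\ \leq\ \deg\cV_{i,\posi},\qquad -\min_k d_{ik}\ \leq\ |\deg\cV_{i,\nega}|
$$
give $\max_{i,k}|d_{ik}|\leq \max_i\max\bigl(\deg\cV_{i,\posi},\,|\deg\cV_{i,\nega}|\bigr)$. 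In particular this quantity is dominated by the $\ell^\infty$-norm of either of the vectors $\deg\cV_{\posi},\deg\cV_{\nega}\in\Z^I$.

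The final step is a convex-geometry argument. Pick $\theta_\star$ in the interior of $\Camp$. The linear functional $(\theta_\star,\cdot)$ is then strictly positive on $\Camp^\vee\setminus\{0\}$, so by compactness of the intersection of $\Camp^\vee$ with an $\ell^\infty$ unit sphere there is a constant $c>0$ depending only on $\theta_\star$ with $(\theta_\star,v)\geq c\|v\|_\infty$ for every $v\in\Camp^\vee$. Applying this to $v=\deg\cV_{\posi}$ and to $v=\deg\cV_{\nega}$, both of which lie in $\Camp^\vee$ by Lemma \ref{l_degree_split}, and using additivity $(\theta_\star,\deg\cV)=(\theta_\star,\deg\cV_{\posi})+(\theta_\star,\deg\cV_{\nega})$ of two nonnegative summands, one obtains
$$
\max_{i,k}|d_{ik}|\ \leq\ c^{-1}(\theta_\star,\deg\cV),
$$
which combined with the first display yields \eqref{q_bound2}. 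The proof is really an assembly of pieces already in place; the main (minor) obstacle is simply to fix a working notion of $\deg\lambda$ for virtual $\lambda$ that is additive on sums and multiplicative on tensor products up to $\max$, for which expanding in Schur functors or equivalently in Adams operations is convenient. Everything else is forced by Lemma \ref{l_degree_split} and the openness of $\Camp$.
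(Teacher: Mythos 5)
Your proof is correct and is exactly the argument the paper leaves implicit: the paper states the corollary right after Lemma~\ref{l_degree_split} with no written proof, and the intended deduction is precisely the combination of the crude bound \eqref{q_bound} (applied degree-by-degree) with Lemma~\ref{l_degree_split} and the fact that a strictly interior $\theta_\star$ pairs with elements of the pointed cone $\Camp^\vee$ at a rate comparable to the $\ell^\infty$-norm. Your spelling out of the compactness step and the bound $\max_{i,k}|d_{ik}| \le \max\bigl(\|\deg\cV_\posi\|_\infty,\|\deg\cV_\nega\|_\infty\bigr)$ makes explicit what the text of Section 7.5 only sketches.
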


\subsubsection{}
Recall from Section \ref{s_polar} that a polarization of $X$ is 
an equivariant $K$-theory class $T^{1/2}X$ satisfying 
\begin{equation}
T X = T^{1/2} X + \hbar^{-1} \, \left(T^{1/2} X\right)^\vee \,.
\label{def_pol} 
\end{equation}
A polarization of $X$ is unique up to adding a balanced class 
\begin{equation}
T^{1/2} X  \mapsto T^{1/2} X  + \cG - \hbar^{-1} \cG^\vee \,, \quad 
\cG\in K(X) \,,
\label{change_pol} 
\end{equation}
which affects the modified virtual canonical class 
\eqref{def_tO} by 
\begin{equation}
\tO_\vir  \mapsto \tO_\vir \, \frac{\det \cG_2}{\det \cG_1}  \,, 
\label{change_tO} 
\end{equation}
where $\cG_1$ is corresponding polynomial in $\cV_i\big|_{p_1}$. 
Note that by \eqref{lambda_shift} this also means 
\begin{equation}
\bra{\lambda} \mapsto \bra{\lambda} \Big|_{z\to q^{\det \cG}z} 
\,, 
\label{change_Vertex} 
\end{equation}
and so all polarizations are equally good for computing the 
vertex or for determining the vertex from the computation of 
the cap.

\subsubsection{}

The contribution of the polarization at $p_1$ to $\tO_\vir$ in localization 
formulas is 
\begin{equation}
(-1)^{\rk \cT^{1/2}} \bSd \cT^{1/2}_{p_1} = 
\begin{cases}
O\left( q^{-\deg \cT^{1/2}_{\nega}} \right)&  q\to 0\,,\\       
O\left( q^{-\deg \cT^{1/2}_{\posi}} \right)& q\to \infty \,, 
\end{cases} \label{T12asy} 
\end{equation}
where $\cT^{1/2}_{\posi}$ and $\cT^{1/2}_{\nega}$ are defined as in 
\eqref{posinega}. 

\begin{Definition} 
We say that the polarization $T^{1/2}$ is large with respect to 
$\theta\in \Camp$ if 
\begin{align}
\deg \cT^{1/2}_{\posi} &> \theta \cdot \deg f \,, \label{dTposi}\\ 
-\deg \cT^{1/2}_{\nega} &\ge \theta \cdot \deg f \,. \label{dTnega} 
\end{align}
for any nonconstant stable quasimap $f$ with stability parameter $\theta$. 
\end{Definition}

It is important to note that $\cT^{1/2}$ is a virtual vector bundle
and so the degree of $\cT^{1/2}_{\nega}$ need \emph{not} be negative. For 
an example of this phenomenon, one can take quasimaps 
to $T^*\Gr(n,n)$, for $n>1$. 

\subsubsection{}

Let 
$$
\bv = \dim V \,, \quad \bw = \dim W \,,
$$
be the dimension vectors corresponding to $X=X_{\bv,\bw}$ and 
recall that any Nakajima variety may be embedded 
\begin{equation}
X_{\bv,\bw} \hookrightarrow X_{\bv,\bw'} \,, \quad \bw' > \bw \,, 
\label{emb_Nak}
\end{equation}
into a larger Nakajima variety as a fixed point of the action of 
the framing torus.  The ample cones 
stabilize with respect to these embeddings. 

\begin{Proposition}\label{p_w'} 
For any $\theta$, there exists $\bw'$ and a polarization of 
$X_{\bv,\bw'}$ which is large with respect to $\theta$. 
\end{Proposition}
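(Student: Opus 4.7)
The plan is to construct the required polarization in two stages. First, I would enlarge the framing $\bw \to \bw'$ by adding a multiplicity space $W_i^{\textup{new}}$ of dimension $N_i$ with trivial $\sigma$-weight at each vertex, embedding $X = X_{\bv,\bw}$ into $X_{\bv,\bw'}$ as a fixed locus of the new framing torus and adding $\sum_i N_i\, \cV_i$ to $\cT^{1/2}$ on quasimaps. Second, using the freedom \eqref{change_pol}, I shift the resulting polarization by a balanced class $\cG - \hbar^{-1}\cG^\vee$ with $\cG = \sum_i c_i V_i^\vee$, which adds $\sum_i(c_i\cV_i^\vee - \hbar^{-1}c_i\cV_i)$. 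The integer vectors $N = (N_i)$ and $c = (c_i)$ remain to be chosen large.

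The key computation is the posi/nega decomposition of $\cT^{1/2}$ at a $\Ct_q$-fixed quasimap. Writing $\cV_i = \bigoplus_m \cO(d_{im})$ and setting $A_i = \deg \cV_{i,\posi}$, $B_i = -\deg \cV_{i,\nega}$ (both nonnegative, with $\deg \cV_i = A_i - B_i$), a short reading of $q$-weights at $p_1$ yields that the new contributions to $\deg \cT^{1/2}_\posi$ and $-\deg \cT^{1/2}_\nega$ are respectively $(N-c)\cdot A + c\cdot B$ and $(N-c)\cdot B + c\cdot A$, on top of a ``base'' contribution from $T^{1/2}X_{\bv,\bw}$ which is linearly bounded in $A + B$ by constants depending only on the quiver. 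Using $\theta \cdot \deg f = \theta \cdot A - \theta\cdot B$, the bounds \eqref{dTposi}--\eqref{dTnega} reduce, modulo the absorbable base, to
\begin{equation*}
(N-c-\theta)\cdot A + (c+\theta)\cdot B > 0 \text{ when } (A,B) \ne 0, \qquad (c-\theta)\cdot A + (N-c+\theta)\cdot B \ge 0.
\end{equation*}

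By Lemma \ref{l_degree_split}, both $A$ and $B$ lie in $\Camp^\vee$, and at least one of them is nonzero on any nonconstant stable quasimap. I would then choose $c$ so that $c - \theta$ lies deep in the interior of $\Camp$, automatically placing $c + \theta = (c - \theta) + 2\theta$ in the interior as well since $\Camp$ is a convex cone containing $\theta$; then I choose $N$ so that $N - c - \theta$ also lies deep in the interior of $\Camp$, placing $N - c + \theta$ there by the same argument. Any pairing of an interior element of $\Camp$ with a nonzero element of $\Camp^\vee$ is strictly positive, giving both displayed inequalities, with the strict form of \eqref{dTposi} following from the nonvanishing of $(A,B)$ on nonconstant quasimaps. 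The only subtlety---rather than a genuine obstacle---is absorbing the base term from $-\sum_i \cHom(\cV_i,\cV_i)$, whose posi and nega degrees can have either sign but grow only linearly in $A + B$; this is handled by enlarging $N$ and $c$ further so that the dominant coefficients still lie interior to $\Camp$ after subtracting the base constants.
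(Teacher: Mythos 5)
Your reduction of \eqref{dTposi}--\eqref{dTnega} to the two displayed inequalities is algebraically correct, but the invocation of Lemma~\ref{l_degree_split} that follows has a fatal sign error. That lemma asserts that \emph{both} $\deg\cV_\posi$ and $\deg\cV_\nega$ lie in $\Camp^\vee$; with your notation $A=\deg\cV_\posi$ and $B=-\deg\cV_\nega$, this gives $A\in\Camp^\vee$ and $-B\in\Camp^\vee$, \emph{not} $B\in\Camp^\vee$. Since $\Camp^\vee$ is a pointed cone, whenever $B\ne 0$ the pairing of your interior-to-$\Camp$ coefficient $c+\theta$ with $B$ comes out nonpositive rather than strictly positive, so the claimed conclusion fails. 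Correcting the sign, the bounds would require $-(c+\theta)$ in the interior of $\Camp$ (from the first inequality with $A=0$) together with $c-\theta$ in the closure of $\Camp$ (from the second with $\tilde B := -B = 0$). Adding these two gives $-2\theta$ in the interior of $\Camp$, contradicting $\theta$ in the interior of $\Camp$. So your four cone constraints on $N,c$ are mutually inconsistent as soon as both $\deg\cV_\posi$ and $\deg\cV_\nega$ can be nonzero, which happens for stability vectors $\theta$ of mixed sign. A secondary issue is that $N$ must be a nonnegative dimension vector, which need not hold when $N-c-\theta$ and $c+\theta$ are forced into a chamber $\Camp$ that does not meet the positive orthant.

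The paper sidesteps both problems by making the framing contribution to the polarization \emph{sign-aware} in $\theta$: it takes $\cF=\bigoplus_{\theta_i>0}\Hom(W'_i,V_i)\oplus\bigoplus_{\theta_i<0}\hbar^{-1}\Hom(V_i,W'_i)$ with $\bw'_i\approx C'|\theta_i|$, an actual (not virtual) bundle, so that $\deg\cF_\posi\ge\deg\cF=C'\,\theta\cdot\deg f$, a manifestly large positive multiple of $\theta\cdot\deg f$. The two bounds are then enforced by two different mechanisms: the balanced class $\cL-\hbar^{-1}\cL^\vee$ with $\cL=\bigotimes(\det V_i)^{-C\theta_i}$ supplies \eqref{dTnega}, while $\cF$ with $C'\gg C$ overcompensates the resulting loss on the $\posi$ side to supply \eqref{dTposi}. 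Your uniform $\sum_i N_i\cV_i$, even augmented by the balanced shift $\cG-\hbar^{-1}\cG^\vee$, cannot replicate this because it imposes the same sign pattern on the coefficient vectors entering both bounds, which the corrected cone analysis rules out.
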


\begin{proof}
We take 
\begin{equation} 
T^{1/2} X_{\bv,\bw'} = \cF + \cL - \hbar^{-1} \, \cL^{-1} + \dots \,, \label{pol_w} 
\end{equation}
where 
$$
\cF = \bigoplus_{\theta_i > 0 } 
\Hom(W'_i,V_i)  +  \bigoplus_{\theta_i < 0 } \hbar^{-1} \Hom(V_i,
W'_i) \,, 
$$
$\cL$ is a line bundle of the form 
$$
\cL = \bigotimes \left(\det V_i\right)^{- C \theta_i} \,, \quad C \gg
0 
$$
and dots in \eqref{pol_w} stand for terms with $\cQ_{ij}$ from \eqref{repM}. The
degrees of all line bundles appearing in those omitted terms are bounded 
in terms of $\theta\cdot \deg f$ by Lemma \ref{l_degree_split}. 

Since $\cL$ can be made arbitrarily negative by choosing $C$ large
enough, it can be used to make the polarization \eqref{pol_w} satisfy
\eqref{dTnega}.  Now take 
$$
\bw'_i \approx C' |\theta_i|
\quad C'  \gg C \gg 0 \,. 
$$
Since $\cF$ for an actual, that is, not a virtual vector bundle $\cF$ we have 
$$
\deg \cF_{\posi} \ge \deg \cF  \approx 
\bigotimes \left(\det V_i\right)^{C' \theta_i}
$$
and this can be made arbitrarily large so that to satisfy
\eqref{dTposi}. 
\end{proof}

\subsubsection{}

Let $\theta_\star$ be as in Corollary \ref{c_theta_star}. 

\begin{Theorem}\label{t_large_framing} 
If the polarization $T^{1/2}X$  is large with respect to $(\deg \lambda)
\theta_\star$ then 
\begin{equation}
\brar{\lambda} = \lambda(V) \, \cK_X^{1/2}  
\label{cap_class_1}
\end{equation}
\end{Theorem}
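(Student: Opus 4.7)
The plan is to establish vanishing of the quantum corrections to $\brar{\lambda}$ by a $\Ct_q$-equivariant rigidity argument, in the spirit of the second proof of Theorem \ref{t_tube} given in Section \ref{s_2_proof_tube}. Since $\ev_{p_2}$ is proper on $\QM_{\textup{relative }p_2}$, each degree-$d$ coefficient $(\brar{\lambda})_d \in K(X)$ is well-defined in nonlocalized K-theory. The torus $\Ct_q$ rotating the domain $C \cong \bP^1$ acts on the moduli of quasimaps but trivially on the target $X$, so $(\brar{\lambda})_d$ is \emph{a priori} independent of $q$. I will evaluate $(\brar{\lambda})_d$ by $\Ct_q$-equivariant localization and show that every fixed-locus contribution decays as $q \to \infty$ under the large framing hypothesis; $q$-independence will then force $(\brar{\lambda})_d = 0$ for all $d > 0$, while the $d = 0$ term comes from constant quasimaps and evaluates to $\lambda(V)\cK_X^{1/2}$ as in \eqref{cap_class}, yielding \eqref{cap_class_1}.

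By Section \ref{s_Cq_fixed}, the $\Ct_q$-fixed locus parametrizes a flag of quiver subrepresentations $\{\bbV[k]\}$ encoding the bare quasimap data at $p_1$, glued to an accordion at the relative point $p_2$; for fixed degree $d$ there are finitely many components. For each such component I will bound the $q \to \infty$ asymptotic by collecting the $q$-dependent factors: the polarization factor $\bSd(\cT^{1/2}|_{p_1})^\vee$ contributes $O(q^{-\deg \cT^{1/2}_\posi})$ by \eqref{T12asy}; the descendent $\lambda(\cV|_{p_1})$ is bounded in magnitude by $q^{(\theta_\star,\deg f)\deg\lambda}$ via Corollary \ref{c_theta_star}; the balanced cohomological contributions of the form $H - \hbar^{-1}H^\vee$ arising from Lemma \ref{lsquare} remain bounded in $q$ because $\aroof$ of any balanced class is bounded; and the accordion-smoothing factors at $p_2$ are rational functions of the form analyzed in Section \ref{s_contr_accord} that stay bounded as $q \to \infty$ by Lemma \ref{l_limit}. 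The residual $\det \cT^{1/2}|_{p_2}$ term from \eqref{def_tO_rel} is $q$-independent since the quasimap is constant along the accordion chain.

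Combining these estimates, each fixed-component contribution is bounded by $O(q^{-\deg \cT^{1/2}_\posi + (\theta_\star,\deg f)\deg\lambda})$ as $q \to \infty$. The largeness hypothesis \eqref{dTposi} applied with $\theta = (\deg \lambda)\theta_\star$ yields $\deg \cT^{1/2}_\posi > (\theta_\star, \deg f)\deg\lambda$, making this exponent strictly negative. Summing finitely many such contributions preserves the $O(q^{-\epsilon})$ decay; combined with the $q$-independence of $(\brar{\lambda})_d$, this forces $(\brar{\lambda})_d = 0$ for every $d > 0$. The symmetric condition \eqref{dTnega} yields the same conclusion via the $q \to 0$ limit, and either bound suffices on its own.

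The main obstacle is the careful $q$-asymptotic accounting for the virtual tangent space and accordion factors, and in particular verifying that the balanced combinations built into $\tO_\vir$ together with the node-smoothing terms do not produce hidden growth that overwhelms the polarization decay. This is precisely where the self-dual structure of $\tO_\vir$ plays its role: the square root in \eqref{def_tO_rel} is designed so that all noncanceling contributions assemble into balanced classes, for which $\aroof$-type expressions remain bounded as $q \to \{0,\infty\}$, enabling the rigidity mechanism. Once this bookkeeping is complete, the large framing condition cleanly isolates the classical $d = 0$ contribution and gives \eqref{cap_class_1}.
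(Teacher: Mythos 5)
Your overall strategy — $\Ct_q$-rigidity using the large polarization hypothesis via the bound on $\cT^{1/2}_{\posi}$ and $\cT^{1/2}_{\nega}$ — is the same as the paper's. The paper organizes the localization into the factorization $\brar{\lambda} = \bra{\lambda}\,\left|\textup{capping operator}\right|$ and then records the $q\to 0$ and $q\to\infty$ behavior of each factor separately, which you implicitly reproduce.

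There is, however, a genuine logical gap in your rigidity step. You claim that ``$(\brar{\lambda})_d$ is a priori independent of $q$'' because $\Ct_q$ acts trivially on $X$, and later that ``either bound suffices on its own.'' Neither is correct. The class you actually compute by $\Ct_q$-localization is the \emph{equivariant} pushforward, which lands in $K_{\bT}(X)\otimes\Z[q^{\pm 1}]$ and is a Laurent polynomial in $q$, not a priori constant: although $\Ct_q$ acts trivially on $X$, it acts nontrivially on $\tO_\vir$ and on the descendent $\lambda(\cV|_{p_1})$, and the pushforward inherits this nontrivial grading. (Compare with the toy example $\chi^{\Ct}(\bP^1,\cO(1))$: the target is a point, yet the equivariant Euler characteristic is a nonconstant element of $\Z[q^{\pm 1}]$.) Knowing only that the Laurent polynomial decays as $q\to\infty$ tells you that its nonnegative $q$-powers vanish, but leaves the negative ones undetermined; likewise boundedness at $q\to 0$ alone only kills the negative powers. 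You need \emph{both}: the decay at $q\to\infty$ (from \eqref{dTposi}) and the boundedness at $q\to 0$ (from \eqref{dTnega}, which gives a non-strict inequality, hence $O(1)$ and not decay) together force the polynomial to be the constant given by the $q\to\infty$ limit, namely the classical term $\lambda(V)\cK_X^{1/2}$. You do in fact invoke both conditions in passing, so the argument is salvageable, but the assertion that one bound suffices is wrong and should be replaced by the standard ``bounded at both infinities of $\Ct_q$, hence constant, hence equal to the $q\to\infty$ limit'' argument, exactly as in the second proof of Theorem \ref{t_tube}.
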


\begin{proof}
  We argue as in Section \ref{s_2_proof_tube}. 
Localization may be phrased as 
\begin{equation}
\brar{\lambda}  = \bra{\lambda} \, \left| \textup{capping
    operator}\right|  \,, 
\label{bra_acc} 
\end{equation}
where the capping operator is the same as already discussed in 
Section \ref{s_contr_accord}. The matrix elements of this 
operator are given by 
\begin{equation}
\ev_* 
\left(
  \QM^\text{\normalsize$\sim$}_{\textup{relative $p_2,p'_2$}}
  \,, z^{\deg} \, \tO_\vir  \, 
\frac1{1-q \psi_{p_2}}
\right) \in K(X)^{\otimes 2} \otimes \Q[[q]][[z]]  \,. 
\label{accord2}
\end{equation}
Therefore 
  \begin{equation}
\left| \textup{capping
    operator}\right| \to 
\begin{cases}
1 \,, & q \to \infty \\
O(1) \,, & q \to 0 \,. 
\end{cases} \label{accp2} 
\end{equation}
On the other hand, 
 by Corollary \ref{c_theta_star}, \eqref{T12asy}, and the definition of a 
large polarization 
\begin{equation}
\bra{\lambda} \to 
\begin{cases}
\lambda(V) \, \cK_X^{1/2}  \,, & q \to \infty \\
O(1) \,, & q \to 0 \,. 
\end{cases} \label{braO1}
\end{equation}
The Theorem follows.  
\end{proof}

\noindent 
Observe that very crude estimates were used in the proof and 
sharper results on vanishing of quantum corrections in 
\eqref{cap_class} can be obtained along the same lines. 

\subsubsection{}\label{s_from_large_framing} 
Recall we view \eqref{bra_acc} as a system of linear equations 
that lets one determine the vertex if the cap and the capping operator 
are known. 
The capping operator will be computed as the 
fundamental solutions of a certain $q$-difference equation below. 

For any given $\lambda$, one can thus compute $\bra{\lambda}$ 
for all Nakajima varieties with sufficiently large framing vector $\bw'$, 
as in Proposition \ref{p_w'}. 

{}From this, one can go to all Nakajima varieties using 
Lemma \ref{l_index_limit}.

\section{Difference equations} \label{s_diff} 

\subsection{Shifts of K\"ahler variables}\label{s_shift_K}

\subsubsection{}

Let $\cF$ be a $\Ct_q$-equivariant coherent sheaf on $C$. We have 
\begin{equation}
\det \Hd(\cF \otimes (\cO_{p_1} - \cO_{p_2})) = q^{\deg
  \cF} \label{qdeg} \,,
\end{equation}
and this equality is $\Aut(\cF)$-equivariant. It suffices to 
check \eqref{qdeg} for $\cF\in\{\cO,\cO_{p_1}\}$, which is immediate

\subsubsection{}

Recall that for relative quasimaps, the tautological bundles $\cV_i$ 
are defined on a certain destabilization 
$$
\pi: C' \to C 
$$
of the original curve $C$. Since 
$$
\deg \cV_i = \deg \pi_* \cV_i \,, 
$$
we have
\begin{equation}
q^{\deg
  \cV_i} = \det \Hd(\cV_i \otimes \pi^*(\cO_{p_1} -
\cO_{p_2})) \label{qdeg2} \,. 
\end{equation}
This means that for any K-theoretic count of quasimaps, we have 
\begin{equation}
  \label{qdeg3}
  \bigg \langle \dots \,\, z^{\deg} \bigg \rangle \bigg|_{z_i \mapsto
    q z_i} =  \bigg \langle \dots \,\, z^{\deg} \,\, 
\det \Hd(\cV_i \otimes \pi^*(\cO_{p_1} -
\cO_{p_2}))
\bigg \rangle \,,
\end{equation}
where dots stand for arbitrary additional insertions. 
The basic relation \eqref{qdeg3} turns into $q$-difference 
equations as follows. 

\subsubsection{}
To illustrate different possibilities, consider the following setup 
that mixes relative and absolute conditions. 

We can define 
\begin{equation}
\bJ = \ev_{*}\left(\QM_{\substack{
\textup{relative $p_1$}\\
\textup{nonsing at $p_2$}}},\tO_\vir \, z^{\deg f}\right) \in K(X)^{\otimes
  2}_\textup{localized} \otimes \Q[[z]] \label{defJ}
\end{equation}
by equivariant localization. Using the bilinear 
pairing \eqref{formX} on $K(X)$, one can turn $\bJ$ into an operator acting 
from the second copy of $K(X)$ to the first. We have seen this 
operator previously, namely 
\begin{equation}
  \bJ = \left| \textup{capping at $p_1$}\right| \,.
\end{equation}
By our conventions, 
\begin{equation}
  \label{JOz} 
\bJ = 1 + O(z) \,. 
\end{equation}

\subsubsection{}

Since $\pi$ is an isomorphism over $p_2$, we have 
$$
\det \Hd(\cV_i \otimes \pi^*(\cO_{p_2})) = \ev_2^* \cL_i 
$$
where $\cL_i$ is the $i$th tautological line bundle \eqref{cLi} on $X$. 

Over $p_1$, $\pi$ is usually not an isomorphism, which allows 
room for nontrivial enumerative geometry of rational curves in $X$. 
This geometry can be conveniently packaged using degeneration
formula.

\subsubsection{}\label{sbfM}

Introduce the operator 
\begin{equation}
\bfM_{\cL_i} = \ev_*\left(\tO_\vir \,  z^{\deg f} \, 
\det \Hd\left(\cV_i \otimes
\pi^*(\cO_{p_1})\right) \right) \, \bfG^{-1} \label{defML}
\end{equation}
in which the first factor is 
defined using the moduli spaces of quasimaps relative to
\emph{both} points $p_1$ and $p_2$ and the second 
is the gluing operator. 
As in Proposition  \ref{p_glue_nolocalized}, we 
see that the operator \eqref{defML} is defined in 
nonlocalized K-theory. 

By construction, 
\begin{equation}
  \label{MOz}
  \bfM_{\cL_i} = \cL_i  + O(z) \,,
\end{equation}
where $\cL_i$ is viewed as an 
operator of tensor multiplication.

 \subsubsection{}

The curve $C$ may be equivariantly degenerated to a union 
$$
C \rightsquigarrow C_1 \cup C_2 \,,  \quad p_i \in C_i  
$$
of two $\bP^1$'s. The degeneration formula for \eqref{qdeg3} 
gives the following formula
\begin{equation}
  \label{qJi}
  \bJ(q^{\cL_i} z)  = \bfM_{\cL_i} \, \bJ(z) \, \cL_i^{-1}
\end{equation}
where 
$$
(q^{\cL} z)^d = q^{\lan \cL,d\ran} \, z^d\,,
\quad  d\in H_2(X,\Z)\,, \cL\in
\Pic(X) \,. 
$$
This immediately extends to the whole Picard group to give 
the following 
\begin{Theorem} \label{t_bfM}
  \begin{equation}
    \label{qdJ}
    \bJ(q^\cL z) \, \cL = \bfM_{\cL} \, \bJ(z) 
  \end{equation}
\end{Theorem}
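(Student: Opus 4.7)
The plan is to combine the tautological shift identity \eqref{qdeg3}, which turns a Kähler shift $z\mapsto q^{\cL}z$ into the insertion of a determinantal class, with the degeneration–gluing formula of Section \ref{s_degen_glue}. I will first prove \eqref{qJi} for each generating line bundle $\cL_i=\det V_i$, and then obtain \eqref{qdJ} for arbitrary $\cL\in\Pic(X)$ by multiplicativity.

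Applying \eqref{qdeg3} to the defining integral \eqref{defJ} for $\bJ$ and shifting only the variable $z_i$ introduces into the integrand the K-theory class
\begin{equation*}
q^{\deg\cV_i} \;=\; \det\Hd(\cV_i\otimes\pi^*(\cO_{p_1}-\cO_{p_2})) \;=\; \frac{\det\Hd(\cV_i\otimes\pi^*\cO_{p_1})}{\det\Hd(\cV_i\otimes\pi^*\cO_{p_2})}.
\end{equation*}
Since $\bJ$ counts quasimaps nonsingular at $p_2$, the stabilization $\pi$ is an isomorphism in a neighbourhood of $p_2$, so $\det\Hd(\cV_i\otimes\pi^*\cO_{p_2})=\ev_{p_2}^*\cL_i$. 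This factor pulls through the pushforward as multiplication by $\cL_i$ on the $p_2$-leg, so the shift identity becomes equivalent to
\begin{equation*}
\bJ(q^{\cL_i}z)\,\cL_i \;=\; \ev_*\!\left(\QM,\ \tO_\vir\,z^{\deg f}\,\det\Hd(\cV_i\otimes\pi^*\cO_{p_1})\right).
\end{equation*}

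To recognise the right-hand side as $\bfM_{\cL_i}\bJ(z)$, I then degenerate $C\cong\bP^1$ equivariantly into $C_1\cup_p C_2$ with $p_1\in C_1$ and $p_2\in C_2$ and apply \eqref{degen_formula}. The determinantal insertion is supported over $p_1$ and therefore lives entirely on the $C_1$-piece, which carries the relative condition at $p_1$ and the relative condition at the new node; by the very definition \eqref{defML} of $\bfM_{\cL_i}$, its contribution after pushforward is $\bfM_{\cL_i}\,\bfG$. The $C_2$-piece, relative at the node and nonsingular at $p_2$, is $\bJ(z)$ by \eqref{defJ}. Gluing via $\bfG^{-1}$ as in \eqref{degen_formula} yields $\bfM_{\cL_i}\,\bfG\cdot\bfG^{-1}\cdot\bJ(z)=\bfM_{\cL_i}\,\bJ(z)$, which is \eqref{qJi}. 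The extension to arbitrary $\cL\in\Pic(X)$ is then immediate: $\Pic(X)$ is generated by the classes $\cL_i$, and iterating the displayed identity fixes the $\bfM_{\cL}$ uniquely via a cocycle in $z$.

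The one genuinely delicate part of the argument is the bookkeeping in the degeneration step: one must check that the factor of $\bfG^{-1}$ placed by hand into the definition \eqref{defML} of $\bfM_{\cL_i}$ is exactly what cancels the $\bfG$ generated by gluing, and that the determinantal insertion $\det\Hd(\cV_i\otimes\pi^*\cO_{p_1})$ really does split cleanly between the two components as a pure $C_1$-contribution. Both points are formally the same cancellations that powered the short proof of $\Tube=\bfG$ in Section \ref{s_1st_tube} and the inclusion–exclusion derivation of the degeneration formula itself.
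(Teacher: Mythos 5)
Your proof is correct and is essentially the same argument the paper uses: apply \eqref{qdeg3} to $\bJ$, absorb the $p_2$-contribution as $\ev_{p_2}^*\cL_i$, degenerate $C$ into $C_1\cup_p C_2$, and observe that the $C_1$-piece (with the $p_1$-insertion) is $\bfM_{\cL_i}\bfG$ by the definition \eqref{defML} while the $C_2$-piece is $\bJ$, so the $\bfG^{-1}$ from \eqref{degen_formula} cancels the $\bfG$. The paper leaves these steps essentially implicit in the paragraph between \eqref{qdeg3} and the theorem; you have just spelled them out.
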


\noindent 
{}From \eqref{JOz} and \eqref{MOz}, we see that the difference 
equations 
$$
\Psi(q^{\cL} z) =  \bfM_{\cL} \, \Psi(z) 
$$
have a regular singularity at $z=0$ and
that $\bJ$ is the fundamental solution. 

\subsubsection{}
Recall from Section \ref{s_2_proof_tube} that 
$$
\bJ \to 
\begin{cases}
1 \,, & q \to 0 \\
\bfG \,, & q \to \infty \,. 
\end{cases}
$$
Substituting this into \eqref{qdJ}, we obtain the following 

\begin{Corollary}
If $\cL$ is ample then 
\begin{alignat}{2}
 \bfM_{\cL} (z,q) &\to \cL \,, & \quad q&\to 0 \,, \\
\bfM_{\cL} (q^{-\cL} z,q) &\to \bfG \, \cL \,, & \quad q&\to \infty
\,. 
\end{alignat}
\end{Corollary}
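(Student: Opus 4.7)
The plan is to derive the corollary directly from Theorem~\ref{t_bfM} by studying the two asymptotic regimes of the difference equation, using the limits of $\bJ$ recorded in Section~\ref{s_2_proof_tube}. Rewriting \eqref{qdJ} gives
\[
\bfM_{\cL}(z,q) = \bJ(q^{\cL}z,q)\,\cL\,\bJ(z,q)^{-1},
\]
so the whole question reduces to understanding $\bJ$ at $q\to 0$ and $q\to\infty$ with the argument $z$ shifted by $q^{\pm\cL}$.

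First I would treat $q\to 0$. By Section~\ref{s_2_proof_tube}, each coefficient of $z^d$ in $\bJ(z,q)$ is a rational function of $q$ that tends to $\delta_{d,0}$ as $q\to 0$, so $\bJ(z,q)\to 1$. Ampleness of $\cL$ enters here: for every effective $d\in H_2(X,\Z)$ one has $\langle\cL,d\rangle>0$, so the substitution $z\mapsto q^{\cL}z$ multiplies the $z^d$-coefficient by $q^{\langle\cL,d\rangle}\to 0$. Since that coefficient is already bounded near $q=0$ by the argument for $\bJ\to 1$, the shifted series $\bJ(q^{\cL}z,q)$ still tends to $1$. Combining these,
\[
\bfM_{\cL}(z,q)\;\longrightarrow\;1\cdot\cL\cdot 1^{-1}=\cL,\qquad q\to 0.
\]

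For $q\to\infty$ the same identity, evaluated at $q^{-\cL}z$, reads
\[
\bfM_{\cL}(q^{-\cL}z,q) = \bJ(z,q)\,\cL\,\bJ(q^{-\cL}z,q)^{-1}.
\]
Now $\bJ(z,q)\to\bfG$ by Section~\ref{s_2_proof_tube}, while ampleness of $\cL$ makes $q^{-\cL}z$ tend to zero (coefficientwise) as $q\to\infty$, so $\bJ(q^{-\cL}z,q)\to 1$ by the same boundedness argument applied to the $q\to\infty$ limit. This yields
\[
\bfM_{\cL}(q^{-\cL}z,q)\;\longrightarrow\;\bfG\cdot\cL,\qquad q\to\infty,
\]
which is the second claim.

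The only delicate point --- and the thing I would write out carefully --- is the justification that one may substitute $z\mapsto q^{\pm\cL}z$ inside the limits. This is really a statement about the two-variable behavior of the coefficients of $\bJ$: each coefficient of $z^d$ is a rational function of $q$, regular at $0$ and $\infty$ (in the sense of Lemma~\ref{l_limit} applied to the accordion localization formula \eqref{accord1} with $\psi_{p_1}$ replaced by $\psi_{p_2}$), so multiplying by $q^{\pm\langle\cL,d\rangle}$ with $\langle\cL,d\rangle>0$ for effective $d$ kills the corresponding contributions in the appropriate limit. Given that input, which is already available in the text, the corollary is a one-line consequence of the $q$-difference equation of Theorem~\ref{t_bfM}.
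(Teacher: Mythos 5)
Your proof is correct and follows the same route as the paper: rewrite the difference equation of Theorem~\ref{t_bfM} as $\bfM_{\cL}(z,q)=\bJ(q^{\cL}z,q)\,\cL\,\bJ(z,q)^{-1}$ and feed in the two limits of $\bJ$ from Section~\ref{s_2_proof_tube}, using ampleness of $\cL$ to control the shifted argument. The paper compresses this to ``substituting this into \eqref{qdJ}''; you have merely spelled out the (correct) coefficientwise justification that the paper leaves implicit.
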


\subsection{Shifts of equivariant variables}\label{s_shift_eq}

\subsubsection{}

Let $\sigma$ be a cocharacter as in \eqref{def_sigma}. 
We will consider $\sigma$-twisted quasimaps to $X$ and 
we fix the equivariant structure so that $\Ct_q$ acts 
trivially in the fiber over $p_2$. 

\subsubsection{}

We define the operator 
\begin{equation}
\bJ^\sigma: K_\bT (X) \to K_{\bT\times \Ct_q} 
(X)_\textup{localized}\otimes \Q[[z]] \label{bJsigma}
\end{equation}
 as in \eqref{defJ} but for 
twisted quasimaps. Here $\Ct_q$ acts on $X$ via $\sigma$. 

\subsubsection{}
The operator $\bJ^\sigma$ may be computed by localization with 
respect to $\C^\times_q$.  The localization formula has 
accordions opening at $p_1$ and an edge term associated to 
a ``constant'' map to $\sigma$-fixed point $x$ 
$$
f_\sigma: C \to x \in X^\sigma \,, 
$$
as in Section \ref{s_const_map}. 

The accordion terms are identical to the accordion terms 
for $\bJ$, with equivariant variables $a\in \bA$ replaced by $q^\sigma
a$, where $q^\sigma=\sigma(q)$, and therefore
\begin{equation}
\bJ^\sigma = \bJ(a q^\sigma) \, \bE(x,\sigma) \,, 
\end{equation}
where 
$$
\bE(x,\sigma) = \frac{\textup{edge term for $f_\sigma$}}
{\left(\textup{edge term for $f_0$}\right)\big|_{a\mapsto q^\sigma a}} \,.
$$
This ration of edge terms may be analyzed as follows. 

\subsubsection{}
Let $\cT_x$ be 
the vector bundle over $C$ associated to the action of $\Ct_q$ on
$T_xX$. We have 
$$
T_\vir (f_\sigma) = \Hd(\cT_x) = 
\frac{T_x^\sigma}{1-q^{-1}} + \frac{T^0_x}{1-q}\,,
$$
where $T^0_x$ means we take this vector space with a trivial $\Ct_q$
action. Therefore
\begin{equation}
T_\vir (f_\sigma) - T_\vir (f_0) \big|_{a\mapsto q^\sigma a}  = 
\frac{T_x^0 - T_x^\sigma}{1-q} \,. \label{Tvirq}
\end{equation}
The polarization terms in \eqref{def_tO} give 
$$
\textup{polarization contribution} = q^{-\frac12 \deg \det \cT^{1/2}_x}  \,. 
$$
for the twisted map and $1$ for the untwisted. 

\subsubsection{}
The entire function 
\begin{equation}
  (a)_\infty = \prod_{i=0}^\infty (1-q^i a) \label{def_phi} 
\end{equation}
is a $q$-analog of the reciprocal of the $\Gamma$-function. We 
extend it to a map 
$$
\Phi: K(X) \to K(X) \otimes \Z[[q]] 
$$
by 
$$
\Phi(\cG) = \prod (g_i)_\infty = \Ld \left(\cG \otimes \sum_{i\ge 0} q^i \right)
$$
where the product is over Chern roots $g_i$ of $\cG$. Similar 
$\Gamma$-function terms are ubiquitous in quantum cohomology, 
see for example Chapter 16 in \cite{MO1}  
and have been conceptually investigated by Iritani, see
e.g.\ \cite{Iri}.  We have
$$
T_\vir = \frac{\cG}{1-q^{-1}}  \Rightarrow \cO_\vir = \frac1{\Phi(\cG^\vee)}\,,
$$
which, in particular, applies to  \eqref{Tvirq}. 

\begin{Exercise}\label{ex_phi_pol}
If $T_\vir = H - c H^\vee$ where $c$ is a character, then 
$$
\cO_\vir \otimes \cK^{1/2}_\vir = (-c^{1/2})^{\rk H} \bSd ((1-c^{-1})
H) \,. 
$$
In particular, if $T=T^{1/2} + \hbar^{-1} \left(T^{1/2} \right)^\vee$
and $T_\vir$ is given by \eqref{Tvirq} then 
$$
T_\vir = H - c H^\vee \,, \quad H = 
\frac{T^{1/2}_{x,0} - T_{x,\sigma}^{1/2}}{1-q} \,, \quad c=
\frac{1}{\hbar q}\,, 
$$
and the rank of $H$ is the degree of the bundle $\cT^{1/2}_x$ formed
by polarizations. 
\end{Exercise}

Here we denote by $T^{1/2}_{x,\sigma}$ the polarization at 
$x$ with the action of $\Ct_q$ induced by $\sigma$. The same
virtual vector space with the trivial $\Ct_q$-action is denoted 
by $T^{1/2}_{x,0}$. 

\subsubsection{}

Including the contributions from $\cK^{1/2}_\vir$, the polarization,
and $z^{\deg}$, we obtain the following 

\begin{Lemma} We have 
\begin{equation}
\bE(x,\sigma) =
\left(z_\textup{shifted}\right)^{\lan \bmu(x), \textup{---} \otimes \sigma\ran} \, 
\, \Phi\big((1-q\hbar)\big(T^{1/2}_{x,\sigma} -
T^{1/2}_{x,0}\big)\big) \,,\label{bE}
\end{equation}
where
\begin{equation}
z_\textup{shifted}^d = z^d (-\hbar^{1/2} q)^{-\lan \det T^{1/2},d \ran} \,, 
\quad d\in H_2(X,\Z) \,. \label{z_shifted}
\end{equation}
\end{Lemma}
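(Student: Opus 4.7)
The plan is to expand the defining ratio for $\bE(x,\sigma)$ by isolating the four multiplicative pieces that make up $\tO_\vir\,z^{\deg f}$: the virtual structure sheaf $\cO_\vir$, the normalization $\cK_\vir^{1/2}$, the asymmetric polarization factor $(\det\cT^{1/2}_{p_2}/\det\cT^{1/2}_{p_1})^{1/2}$ from \eqref{def_tO}, and $z^{\deg f}$. By \eqref{deg_const}, $\deg f_\sigma=\lan \bmu(x),-\otimes\sigma\ran$ while $\deg f_0=0$, so the $z$-pieces immediately contribute $z^{\lan\bmu(x),-\otimes\sigma\ran}$.

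The virtual tangent difference is already supplied by \eqref{Tvirq}: $T_\vir(f_\sigma)-T_\vir(f_0)\big|_{a\mapsto q^\sigma a}=(T^0_x-T^\sigma_x)/(1-q)$. Splitting with $T=T^{1/2}+\hbar^{-1}(T^{1/2})^\vee$ puts this in balanced form $H-cH^\vee$, exactly as in the second half of Exercise \ref{ex_phi_pol}, with $c=(\hbar q)^{-1}$ and $H=(T^{1/2}_{x,0}-T^{1/2}_{x,\sigma})/(1-q)$. That exercise then yields $(-(\hbar q)^{-1/2})^{d_\sigma}\,\bSd((1-q\hbar)H)$ for the ratio of $\cO_\vir\cK_\vir^{1/2}$-terms, where $d_\sigma:=\rk H$. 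Using $\Ld V=\bSd(-V)$ and the geometric sum, the definition of $\Phi$ gives $\Phi(\cG)=\bSd(-\cG/(1-q))$, which rewrites the $\bSd$-term as $\Phi\bigl((1-q\hbar)(T^{1/2}_{x,\sigma}-T^{1/2}_{x,0})\bigr)$, reproducing the desired $\Phi$-factor.

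The polarization piece is $q^{-d_\sigma/2}$ for $f_\sigma$ (the $\sigma$-twist sits at $p_1$ and the action at $p_2$ is trivial) and $1$ for $f_0$; the substitution $a\mapsto q^\sigma a$ acts trivially on $q$. Collecting all $q$- and $\hbar$-prefactors gives
$$
q^{-d_\sigma/2}\bigl(-(\hbar q)^{-1/2}\bigr)^{d_\sigma}=(-\hbar^{1/2}q)^{-d_\sigma}.
$$
Since $d_\sigma=\deg\cT^{1/2}_x=\lan\bmu(x),\det T^{1/2}\otimes\sigma\ran=\lan\det T^{1/2},\deg f_\sigma\ran$, this prefactor combines with $z^{\deg f_\sigma}=z^{\lan\bmu(x),-\otimes\sigma\ran}$ precisely into $(z_\textup{shifted})^{\lan\bmu(x),-\otimes\sigma\ran}$ with $z_\textup{shifted}$ given by \eqref{z_shifted}.

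The substantive mathematical input is entirely in \eqref{Tvirq} and Exercise \ref{ex_phi_pol}; the remainder is bookkeeping. The one subtle identification is $\rk H=\deg\cT^{1/2}_x$, which I will verify weight-by-weight: writing $T^{1/2}|_x=\sum_\alpha w_\alpha$ as a sum of $\bA$-weights, the summand $w_\alpha(1-q^{\lan w_\alpha,\sigma\ran})/(1-q)$ of $H$ has rank $\lan w_\alpha,\sigma\ran$, so $\rk H=\lan\det T^{1/2}|_x,\sigma\ran=\lan\bmu(x),\det T^{1/2}\otimes\sigma\ran$, which matches $\deg\cT^{1/2}_x$ via \eqref{deg_const}. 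The only other place where care is needed is in keeping the linearization convention ($\Ct_q$ trivial at $p_2$) consistent on the two sides of the ratio; once that is done, everything is multiplicative and the stated formula for $\bE(x,\sigma)$ assembles immediately.
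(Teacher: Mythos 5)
Your proposal is correct and follows essentially the same route as the paper: the key inputs are \eqref{Tvirq} and Exercise~\ref{ex_phi_pol}, which the paper likewise cites to get $\cO_\vir\otimes\cK_\vir^{1/2}=(-1/\sqrt{\hbar q})^{\deg\cT^{1/2}_x}\Phi\bigl((1-q\hbar)(T^{1/2}_{x,\sigma}-T^{1/2}_{x,0})\bigr)$, after which the $q^{-\deg\cT^{1/2}_x/2}$ polarization factor and $z^{\deg f_\sigma}$ are folded into $(z_\textup{shifted})^{\lan\bmu(x),\cdot\otimes\sigma\ran}$ exactly as you do. Your explicit weight-by-weight check that $\rk H=\deg\cT^{1/2}_x=\lan\det T^{1/2},\deg f_\sigma\ran$ is a welcome verification of an identity the paper leaves to Exercise~\ref{ex_phi_pol}, but the overall argument is the same.
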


\begin{proof}
{} From Exercise \eqref{ex_phi_pol}, we compute 
$$
\cO_\vir \otimes \cK^{1/2}_\vir = 
\left(-\frac{1}{\sqrt{\hbar q}}\right)^{\deg \cT_x^{1/2}}
\,\, \Phi\big((1-q\hbar)\big(T^{1/2}_{x,\sigma} -
T^{1/2}_{x,0}\big)\big) \,.
$$
Together with the $q^{-\deg \cT_x^{1/2}/2}$ 
contribution of the polarization in \eqref{def_tO}, the 
prefactor becomes 
$$
(-\hbar^{1/2} q)^{-\deg \cT_x^{1/2}}  \,. 
$$
This is 
equivalent to shifting the degree variable as in \eqref{z_shifted}. 
\end{proof}

\subsubsection{}

The shift of the K\"ahler variables, as in \eqref{z_shifted}, by an
amount proportional to the determinant of the polarization also 
appears in quantum cohomology, see for example the discussion in 
Section 1.2.4 of \cite{MO1}. 

\subsubsection{}
We define an operator 
$$
\bE(\sigma) 
: K_\bT (X) \to 
\big(K_{\bT}(X) \otimes \Q[q]\big)_\textup{localized}
\otimes \Q[z^{\pm 1}] 
$$
by formula \eqref{bE} in $\sigma$-fixed points. 
Note that multiplication by $\Phi$ is defined without localization, 
but the degree term is not. Also note that once we go to $\sigma$-fixed
points, there is no difference between the domain and the source of 
the map \eqref{bJsigma}. So, localization gives 
$$
\bJ^\sigma = \bJ(q^\sigma a) \, \bE(\sigma)  \,. 
$$

\subsubsection{}

On the other hand, we can argue as in Section \ref{sbfM} and introduce
the following \emph{shift operator} 
\begin{equation}
\bfS_\sigma= 
\ev_*\left(\textup{twisted quasimaps}, \, \tO_\vir \,  z^{\deg f} \,
\right) \, \bfG^{-1} 
\label{defSs}
\end{equation}
in which the first factor is 
defined using the moduli spaces of quasimaps relative to
\emph{both} points $p_1$ and $p_2$ \,. 

Note that in contrast to the operator \eqref{defML}, 
$\Ct_q$-localization is \emph{required} to define the operator 
\eqref{defSs}. This is because a twisted rational curve in $X$ may 
run off to infinity even with two fixed marked 
points\footnote{This happens
already for $X=\C$, twisted rational curves in which are nothing but 
sections of line bundles on $\bP^1$.}.

\subsubsection{}

Degeneration formula for $\bJ^\sigma$ gives the following 

\begin{Theorem} \label{t_bfS}
  \begin{equation}
    \label{sdJ}
    \bJ(q^\sigma a)  \, \bE(\sigma) = \bfS_{\sigma} \, \bJ(a) \,.
  \end{equation}
\end{Theorem}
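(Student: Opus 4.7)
The strategy mirrors the proof of Theorem \ref{t_bfM}: compute $\bJ^\sigma$ in two different ways and compare. The preceding paragraphs have already carried out the first computation, namely $\Ct_q$-equivariant localization on $\QM^{\sigma}$. The $\Ct_q$-fixed locus over a point $p_2 \in C$ consists of a constant $\sigma$-fixed map from $C$ plus accordions developing at $p_1$; the accordion contribution reproduces $\bJ(q^\sigma a)$ with equivariant variables shifted as $a \mapsto q^\sigma a$, while the edge term for the constant twisted map is exactly $\bE(x,\sigma)$ of \eqref{bE}. This yields
\[
\bJ^\sigma \;=\; \bJ(q^\sigma a)\,\bE(\sigma).
\]

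For the second computation, I would degenerate the domain $C$ into a nodal curve $C_1 \cup_p C_2$ with $p_1 \in C_1$ and $p_2 \in C_2$, using the universal family in the spirit of Section~\ref{s_degen_glue}. The twisting bundles on $C$ are associated to $\cO(p_1)$; under the degeneration they restrict to $\cO_{C_1}(p_1)$ on the component containing $p_1$ and to the trivial bundle on $C_2$. Thus over the central fiber the quasimap $\sigma$-twist is concentrated entirely on $C_1$, while $C_2$ carries ordinary (untwisted) quasimaps. The relative condition at $p_1$ and the nonsingular condition at $p_2$ are respected by the degeneration, so the degeneration formula \eqref{degen_formula} gives
\[
\bJ^\sigma \;=\; \bigl(\bA_1\bigr)\,\bfG^{-1}\,\bigl(\bA_2\bigr),
\]
where $\bA_1$ is the pushforward from twisted quasimaps on $C_1$ relative to both $p_1$ and the gluing point, weighted by $\tO_\vir z^{\deg}$, while $\bA_2$ is the analogous pushforward from ordinary quasimaps on $C_2$ relative to the gluing point and nonsingular at $p_2$. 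By definition \eqref{defSs}, $\bA_1 \bfG^{-1} = \bfS_\sigma$. Relabeling the gluing point of $C_2$ as $p_1$, the second factor $\bA_2$ coincides with the definition \eqref{defJ} of $\bJ(a)$. Putting it together,
\[
\bJ(q^\sigma a)\,\bE(\sigma) \;=\; \bJ^\sigma \;=\; \bfS_\sigma\,\bJ(a),
\]
as claimed.

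The main technical point to verify carefully is the bookkeeping of prefactors in $\tO_\vir$ across the degeneration. Specifically, the polarization terms $\det \cT^{1/2}|_{p_i}$ in \eqref{def_tO_rel} at the two sides of the gluing node must cancel against each other (this is why $\bfG$ acts in nonlocalized K-theory via the pairing \eqref{formX}), and the K\"ahler shift \eqref{z_shifted} built into $\bE(\sigma)$ must match exactly the shift in the $z^{\deg}$ weight induced by the polarization chosen to define $\bfS_\sigma$. This is the K-theoretic analog of the $\hbar$-insensitive cancellations in Section 9 of \cite{MO1}; here it is forced by the balanced structure of the virtual tangent bundle from Lemma~\ref{lsquare}, which guarantees the existence of the square root \eqref{def_tO_rel} and the required self-duality at the node. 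Once this matching is checked, the two evaluations of $\bJ^\sigma$ agree as operators in $K(X)_\textup{loc}\otimes \Q[[z]]$, which gives the theorem.
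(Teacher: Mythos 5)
Your proof is correct and follows essentially the same route the paper takes: the paper establishes $\bJ^\sigma = \bJ(q^\sigma a)\,\bE(\sigma)$ by $\Ct_q$-localization in the paragraphs immediately preceding the theorem, and then simply states that the degeneration formula applied to $\bJ^\sigma$ yields the claim. Your additional details — that $\cO(p_1)$ degenerates to $\cO_{C_1}(p_1)\boxtimes\cO_{C_2}$ so the twist concentrates on $C_1$, and that the polarization and prefactor bookkeeping at the node must match across \eqref{def_tO_rel} and \eqref{defSs} — correctly fill in what the paper leaves implicit, exactly in parallel with the proof of Theorem \ref{t_bfM}.
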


\subsubsection{}

It is convenient to introduce the following operator 
\begin{equation}
  \label{bJ_Phi}
  \bJ_\Phi = \bJ \, \Phi((1-q\hbar) \, T^{1/2} X) \,.
\end{equation}
It conjugates both the Kahler and equivariant shifts into 
constant $q$-difference equations 

\begin{Corollary}
  \begin{align}
    \label{conJ}
\bfM_{\cL} \, \bJ_\Phi(z) &= \bJ_\Phi(q^\cL z) \, \cL \,, \\
\bfS_{\sigma} \, \bJ_\Phi(a) &= 
\bJ_\Phi(q^\sigma a) \,  
\left(z_\textup{shifted}\right)^{\lan \bmu(\,\cdot\,), \textup{---}
  \otimes \sigma\ran} 
\,.  \notag 
    \end{align}
\end{Corollary}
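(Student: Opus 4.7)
Both identities are formal consequences of Theorems \ref{t_bfM} and \ref{t_bfS}, combined with the multiplicativity $\Phi(U+V)=\Phi(U)\Phi(V)$ of the plethystic $q$-Gamma operator. The entire point of the $\Phi$-dressing in $\bJ_\Phi$ is to absorb the $a$-dependent edge correction $\bE(\sigma)$ produced by Theorem \ref{t_bfS}, turning it into the clean $z$-shift on the right. Neither identity requires new geometry beyond what was already used to set up $\bfM_\cL$ and $\bfS_\sigma$.

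For the first (K\"ahler) identity, the key observation is that $\Phi((1-q\hbar)T^{1/2}X)$ is a tensor multiplication operator on $K_\bT(X)$ depending only on $a,\hbar,q$ and hence carries no $z$-dependence and commutes with the tensor-multiplication operator $\cL$. I would therefore right-multiply the identity of Theorem \ref{t_bfM} by $\Phi((1-q\hbar)T^{1/2}X)$ and commute $\cL$ past it:
\begin{equation*}
\bfM_\cL\,\bJ(z)\,\Phi\bigl((1-q\hbar)T^{1/2}X\bigr) \;=\; \bJ(q^\cL z)\,\cL\,\Phi\bigl((1-q\hbar)T^{1/2}X\bigr) \;=\; \bJ_\Phi(q^\cL z)\,\cL\,.
\end{equation*}
This reproduces the first line of the Corollary immediately.

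The second (equivariant) identity requires a short fixed-point bookkeeping. Starting from Theorem \ref{t_bfS} and right-multiplying by $\Phi((1-q\hbar)T^{1/2}X)(a)$, one obtains $\bfS_\sigma\,\bJ_\Phi(a) = \bJ(q^\sigma a)\,\bE(\sigma)\,\Phi((1-q\hbar)T^{1/2}X)(a)$. I would then pass to a component $x\in X^\sigma$ of the $\sigma$-fixed locus. By the very definition of the $\sigma$-twist, substituting $a\mapsto q^\sigma a$ in the equivariant weights of $T^{1/2}X|_x$ is precisely the twist that converts $T^{1/2}_{x,0}$ into $T^{1/2}_{x,\sigma}$; hence $\Phi((1-q\hbar)T^{1/2}X)(q^\sigma a)|_x = \Phi((1-q\hbar)T^{1/2}_{x,\sigma})$. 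Inserting the formula \eqref{bE} for $\bE(\sigma)$ and using multiplicativity of $\Phi$:
\begin{equation*}
\Phi\bigl((1-q\hbar)(T^{1/2}_{x,\sigma}-T^{1/2}_{x,0})\bigr)\,\Phi\bigl((1-q\hbar)T^{1/2}_{x,0}\bigr) \;=\; \Phi\bigl((1-q\hbar)T^{1/2}_{x,\sigma}\bigr)\,.
\end{equation*}
The leftover factor $(z_\textup{shifted})^{\lan\bmu(x),\,\cdot\,\otimes\sigma\ran}$ is a scalar (monomial in $z$) at each component and commutes with everything, so it can be moved to the right. Assembling the fixed-point contributions yields the second identity.

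The only step that requires genuine care is the compatibility between the algebraic substitution $a\mapsto q^\sigma a$ in equivariant characters and the geometric $\sigma$-twist of the polarization; however this identification is built into the setup of $\sigma$-twisted quasimaps in Section \ref{s_sigma}, and so serves as the definition rather than a hurdle. The remaining content is then just the elementary identity $\Phi(U)\Phi(V)=\Phi(U+V)$, which is where the precise coefficient $(1-q\hbar)$ in $\bJ_\Phi$ is forced: any other choice would leave an uncanceled $\Phi$-factor and destroy the constancy of the $q$-difference equation.
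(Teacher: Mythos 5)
Your proof is correct and is the intended argument: the Corollary is a purely formal consequence of Theorems \ref{t_bfM} and \ref{t_bfS}, the explicit formula \eqref{bE} for $\bE(\sigma)$, and the multiplicativity $\Phi(U+V)=\Phi(U)\Phi(V)$, with the key bookkeeping being precisely the identification $\Phi\bigl((1-q\hbar)T^{1/2}X\bigr)\big|_{a\mapsto q^\sigma a,\,x} = \Phi\bigl((1-q\hbar)T^{1/2}_{x,\sigma}\bigr)$ that you isolate. The only thing worth stating a little more carefully is that $\Phi\bigl((1-q\hbar)T^{1/2}X\bigr)$ acts as tensor multiplication by a class in $K_\bT(X)\otimes\Q[[q]]$, so it commutes with $\cL\otimes$ because the K-theory ring is commutative, and it is a scalar on each fixed component so it commutes with the monomial $(z_\textup{shifted})^{\lan\bmu(x),\,\cdot\,\otimes\sigma\ran}$ --- both of which you use.
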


Recall that \eqref{MOz} implies the difference equation in 
K\"ahler variables has a regular singularity at $z=0$. It follows 
from the results of \cite{OS} that, in fact, it has regular
singularities on the whole K\"ahler moduli space, which is the
toric variety associated to the fan of ample cones in $H^2(X,\Z)$.
 
Similarly, the 
difference equations in the equivariant variables $a\in \bA$ have 
regular singularities on a toric compactification of $\bA$ 
defined by the fan of the chambers from Section \ref{s_chambers} 
below. Below in Corollary \ref{c_reg_shift}  we will see an explicit basis in 
which the operators $\bfS_{\sigma}$ are bounded and 
invertible on the infinities of the torus $\bA$. 

Note that a similar shift operator may be defined for any cocharacter
of the full torus $\bT$, but it will lead to a difference equation
with irregular singularities.

\subsubsection{}

Conjugation to an equation with constant coefficients as in
\eqref{conJ} is as good as a solution of difference equation, and, in 
some sense, even better as it involves no multivalued
functions in $z$ and $a$. A solution of the constant coefficient 
equation, which does involve multivalued functions and, specifically, 
logarithms of K\"ahler and equivariant variables, can be written as follows. 

\subsubsection{}

A solution of a constant coefficient matrix difference equation of the form
$$
F(qz) = B F(z) 
$$
is given by $\displaystyle{F(z)=\exp\left(\frac{\ln z}{\ln q} \, \ln
    B\right)}$, where the function 
$$
\log_q z =  \frac{\ln z}{\ln q} 
$$
solves the equation $\log_q(qz) = 1+\log_q z$. Therefore, we need the 
logarithm of the operator $\cL\otimes$ acting on $K_\bT(X)$. Here 
the equivariance will be important, which is why we indicate it 
explicitly. 

\subsubsection{}

Let $R$ be an integral domain and $B: R^n \to R^n$ an operator in a free
$R$-module. Even if all eigenvalues $\{\lambda_i\}$ of $B$ lie in $R$,
most functions $f(B)$ of the operator $B$, such as projectors on generalized eigenspaces, are 
only defined after localization at $\lambda_i - \lambda_j$, and so 
$$
f(B) \in \End( R^n ) \left[ \frac{1}{\lambda_i - \lambda_j},
  f(\lambda_i)
\right] \,, 
$$
as illustrated by 
$$
\ln 
\begin{pmatrix}
a  & 1 \\ 0 & b 
\end{pmatrix}  = 
\begin{pmatrix}
\ln(a)  & \frac{\ln(a)-\ln(b)}{a-b} \\ 0 & \ln(b) 
\end{pmatrix}  \,. 
$$

\subsubsection{}

This
applies, in particular, to the operator $B=\cL\otimes$ of tensor
multiplication by a line bundle in 
$K_{\bT}(X)$. Recall from Exercise \ref{ex_otimes_cL} 
that the eigenvalues of this operators are the $\bT$-weights of $\cL$ at
the different components of $X^\bT$. In particular, for $f(x)=\ln x$, 
we need the logarithm of a weight, which is an element of $\left(\Lie
  \bT\right)^*$. 

We thus can define a map 
$$
\bxi: H^2(X,\C) \otimes \Lie \bT  \to \End K(X^\bT) \otimes \C
$$
which extends the map 
$$
\cL  \mapsto \ln (\cL \otimes \textup{---} )   
$$
by linearity to $H^2(X,\C) = \Pic(X) \otimes_\Z \C$. This is
refinement of the transpose of the map $\bmu$ from \eqref{def_mu} 
and it coincides with it in the case of isolated $\bT$-fixed points. 

\subsubsection{}

The operator
\begin{equation}
\Xi = \exp \frac{\bxi(\ln z_{\textup{shifted}},\ln t)}{\ln q}
\label{defXi} 
\end{equation}
solves the equations 
\begin{equation}
\label{eq_Xi}
\Xi(q^\cL z) = \cL \otimes \Xi\,, \quad 
\Xi(q^\sigma a) = \left(z_\textup{shifted}\right)^{\lan \bmu(\,\cdot\,), \textup{---}
  \otimes \sigma\ran} \, \Xi \,. 
\end{equation}
Therefore, we have the following 

\begin{Corollary} The operator $\bJ_\Phi(z) \, \Xi$ is a solution of  
the difference equations defined by $\bfM_{\cL}$ and 
$\bfS_{\sigma}$. 
\end{Corollary}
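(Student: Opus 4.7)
The statement follows by direct combination of the two systems of difference equations already proved. Given the preceding Corollary, which established
\[
\bfM_{\cL}\,\bJ_\Phi(z)=\bJ_\Phi(q^\cL z)\,\cL,\qquad
\bfS_{\sigma}\,\bJ_\Phi(a)=\bJ_\Phi(q^\sigma a)\,(z_{\textup{shifted}})^{\langle\bmu(\,\cdot\,),\textup{---}\otimes\sigma\rangle},
\]
together with the defining identities \eqref{eq_Xi} of $\Xi$, essentially nothing remains beyond multiplying the two factors in the right order. The plan is simply to write down the two verifications.

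For the K\"ahler shift, I will compute
\[
\bfM_{\cL}\,\bJ_\Phi(z)\,\Xi(z,a)
=\bJ_\Phi(q^\cL z)\,\cL\,\Xi(z,a)
=\bJ_\Phi(q^\cL z)\,\Xi(q^\cL z,a),
\]
the first equality being the K\"ahler part of \eqref{conJ} and the second being the first identity in \eqref{eq_Xi}. The point to stress is that $\bfM_\cL$ depends only on $z$, so it passes to the right of $\Xi$ as soon as the action of $\cL$ has been absorbed. For the equivariant shift, the completely analogous calculation
\[
\bfS_{\sigma}\,\bJ_\Phi(a)\,\Xi(z,a)
=\bJ_\Phi(q^\sigma a)\,(z_{\textup{shifted}})^{\langle\bmu(\,\cdot\,),\textup{---}\otimes\sigma\rangle}\,\Xi(z,a)
=\bJ_\Phi(q^\sigma a)\,\Xi(z,q^\sigma a)
\]
uses the equivariant part of \eqref{conJ} followed by the second identity in \eqref{eq_Xi}.

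The only genuine point of care (and what I expect to be the main thing one actually has to state rather than just do) is the interpretation of the product $\bJ_\Phi\cdot\Xi$. The operator $\Xi$ was defined on $K_\bT(X^\bT)$ since the map $\bxi$ lands in $\End K(X^\bT)\otimes\C$, whereas $\bJ_\Phi$ naturally acts on $K_\bT(X)$. So the composition has to be read after applying the equivariant localization isomorphism $K_\bT(X)_{\textup{loc}}\cong K_\bT(X^\bT)_{\textup{loc}}$ of Section 2.3, under which $\cL\otimes$ on $K_\bT(X)$ becomes the operator whose logarithm is $\bxi(\cL,\ln t)$. Once this identification is fixed, the two displays above are literal equalities in the appropriate localization of $K_\bT(X)[[z]][\log z,\log t]$, and the corollary is proved.
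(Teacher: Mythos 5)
Your proof is correct and is exactly the argument the paper leaves implicit (the Corollary is stated without proof precisely because it follows by chaining \eqref{conJ} with \eqref{eq_Xi} in the way you write out). The one genuinely useful addition you make is the remark on where the product $\bJ_\Phi\cdot\Xi$ lives: $\Xi$ is a priori an operator on $K(X^\bT)\otimes\C$, so the composition must be read through the localization isomorphism, and this is indeed the correct reading of the paper's convention (cf.\ the discussion around \eqref{diagResStab}, where restriction to fixed points is systematically suppressed).
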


\subsection{Difference equations for vertices}
\label{sec:diff-equat-vert}

\subsubsection{}

Exchanging the roles of $0,\infty\in \bP^1$, we get 
\begin{equation}
\ketr{\lambda}  = \bJ \, \ket{\lambda}  \,, 
\label{ket_acc} 
\end{equation}
where 
\begin{equation}
\ket{\lambda}  = \ev_{p_1,*} \left(\QM_\textup{nonsing $p_1$}\,,
\tO_\vir \,  z^{\deg f} \, \lambda\left(\cV\big|_{p_2}\right)\right) \,,
\label{defVertex_desc_ket}
\end{equation}
and similarly for the capped vertices.

\subsubsection{}

Clearly, equation \eqref{ket_acc}  can be restated as 
\begin{equation}
\left(\bJ_\Phi \, \Xi \right)^{-1} \, \ketr{\lambda}  = 
\Xi^{-1} \, \Phi((q\hbar-1) \, T^{1/2} X)  \,  \ket{\lambda}  \,, 
\label{ket_acc2} 
\end{equation}
We have the following 

\begin{Proposition}
The vector \eqref{ket_acc2} satisfies a scalar $q$-difference
equation of order $\rk K(X)$ in variables $z$ with regular singularity at $z=0$ and 
a scalar $q$-difference
equation with regular singularities in variables $a\in\bA$. 
\end{Proposition}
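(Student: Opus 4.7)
The approach will be to reduce the statement to a general fact about vector-valued solutions of rank-$N$ matrix $q$-difference systems, where $N = \rk K(X)$. By Corollary \eqref{conJ}, the operator $\bJ_\Phi \, \Xi$ is a fundamental matrix solution of two commuting first-order matrix systems of rank $N$: the K\"ahler shift $T_\cL$ with matrix $\bfM_\cL$, and the equivariant shift $T_\sigma$ with matrix $\bfS_\sigma$. Both systems have regular singularities --- at $z=0$ in the K\"ahler case since $\bfM_\cL = \cL + O(z)$ with $\cL$ invertible by \eqref{MOz}, and on the toric compactification of $\bA$ in the equivariant case, as indicated via \cite{OS}.

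To derive the scalar equation in $z$, I will use \eqref{ket_acc2} to rewrite $(\bJ_\Phi\Xi)^{-1}\ketr{\lambda} = u_\lambda(z,a) := \Xi^{-1}\,\Phi((q\hbar-1)T^{1/2}X)\,\ket{\lambda}$. Applying \eqref{qdeg3} and using that the contracting map $\pi$ is the identity at the nonsingular point, one computes $\ket{\lambda}(q^\cL z) = \cL\cdot \ket{\cL^{-1}\lambda}(z)$, where $\cL^{-1}$ modifies the descendent insertion at $p_2$. Conjugating by $\Xi^{-1}\Phi$ gives $T_\cL u_\lambda = u_{\cL^{-1}\lambda}$, so the $N+1$ iterates $u_\lambda, T_\cL u_\lambda, \dots, T_\cL^N u_\lambda$ all take values in the $N$-dimensional space $K(X)_\textup{loc}$ and hence are linearly dependent over the field of rational functions of $z$. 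Extracting a minimal linear dependence $\sum_{k=0}^N c_k(z)\,T_\cL^k u_\lambda = 0$ gives the required scalar $q$-difference equation. Regularity of the coefficients $c_k(z)$ at $z=0$, and therefore the regular singularity of the equation there, will follow from the holomorphicity of the bare vertices $\ket{\cL^{-k}\lambda}(z) = (\cL^{-k}\lambda)(V)\cK_X^{1/2}+O(z)$ and the invertibility of the leading matrix $\bfM_\cL(0)=\cL$.

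The equivariant scalar equation in $a$ will be derived by the same method, with $T_\cL$ and $\bfM_\cL$ replaced by $T_\sigma$ and $\bfS_\sigma$: the relation $\bJ^\sigma = \bJ(q^\sigma a)\,\bE(\sigma)$ from Section \ref{s_shift_eq} will convert the equivariant shift into an action on descendents, and linear dependence in $K(X)_\textup{loc}$ again yields an order-$N$ scalar equation. The main obstacle in the equivariant setting will be the regular singularity statement: one must verify regularity at every boundary divisor of the toric compactification of $\bA$, not just at a single infinity, which requires combining the rigidity arguments from Section \ref{s_princ} with the explicit form of $\bE(\sigma)$ near each boundary chart. Existence of the equation itself is comparatively straightforward and follows purely from the dimension count in $K(X)_\textup{loc}$.
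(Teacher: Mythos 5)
Your shift identity $T_\cL u_\lambda = u_{\cL^{-1}\lambda}$ is correct: it follows from \eqref{qdeg3} applied to $\ket{\lambda}$ (with $\pi=\mathrm{id}$), together with $\Xi(q^\cL z)=\cL\otimes\Xi(z)$ and commutativity of $\Phi$ with $\cL\otimes$. But the next step, that the iterates $u_\lambda, T_\cL u_\lambda,\dots, T_\cL^N u_\lambda$ must be linearly dependent over $\C(z)$ because they take values in the $N$-dimensional space $K(X)_\textup{loc}$, is a gap. The $u_{\cL^{-k}\lambda}(z)$ are formal power series in $z$ valued in $K(X)_\textup{loc}$; $N+1$ such series landing in an $N$-dimensional coefficient space are \emph{not} automatically linearly dependent over $\C(z)$. (Take $N=1$: $u(z)=1$, $u(qz)=e^z$ gives no relation $c_0(z)+c_1(z)e^z=0$ with rational $c_i$.) What makes the cyclic-vector argument work is that the iterates must be of the form $(\text{rational matrix in } z)\cdot v$ for a \emph{single} fixed vector $v$ that is rational in $z$; pointwise dimension counting is not enough.

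The paper supplies exactly this missing ingredient via the large framing vanishing (Theorem \ref{t_large_framing}): for sufficiently large framing and a suitable polarization, $\ketr{\lambda}=\lambda(V)\cK_X^{1/2}$ is independent of the K\"ahler variables and rational in $a$, so the vector \eqref{ket_acc2} is the fundamental solution $(\bJ_\Phi\Xi)^{-1}$ applied to a fixed rational vector, and the Exercises preceding the Proposition immediately give the scalar $q$-difference equations with regular singularities in both $z$ and $a$. The general case is then reduced to large framing by embedding $X$ as a component of $(X')^{\bA'}$ for a larger Nakajima variety $X'$, using Lemma \ref{l_index_limit} and the fact that $\Xi$ separates the exponential growth rates of the different components in the $\bA'\to\infty$ limit, so that the differential module decomposes. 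Your proposal omits both the large framing input and the embedding/limit step, which are the crux of the paper's argument; the equivariant discussion you sketch (converting $\bfS_\sigma$ into an "action on descendents'' via $\bE(\sigma)$) is also not how the argument goes, since $\bE(\sigma)$ is a diagonal multiplicative correction at fixed points rather than a modification of $\lambda$.
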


We warm up for the proof with the following sequence of exercises. 

\begin{Exercise}
Let $B(a) \in GL(n,\C(a))$ be a matrix of rational functions of $a$
and let $0\ne v(a) \in \C(a)^n$ be a nonzero vector of rational
functions. If a covector $\eta(a)$ solves the equation 
\begin{equation}
\eta(qa) = B(a)^{T} \, \eta(a) \label{etaB} 
\end{equation}
then the function 
$$
f_\eta(a) = \langle \eta(a), v(a) \rangle 
$$
solves a difference equation 
\begin{equation}
\sum_{i=0}^{d} c_i(a) \, f(q^i a) = 0 \,, \quad c_i(a) \in
\C(a) \,, \label{eq_scalar} 
\end{equation}
of order $d\le n$ 
which depends on
$B(a)$ and $v(a)$, but not on a choice of $\eta(a)$ solving
\eqref{etaB}. 
\end{Exercise}

\begin{Exercise}
In the setting of the previous exercise, consider the $\C$-linear 
map 
\begin{equation}
\left\{
  \begin{matrix}
    \textup{solutions}\\ \textup{of \eqref{etaB}}
  \end{matrix}
\right\} \owns \eta \mapsto f_\eta \in 
\left\{
  \begin{matrix}
    \textup{solutions}\\ \textup{of \eqref{eq_scalar}}
  \end{matrix}
\right\} \,. \label{etafeta} 
\end{equation}
Show that if this map has kernel then the difference operator 
\begin{equation}
v(a) \mapsto B(a) \, v(qa) \label{diffBv} 
\end{equation}
is reducible, that is, preserves a nontrivial $\C(a)$-linear subspace 
$$
v(a) \in V \ne  \C(a)^n \,. 
$$
In this case, we can choose $d\le \dim_{\C(a)} V$ in \eqref{eq_scalar}
and we can always choose $d$ so that the map \eqref{etafeta} is
surjective. 
\end{Exercise}

\begin{Exercise}
If $d$ is minimal and \eqref{etaB} has regular singularities in $a$ then so does
\eqref{eq_scalar}. 
\end{Exercise}

\begin{proof}
We first assume that the framing is sufficiently large and the
polarization is chosen so that  Theorem \ref{t_large_framing}
applies. In this case, 
$$
\ketr{\lambda} \in K(X) 
$$
is independent of the K\"ahler parameters and depends rationally 
on $a\in\bA$ via the identification $K(X)\otimes_{\C[\bA]} \C(\bA) \cong K(X^\bA)
\otimes \C(\bA)$. Therefore the claim follows from the above 
Exercises. 

For arbitrary framing, we argue as in Section
\ref{s_from_large_framing} by embedding $X$ as 
$$
X = \textup{a component of    } (X')^{\bA'} \,,
$$ 
where $X'$ is a larger Nakajima variety and the torus $\bA'$ 
acts on framing spaces. 
Using 
Lemma \ref{l_index_limit} we can get the vertices for $X$ from 
vertices for $X'$. The operator $\Xi$ in \eqref{ket_acc2}  makes 
the solutions associated to different components of $(X')^{\bA'}$
grow at different exponential rates at the infinity of $\bA'$. 
Correspondingly, the differential module breaks up into 
a direct sum over the components of $(X')^{\bA'}$ in the limit. 
\end{proof}

\subsubsection{}

The differences between the vertices $\ket{\lambda}$ and
$\bra{\lambda}$ are the following: 
\begin{enumerate}
\item[---] polarization at $0,\infty$ contributes differently to 
the symmetrized virtual structure sheaf in \eqref{def_tO}, and 
\item[---] exchanging $0$ and $\infty$ sends $q$ to $q^{-1}$. 
\end{enumerate}
Therefore 
\begin{equation}
\bra{\lambda} (z,q) = \ket{\lambda} (z q^{-\det T^{1/2}}, q^{-1}) \,.
\label{braquket} 
\end{equation}

\subsubsection{}
There is no way to replace $q$ by $q^{-1}$ in \eqref{def_phi}, but
the functions 
\begin{align*}
  (a;q^{-1})_\infty  &= \prod_{i=0}^\infty (1-q^{-i} a) \\
  \frac{1}{(qa;q)_\infty} &= \prod_{i=1}^\infty (1-q^{i} a)^{-1} 
\end{align*}
solve the same difference equation 
$$
f(qa) = (1-qa) f(a)\,,
$$
reflecting the rational function identity 
$$
\frac{a}{1-q^{-1}} = - \frac{qa}{1-q} \,.
$$
Similarly
$$
\frac{q\hbar -1}{1-q} \Big|_{q\mapsto q^{-1}} = 
\frac{q-\hbar}{1-q} \,,
$$
and therefore, from the point of view of difference equations, 
$$
 \Phi((q\hbar-1) \, T^{1/2} X) \Big|_{q\mapsto q^{-1}} \equiv 
\Phi((q - \hbar) \, T^{1/2} X) \,.
$$

\subsubsection{}
Substitution \ref{braquket} takes 
$$
z_\textup{shifted} \mapsto z_\# = z \, (-\hbar^{1/2} )^{-\det T^{1/2}}
\,. 
$$
Putting it all together, we obtain the following 

\begin{Theorem}
The vector 
\begin{equation}
\Vertext(\lambda,T^{1/2})= \bra{\lambda} \, \Phi((q - \hbar) \, T^{1/2} X) \, 
\exp \frac{\bxi(\ln z_\#,\ln t)}{\ln q} \label{Vertext} 
\end{equation}
satisfies a scalar $q$-difference
equation of order $\rk K(X)$ in variables $z$ with regular singularity at $z=0$ and 
a scalar $q$-difference
equation with regular singularities in variables $a\in\bA$. 
\end{Theorem}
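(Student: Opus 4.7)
The strategy is to deduce the Theorem from the Proposition immediately preceding it by transport across the involution $q\mapsto q^{-1}$, using the substitution formula \eqref{braquket}. The Proposition already asserts that the vector
$$
\left(\bJ_\Phi\,\Xi\right)^{-1}\ketr{\lambda}\;=\;\Xi^{-1}\,\Phi((q\hbar-1)\,T^{1/2}X)\,\ket{\lambda}
$$
satisfies scalar $q$-difference equations in $z$ and in $a\in\bA$ of the advertised order and with the advertised regular singularities, and the hard work — pairing a fundamental matrix solution of \eqref{conJ} with a rational vector, then cutting down the order to $\rk K(X)$ by the sequence of Exercises — has been done. What remains is essentially bookkeeping on the $q\to q^{-1}$ side.

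First I would rewrite \eqref{braquket} in its conjugated form: applying the joint substitution $(z,q)\mapsto(z\,q^{-\det T^{1/2}},\,q^{-1})$ sends $\ket{\lambda}$ to $\bra{\lambda}$ and transforms each of the three factors separately. The operator $\Phi((q\hbar-1)T^{1/2}X)$ becomes $\Phi((q-\hbar)T^{1/2}X)$ at the level of difference equations, by the rewriting of $(a;q^{-1})_\infty$ versus $(qa;q)_\infty^{-1}$ explained in the paragraph preceding the Theorem. The conjugator $\Xi=\exp\frac{\bxi(\ln z_\textup{shifted},\ln t)}{\ln q}$ becomes $\exp\frac{\bxi(\ln z_\#,\ln t)}{\ln q}$, because the $q$-dependent monomial $(-\hbar^{1/2}q)^{-\det T^{1/2}}$ in $z_\textup{shifted}$ is absorbed by the simultaneous shift $z\mapsto zq^{-\det T^{1/2}}$, leaving exactly the $q$-independent twist $(-\hbar^{1/2})^{-\det T^{1/2}}$ that defines $z_\#$. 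Multiplying these three factors into $\bra{\lambda}$ reconstructs $\Vertext(\lambda,T^{1/2})$.

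Next I would verify that the scalar difference equations transport under this substitution. Writing the K\"ahler system as $F(q^\cL z)=\cL\,F(z)$ with fundamental solution $\bJ_\Phi(z)\Xi$ and pairing with $\ket{\lambda}$ as in the Exercises yields a scalar equation of order $\le\rk K(X)$ with regular singularity at $z=0$, inherited from $\bJ=1+O(z)$ and $\bfM_\cL=\cL+O(z)$; since the substitution $q\mapsto q^{-1}$ preserves the form of the recursion and the rationality of coefficients in $z$, the image is again a scalar equation of the same order with regular singularity at $z=0$. The equivariant system $\bfS_\sigma\bJ_\Phi(a)=\bJ_\Phi(q^\sigma a)\,(z_\textup{shifted})^{\langle\bmu(\cdot),-\otimes\sigma\rangle}$ has regular singularities on a toric compactification of $\bA$ by the results of \cite{OS}, which likewise transport. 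For arbitrary framing, rationality of $\ket{\lambda}$ in $a$ is ensured by embedding $X$ as a $\bA'$-fixed component of a Nakajima variety $X'$ with large framing and invoking Theorem \ref{t_large_framing} together with Lemma \ref{l_index_limit}, exactly as in the proof of the Proposition.

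The main obstacle I anticipate is not conceptual but combinatorial: checking that all prefactors from $\cK_\vir^{1/2}$, from the asymmetric polarization contributions at $p_1$ and $p_2$ in \eqref{def_tO}, and from the $q\mapsto q^{-1}$ reinterpretation of $\Phi$ assemble to exactly the displayed expression \eqref{Vertext}, with no stray powers of $\hbar^{1/2}$, $q$, or $-1$. Once the three factors match, the regularity and order statements follow mechanically from the corresponding statements of the Proposition.
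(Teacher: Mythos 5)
Your proposal is correct and reproduces the paper's argument: the Theorem is obtained from the Proposition by transport across the substitution $(z,q)\mapsto(z\,q^{-\det T^{1/2}},q^{-1})$ of \eqref{braquket}, with $\Phi((q\hbar-1)T^{1/2}X)$ replaced by $\Phi((q-\hbar)T^{1/2}X)$ at the level of difference equations and $z_\textup{shifted}$ replaced by $z_\#$. This is exactly what the paper does in the paragraphs preceding the Theorem, summarized there by ``Putting it all together.''
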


Formally speaking, the function $\Phi((q - \hbar) \, T^{1/2} X)$ 
may have a pole in the denominator if the $\bT$-fixed points are not 
isolated. To fix this, it may be replaced by any solution of the 
same difference equation, we can replace the polarization 
$T^{1/2} X$ by its normal directions to $X^\bT$. It is however, 
better, to interpret the whole thing as a section of a certain 
line bundle on the spectrum of $K_\bT(X)\otimes\C$, see \cite{AO}. 

\subsubsection{}
As written, the function \eqref{Vertext} and the difference equations
that it solves depends on the choice of polarization $T^{1/2}$.
However, as we will check presently, a difference choice of
polarization affects the difference equations via a shift of the
variable $z$ only.

Indeed, any other polarization may we written as $T^{1/2}+\cG - 
\hbar^{-1} \cG^\vee$ for some $\cG\in K(X)$ and we have 
the following

\begin{Exercise}
Show that 
\begin{multline}
  \Vertext(\lambda,T^{1/2}+\cG - \hbar^{-1} \cG^\vee) =
\\
  \Vertext(\lambda,T^{1/2})\Big|_{z\mapsto z q^{-\det \cG}} \Phi((q -
  \hbar) (\cG - \hbar^{-1} \cG^\vee)) \, \exp \frac{\bxi\left(\ln
    \left(\dfrac{q}{h}\right)^{\det \cG},\ln t\right)}{\ln q} \,. 
\end{multline}
\end{Exercise}

\begin{Exercise}
Show that 
\begin{equation}
  \label{PhiXi}
\Phi((q -
  \hbar) (\cG - \hbar^{-1} \cG^\vee)) \, \exp \frac{\bxi\left(\ln
    \left(\dfrac{q}{h}\right)^{\det \cG},\ln t\right)}{\ln q} 
\end{equation}
is invariant under $q$-shifts of variables in $\bA$. 
The $\Phi$-term here is a ratio of theta functions because
$$
\Phi((q-h)(a-1/a/h)) = \frac{q^{1/2}}{\hbar^{1/2}} 
\frac{\vartheta(qa)}{\vartheta(\hbar a)}\,, 
$$
where 
$$
\vartheta(x) = (x^{1/2}-x^{-1/2}) (qx)_\infty (q/x)_\infty 
$$
is the odd theta function. 
\end{Exercise}

\section{Stable envelopes and quantum groups}\label{s_Stab_Q}

\subsection{K-theoretic stable envelopes} \label{s_stab}

\subsubsection{}

At the heart of many computations done in these notes lies the 
basic principle outlined in Section \ref{s_princ}. To use it 
effectively, it is very convenient to have a supply of 
equivariant K-classes which have both 
\begin{itemize}
\item[---] small support (to help with properness), and 
\item[---] small weights of their restriction to fixed points. 
\end{itemize}
Obviously, there is a tension between these 
two desirable features as exemplified, on the one hand, by 
the structure sheaf $\cO_X$ which has zero weights but is
supported everywhere and, on the other hand, by the 
structure sheaves $\cO_x$ of torus fixed points. Those have 
minimal possible support, but their weights will exactly 
match the denominators in the localization formula. 

\subsubsection{}\label{s_chambers} 

A productive middle ground is formed by classes supported 
on the \emph{attracting} set of a subtorus 
$$
\bA \subset \Aut(X,\omega)
$$
where $\omega$ is the holomorphic symplectic form. 
Let $\xi\in \Lie \bA$ be generic and consider 
$$
\Attr_\xi = \left\{ x \in X\left|  \, \lim_{t\to \infty} e^{-t \xi} x 
\,\, \textup{exists}\right. \right\} 
\,.
$$
This is a closed set which is a union of locally closed sets 
$$
\Attr_\xi(F_i) = \left\{ x \in X\left|  \, \lim_{t\to \infty} e^{-t \xi} x 
\in F_i \right. \right\} 
\,,
$$
where $X^\bA = \bigsqcup F_i$ is the decomposition of the 
fixed-point set into connected components. The natural 
embedding 
$$
\Attr_\xi(F_i)  \subset X \times F_i 
$$
is Lagrangian with respect to the form $\omega_X - \omega_{F_i}$. 

A small variation of $\xi$ doesn't change the attracting manifolds, 
they change only if one of the weights of the normal bundle to $X^\bA$ 
in $X$ changes sign on $\xi$. These weights partition $\Lie \bA$ into 
finitely many \emph{chambers}. We denote by $\fC\owns \xi$ the 
chamber that contains $\xi$ and write $\Attr_\fC(F_i)$ is what follows, 
or simply $\Attr(F_i)$ if the choice of a chamber is understood. 

\subsubsection{}\label{s_cond_stab} 
Let 
$$
\Attr^f \subset X \times X^\bA 
$$
be \emph{full} attracting set for given $\fC$ which, 
by definition, is the smallest closed set that contains the diagonal 
in $X^\bA \times X^\bA \subset X \times X^\bA$ and is closed under 
taking $\Attr(\,\cdot\,)$. The \emph{stable envelope} in K-theory 
$$
\Stab \in K(X \times X^\bA )
$$
will be a certain improved version of $\cO_{\Attr^f}$, such that 
\begin{itemize}
\item[(1)] $\displaystyle \supp \Stab \subset \Attr^f$\,,
\item[(2)] we will have 
  \begin{equation}
\Stab = \cO_{\Attr^f} \otimes \textup{line bundle}
\label{stab_norm1} 
\end{equation}
near the diagonal in $X \times X^\bA$, which is a normalization
condition, 
\item[(3)] the weights of the restriction of $\Stab$ to off-diagonal 
components of $X^\bA \times X^\bA$ will be smaller than 
what can be expected of a general Lagrangian subvariety 
and smaller than half of the denominators in the localization formula. 
\end{itemize}

Since $\bA$ preserves the symplectic form $\omega$, the 
$\bA$-weights in the normal bundle to $X^\bA$ appear in opposite
pairs, and half of denominators above refers to a choice of one from 
each pair. 

\subsubsection{}

We first discuss the normalization, that is, the line bundle 
in \eqref{stab_norm1}.  This will require a choice of polarization 
$T^{1/2}X$. The eventual formula will make sense for a  
polarization of any kind, but to start let us assume that we 
are 
really given a half $T^{1/2}X$ of the tangent bundle $TX$, and thus a 
half $N^{1/2}$ of the normal bundle $N_F$, near a component 
$F$ of $X^\bA$.  

Since $\bA$ preserves $\omega$, all normal weights to $X^\bA$ come 
in pairs
$$
w + \hbar^{-1} w^{-1} 
$$
which we can now break according to positivity on $\fC$ or which 
of those belongs to $N^{1/2}$. We view $\fC$ as something 
that will be changing depending on circumstances, whereas the 
polarization is a choice that we are free to make and keep. The 
idea is thus to make $\Stab$ to be more like 
$\Ldm \left(N^{1/2}\right)^\vee$ and less like 
$\cO_{\Attr}$. 

The prescription for doing this is illustrated in the following table, 
where we assumed that weight $w$ appears in $N^{1/2}$ but 
may be attracting or repelling  
\begin{center}
  \begin{tabular}{ l | l | l |  l | l | l }
    $N^{1/2}$ & $\Attr$ &  $N_{<0}$ & $\cO_{\Attr}$ & $\Ldm \left(N^{1/2}\right)^\vee $ & $\Stab$
    \phantom{$\Big|$} \\  
    \hline 
    $w$ & $w$ &  $\frac1{\hbar w}$ & $(1-\hbar w)$ & $(1-\frac1w)$  & $(1-\frac1{\hbar w})
\hbar^{1/2}$
\phantom{$\Big|$}\\
    $w$ & $\frac1{\hbar w}$ &  $w$ & $(1- \frac1w)$ & $(1-\frac1w)$ 
                            & $(1-\frac1w)$\phantom{$\Big|$}\\
  \end{tabular}
\end{center}
The general formula 
\begin{equation}
\Stab\big|_{F\times F} = (-1)^{\rk N^{1/2}_{>0}} 
\left(\frac{\det N_{<0}}{\det N^{1/2}}\right)^{1/2} \, \cO_{\Attr}
\label{Stab_FF} 
\end{equation}
makes sense for an arbitrary polarization. 

One can explain this formula also from a different perspective,
namely, one would have liked to make a self-dual choice 
\begin{equation}
 \Stab\big|_{F\times F}  \approx \pm 
\left(\det N_{<0} \right)^{1/2} \, \cO_{\Attr} \,,
\label{Stab_FF_naive} 
\end{equation}
except there is no reason for this square root to exist. 
The correction by polarization in \eqref{Stab_FF} makes 
the square root well defined.

\subsubsection{}

The third property of stable envelopes in Section \ref{s_cond_stab} 
means the following. We would like a condition of the kind 
$$
\deg_\bA \Stab \Big|_{F_2 \times F_1}  < 
\deg_\bA \Stab \Big|_{F_2 \times F_2}  \,.
$$
The degree of Laurent polynomial is its \emph{Newton polygon} 
$$
\deg_\bA \sum f_{\mu} z^{\mu}  = \textup{Convex hull} \,
\left(\{ \mu, f_\mu \ne 0\} \right) \subset \bA^\wedge \otimes_\Z \Q
\,,
$$
considered \emph{up to translation}. The up-to-translation 
ambiguity corresponds to multiplication by invertible functions 
$z^\nu$, $\nu\in \bA^\wedge$. The natural partial 
ordering on Newton polygons is inclusion, again up to translation. 
This is compatible with multiplication in $\C[\bA]$. 

\subsubsection{} 

The translation ambiguity for inclusion of Newton polygons leads
to an additional degree of freedom $\cL$, called \emph{slope},
 in the following definition. 

Let $\cL$ be an equivariant line bundle on $X$. Its restriction
$\cL\big|_F$ has a well-defined equivariant weight, already 
discussed in Section \ref{s_const_map}. A fractional line 
bundle 
$$
\cL \in \Pic_\bA(X) \otimes_\Z \Q
$$
has a well-defined fractional weight at all fixed points. 

We fix a sufficiently general fractional line bundle $\cL$ and 
require that 
\begin{equation}
\deg_\bA \Stab_\cL\Big|_{F_2 \times F_1} \otimes \cL\Big|_{F_1}
\,\, \subset \,\,
\deg_\bA \Stab \Big|_{F_2 \times F_2} \otimes \cL\Big|_{F_2}
\label{degK}
\,. 
\end{equation}
Observe that
\eqref{degK} is independent on the $\bA$-linearization of $\cL$.

Also observe the inclusion in
\eqref{degK} is necessarily strict, because it is an inclusion 
of a fractional shift of an integral polygon inside an integral 
polygon. 

\subsubsection{}
It is easy to see the K-theory class $\Stab$ satisfying the 
three conditions of Section \ref{s_cond_stab}, made precise in 
equations \eqref{Stab_FF}  and \eqref{degK}, is unique if it exists,
see below. 
Existence of stable envelopes for Nakajima varieties is 
a nontrivial geometric fact, see \cite{MO2} and also \cite{AO}. 

\begin{Exercise}\label{ex_stab_polar} 
Show that stable envelopes corresponding to different 
polarizations are related by a shift of slope. 
\end{Exercise}

\subsubsection{}
Let $X$ be a hypertoric variety, that is, an algebraic 
symplectic reduction of the form 
$$
X = T^* \C^n \rd \,\, \textup{Torus}  \,.
$$
We may assume that 
$$
\textup{Torus}\cong\diag(s_1,\dots,s_k) \in GL(k)
$$
acts on $\C^n=\C^k \oplus \C^{n-k}$ via 
the defining representation on $\C^k$ and with some 
weights on the remaining coordinates. The action of 
$$
\bA = \diag(a_{k+1},\dots,a_n) \in GL(n-k)
$$
descends to $X$. The point 
$$
x = T^*\C^k\rd \textup{Torus} \in X 
$$
is fixed by $\bA$. 

To compute $\Stab(x)$ we may assume that 
\begin{itemize}
\item[---] the weights $a_i$ are attracting,
\item[---] the polarization is given by 
the cotangent directions, that is, 
$$
T^{1/2} = T^* \C^n - \hbar^{-1}\, \Lie \textup{Torus}  \,. 
$$
\end{itemize}
Indeed, if one of the weights $a_i$ is repelling, we can replace it 
in the module $\C^{n-k}$ by the corresponding cotangent direction. 
Using Exercise \ref{ex_stab_polar}, we can do the same with 
polarization. 

Let $w_{k+1},\dots,w_{n}$ denote the coordinates in cotangent 
directions of $T^*\C^{n-k}$. They descend to section of line 
bundles on $X$ and define divisors $w_i=0$ on $X$. 

\begin{Exercise}\label{exer_st_hypertor}
Show that the structure sheaf $\cO_{w_{k+1}=\dots=w_n=0}$ is the 
stable envelope of $x$ for $\cL=0$. 
\end{Exercise}

\begin{Exercise}
Show that $\Stab_{\cL}(x)$ for any other slope is given by a twist of 
$\cO_{w_{k+1}=\dots=w_n=0}$ by an integral line bundle. 
\end{Exercise}

\subsubsection{}

A typical computation with stable envelopes proceeds as
follows. One uses the support condition to argue the 
result in a Laurent polynomial in $a\in\bA$.  One then 
computes this polynomial by taking the $a\to\infty$ 
asymptotics in localization formula. The strict inclusion 
in \eqref{degK} often implies that the contribution of 
many fixed points may be discarded. 

\begin{Exercise}\label{ex_transpose} 
Show that 
  \begin{equation}
    \label{StabStab}
    \Stab^\transp_{-\fC,\, \Topp,\, -\cL}
\circ \Stab_{\fC,\, T^{1/2},\,\cL}  = \cO_{\diag X^\bA} \,,
  \end{equation}
where 
$$
\Stab^\transp \in K(X^\bA \times X) 
$$
is the transposed correspondence. It follows that 
the composition of the other order is also the identity 
  \begin{equation}
    \label{StabStab2}
    \Stab_{\fC,\, T^{1/2},\,\cL} \circ \Stab^\transp_{-\fC,\, \Topp,\, -\cL}
= \cO_{\diag X} \,. 
  \end{equation}
This gives a decomposition of the diagonal for $X$ 
if one for $X^\bA$ is known. 
\end{Exercise}

\subsection{Triangle lemma and braid relations} 

\subsubsection{}

Let 
$
\bA' \subset \bA 
$
be a subtorus such that $X^{\bA'} = X^{\bA}$, which is satisfied 
generically. Evidently, 
$$
\textup{Stable envelopes for $\bA$} = 
\textup{Stable envelopes for $\bA'$}  \,, 
$$
where 
$$
\textup{chambers in $\Lie \bA'$} = \left(\Lie \bA' \right) 
\cap 
\textup{chambers in $\Lie \bA$} \,. 
$$
In particular, $\bA'$ can be a generic $1$-parameter subgroup, 
and therefore for uniqueness of stable envelopes it is enough, 
in principle, to 
consider the case $\dim \bA =1$. 

\begin{Proposition}[\cite{MO1}] Stable envelopes are unique. 
\end{Proposition}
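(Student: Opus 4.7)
The plan is to show that the difference $D = \Stab - \Stab'$ of two stable envelopes for the same chamber $\fC$, polarization $T^{1/2}$, and slope $\cL$ vanishes. First I would reduce to the case $\dim \bA = 1$ by replacing $\bA$ with a generic one-parameter subgroup $\bA' \subset \bA$ satisfying $X^{\bA'} = X^\bA$ (permissible because the stable envelope depends only on this data, as noted just before the proposition); this restricts $\fC$ to a single half-line and makes $\Attr(F)$ an honest $\bA$-equivariant affine bundle over each component $F$ of $X^\bA$.

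Next I would equip the components of $X^\bA$ with the partial order $F' \le F$ iff $F' \subset \overline{\Attr(F)}$, so that $\Attr^f(F_0) = \bigsqcup_{F' \le F_0} \Attr(F')$. The claim that $D = 0$ reduces to showing $D|_{X \times F_0} = 0$ for each $F_0$, and I would proceed by induction on $F_0$ in this order. For the base case, take $F_0$ minimal: then $\Attr^f(F_0) = \Attr(F_0)$ is a single affine bundle over $F_0$, and the support condition forces $D|_{X\times F_0}$ to live in $K_\bA(\Attr(F_0) \times F_0)$. By homotopy invariance of equivariant $K$-theory for affine bundles, pullback along $\Attr(F_0) \to F_0$ is an isomorphism onto $K_\bA(F_0 \times F_0)$, with inverse given by restriction to the zero section $F_0 \times F_0$; since the two normalizations coincide, that restriction is $D|_{F_0 \times F_0} = 0$, forcing $D|_{X \times F_0} = 0$.

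For the inductive step, suppose $D|_{X \times F'} = 0$ for all $F' < F_0$ and stratify $\Attr^f(F_0)$ as $\Attr(F_0) \sqcup \bigsqcup_{F' < F_0} \Attr(F')$. The base-case argument applied to the open stratum shows that the restriction of $D|_{X\times F_0}$ to $\Attr(F_0) \times F_0$ vanishes, so by the localization sequence \eqref{exK} the class $D|_{X \times F_0}$ is pushed forward from the closed union of lower strata. I would then further restrict to each open stratum $\Attr(F') \times F_0$ ($F' < F_0$) and use affine-bundle homotopy invariance again to identify the restriction with $D|_{F' \times F_0} \in K_\bA(F' \times F_0)$, which by condition (3) has Newton polygon \emph{strictly} contained in $\deg_\bA \Stab|_{F_0\times F_0} \otimes \cL|_{F_0}/\cL|_{F'}$.

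The main obstacle is converting this strict Newton polygon bound into the vanishing of $D|_{F' \times F_0}$. The strategy is to combine it with equivariant localization on $\Attr^f(F_0)$ (replacing $X$ by the closed subvariety supporting $D|_{X \times F_0}$): the class $D|_{X \times F_0}$ expressed via \eqref{equiv_localiz} as a sum of $\iota_{F',*} D|_{F' \times F_0}/\bSd N_{F'}^\vee$ must be a genuine (non-localized) K-theory class, forcing cancellation of poles between contributions of different $F' \le F_0$. The polygon of $\Stab|_{F_0\times F_0} = (-1)^{\rk N^{1/2}_{>0}} (\det N_{<0}/\det N^{1/2})^{1/2}\, \cO_{\Attr}$ is tight — it captures exactly the full range of attracting weights of $\Attr(F_0)$ — so any class supported on $\Attr^f(F_0)$ whose fixed-point restrictions all sit in strictly smaller shifted polygons cannot balance these residues and be a genuine class, unless it is zero. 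Making this residue/Newton-polygon analysis precise, proceeding inductively on $F' < F_0$ in decreasing order, is the technical heart of the argument; once it is in place, the induction closes and $D = 0$.
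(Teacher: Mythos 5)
Your strategy starts from the same three ingredients the paper uses (the partial order on fixed components, the normalization condition, and the degree bound) and you correctly identify that the difference $D=S-S'$ vanishes on $\Attr(F_0)$ by normalization plus homotopy invariance, and that the degree bound must be the source of the contradiction on the lower strata. However, two things go wrong.

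First, the induction over $F_0$ in the second factor is spurious. The classes $D|_{X\times F'}$ for distinct fixed components $F'$ of $X^\bA$ live in independent summands of $K(X\times X^\bA)=\bigoplus_{F'} K(X\times F')$, so knowing $D|_{X\times F'}=0$ for $F'<F_0$ gives no information whatsoever about $D|_{X\times F_0}$. Each second-factor component is a separate problem, and the paper accordingly fixes a single $F_i$ once and for all.

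Second, and more importantly, the step you yourself flag as ``the technical heart'' is not closed, and the mechanism you propose for closing it (pole/residue cancellation in the localization formula \eqref{equiv_localiz}) is not the right one. A strict Newton polygon inclusion for a single $D|_{F'\times F_0}$ is not, by itself, in tension with being a genuine class; for instance, a class pushed forward from a single closed fixed component can have arbitrary $K_\bA(F')$-character, with nothing to ``balance.'' The paper's way out is the following observation, which your outline misses: pick $F_j$ \emph{maximal} (in the first factor, within the stratification of $\Attr^f(F_0)$) among those components where $D|_{\Attr(F_j)}\ne 0$. By maximality, in a neighborhood of $F_j$ the class $D$ is supported on $\Attr^f(F_j)$, so the restriction $D|_{F_j\times F_0}$ factors through $K(\Attr^f(F_j))\to K(F_j)$, which on classes supported there is tensoring by $\Ld\,N_{<0}^\vee$. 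Since $\cO_{\Attr(F_j)}$ restricts to $\Ld\,N^\vee_{<0}$ on $F_j$, the Newton polygon of $D|_{F_j\times F_0}$ thus contains a full translate of $\deg_\bA\Stab|_{F_j\times F_j}$, and this directly contradicts the \emph{strict} inclusion in \eqref{degK}. No residue cancellation is needed: it is a single excess-intersection computation at the maximal bad component. Your plan would become correct if you drop the outer induction, run a single descending induction in the first factor for fixed $F_0$, and replace the residue argument by this excess-intersection identity, but as written the core step remains a genuine gap.
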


\begin{proof}
Let $F_i$ be a component of the fixed locus and suppose there
are two classes $S$ and $S'$ that satisfy the definition of the 
stable envelope 
$$
\Stab\big|_{F_i} \subset X \times F_i \,.
$$
We write $F_j < F_i$ if the full attracting set of $F_i$ contains
$F_j$; this is a partial order on components of the fixed locus. 
By the normalization condition, 
$$
S|_{\Attr(F_i)} = S'|_{\Attr(F_i)} 
$$
and hence $S-S'$ is supported on $\bigcup_{j<i} \Attr(F_j)$. 
Therefore, there exist a maximal $F_j < F_i$ such that 
$$
(S-S')|_{\Attr(F_j)} \ne 0  \,.
$$
The restriction of $S-S'$ to $F_j$ factors through the map 
$$
\xymatrix{
K(\Attr^f(F_j)) \ar[rrr]^{\textup{pushforward}} \ar[d] &&& K(X) \ar[d] \\
K(F_j) \ar[rrr]^{\displaystyle{\otimes \Ld N_{<0}^\vee}} &&& K(F_j) \,,
}
$$
in which the vertical maps are restrictions. Therefore the 
Newton polygon of $S-S'\big|_{F_j}$ is at least as big as the Newton polygon 
of $\Stab_{F_j \times F_j}$, in contradiction with the degree bound. 
\end{proof}

\subsubsection{}
If $\dim \bA =1$ then one can construct stable envelopes 
inductively with respect to the above partial order on the 
set of fixed components. This is essentially a version of 
the Gram-Schmidt process. In fact, instead of \eqref{degK}
one can require 
\begin{equation}
\deg_\bA \Stab \Big|_{F_2 \times F_1} 
\,\, \subset \,\,
\deg_\bA \Stab \Big|_{F_2 \times F_2}  + \textup{shift\,}_{i,j}
\label{degK2}
\,. 
\end{equation}
for generic shifts 
$$
\textup{shift\,}_{i,j} \in \Q   \,.
$$
Remarkably, a similar Gram-Schmidt process may be 
carried out not in $K(X)$, but in the derived category of 
coherent sheaves on $X$. This is a special case of the 
theory developed in \cite{DHL}, where many applications of these 
techniques can also be found. 

In that setting, $\Stab$ is a bounded complex of coherent 
sheaves on $X \times X^\bA$ and the degree bound applies to all terms of 
its derived restriction to $X^\bA \times X^\bA$, that is, to 
all groups $\Tor_k(\Stab,\cO_{F_i})$. 

For $\dim \bA > 1$, in general, it will be 
impossible to satisfy the condition \eqref{degK2} with 
generic shifts. Put it differently, 
we can still define $\Stab_\xi$ for a $1$-parameter subgroup 
generated by $\xi$ using \eqref{degK2}. However, as a function of
$\xi$, these will jump 
across some new walls in $\Lie \bA$ and these new walls will subdivide our 
old chambers $\fC$. 

These is where the importance of shifts by a weight of a line bundle 
as in \eqref{degK} comes in. Lifting \eqref{degK} to derived 
category of coherent sheaves is an active area of current research. 
It overlaps, among other things, with the study of quantizations, that
is, noncommutative deformations of the algebraic symplectic 
manifold $X$. A fractional line bundle in 
that context becomes a parameter of quantization.

\subsubsection{}

Now consider a subtorus $\bA'\subset \bA$ such that $X^{\bA'} \ne
X^{\bA}$, which means that $\Lie \bA'$ lies inside one of the walls 
in $\Lie \bA$. A chamber $\fC_\bA \subset \Lie \bA$ determines a 
pair of chambers
\begin{alignat*}{2} 
\fC_{\bA'} & = \fC_\bA \cap \Lie \bA' & \quad \in &\Lie \bA' \,,\\
\fC_{\bA/\bA'} & = \textup{Image} \, \left(\fC_\bA\right)  & \quad \in  &\Lie\bA/\bA'
\,,
\end{alignat*}
see Figure \ref{f_cones}. 
\begin{figure}[!htbp]
  \centering
   \includegraphics[scale=0.47]{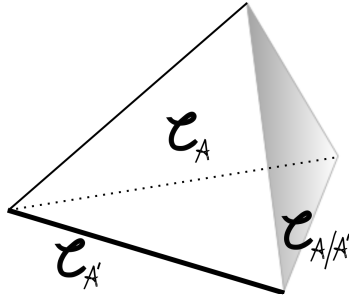}
 \caption{A cone $\fC\subset\Lie \bA$ induces a cone for each 
subtorus $\bA'$ and quotient torus $\bA/\bA'$.}
  \label{f_cones}
\end{figure}
Correspondingly, we have a diagram of maps 
\begin{equation}
\label{triang} 
\xymatrix{
K(X^\bA) \ar[rrrr]^{\Stab_{\fC_\bA}} \ar[rrd]_{\Stab_{\fC_{\bA/\bA'}}}&&&& K(X) \,.\\
&&K(X^{\bA'}) \ar[rru]_{\Stab_{\fC_{\bA'}}} 
}
\end{equation}
The shape of this diagram explains why the following statement is
known as the \emph{triangle lemma}. 

\begin{Proposition} 
The diagram \eqref{triang} commutes. 
\end{Proposition}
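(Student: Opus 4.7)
The plan is to prove commutativity of \eqref{triang} by invoking the uniqueness of stable envelopes: I will verify that the composition $\Stab_{\fC_{\bA'}} \circ \Stab_{\fC_{\bA/\bA'}}$ satisfies the three defining properties (support, normalization, degree bound) listed in Section \ref{s_cond_stab} for $\Stab_{\fC_\bA}$, and the equality will then follow automatically. I will use the same polarization $T^{1/2}X$ throughout (its restriction to $X^{\bA'}$ gives a polarization of $X^{\bA'}$ as an algebraic symplectic variety, and restriction to $X^\bA$ gives a polarization of $X^\bA$), as well as the same slope $\cL$ and its restriction to $X^{\bA'}$.

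First, for the support condition, I would observe the geometric fact that the full attracting set for $\fC_\bA$ is obtained by two-step attraction: a point $x\in X$ flows under a generic $\xi\in\fC_\bA$ to a component $F \subset X^\bA$ if and only if it first flows under the $\bA'$-part of $\xi$ to some component $F' \subset X^{\bA'}$, and then $F'$ flows under the induced $\bA/\bA'$-action on $X^{\bA'}$ (along the image of $\xi$ in $\fC_{\bA/\bA'}$) to $F$. This is a standard consequence of the fact that $\Lie\bA'$ lies in a wall of $\fC_\bA$, so attraction along $\xi$ happens in two stages, and yields the containment of supports required for the composition to be supported in $\Attr^f_{\fC_\bA}$. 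Second, for the normalization at the diagonal $F \times F \subset X^\bA \times X^\bA$, I would use the splitting of the normal bundle $N_{X/F} = N_{X^{\bA'}/F} \oplus N_{X/X^{\bA'}}\big|_F$, which splits compatibly the chamber data (each normal weight is either killed by $\bA'$ or not) and the polarization. The product of the two normalization factors of the form \eqref{Stab_FF} for $\fC_{\bA/\bA'}$ on $X^{\bA'}$ and $\fC_{\bA'}$ on $X$ then reassembles into the single factor for $\fC_\bA$ on $X$, because attracting/repelling with respect to $\fC_\bA$ is determined precisely by whether the normal weight is attracting with respect to $\fC_{\bA'}$ (when it lives in $N_{X/X^{\bA'}}$) or with respect to $\fC_{\bA/\bA'}$ (when it lives in $N_{X^{\bA'}/F}$).

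The main obstacle is the degree bound \eqref{degK}. By localization, the restriction of the composition to an off-diagonal component $F_2 \times F_1$ is a finite sum
\begin{equation*}
\sum_{F'\subset X^{\bA'}} \Stab_{\fC_{\bA'}}\big|_{F_2\times F'} \cdot \bSd N_{X^{\bA'}/F'}^\vee \cdot \Stab_{\fC_{\bA/\bA'}}\big|_{F'\times F_1}
\end{equation*}
over intermediate components $F'$. Here I would split the character lattice $\bA^\vee = (\bA')^\vee \oplus (\bA/\bA')^\vee$ (after choosing a splitting of the inclusion, which is harmless since only the up-to-translation Newton polygons matter) and track the two-variable Newton polygons separately. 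For each term, the $(\bA')^\vee$-direction is controlled by the $\Stab_{\fC_{\bA'}}$-factor, while the $(\bA/\bA')^\vee$-direction is controlled by the $\Stab_{\fC_{\bA/\bA'}}$-factor, with the intermediate localization factor $\bSd N_{X^{\bA'}/F'}^\vee$ expanded according to the chamber $\fC_{\bA'}$ (whose choice determines how the denominators $1-w^{\pm 1}$ are expanded, as already used in Exercise \ref{ex_transpose}). The strict bound for each factor transports to a strict bound for the composition, and the slope contribution matches because I am using the same $\cL$ on $X$ throughout. Putting these together yields \eqref{degK} for $\fC_\bA$ with slope $\cL$, and uniqueness finishes the proof; the subtle point, as usual with stable envelopes, will be keeping the book-keeping of fractional $\cL$-shifts consistent across the three chambers, which is the only place where care is required.
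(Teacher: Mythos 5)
Your overall strategy --- verify the support, normalization, and degree conditions for $\Stab_{\fC_{\bA'}}\circ\Stab_{\fC_{\bA/\bA'}}$ and invoke uniqueness --- is exactly the paper's, and your treatments of support (two-step attraction) and normalization (splitting of the normal bundle and recombination of the \eqref{Stab_FF}-factors) match what the paper takes as ``clear.'' The difference, and the problem, lies in the degree bound.

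The paper does not localize on the intermediate $X^{\bA'}$. Instead it observes that, by uniqueness, it suffices to check \eqref{degK} for a single generic $1$-parameter subgroup $\xi = \xi' + \delta\xi$ with $\xi'$ generic in $\fC_{\bA'}$ and $\|\delta\xi\| \ll \|\xi'\|$, and then splits into two cases: if $F_1,F_2$ lie in different components of $X^{\bA'}$, the $\xi'$-inequality coming from $\Stab_{\fC_{\bA'}}$ is already strict and survives the small perturbation $\delta\xi$; if they lie in the same component, the $\Stab_{\fC_{\bA'}}$-contribution is the diagonal multiplication factor, contributes equally to both sides of \eqref{degK}, and the bound reduces to the one for $\Stab_{\fC_{\bA/\bA'}}$. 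The smallness of $\delta\xi$ is what makes the $\bA/\bA'$-weights of the $\Stab_{\fC_{\bA'}}$-factor harmless: they can only shift the $\xi$-degree by $O(\|\delta\xi\|)$, which cannot spoil a strict inequality.

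Your version claims instead that ``the $(\bA')^\vee$-direction is controlled by the $\Stab_{\fC_{\bA'}}$-factor, while the $(\bA/\bA')^\vee$-direction is controlled by the $\Stab_{\fC_{\bA/\bA'}}$-factor.'' This is the step that does not hold as stated. The degree axiom for $\Stab_{\fC_{\bA'}}$ constrains only the $(\bA')^\vee$-projection of its Newton polygon; its off-diagonal restrictions $\Stab_{\fC_{\bA'}}\big|_{F_2\times F'}$ are Laurent polynomials in the full torus and carry $(\bA/\bA')^\vee$-weights that are \emph{not} controlled by anything in the axioms. So the $(\bA/\bA')^\vee$-projection of your individual term is not, in fact, controlled solely by $\Stab_{\fC_{\bA/\bA'}}$ and $\bSd N^\vee$; the $\Stab_{\fC_{\bA'}}$-factor pollutes it. (The observation that $\bA'$ acts trivially on $X^{\bA'}$ does tame the opposite cross-term --- the $(\bA')^\vee$-weight of $\Stab_{\fC_{\bA/\bA'}}\big|_{F'\times F_1}$ is a single character and hence only a translation --- but there is no symmetric argument in the other direction.) The paper's $\xi=\xi'+\delta\xi$ trick exists precisely to sidestep this. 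Two smaller slips: the intermediate sum should run over components $F'\subset (X^{\bA'})^{\bA}=X^\bA$, not over components of $X^{\bA'}$; and $\bSd N_{X^{\bA'}/F'}^\vee$ carries only $\bA/\bA'$-weights, so the choice of expansion is governed by $\fC_{\bA/\bA'}$, not $\fC_{\bA'}$.
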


\begin{proof}
Consider the composition 
\begin{equation}
\Stab_{\fC_{\bA'}} \circ \Stab_{\fC_{\bA/\bA'}} \subset X \times X^{\bA}
\,. \label{StabStabTr} 
\end{equation}
It clearly satisfies the support and the normalization conditions, and so it 
is the degree condition \eqref{degK} that needs to be
checked. Further, by uniqueness of stable envelopes, it is enough to 
check the degree condition for a generic $1$-parameter subgroup
with generator 
$$
\xi = \xi' + \delta \xi \,, \quad \xi' \in \fC_{\bA'}, \,\, 
\delta \xi \in \fC_{\bA/\bA'} \,, 
$$
where $\xi'$ is generic and the perturbation 
\begin{equation}
\|\delta \xi\| \ll \| \xi' \|  \label{deltaxi} 
\end{equation}
is small. 

In order to check \eqref{degK}, we need to consider two cases. The first
case is when $F_2$ and $F_1$ belong to different components of
$X^{\bA'}$. In this case, the inequalities \eqref{degK} are 
strict for the stable envelope $\Stab_{\fC_{\bA'}}$. In particular, they 
are strict for the $1$-parameter subgroup generated by $\xi'$, 
and therefore still satisfied for a small perturbation $\xi$. 

The second case is when $F_2$ and $F_1$ belong to the 
same component of
$X^{\bA'}$. In this case, $\Stab_{\fC_{\bA'}}$ is multiplication by 
$\Ld \left(N_{X^{\bA'},<0}\right)^\vee$, twisted by polarization
terms. The corresponding terms cancel from both sides of 
\eqref{degK}, as does the $\bA'$-weight of the line bundle $\cL$. 
The degree bounds thus follows from the degree bounds 
for $\Stab_{\fC_{\bA/\bA'}}$. 
\end{proof}


\subsubsection{}
Let two chambers $\fC_{\pm}$ share a face, that is, 
let them be separated by a codimension $1$ wall 
$$
w=\Lie \bA' \subset \Lie \bA \,. 
$$
For fixed slope $\cL$, we define the following
\emph{R-matrix} 
\begin{align}
R_w &= \Stab_{\fC_-}^{-1} \, \circ \,  \Stab_{\fC_+} 
\quad \in \End K(X^\bA)_\textup{localized} \notag \\
      &=\Stab_{w,-}^{-1} \, \circ \,  \Stab_{w,+} \label{Rw} \,,
\end{align}
where 
$$
\Stab_{w,\pm} : K(X^\bA) \to K(X^{\bA'}) 
$$
are the stable envelopes for the quotient $\bA/\bA' \cong \Ct$. 
The equality of two lines in \eqref{Rw} follows from the triangle
lemma. Note from \eqref{StabStab} that the inverse of a stable 
envelope is the transpose of another stable envelope. 

{}From \eqref{Rw} we observe:
\begin{enumerate}
\item[---] the operator $R_w$ depends only on $X^\bA 
\hookrightarrow X^{\bA'}$ and, in particular, 
\item[---] as a function of equivariant parameters in $\bA$, it
factors through $\bA/\bA'$. 
\end{enumerate}

\subsubsection{}
Let $w_1,w_2,\dots$ be the walls intersecting in a given 
codimension 2 subspace of $\Lie \bA$. We may put them 
in the natural cyclic ordering as in Figure \ref{f_walls_around} 
\begin{figure}[!htbp]
  \centering
   \includegraphics[scale=0.4]{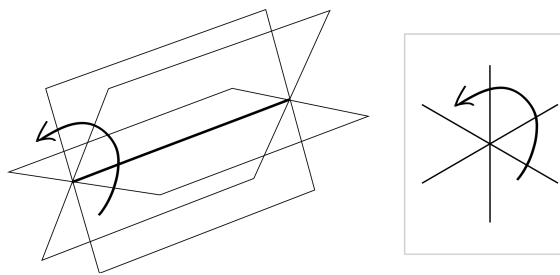}
 \caption{Product of wall $R$-matrices around a codimension 2
subspace is the identity}
  \label{f_walls_around}
\end{figure}
and then we have the following 
obvious

\begin{Proposition}
  \begin{equation}
\overset{\xleftarrow{\,\,\,\,}}{\prod} \,R_{w_i} = 1  \,.
\label{braid_relation} 
\end{equation}
\end{Proposition}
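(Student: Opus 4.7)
The plan is to reduce the braid/hexagon-type relation \eqref{braid_relation} to a bare telescoping identity using the very definition \eqref{Rw} of the wall $R$-matrix. First I would enumerate the walls $w_1, w_2, \dots, w_n$ meeting the given codimension $2$ subspace of $\Lie \bA$ in their natural cyclic order (the one inherited from going once around the codimension $2$ subspace), and label the chambers they separate as $\fC_0, \fC_1, \dots, \fC_n = \fC_0$, in such a way that $\fC_{i-1}$ and $\fC_i$ are exactly the two chambers adjacent to $w_i$.

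Next I would match this labeling against the orientation convention implicit in \eqref{Rw}: reading $\fC_+ = \fC_i$ and $\fC_- = \fC_{i-1}$ gives
\begin{equation*}
R_{w_i} \, = \, \Stab_{\fC_{i-1}}^{-1} \circ \Stab_{\fC_i}.
\end{equation*}
The composition around the cycle then telescopes,
\begin{equation*}
\overset{\xleftarrow{\,\,\,\,}}{\prod} R_{w_i} \, = \, \Stab_{\fC_0}^{-1} \circ \Stab_{\fC_1} \circ \Stab_{\fC_1}^{-1} \circ \Stab_{\fC_2} \circ \cdots \circ \Stab_{\fC_{n-1}}^{-1} \circ \Stab_{\fC_n} \, = \, \Stab_{\fC_0}^{-1} \circ \Stab_{\fC_0} \, = \, 1,
\end{equation*}
where I used $\fC_n = \fC_0$ in the last step. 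This is really all there is to the argument, which is why the Proposition is labeled obvious in the excerpt.

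The only nontrivial points to verify are the bookkeeping ones. Concretely: (i) I must check that the cyclic orientation of the walls used in $\overset{\xleftarrow{\,\,\,\,}}{\prod}$ matches the orientation induced on each $w_i$ by the transit $\fC_{i-1} \rightsquigarrow \fC_i$, because reversing this transit inverts $R_{w_i}$ (one reads off from \eqref{Rw} that swapping $\fC_\pm$ replaces $R_w$ by $R_w^{-1}$); (ii) I must make sure that in \eqref{Rw} the stable envelope $\Stab_{\fC_i}$ really depends only on the chamber $\fC_i$ and not on the way we approach the wall, so that the factors $\Stab_{\fC_i}\Stab_{\fC_i}^{-1}$ in adjacent pairs truly cancel. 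Point (ii) is immediate from the uniqueness of stable envelopes, which is already recorded earlier in the text; point (i) is just a matter of fixing one orientation of the loop around the codimension $2$ subspace once and for all. Beyond these two sanity checks there is no obstacle: the braid relation \eqref{braid_relation} is a formal consequence of the telescoping structure embedded in the definition of $R_w$.
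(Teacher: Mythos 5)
Your telescoping argument is exactly the content the paper has in mind when it labels the Proposition ``obvious'' (the paper supplies no explicit proof). The bookkeeping points you flag are the right ones, and point (ii) is indeed just uniqueness of stable envelopes as recorded earlier in the text, so the proposal is correct and matches the paper's (implicit) reasoning.
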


In other words, R-matrices satisfy relations of the \emph{fundamental groupoid}
of the complement of the walls in $\Lie \bA$. 

\subsubsection{}

Fix a quiver, a framing vector $\bw$ and define 
$$
X(\bw) = \bigsqcup_\bv \textup{Nakajima variety}(\bv,\bw) 
$$
where $\bv=(\dim V_i)$ is the vector of ranks of tautological 
vector bundles and some choice of the stability parameter is 
understood. Let 
$$
\bA = \{(a_1,a_2,a_3)\} \cong \left(\Ct\right)^3 
$$
act on the framing spaces and hence on $X$ so that 
$$
\bw = \sum a_i \otimes \bw^{(i)} 
$$
is the equivariant dimension vector. 

\begin{Exercise}
Describe $X^\bA$, the walls in $\Lie A$, and the corresponding fixed
loci. 
\end{Exercise}

\begin{Exercise}
Show that the braid relation \eqref{braid_relation} becomes the 
following equation 
\begin{equation}
R_{12}(a_1/a_2) R_{13}(a_1/a_3) R_{23}(a_2/a_3)  = 
 R_{23}(a_2/a_3)  R_{13}(a_1/a_3) R_{12}(a_1/a_2) 
\label{YB} 
\end{equation}
known as the \emph{Yang-Baxter equation} with spectral parameter. This 
is an equation for operators
$$
R_{ij}(a_i/a_j) \in \End K(X(\bw^{(i)})) \otimes 
K(X(\bw^{(j)}))_\textup{localized} 
$$
acting in the respective factors of 
$$
K(X(\bw^{(1)})) \otimes K(X(\bw^{(2)})) \otimes
K(X(\bw^{(3)}))_\textup{localized}  \,.
$$

\end{Exercise}

\begin{Exercise}
Show the following property 
\begin{equation}
R_{12}(a_1/a_2) \, R_{21}(a_2/a_1)  = 1 \,, \label{R_unitary} 
\end{equation}
known as the \emph{unitarity} of $R$-matrices. 
\end{Exercise}

\subsubsection{}

In particular, take 
\begin{equation}
X = X(2) = \pt \sqcup T^*\bP^1 \sqcup \pt 
\label{X2} 
\end{equation}
for the quiver with one vertex and no loops. Stable envelopes 
for $T^* \bP^1$ are a special case of the computation from 
Exercise \ref{exer_st_hypertor}, with the following result. 

Let 
$$
\bA = \diag(a_1,a_2) \subset GL(2)
$$
act on $\bP^1 = \bP(\C^2)$ in the natural way. In particular, 
it acts with weights $a_i$ on the fibers of $\cO(-1)$ over the 
fixed point $p_i\in X$. 

\begin{Exercise}
Choose the parameters of stable envelopes as follows: 
\begin{itemize}
\item[---]  the polarization $T^{1/2}$ is given by cotangent
  directions, 
\item[---]  the slope is $\cL=\cO(-\varepsilon)$, where 
$0<\varepsilon<1$, 
\item[---] the chamber $\fC$ is such that $a_1/a_2\to 0$. 
\end{itemize}
Then 
$$
\Stab_\fC(p_1) = (1-a_2 \cO(1)) \, \hbar^{1/2} \,, 
\quad
\Stab_\fC(p_2) = 1-\hbar a_1 \cO(1) \,,
$$
while
$$
\Stab_{-\fC}(p_1) = 1-\hbar a_2 \cO(1) \,,   
\quad
\Stab_{-\fC}(p_2) =  (1-a_1 \cO(1)) \, \hbar^{1/2} \,,
$$
is obtained by permuting the roles of $p_1$ and $p_2$. 
\end{Exercise}

\begin{Exercise}
Check that the computation of the $R$-matrix in the previous example
gives
$$
R_{T^*\bP^1}(u) = \Stab_{-\fC}^{-1} \, \circ \, \Stab_{-\fC} = 
\begin{pmatrix}
 \hbar^{1/2}  \, \dfrac{1-u}{\hbar-u}  & u \, \dfrac{1-\hbar}{u-\hbar} 
\vspace{5pt}\\ 
\dfrac{1-\hbar}{u-\hbar} &   \hbar^{1/2}\, \dfrac{1-u}{\hbar-u} 
 \end{pmatrix}
$$
where $u=a_1/a_2$. Together with the trivial blocks 
$$
R_{X(2)} = 
\begin{pmatrix}
  1 \\
 & R_{T^*\bP^1} \\ 
&& 1 
\end{pmatrix}
$$
for the points in \eqref{X2}, this equates 
the $R$-matrix for $X(2)$ with the standard 
$R$-matrix for $\cU_\hbar(\widehat{\mathfrak{gl}}(2))$, see 
\cite{ES,ChariPress}. 
\end{Exercise}

\subsection{Actions of quantum group}\label{s_act_Uq} 

\subsubsection{}

Let $\{F_i\}$ be a collection of vector spaces
(or free modules over some more general rings) and 
\begin{equation}
R_{F_i,F_j}(u) \in \End(F_i \otimes F_j) \otimes \C(u) \label{nabor_R}
\end{equation}
a collection of invertible operators satisfying the Yang-Baxter equation 
\eqref{YB}. From this data, one can make a certain 
\emph{Hopf algebra} $\cU$ act on modules of the form 
\begin{equation}
\label{tensor_F} 
F_{i_1}(a_1) \otimes \dots \otimes F_{i_n}(a_n) 
\end{equation}
where $F_i(a)$ is a shorthand for $F_i \otimes \C[a^{\pm1}]$ and 
the order of factors in \eqref{tensor_F} is important. 

The variables $a_i$ here are simply new indeterminates, unrelated
to the structure of the algebra $\cU$. The action of $\cU$ in 
\eqref{tensor_F} is linear over functions of $a_i$.  One should 
think of $a_i$ as points of $\Ct$ at which the modules
$F_i$ are inserted, and 
think of $\cU$ as a deformation of a gauge Lie algebra 
$$
\cU \approx  
\textup{Univ. enveloping}\left(\textup{Maps}(\Ct \to \fg)\right)\,,
$$
where each $F_i$ is a module over the Lie algebra $\fg$. 

\subsubsection{}

The definition of a Hopf algebra includes a coproduct, that 
is a coassociative homomorphism of algebras 
$$
\Delta: \cU \mapsto \cU \otimes \cU \,. 
$$
A coproduct makes modules over $\cU$ a tensor category. 
Familiar examples are a group algebra $\C G$ of a group $G$ or 
the universal enveloping algebra of a Lie algebra $\fg$. 
A group acts in tensor products via the diagonal homomorphism, 
meaning 
$$
\Delta: \C G \owns g \mapsto g\otimes g \in \C G \otimes \C G \,,
$$
which by Leibniz rule gives 
$$
\Delta(\xi) = \xi \otimes 1 + 1 \otimes \xi  \quad \xi \in \Lie G \,. 
$$
In categories related to motives, in which 
$$
\otimes = \textup{Cartesian product of varieties} \,, 
$$
tensor  product is similarly commutative.

By contrast, the coproduct for $\cU$ will not be cocommutative,
meaning that 
$$
\Delta^\opp = (12) \circ \, \Delta \ne \Delta \,,
$$
and so the order of factors in the tensor product will be important. 

\subsubsection{}
The coproduct will be not entirely noncommutative: 
after localization in $a$, tensor products in either order 
become isomorphic, that is 
$$
F_1(a_1) \otimes F_2(a_2) \otimes \C(a_1/a_2) 
\cong F_2(a_2) \otimes F_1(a_1)  \otimes \C(a_1/a_2) 
$$
as $\cU$ modules. In fact, the isomorphism is given by 
\begin{equation}
R^\vee_{F_1(a_1),F_2(a_2)} = (12) \circ R_{F_1,F_2} (a_1/a_2)
\,.  \label{Rcheck} 
\end{equation}
The same formula will work for general modules of the 
form \eqref{tensor_F}, if we extend the definition of 
$R$-matrices as follows 
$$
R_{F_1(a_1), F_2(a_2)\otimes F_3(a_3)} = 
R_{F_1(a_1), F_3(a_3)} \, R_{F_1(a_1), F_2(a_2)} 
$$
and so on, see Figure \ref{f_R_crossing} which illustrates the order 
in which the $R$-matrices should be multiplied. 
\begin{figure}[!htbp]
  \centering
   \includegraphics[scale=0.64]{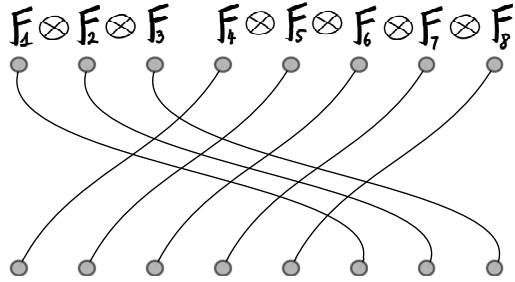}
 \caption{To write an $R$-matrix for two modules of the form
\eqref{tensor_F}, one multiplies $R$-matrices for factors in the 
order of crossings in the above diagram. Here, we get 
$R_{18} \cdots R_{24} R_{35} R_{34}$, where $R_{24}$ and $R_{35}$ 
can be places in arbitrary order.}
  \label{f_R_crossing}
\end{figure}

\begin{Exercise}
Prove that so defined $R$-matrices satisfy the same 
braid relations of the form 
$$
R_{12} R_{13} R_{23} = 
 R_{23}  R_{13} R_{12} \,,
$$
and the Yang-Baxter equation 
$$
R_{12}(u_1/u_2) R_{13}(u_1/u_3) R_{23}(u_2/u_3)  = 
 R_{23}(u_2/u_3)  R_{13}(u_1/u_3) R_{12}(u_1/u_2) 
$$
if we make $u\in \Aut(\Ct)$ act on modules \eqref{tensor_F} 
by shifting all variables $a_i \mapsto u a_i$. 
\end{Exercise}

\subsubsection{}

Let $\bF$ be a module of the form \eqref{tensor_F}. The algebra
$\cU$ may be defined in a very down-to-earth way as the 
subalgebra 
\begin{equation}
\cU \subset \prod_{\bF} \End(\bF) \label{UEndF} 
\end{equation}
generated by matrix elements of 
$$
R_{\bF,F_i(u)} \in \End(\bF) \otimes \End(F_i) \otimes \C(u) 
$$
in the auxiliary space $F_i$ for all $i$. Such matrix element 
is a rational function of $u$ with values in $\End(\bF)$ and 
we add to $\cU$ the coefficients of the expansion 
of this rational function as both $u\to 0$ and $u\to \infty$. 
As the degrees of these rational functions are unbounded, 
there is no universal relations between these coefficients. 

The natural projection 
$$
\prod_{\bF} \End(\bF) \mapsto \prod_{\bF_1, \bF_2} \End(\bF_1 \otimes 
\bF_2) 
$$
restricts to a map 
$$
\Delta: \cU \mapsto \cU \, \widehat{\otimes} \, \cU 
$$
where the completion is required if the auxiliary spaces 
are infinite-dimensional, see the discussion in Section 5 of
\cite{MO1}.  It is clear that $\Delta$ is a coassociative coproduct. 

\begin{Exercise}
Show that \eqref{Rcheck} intertwines $\cU$ action. 
\end{Exercise}

Geometrically defined $R$-matrices have an additional 
parameter, namely the slope. It can be shown \cite{OS} that the 
resulting algebra $\cU$ does not depend on the slope. 
However, the ability to vary the slope is of significant help in 
analysis of the structure of $\cU$. 

\subsubsection{}

Among $\bF$ in \eqref{UEndF} we include 
the trivial representation $\C$ such that $R_{\C,F}=1$ for 
any $F$. 

\begin{Exercise}
Show that the induced map $\cU \to \C$ is a counit for
$\Delta$. 
\end{Exercise}

\subsubsection{}

In addition to a coproduct, a Hopf algebra has an anti-automorphism
$$
S: \cU \to \cU
$$
called the \emph{antipode}, see \cite{ES,ChariPress}. It satisfies 
\begin{equation}
m \circ (1 \otimes S) \circ\Delta =
m \circ (S \otimes 1) \circ \Delta = \textup{unit} \circ
                                     \textup{counit} 
\end{equation}
where $m: \cU \otimes \cU \to \cU$ is the multiplication. 

In applications, $S$ is invertible, even though 
invertibility of the antipode is not a part of the standard axioms. 
For the category of modules, the existence of the antipode means 
existence of \emph{dual} modules. 

For noncommutative $\otimes$, one has to distinguish between 
the \emph{left} duals, with canonical maps 
\begin{equation}
F^* \otimes F \to \C \,, \quad \C \to F \otimes F^* \,,
\label{left_dual} 
\end{equation}
and and the \emph{right} duals, in which 
the order of tensor factors in \eqref{left_dual} is reversed. 

\begin{Exercise}
Show that the assignment 
$$
\cU \owns x \mapsto S(x)^* \in \End(F^*) \,,
$$
where start denotes the transpose operator, makes vector 
space dual $F^*$ into a left-dual module for a Hopf algebra $\cU$. 
Show that the inverse of $S$ is needed to make it a right-dual 
module. 
\end{Exercise}

\subsubsection{}
Following \cite{Reshet}, one can extend the collection 
\eqref{nabor_R} to dual spaces $F_i^*$ by the following rules 
\begin{alignat}{2}
&R_{F_1^*,F_2} &&= \left(R_{F_1,F_2}^{-1}\right)^{*_1} \notag\\
&R_{F_1,F_2^*} &&= \left(R_{F_1,F_2}^{*_2}\right)^{-1} \label{R_duals}\\
&R_{F_1^*,F_2^*} &&= \left(R_{F_1,F_2}\right)^{*_{12}} \,, \notag 
\end{alignat}
where $*_1$ means taking the transpose with respect to the 
first tensor factor, etc. 

\begin{Exercise}
Show that the matrices \eqref{R_duals} satisfy Yang-Baxter
equations. 
\end{Exercise}

\noindent
We can thus assume that the set $\bF$ in \eqref{UEndF} is 
closed under duals and define 
$$
S : \cU \to \widehat{\cU} 
$$
as the restriction of 
$$
\End(\bF) \xrightarrow{\quad * \quad} \End(\bF^*)  \,.
$$

\begin{Exercise}
Show this map satisfies the antipode axiom and that $\bF^*$ 
is the left-dual module for $\cU$. 
\end{Exercise}

\section{Quantum Knizhnik-Zamolodchikov equations}\label{s_KZ} 

\subsection{Commuting difference operators from $R$-matrices} 

\subsubsection{}

Consider the group 
$$
\overline{W} = S(n) \ltimes \Z^n \subset \Aut(\Z^n)
$$
generated by shifts and permutation of the coordinates. 
It contains the affine Weyl group 
$$
W_\textup{aff} \subset \overline{W} 
$$
of type $A_{n-1}$ generated by $S(n)$ and the reflection 
$$
s_0 \cdot (x_1,\dots,x_n) = (x_{n}+1,x_2,\dots,x_1-1) 
$$
in the affine root 
$$
x_1-x_n = 1 \,.
$$
Together with the simple positive roots of $A_{n-1}$, it delimits the 
fundamental alcove 
\begin{equation}
\dots\ge x_n+1 \ge x_1 \ge x_2 \ge \dots \ge x_n \ge x_1-1  \ge \dots
\,. 
\label{aff_alcove} 
\end{equation}
This stabilizer of \eqref{aff_alcove} in $\overline{W}$ is generated
by the transformation
$$
\pi \cdot (x_1,\dots,x_n) = (x_n+1,x_1,\dots,x_{n-1}) 
$$
that shifts all entries in \eqref{aff_alcove} by one. Projected 
along the $(1,1,\dots,1)$-direction, this becomes the order $n$ 
outer symmetry of the 
Dynkin diagram of $\widehat{A}_{n-1}$. 
The group $\overline{W}$ is generated by $W_\textup{aff}$ and $\pi$ 
with the relations 
\begin{equation}
\pi s_{i} = s_{i+1} \pi \,, \quad i \in \Z/n \,. \label{rel_pi_s} 
\end{equation}
In particular, we have the following expression for the shifts 
of the coordinates 
\begin{align}
(1,0,\dots,0) &= \pi \, s_{n-1} \dots s_{2} s_1 \,, \label{100} \\
(0,1,\dots,0) &= s_1 \pi \, s_{n-1} \dots s_{2} \,, \quad 
                \textup{etc.} 
\label{010} 
\end{align}

\subsubsection{}

Let $F_i$ be as in Section \ref{s_act_Uq} and define
$$
P = \bigoplus_{w\in S(n)} F_{w(1)}(a_{w(1)}) \otimes \dots \otimes 
F_{w(n)}(a_{w(n)}) \,.
$$
Consider the $\cU$-intertwiners in $P$ as in \eqref{Rcheck}
\begin{align}
 s_k & = R^\vee_k \notag\\
&=(k,k+1) \, R_{F_{w(k)},F_{w(k+1)}}(a_{w(k)}/a_{w(k+1)}) \,\,
\label{sRcheck}
\end{align}
where $(k,k+1)$ permutes the $k$th and $(k+1)$st factors. 

\begin{Exercise}
Deduce from \eqref{R_unitary} that the operators \eqref{sRcheck}
form a representation of the symmetric group. 
\end{Exercise}

Let
$$
G: F_i(a) \to F_i(a)
$$
be an invertible operator such that 
\begin{equation}
\left[ G\otimes G, R\right] = 0 \,.  \label{GGR} 
\end{equation}
Define
$$
\pi = G_1 \, (1,2,3,4\dots n) \,,
$$
where $G_1$ means it acts in the first tensor factor. 

\begin{Exercise}
Check this operator satisfies relations \eqref{rel_pi_s}. 
\end{Exercise}

\noindent
By analogy with \eqref{aff_alcove}, one can think of infinite 
quasi-periodic tensors of the form 
$$
\cdots \otimes G f_n(a_n)  \otimes f_1(a_1) \otimes f_2(a_2) \otimes \cdots \otimes f_n(a_n) \otimes 
G^{-1} f_1(a_1) \otimes G^{-1} f_2(a_2) \otimes \cdots \,.
$$
The operator $\pi$ shifts this infinite tensor by one step. 
This operation occurs naturally in the study of spin chains
with quasi-periodic boundary condition. 

It follows that we get a representation of the group $S(n)\ltimes
\Z^n$ and, in particular, $n$ commuting operators of the 
form 
\begin{align}
\textup{\eqref{100}} &= G_1 R_{1,n} \dots R_{12} \label{qKZ1} \,, \\
\textup{\eqref{010}} & = R_{2,1} G_2 R_{2,n} \dots R_{23}\,, \quad 
\textup{etc.} 
\notag  
\end{align}

\subsubsection{}
There are two ways to obtain an operator $G$ satisfying \eqref{GGR}. 
The first is to take an element of $\cU$ which is primitive, that is, 
satisfies 
$$
\Delta(G) = G\otimes G \,. 
$$
Quantum affine algebras have very few primitive elements.  For 
Nakajima varieties the only nontrivial choice is an exponentials of linear
functions of $\bv$, which can be written as $z^\bv$ in multiindex
notation. 

The other option is to introduce a shift operator,
$$
\tau_{a,q}f(a) = f(q a)  
$$
which satisfies \eqref{GGR} because the $R$-matrix 
depends only on the ratio $a_1/a_2$ of evaluation parameters. 
Combining $z^\bv$ with a shift operator in the form 
$$
G f(a) = z^{\bv} \, f(q^{-1} a) \,,
$$
 we get an action of $\Z^n$ by commuting difference operators. 

The corresponding difference equations were introduced in 
\cite{FrenResh} and are known as the quantum 
Knizhnik-Zamolodchikov equations. In the $q\to 1$ limit, 
they become the Knizhnik-Zamolodchikov differential 
equations for conformal blocks of WZW conformal field theories. 
See the book \cite{EFK} for an introduction.

\subsection{Minuscule shifts and qKZ} 

\subsubsection{}
We will identify the shifts of equivariant variable from 
Section \ref{s_shift_eq} with 
qKZ operators in the case when the cocharacter 
$\sigma$ is \emph{minuscule}. The parameters in 
the primitive element $z^\bv$ will be identified, up to a shift, with 
K\"ahler variables. 

Being minuscule is an important technical notion, and 
one of the equivalent ways to define a minuscule cocharacter
is the following. Let $X_0$ be the affinization of $X$, that is, 
the spectrum of global functions on $X$. The ring 
$\C[X_0]=H^0(\cO_X)$ is $\Z$-graded by the characters of $\bA$ 
and one can look for generators of minimal degree with 
respect to $\sigma$. By definition, $\sigma$ is minuscule
if $\C[X_0]$ is generated by elements of degree $\{\pm 1,0\}$ 
with respect to $\sigma$. 

For Nakajima varieties, $\C[X_0]$ is generated by invariant functions 
on the prequotient, and the first fundamental theorem of invariant
theory 
provides  a set of generators for it.  It  is shown in Section 
2.6 of \cite{MO1} that actions 
$$
\sigma: \Ct \to \prod GL(W_i)
$$
with the character
$$
\bw = a \bw' + \bw''\,, \quad a \in \Ct\,,
$$
are minuscule. 

\begin{Exercise}
 Check this. 
\end{Exercise}

\begin{Exercise}
Prove that the weights of a minuscule cocharacter $\sigma$ in the 
normal bundle to $X^\sigma$ are $\pm 1$. 
\end{Exercise}

\subsubsection{}
It is clear that the cocharacter $\sigma$ has to be very 
special for the shift operator to have the qKZ form. 
Indeed, the constructions of Section 
\ref{s_diff} produce difference operators 
as a formal series in $z$. For the shifts of K\"ahler 
variables it is shown in \cite{OS}, as a consequence of commutation with 
qKZ, that the corresponding series converges to a rational 
function of $z$, up to an overall scalar. 

By contrast, the qKZ operator are polynomial in $z$ and, in 
fact, they involve the variables $z$ only through the 
diagonal operator $z^\bv$ acting in one of the tensor 
factors. The possibility to factor out the entire degree 
dependence in such a strong way is very special and 
is destroyed, for example, if we replace a difference operator by 
its square. Of course, $\sigma^2$ is never minuscule
for $\sigma\ne 1$. 

\subsubsection{}
Let $\sigma$ be a minuscule cocharacter and let $X^\sigma$ 
be the fixed locus on the corresponding $1$-parameter 
subgroup. The identification of geometric 
structures with structures from quantum groups happens 
in the basis of stable envelopes and, in particular, 
the shift operator $\bfS_\sigma$ will be identified with qKZ 
after conjugation by 
\begin{equation}
\Stab_{+,T^{1/2},\cL} : K_\bT(X^\sigma) \to K_\bT(X) \label{S_+} 
\end{equation}
where plus means $\sigma$-attracting directions and 
\begin{equation}
\cL \in \textup{\{neighborhood of $0$\}} 
\cap (-\Camp) \subset  \Pic(X)\otimes \R\,.
\label{L-ampl} 
\end{equation}
We will see the importance of this particular choice of the 
slope $\cL$ later.

\subsubsection{}

The conjugation of the shift operator to qKZ is summarized 
in the following diagram: 
\begin{equation}
 \xymatrix{ K_\bT(X^\sigma) \ar[d]_{\Res\circ\Stab_+}
   \ar[rrrr]^{\tau_{\sigma}^{-1}  (-1)^{\codim/2} z^{\deg} \,R}
&&&&
   K_\bT(X^\sigma)_\textup{localized}
\ar[d]^{\Res\circ\Stab_+}\\
K_\bT(X^\sigma)\ar[rrrr]^{\tau_{\sigma}^{-1}  \, \bfS_{\sigma}} &&&&
K_\bT(X^\sigma)_\textup{localized}} 
\label{diagResStab} 
\end{equation}
where $\Stab_+$ is the operator \eqref{S_+}, followed by 
the restriction $\Res$ to fixed points, $R=\Stab_-^{-1} \Stab_+$ is 
the $R$-matrix, and 
$$
\tau_{\sigma} f(a) = f(q^\sigma a) \,. 
$$
Recall that the operators that shift equivariant variables are only 
defined after localization. In particular, to define the shift 
$\tau_{\sigma}$ we need to trivialize $K_\bT(X)$ as a 
$K_{\Ct_q}(\pt)$-module. This is achieved by restriction to $K_\bT(X^\sigma)$. 

In what follows, we will often drop the restriction to fixed points
for brevity. To simplify notation, we will also denote by 
$a$ the coordinate in the image of $\sigma$ so that
$$
\tau_\sigma f(a) = f(qa) \,. 
$$

\subsubsection{} 

The inverse of the stable envelope may be replaced, as in 
Exercise \ref{ex_transpose}, by the transposed correspondence 
with the opposite polarization, cone, and slope. Therefore, 
the diagram \eqref{diagResStab} is equivalent to the 
following 

\begin{Theorem}\label{t_R} 
If $\sigma$ is a minuscule cocharacter then 
\begin{equation}
 \Stab^\transp_{-,\, \Topp,\, -\cL}\Big|_{a\mapsto q^\sigma a} 
\,\, \bfS_\sigma \,\,  \Stab_{-,\, T^{1/2},\,\cL}   = 
(-1)^{\frac12 \codim X^\sigma} \, z^{\deg} 
\label{sSs}
  \end{equation}
for all 
$$
\cL \in \textup{\{neighborhood of $0$\}} 
\cap (-\Camp) \subset  \Pic(X)\otimes \R\,.
$$
\end{Theorem}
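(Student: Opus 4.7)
The plan is to expand the left-hand side of \eqref{sSs} by $\Ct_q$-equivariant localization on the moduli space of $\sigma$-twisted quasimaps relative to $p_1$ and $p_2$, and then to show that after conjugation by stable envelopes only the contribution of the constant twisted quasimap survives. By the analysis of Section \ref{s_Cq_fixed}, the $\Ct_q$-fixed twisted quasimaps decompose as a ``constant'' core $f_\sigma\colon C\to x\in X^\sigma$ with possibly opening accordions at $p_1$ and $p_2$; the accordion contributions at $p_i$ are exactly the capping-operator type insertions that appear in the second proof of Theorem \ref{t_tube}. First I would absorb the gluing operator $\bfG^{-1}$ present in the definition \eqref{defSs} by recognizing that the composition with $\Stab_{-,T^{1/2},\cL}$ and $\Stab^\transp_{-,\Topp,-\cL}$, specialized as in the diagram \eqref{diagResStab}, is set up so that the accordions at $p_1$ and $p_2$ are ``swallowed'' into the stable envelopes via the same triangle-lemma mechanism that produces $R$-matrices in \eqref{Rw}.

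The second step is to rule out all contributions except the bare constant twisted quasimap. I would do this by a rigidity argument in the variables $a \in \bA$, following the template of Section \ref{s_princ} and the large-framing vanishing of Theorem \ref{t_large_framing}. The matrix element $(x_1,x_2)$ of the LHS is a priori a Laurent polynomial in $a$ (times the degree-dependent factor $z^{\deg f}$), and the defining degree bounds \eqref{degK} for $\Stab_{-,T^{1/2},\cL}$ and $\Stab^\transp_{-,\Topp,-\cL}$ imply sharp Newton polygon constraints on it. Because $\sigma$ is minuscule, the weights of $\sigma$ on the normal directions to $X^\sigma$ are only $\pm 1$, and the quasimap degree contributes at most linearly to these Newton polygons via the constant-map formula \eqref{deg_const}. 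The slope condition $\cL\in (-\Camp)$ near $0$ is precisely what is needed to make these bounds strict off the diagonal, so that in the $a^{\pm 1}\to \infty$ limit every positive-degree quasimap contribution is forced to vanish. This reduces the LHS to a diagonal operator supported on constant twisted maps.

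The third step is to identify the resulting diagonal entry. For a constant twisted map at $x\in X^\sigma$, the degree is $\lan \bmu(x),\,\text{---}\otimes \sigma\ran$ by \eqref{deg_const}, giving the factor $z^{\deg}$. The remaining scalar comes from the $\tO_\vir$-prefactor at the constant map; by Lemma \ref{lsquare} and the $\sigma$-weight $\pm 1$ feature, the polarization ratio $\det \cT^{1/2}_{\mov}|_{p_2}/\det \cT^{1/2}_{\mov}|_{p_1}$ from \eqref{def_tO} is trivial, but the sign $(-1)^{\rk \cT^{1/2}_{\mov}} = (-1)^{\frac12 \codim X^\sigma}$ arising from $(-c^{1/2})^{\rk H}$ in the $\aroof$-type identity (cf.\ Exercise \ref{ex_phi_pol}) survives. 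Combined with the cancellation of the accordion contributions from $p_1$ and $p_2$ against the $\Stab$--$\Stab^\transp$ pair evaluated at the same fixed point (by \eqref{StabStab}), this produces exactly the factor $(-1)^{\frac12 \codim X^\sigma}$ on the right-hand side of \eqref{sSs}.

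The main obstacle will be the second step: the vanishing of positive-degree contributions. In the large-framing setting one could simply enlarge the framing to dominate the polarization; here the rank of the framing is fixed, and the required cancellations have to come out of a delicate balance between (i) the asymmetric polarization twist in $\tO_\vir$ between $p_1$ and $p_2$, (ii) the boundary choice $\cL\in -\Camp$, and (iii) the fact that for minuscule $\sigma$ the difference operator $\bfS_\sigma$ has a chance to be polynomial (not merely rational) in $z$ since the normal-bundle $\sigma$-weights are extremal. I would formalize the reduction by induction on the quasimap degree along an ample direction, using rigidity with respect to $a$ at each step and appealing to the support property of the Steinberg-type correspondence \eqref{Steinberg_var} (Proposition \ref{p_glue_nolocalized}) to keep the intermediate objects in nonlocalized K-theory so that the bounded-polynomial dichotomy of Section \ref{s_princ} applies.
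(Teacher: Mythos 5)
Your high-level plan --- localize with respect to $\Ct_q$, suppress the accordion contributions by a rigidity argument, and then evaluate the remaining edge/constant-map contribution --- is the right outline and matches the paper's strategy. But there is a real gap in the middle of your proposal: the rigidity you invoke is in the variable $a$ alone, and you try to patch it with ``large-framing vanishing'' or ``induction on degree along an ample direction.'' That is not the argument that works here, and you correctly sense trouble in your last paragraph. The paper's proof is a \emph{two-variable} rigidity in $(q,a)$: the LHS of \eqref{sSs} is a Laurent polynomial in both $q$ and $a$ (by properness), and the decisive step is to look at the specific infinity of $\Ct_q\times\bA$ given by $a\to\infty$ with $qa\to 0$. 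This particular limit is what makes the whole thing converge, and it relies on the minuscule condition in a much sharper way than you use it: since $\sigma$ has normal weights $\pm 1$, the possible weights of $\psi_{p_1}$ at $\Ct_q$-fixed quasimaps lie in $\{1,(qa)^{\pm 1}\}$, and the accordion insertion $(1-q^{-1}\psi_{p_1})^{-1}$ tends to $0$ in precisely this limit, while at the other marked point $(1-q\psi_{p_2})^{-1}$ also degenerates, forcing $\bJ^\transp\bfG^{-1}\to 1$. Nothing like this is visible in a single-variable $a\to\infty$ analysis, and the triangle-lemma / ``swallowing into stable envelopes'' mechanism you suggest is not the cancellation the paper actually uses.

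The evaluation of the surviving edge term is also more subtle than you write. The polarization ratio $\det\cT^{1/2}_{\mov}|_{p_2}/\det\cT^{1/2}_{\mov}|_{p_1}$ is not simply trivial: for a constant $\sigma$-twisted map to $x\in X^\sigma$, the virtual normal bundle is $N_\vir=(T_{x,>0}X)|_{a\mapsto qa}+T_{x,>0}X$, and these factors cancel against the diagonal normalization \eqref{Stab_FF_naive} of the two stable envelopes (one evaluated at $qa$, one at $a$). What is left is a careful bookkeeping of the $\hbar$-powers from \eqref{Stab_FF} and the polarization ratio in \eqref{def_tO}, producing $(-1)^{\frac12\codim F}\hbar^{\frac12\codim F}$, with the $\hbar$-power absorbed by the twisted convolution \eqref{convolution_twisted}. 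Finally, your reading of why $\cL\in -\Camp$ matters is close but incomplete: the off-diagonal suppression comes from the condition \eqref{weight_qa}, which is verified by computing the $\cL^{-1}$-degree of a chain of torus orbits connecting components under $\Attr^f_{-\fC}$, so it is ampleness of $\cL^{-1}$ together with the structure of the full attracting set, not just Newton-polygon bounds on stable envelopes, that closes the argument.
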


\noindent
The proof of this theorem will be given in Section \ref{qKZ}

\subsubsection{}
For the following exercises we do not need 
to assume $\sigma$ minuscule. 

\begin{Exercise}\label{ex_block_diag} 
Let the slope $\cL$ be as \eqref{L-ampl}. Show that 
the following limit exists and is a block-diagonal operator 
$$
\lim_{a\to 0} a^{\lan\det T^{1/2}_{>0},\sigma\ran} \Stab_+ = 
\textup{block-diagonal}  \,. 
$$
As explained before, here $\Stab_+$ really means 
$\Res \circ \Stab_+$. Meanwhile, 
$$
\lim_{a\to 0} a^{\lan\det T^{1/2}_{>0},\sigma\ran} \Stab_- = 
\textup{block-triangular}  \,, 
$$
where the triangularity is the same as for $\Stab_-$. 
\end{Exercise}

\begin{Exercise}
Prove that 
$$
\Stab_+^{-1} \, \tau_\sigma^{-1} \, \Stab_+ \to q^{\lan\det
  T^{1/2}_{>0},\sigma\ran} \, \tau_\sigma^{-1} \,, \quad a\to 0 \,.
$$
\end{Exercise}

\begin{Exercise}
  Prove that 
  \begin{equation}
    R\to \hbar^{-\codim/4} \,\,\,  \textup{block-unitriangular}\,, 
\quad a\to 0 \,, 
  \end{equation}
where the triangularity is the same as for $\Stab_-$. 
\end{Exercise}

\subsubsection{}
We will see in Section \ref{s_Glue_Stab} that the following 
limit 
\begin{equation}
\bJ_{0,+} = \lim_{a\to 0}  \Stab_+^{-1} \, \bJ(a) \, \Stab_+
\label{bJ0+} 
\end{equation}
exists, as does the limit of the operator $\bfS_\sigma$ in the 
stable basis. Again, this does not require the
minuscule assumption. Granted this, 
we deduce from \eqref{conJ} the following analog of \eqref{MOz}. 

\begin{Proposition}\label{c_S_to_0}
We have
\begin{equation}
\bJ_{0,+}^{-1}\, \Stab_+^{-1} \, \bfS_{\sigma} \, \Stab_+ \, \bJ_{0,+} \to (-\hbar^{1/2})^{\lan \det N_{<0},\sigma \ran} \,
q^{-\lan \det T^{1/2}_{>0},\sigma \ran} \, z^{\deg} \,, 
\label{S_to_0}
\end{equation}
as $a\to 0$, 
where $N_{<0}$ is the subspace of the normal bundle to $X^\sigma$
spanned by weights that go to $\infty$ as $a\to 0$. 
\end{Proposition}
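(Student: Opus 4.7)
The plan is to derive the asymptotic formula from the factorization of $\bfS_\sigma$ provided by Theorem \ref{t_bfS}, rewritten as the operator identity
$$\bfS_\sigma = \bJ(q^\sigma a)\,\bE(\sigma)\,\bJ(a)^{-1},$$
and to analyze each factor as $a\to 0$ after passage to the stable basis. Inserting $\Stab_+(q^\sigma a)\Stab_+(q^\sigma a)^{-1}$ in the middle gives the four-factor decomposition
\begin{multline*}
\Stab_+(a)^{-1}\,\bfS_\sigma\,\Stab_+(a) = \bigl[\Stab_+(a)^{-1}\Stab_+(q^\sigma a)\bigr] \bigl[\Stab_+(q^\sigma a)^{-1}\bJ(q^\sigma a)\Stab_+(q^\sigma a)\bigr] \\
\cdot \bigl[\Stab_+(q^\sigma a)^{-1}\bE(\sigma)\Stab_+(a)\bigr] \bigl[\Stab_+(a)^{-1}\bJ(a)^{-1}\Stab_+(a)\bigr].
\end{multline*}
By the defining property \eqref{bJ0+} of $\bJ_{0,+}$, the two inner $\bJ$-conjugates converge to $\bJ_{0,+}$ and $\bJ_{0,+}^{-1}$ respectively as $a\to 0$; after the outer conjugation by $\bJ_{0,+}$ required by the statement, the problem reduces to the limits of the two $\Stab_+$-ratios together with the conjugated edge term.

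Next, I would use Exercise \ref{ex_block_diag} to show that each $\Stab_+$-ratio converges to a block-scalar operator indexed by components $F\subset X^\bA$, with $F$-entry an explicit power of $q$. The combined effect on the product, after absorbing the $\bJ_{0,+}^{\pm 1}$ conjugations (which respect the component decomposition of $X^\bA$), produces the factor $q^{-\lan\det T^{1/2}_{>0},\sigma\ran}$ of the claim. Simultaneously, $\bE(\sigma)$ is diagonal in the $X^\sigma$ fixed-point basis by \eqref{bE}, and the block-triangularity of stable envelopes collapses $\Stab_+(a)^{-1}\bE(\sigma)\Stab_+(a)$ to a diagonal on $X^\bA$ in the limit.

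Finally, I would evaluate the surviving $\bE$-diagonal entry at $F$. By \eqref{deg_const} the constant-map degree at $F$ is $\lan\bmu(F),-\otimes\sigma\ran$, and the identity $z_{\textup{shifted}}^d = z^d(-\hbar^{1/2}q)^{-\lan\det T^{1/2},d\ran}$ converts $z_{\textup{shifted}}^{\deg}$ into $z^{\deg}$ multiplied by an explicit monomial in $\hbar^{1/2}$ and $q$. Decomposing $T^{1/2}|_F$ into $\sigma$-weight pieces and using the identity $N_{<0}|_F = T^{1/2,<0}|_F \oplus \hbar^{-1}(T^{1/2,>0}|_F)^\vee$, the remaining $(-\hbar^{1/2})^{\lan\det N_{<0},\sigma\ran}$ emerges after evaluating the Pochhammer asymptotics of $\Phi((1-q\hbar)(T^{1/2}_{F,\sigma}-T^{1/2}_{F,0}))$ and combining with the excess $\hbar^{1/2}$ and $q$ powers from $z_{\textup{shifted}}$ and from the $\Stab_+$-scalings.

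The principal obstacle is this last step: the Pochhammer factors $(w)_\infty/(q\hbar w)_\infty$ for $\sigma$-repelling weights $w$ diverge as $a\to 0$, and their finite residue must be extracted by tracking the cancellation against the compensating $a$-powers in $\Stab_+(a)$, in the degree term of $z_{\textup{shifted}}$, and in the structure of $\bJ_{0,+}$. This cancellation is forced by the balanced structure of $T_\vir$ on twisted quasimaps (Lemma \ref{lsquare}) and by the self-duality of $\tO_\vir$ that underlies every rigidity argument in the paper; explicit bookkeeping of the $\bA$-moving weights of the polarization is the technical core of the computation.
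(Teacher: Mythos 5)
Your proposal follows the same skeleton as the paper's proof: factor $\bfS_\sigma$ via \eqref{sdJ}, conjugate by $\Stab_+$, use the definition \eqref{bJ0+} of $\bJ_{0,+}$ for the $\bJ$-factors, invoke Exercise \ref{ex_block_diag} for the $\Stab_+$-ratios, and evaluate $\bE(\sigma)$ in the $a\to 0$ limit. The paper executes this much more compactly: it observes that $\bE(\sigma)$ is \emph{block-scalar} on $K(X^\sigma)$, hence commutes with $a^{\lan\det T^{1/2}_{>0},\sigma\ran}$, so that
\begin{equation*}
\Stab_+^{-1}\,\bE(\sigma)\,\Stab_+ = \left(a^{m}\Stab_+\right)^{-1}\bE(\sigma)\left(a^{m}\Stab_+\right) \longrightarrow D^{-1}\,\bigl(\lim\bE(\sigma)\bigr)\,D = \lim\bE(\sigma)\,,
\end{equation*}
the last equality because the block-diagonal $D$ commutes with any block-scalar. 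With this one line, there is no need to carry the four-factor decomposition: the paper substitutes directly into \eqref{sdJ} and reads off the answer. Your decomposition into $\Stab_+(a)$ and $\Stab_+(q^\sigma a)$ ratios is a legitimate way to organize the same computation, but it forces you to track extra $q$-powers that in fact cancel — so it buys nothing and introduces avoidable bookkeeping.

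The one genuine problem is what you label the ``principal obstacle.'' You write that the Pochhammer factors $(w)_\infty/(q\hbar w)_\infty$ for $\sigma$-repelling weights diverge and that the finite residue must be extracted by ``tracking the cancellation against the compensating $a$-powers in $\Stab_+(a)$, in the degree term of $z_{\textup{shifted}}$, and in the structure of $\bJ_{0,+}$.'' That picture is wrong. No external compensation is needed: the class one feeds to $\Phi$ is the \emph{difference} $T^{1/2}_{x,\sigma}-T^{1/2}_{x,0}$, so for each weight $w$ with $\sigma$-degree $n<0$ the four Pochhammer factors combine to
\begin{equation*}
\frac{(wq^{n})_\infty\,(q\hbar w)_\infty}{(w)_\infty\,(q\hbar wq^{n})_\infty} = \prod_{j=1}^{|n|}\frac{1-q^{-j}w}{1-q^{1-j}\hbar w} \longrightarrow (q\hbar)^{n}\,, \qquad a\to 0\,,
\end{equation*}
a finite limit obtained entirely within the $\Phi$-factor; the $\sigma$-attracting weights contribute $1$. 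Summing the exponents gives $\Phi\to(q\hbar)^{\lan\det T^{1/2}_{<0},\sigma\ran}$, which combined with the prefactor $(-\hbar^{1/2}q)^{-\lan\det T^{1/2},\sigma\ran}$ from $z_{\textup{shifted}}^{\deg}$ and the identity $\lan\det N_{<0},\sigma\ran = \lan\det T^{1/2}_{<0},\sigma\ran - \lan\det T^{1/2}_{>0},\sigma\ran$ yields the right-hand side of \eqref{S_to_0}. Your plan as stated would have you hunting for compensating divergences in $\Stab_+$, $z_{\textup{shifted}}$, and $\bJ_{0,+}$ that do not exist, and the formula you quote, $N_{<0}|_F = T^{1/2,<0}|_F\oplus\hbar^{-1}(T^{1/2,>0}|_F)^\vee$, while correct, would not be put to its intended use if you were lost in a spurious regularization.
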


\begin{proof}
Since 
$$
\Phi\big((1-q\hbar)\big(T^{1/2}_{x,\sigma} -
T^{1/2}_{x,0}\big)\big) \to (q\hbar)^{\lan \det T^{1/2}_{<0},\sigma \ran}
\,, \quad a \to 0 \,,
$$
we see that the operator $\bE(\sigma)$ in \eqref{sdJ} becomes
$$
\bE(\sigma) \to \textup{RHS of \eqref{S_to_0}}
$$
in the $a\to 0$ limit. {}From Exercise \ref{ex_block_diag}, 
$$
\Stab_+^{-1} \, \bE(\sigma) \, \Stab_+ = 
\Stab_+^{-1} \, a^{-\lan\det T^{1/2}_{>0},\sigma\ran} \,\bE(\sigma) \,
a^{\lan\det T^{1/2}_{>0},\sigma\ran} 
\Stab_+ \to \bE(\sigma)\,,
$$
as $a\to 0$, and so the conclusion follows from \eqref{sdJ}. 
\end{proof}

\begin{Exercise}
Check that with the minuscule condition on $\sigma$ 
Corollary \ref{c_S_to_0} specializes to 
\begin{equation}
\bJ_{0,+}^{-1}\, \Stab_+^{-1} \, \bfS_{\sigma} \, \Stab_+ \, \bJ_{0,+}
\to (-\hbar^{1/2})^{-\codim/2} \,
q^{-\rk T^{1/2}_{>0}} \, z^{\deg} \, \quad a\to 0\,.  
\label{S_to_0_min}
\end{equation}
\end{Exercise}

\subsubsection{}

{}From the above exercises we conclude that 
$$
 \Stab_+^{-1} \, \tau_\sigma^{-1} \, \bfS_{\sigma} \, \Stab_+
\to \tau_\sigma^{-1} \, \bJ_{0,+} \, (-\hbar)^{\codim/2} 
z^{\deg} \, \bJ_{0,+}^{-1} \,,
$$
and thus the diagram \eqref{diagResStab} implies the 
following 

\begin{Corollary}\label{c_bJ0} 
 The operator $\bJ_{0,+}$ diagonalizes $(-1)^{\codim/2} z^{\deg} R(0)$ 
and, in particular, is triangular with the same triangularity as
$\Stab_-$. 
\end{Corollary}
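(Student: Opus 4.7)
The plan is to read the corollary directly off the $a\to 0$ limit of the identity supplied by the diagram \eqref{diagResStab}. That diagram, combined with the factorization of $\Stab_+^{-1}$ out of the bottom arrow, is the equality of operators
\begin{equation*}
\Stab_+^{-1}\,\tau_\sigma^{-1}\,\bfS_\sigma\,\Stab_+ \;=\; \tau_\sigma^{-1}\,(-1)^{\codim/2}\,z^{\deg}\,R(a)
\end{equation*}
on $K_\bT(X^\sigma)_\textup{localized}$. First I would take this as the starting point: the right-hand side depends on $a$ only through $R$, while the left-hand side depends on $a$ through both $\Stab_+$ and $\bfS_\sigma$.

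Next I would let $a\to 0$. The right-hand side converges to $\tau_\sigma^{-1}(-1)^{\codim/2} z^{\deg} R(0)$, since $R(a)$ admits the limit described in the exercise preceding the corollary. The left-hand side converges to $\tau_\sigma^{-1}\,\bJ_{0,+}\,(-\hbar)^{\codim/2} z^{\deg}\,\bJ_{0,+}^{-1}$ by the formula displayed just before the statement of the corollary, which is itself the minuscule specialization of Proposition \ref{c_S_to_0} together with the definition \eqref{bJ0+}. Canceling the common $\tau_\sigma^{-1}$ and conjugating by $\bJ_{0,+}$ yields
\begin{equation*}
\bJ_{0,+}^{-1}\,(-1)^{\codim/2}\,z^{\deg}\,R(0)\,\bJ_{0,+} \;=\; (-\hbar)^{\codim/2}\,z^{\deg},
\end{equation*}
which is precisely the assertion that $\bJ_{0,+}$ diagonalizes $(-1)^{\codim/2}z^{\deg}R(0)$ in the stable basis, the basis in which $z^{\deg}$ is diagonal.

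For the second claim, I would use the structural information about $R(0)$ collected earlier: by the final exercise of the preceding subsection, $R(0)$ is a scalar multiple of a block-unitriangular matrix, with the triangularity of $\Stab_-$. Consequently $(-1)^{\codim/2}z^{\deg}R(0)$ is block-triangular in the fixed-point basis with diagonal part proportional to $z^{\deg}$. For generic $z$ the diagonal entries $z^{\lan\bmu(x),\sigma\ran}$ are pairwise distinct across components of $X^\sigma$, so a block-triangular operator with such a diagonal admits an essentially unique conjugation to its diagonal part by a block-triangular matrix of the same triangularity (up to multiplication by a diagonal). The normalization $\bJ_{0,+} = 1 + O(z)$ inherited from \eqref{JOz} pins down this diagonal factor to be the identity, forcing $\bJ_{0,+}$ itself to be block-triangular with the same triangularity as $\Stab_-$.

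The main obstacle will be the existence of the limit \eqref{bJ0+}, i.e.\ that $\Stab_+^{-1}\,\bJ(a)\,\Stab_+$ indeed has a finite $a\to 0$ limit --- the text defers this to Section \ref{s_Glue_Stab}. Once that input is granted, the corollary is a short three-line deduction: equate two computations of the same limit and use genericity of the eigenvalues of $z^{\deg}$ to propagate triangularity from the operator to the diagonalizer.
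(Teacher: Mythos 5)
Your proposal matches the paper's proof for the diagonalization claim: read off the identity of operators from the commutative square \eqref{diagResStab}, take the $a\to 0$ limit on both sides, and evaluate the left side via the minuscule specialization \eqref{S_to_0_min} together with the definition \eqref{bJ0+} of $\bJ_{0,+}$.

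For the triangularity, the paper records it only as ``in particular'' and proposes the direct geometric route as the Exercise immediately after \eqref{bJ0+} (examining the contribution of broken nodes in the $a\to 0$ limit). Your linear-algebra alternative --- any conjugator taking a block-triangular matrix to its block-scalar diagonal is itself block-triangular, provided the diagonal scalars are pairwise distinct --- is sound and fills in what the author leaves implicit, but it does quietly assume that distinct components $F$ of $X^\sigma$ have pairwise distinct degrees $\lan\bmu(F),\textup{---}\otimes\sigma\ran$, so that the monomials $z^{\deg}$ are generically distinct; the broken-node argument needs no such genericity hypothesis. Also note that the normalization $\bJ_{0,+}=1+O(z)$ you invoke is not needed for the triangularity claim itself (that follows from the conjugation relation and distinctness alone) --- it only resolves the residual block-diagonal ambiguity in the conjugator.
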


\subsection{Glue operator in the stable basis}\label{s_Glue_Stab} 

\subsubsection{}

Let $T^{1/2}X$ be a polarization of $X$ and assume that
the sheaf $\tO_\vir$, and all K-theoretic curve counts, 
are defined using this polarization as in \eqref{def_tO}. 
Our next goal is to prove the following 

\begin{Theorem}\label{t_SGS} 
The operator 
\begin{align}
 \bfG_{\Stab} &=   
    \Stab^{-1} 
\,\, \bfG \,\,  \Stab  
\notag \\
&=   \label{StabGStab}
    \Stab^\transp_{-\fC,\, \Topp,\, -\cL}
\,\, \bfG \,\,  \Stab_{\fC,\, T^{1/2},\,\cL}  
  \end{align}
is independent of the equivariant parameters in $\bA$ for any 
$\fC$ and for 
\begin{equation}
\cL \in \textup{\{neighborhood of $0$\}} \subset  \Pic(X)\otimes \R\,.
\label{cLnearzero} 
\end{equation}
\end{Theorem}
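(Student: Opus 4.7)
The plan is to prove Theorem \ref{t_SGS} by a rigidity argument: the matrix elements of $\bfG_{\Stab}$ in the basis of $\bA$-fixed point classes of $X^{\bA}$ are Laurent polynomials in $a \in \bA$, and I will show that they are bounded at both infinities of $\bA$ along the chamber $\fC$, which forces them to be constants. This is the same general principle already used several times in the excerpt (see Section \ref{s_princ} and the second proof of Theorem \ref{t_tube}), adapted to the specific geometry of the glue operator.

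First I would establish Laurent polynomiality. By Proposition \ref{p_glue_nolocalized}, the glue operator $\bfG$ is supported on the Steinberg variety \eqref{Steinberg_var} and acts on the nonlocalized equivariant K-theory $K_\bT(X)$. Stable envelopes, by construction, are honest (nonlocalized) classes in $K_\bT(X \times X^\bA)$. Consequently, the composition \eqref{StabGStab}, viewed on $K_\bT(X^\bA) = K(X^\bA) \otimes_\Z K_\bT(\pt)$, has matrix elements that are Laurent polynomials in the characters of $\bA$, with no denominators of the form $1 - a^\mu$. This takes care of the ``no poles'' side of rigidity.

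Next I would analyze the asymptotic behavior as $a \to 0$ and $a \to \infty$ along a generic ray in $\fC$. On one hand, the degree condition \eqref{degK} for $\Stab_{\fC,T^{1/2},\cL}$ and $\Stab_{-\fC,\Topp,-\cL}^{\transp}$ bounds the Newton polygons of their off-diagonal restrictions strictly inside those of their diagonal restrictions, shifted by the $\bA$-weights of $\cL|_{F_i}$. The crucial point is the hypothesis that $\cL$ lies in a neighborhood of $0$ in $\Pic(X) \otimes \R$: this makes the fractional shift in \eqref{degK} arbitrarily small, so that the strict inclusion of Newton polygons survives passage to integer Laurent polynomials. On the other hand, I would use Theorem \ref{t_tube} to identify $\bfG$ with $\Tube$, and then the analysis of accordion contributions done in Sections \ref{s_contr_accord}--\ref{s_2_proof_tube} (via Lemma \ref{l_limit}), together with the large-framing/balancedness mechanism of Section \ref{s_polar}--Lemma \ref{lsquare}: the $\bA$-moving part of $T_\vir$ for a quasimap from $C \cong \bP^1$ to $X$ is balanced modulo polarization at the endpoints, so $\aroof(T_\vir^{\mov})$ stays bounded as $a^{\pm 1} \to \infty$. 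Combining this with the polarization contributions absorbed into $\Stab_{\pm}$ via the chosen conventions \eqref{Stab_FF} gives a uniform bound on each matrix element at both infinities.

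Putting the two pieces together: each matrix element is a Laurent polynomial (Step 1) which is uniformly bounded in the chamber as $a^{\pm 1} \to \infty$ (Step 2), and hence a constant in $a$. This is exactly the assertion of the theorem. The main obstacle, and where I expect most of the work to be, is the bookkeeping in Step 2: one must carefully match the polarization normalizations used in defining $\tO_\vir$ (and thus $\bfG$) with those used in the definition \eqref{Stab_FF} of $\Stab$, and then verify that the Newton polygon estimate from \eqref{degK} with $\cL$ near $0$ is sharper than the degrees produced by the residual non-balanced terms in the localization formula for $\bfG$. The requirement $\cL \in$ neighborhood of $0$ is precisely what makes this margin positive; outside such a neighborhood one expects genuine $a$-dependence to appear, as is indeed the case for $R$-matrices \eqref{Rw}, which are built from the same stable envelopes but for generic slope.
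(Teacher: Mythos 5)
Your strategy matches the paper's exactly: Laurent polynomiality of the matrix elements, then rigidity via boundedness at the infinities of $\bA$, with the balanced part of $T_\vir$ (modulo endpoint polarizations, as in Lemma \ref{lsquare}) contributing $O(1)$ and the residual polarization factors matched against the degree bound \eqref{degK} once $\cL$ is near $0$. You also correctly identify the crux, namely the bookkeeping that matches the endpoint polarization normalization in $\tO_\vir$ against \eqref{Stab_FF} and the degree gap in \eqref{degK}.

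Two small cautions when you execute the asymptotic step. First, the machinery you invoke there---Theorem \ref{t_tube}, Lemma \ref{l_limit}, and the accordion discussion of Section \ref{s_contr_accord}---all pertain to the $\Ct_q$-action (through the $\psi$-class insertion $(1-q^{\pm 1}\psi)^{-1}$), not to $a\in\bA$; what is needed here is $\bA$-equivariant localization applied directly to the accordion moduli underlying $\bfG$. Second, once you do that, the $\bA$-fixed quasimaps are chains of $\bP^1$s whose nodes may be \emph{broken} (nonzero $\bA$-weight on $T_p C'_i \otimes T_p C'_{i+1}$), and these contribute factors $\prod(1-\textup{weight}\otimes\textup{line bundle})^{-1}$ to the localization formula coming from the $T\Def(C')$ term in \eqref{Tvir_rel}, separate from $T_\vir^{\textup{fixed domain}}$. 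They are invisible in an argument phrased purely in terms of a quasimap from a single rigid $\bP^1$, and while they stay bounded at infinity and so do not derail you, they must be identified and estimated.
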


\noindent
The proof takes several steps. 

\subsubsection{}

As in Section \ref{s_glue_proper}, we see that the convolution in \eqref{StabGStab} 
is proper, thus a Laurent polynomial in $a\in \bA$. 
To prove it is a constant, it suffices to check it has a finite 
limit as we send $a$ to infinity in any direction. 

This
can be done by equivariant localization. 

\subsubsection{}

Suppose
$$
f\in \left(\QM^\text{\normalsize$\sim$}_{\textup{relative $p_1,p_2$}}\right)^\bA
  \,,
$$
and let $C'$ be the domain of $f$. The curve $C'$ a chain of $C'_i\cong \bP^1$. 
Since $f$ is fixed, there 
exists 
\begin{equation}
\gamma: \bA \to \Aut(C',p_1',p_2') \cong \prod_{C_i} \Ct 
\label{def_gamma}
\end{equation}
such that 
$$
a f = f \gamma(a)\,, \quad \forall a\in \bA \,. 
$$
Let $p=C'_i \cap C'_{i+1}$ be a node of $C'$. We say that $f$ 
 is \emph{broken} at $p$ if 
 \begin{equation}
\textup{weight}_\gamma \left( T_p C'_i \otimes  T_p C'_{i+1}
\right)\ne 1 \,.
\label{broken_w} 
\end{equation}
This terminology comes from \cite{OP1}. Since the space in
\eqref{broken_w} corresponds to the smoothing of node $p$ in 
$\Def(C')$, this means that broken nodes contribute to 
the virtual normal bundle in \eqref{Tvir_rel}, and namely 
\begin{equation}
  N_\vir = N_\vir^{\textup{fixed domain}} + 
\bigoplus_{
\substack{\textup{broken}\\
\textup{nodes $p$}}}  T_p C'_i \otimes  T_p C'_{i+1} \,.
\end{equation}
In localization formulas, this will give 
$$
\begin{matrix*}[c]
    \textup{contribution of} \\
\textup{broken nodes}
  \end{matrix*}  \quad =  \frac{1}{\prod (1- \textup{weight} \otimes 
\textup{line bundle})} 
$$
and this remains bounded as $a$ goes to any infinity of the torus 
avoiding the poles. 

\subsubsection{}
By Lemma \ref{lsquare}, we have 
\begin{equation}
 N_\vir^{\textup{fixed domain}} = T^{1/2}_{f(p_1),\mov} +
 T^{1/2}_{f(p_2),\mov} + 
\dots 
\end{equation}
where 
$$
 T^{1/2}_{f(p_1),\mov} =  T^{1/2}_{f(p_1),\ne 0}
$$
is the part of the polarization with nontrivial $\bA$-weights and 
dots stand for balanced K-theory class as in Definition 
\ref{def_balanced}.  These omitted term will contribute 
$$
\aroof(\dots) = O(1)\,, \quad a\to \infty \,,
$$
to the localization formulas, so it remains to deal with the 
polarization. 

\subsubsection{}

By definition \eqref{def_tO_rel}, we have 
\begin{equation}
  \begin{matrix*}[c]
    \textup{contribution of} \\
\textup{polarization}
  \end{matrix*}  
\quad =  \frac{1}{
\Ldm T^{1/2}_{f(p_1),\mov} \,\left(\Ldm
      T^{1/2}_{f(p_2),\mov}\right)^\vee} \,.
\label{contrib_pol}
\end{equation}
While a polarization is only a virtual vector space, we have
$$
T^{1/2}_{f(p_2),\mov} = \left(T_{f(p_2)} X\right)_\textup{repelling}+
\dots 
$$
where dots stand for a balanced class which will shift the 
asymptotics of 
\eqref{contrib_pol} by an overall weight. 

\subsubsection{}

To find this overall shift by a weight, we may normalize the 
exterior algebras so that they are self-dual. This gives 
$$
\frac{1}{
\left(\Ldm T^{1/2}_{f(p_2),\textup{mov}}\right)^\vee}  \propto 
\left(\frac{\det  T^{1/2}_{f(p_2),\textup{mov}} }{\det
    T_{f(p_2),\textup{rep}}}
\right)^{1/2} 
\frac{1}{
\left(\Ldm \left(T_{f(p_2),\textup{rep}} \right)\right)^\vee}
$$
as $a\to\infty$, which precisely compensates the weights 
\eqref{degK} of the restriction of a stable envelope to the 
fixed component containing $f(p_2)$ for all slopes
$\cL$ sufficiently close to $0$. 

For the contribution of polarization at $f(p_1)$, the 
conclusion is the same, with the replacement 
$$
\det  T^{1/2}_{f(p_2),\textup{mov}}  
\mapsto 
\det  T^{-1/2}_{f(p_1),\textup{mov}}   = 
\det  T^{1/2,\textup{opp}}_{f(p_1),\textup{mov}} \,.
$$
This concludes the proof of Theorem \ref{t_SGS}. 

\subsubsection{}

Recall the shift operator $\bfS_\sigma$ defined in \eqref{defSs}. 
One of its factors is the glue operator that was just discussed. 
As before, we assume that $\sigma$ is a cocharacter of $\bA$, 
that is, the shift of equivariant weights is in a direction that 
\emph{preserves} the symplectic form. 

\begin{Corollary}\label{c_reg_shift} 
The operator 
\begin{align}
\Stab_{\fC,\, T^{1/2},\,\cL}^{-1} \Big|_{a\mapsto q^\sigma a} 
\,\, \bfS_\sigma\,\,  \Stab_{\fC,\, T^{1/2},\,\cL}    \in 
\End K(X^\bA)_\textup{localized} 
\label{shift_in_stable} 
  \end{align}
is bounded and invertible at any infinity of the torus $\bA$ for 
any 
$\fC$ and for $\cL$ as in \eqref{cLnearzero}. 
\end{Corollary}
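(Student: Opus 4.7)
The plan is to follow very closely the structure of the proof of Theorem \ref{t_SGS}, of which this result is a natural extension. The first step is to factor the shift operator as $\bfS_\sigma = \bH_\sigma \cdot \bfG^{-1}$, where
$$\bH_\sigma = \ev_*\bigl(\sigma\text{-twisted quasimaps relative } p_1, p_2;\ \tO_\vir\, z^{\deg f}\bigr),$$
so that the conjugated operator in \eqref{shift_in_stable} factors as
$$\Bigl(\Stab^\transp_{-\fC,\Topp,-\cL}\big|_{a\mapsto q^\sigma a}\, \bH_\sigma\, \Stab_{\fC, T^{1/2}, \cL}\Bigr)\cdot\Bigl(\Stab^{-1}\, \bfG^{-1}\, \Stab\Bigr).$$
The second factor coincides with $\bfG_{\Stab}^{-1}$ which, by Theorem \ref{t_SGS}, is independent of $a$. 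Thus it remains to analyze the first factor.

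For the first factor, I would run $\bA$-equivariant localization on $\bH_\sigma$. The $\bA$-fixed twisted quasimaps again have domains which are chains of rational curves, and the three sources of contributions to the normal bundle decompose exactly as in the proof of Theorem \ref{t_SGS}: the smoothing modes at broken nodes give factors $1/\prod(1-\text{weight}\otimes \text{line bundle})$ which remain bounded at any infinity of $\bA$ avoiding the poles; balanced classes contribute $O(1)$ by the analysis of Section \ref{s_princ}; and the polarization contributions at the two marked points give precisely the factors
$$\frac{1}{\Ldm T^{1/2}_{f(p_1),\mov}\,\bigl(\Ldm T^{1/2}_{f(p_2),\mov}\bigr)^\vee}.$$
Up to a balanced correction, this is matched against the stable envelope degree bounds in \eqref{degK}, which hold for all $\cL$ in some neighborhood of $0$. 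The only new feature in the twisted setting is that at $p_2$ the equivariant weights of $T^{1/2}_{\mov}$ acquire a $q^\sigma$ twist; this twist is exactly compensated by evaluating $\Stab^\transp$ at the shifted point $q^\sigma a$. Boundedness at any infinity of $\bA$ follows.

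For invertibility, I would appeal to Proposition \ref{c_S_to_0}, which computes the limit of (essentially) the operator \eqref{shift_in_stable} as $a\to 0$ along a fixed chamber: modulo conjugation by the invertible operator $\bJ_{0,+}$, it tends to $(-\hbar^{1/2})^{\lan\det N_{<0},\sigma\ran}\, q^{-\lan\det T^{1/2}_{>0},\sigma\ran}\, z^{\deg}$, which is manifestly invertible. Since \eqref{shift_in_stable} is a Laurent polynomial in $a\in \bA$ (after clearing denominators from the broken node factors, which are bounded and nonzero at the relevant infinity) and is invertible at one infinity of every chamber, it is invertible at every infinity of $\bA$; the same analysis applies to $-\fC$ in place of $\fC$, covering the opposite direction.

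The main obstacle I anticipate is the careful bookkeeping of the signs, determinant twists and polarization weights needed to verify that the stable envelope bounds \eqref{degK} match the polarization contribution \emph{after} the $a\mapsto q^\sigma a$ shift. This involves tracking how $T^{1/2}_{f(p_2),\mov}$ transforms under the $\sigma$-twist and how it interacts with the formula \eqref{def_tO_rel} for $\tO_\vir$; the asymmetric roles of $p_1$ and $p_2$ in \eqref{def_tO_rel}, combined with Lemma \ref{lsquare} ensuring that only polarization (up to balanced classes) contributes, is what makes the matching work for slopes $\cL$ in a neighborhood of $0$ inside $\Pic(X)\otimes\R$.
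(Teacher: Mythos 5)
Your boundedness half is the paper's argument: you run the same $\bA$-localization as in the proof of Theorem \ref{t_SGS}, with the same three-way decomposition of the contributions (broken nodes, balanced remainder, polarization at the marked points), and factoring out the $\bfG^{-1}$ to handle it via Theorem \ref{t_SGS} is a tidy bookkeeping variation. The one thing you should make explicit is the point the paper emphasizes: the balanced identity
$$
T_\vir = \Hd\left(\cT^{1/2} + \hbar^{-1}\left(\cT^{1/2}\right)^\vee\right)
$$
of Lemma \ref{lsquare} continues to hold for $\sigma$-twisted quasimaps \emph{because} $\sigma$ is a cocharacter of $\bA$, not merely of $\bT$, and hence preserves the symplectic form. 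Without that observation there is no reason to expect the Theorem \ref{t_SGS} analysis to run verbatim on $\bH_\sigma$.

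The invertibility half has a genuine gap, on two counts. First, circularity: Proposition \ref{c_S_to_0} presupposes the existence of the $a\to 0$ limits of $\Stab_+^{-1}\bJ(a)\Stab_+$ and of $\Stab_+^{-1}\bfS_\sigma\Stab_+$, and the discussion surrounding \eqref{bJ0+} explicitly defers the existence of those limits to Section \ref{s_Glue_Stab} --- that is, to Theorem \ref{t_SGS} and to the very Corollary you are proving. So invoking \ref{c_S_to_0} here puts the cart before the horse. Second, even granting \ref{c_S_to_0}, ``invertible at one infinity'' does not propagate to ``invertible at any infinity'': after accounting for the broken-node and edge denominators, the conjugated operator is a rational, not Laurent-polynomial, function of $a\in\bA$; the torus $\bA$ may have rank greater than one; and boundedness everywhere together with invertibility of the limit at one corner of a toric compactification of $\bA$ does not force invertibility of the limit at another corner. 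The paper's route avoids both problems and is shorter: the inverse of $\bfS_\sigma$ is built from $\bfS_{-\sigma}$ (composing the shift by $\sigma$ with the shift by $-\sigma$ undoes the shift), the boundedness argument you already gave applies equally to $\bfS_{-\sigma}$, and therefore both the conjugated operator and its inverse are bounded at every infinity of $\bA$ --- which is precisely the ``bounded and invertible'' conclusion.
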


The first factor in \eqref{defSs} is only defined using 
localization, and so \eqref{shift_in_stable} is really a rational 
function of the equivariant parameters. This Corollary implies 
the $q$-difference equation in the variables $a\in \bA$ has 
regular singularities. 

Also, since $\bfS_\sigma$ is really an operator in $K(X^\bA)$, 
the stable envelopes are really the compositions 
$$
K(X^\bA) \xrightarrow{\,\, \Stab \,\,} K(X) 
\xrightarrow{\,\, \textup{restriction} \,\,} K(X^\bA) \,,
$$
and, in particular, it makes sense to shift the equivariant 
variables in stable envelopes. We have already seen this in 
in \eqref{sSs} above.

\begin{proof}
Bounding the $\bA$-weights is done as in the above argument. 
The key point is that as in Lemma \ref{lsquare} we have 
$$
T_\vir = \Hd\left(\cT^{1/2} + \hbar^{-1} \, \left(\cT^{1/2}
  \right)^\vee\right)
$$
even for twisted quasimaps as long as $\sigma$ preserves the 
symplectic form. 

The inverse of $\bfS_\sigma$ may be computed from $\bfS_{_\sigma}$, 
therefore the shift operators are both bounded and invertible at
infinities of $\bA$. 
\end{proof}

\subsubsection{}

\begin{Exercise}
  Prove the existence of the limit \eqref{bJ0+}. 
\end{Exercise}

\begin{Exercise}
Show the triangularity of the operator $\bJ_{0,+}$ from Corollary
\ref{c_bJ0} directly, by examining the contribution of the broken 
nodes in the $a\to 0$ limit. 
\end{Exercise}

\subsection{Proof of Theorem \ref{t_R}}\label{qKZ}



\subsubsection{}
We may assume that $\bA$ is a $1$-dimensional torus with 
coordinate $a$ and that 
$$
\sigma: \Ct_q  \xrightarrow{\,\, \sim\,\,} \bA 
$$
is an isomorphism. We write $qa$ in place of $q^\sigma a$. 

The two-dimensional torus $ \Ct_q  \times \bA$ acts on the 
moduli spaces of twisted quasimaps. By construction, the 
group $\Ct_q$ acts in the fiber of $\cV$ at $p'_1$ and 
\emph{does
not} act in the fiber of $\cV$ at $p'_2$, see Section \ref{s_O(p)}. 

\subsubsection{}

As discussed before, 
the pushforward in \eqref{sSs} is proper,
therefore the left-hand side is a Laurent polynomial in $q$ and $a$. 
We analyze the behavior of this polynomial as $(q,a)$ go 
to different infinities of the torus $\Ct_q  \times \bA$. 
 
In order to prove \eqref{sSs}, it suffices to show that the 
left-hand side
remains bounded at all infinities and limits to the right-hand
side as 
\begin{equation}
a\to \infty\,, \quad qa \to 0 \,.  \label{qa} 
\end{equation}

\subsubsection{}
Localization with respect to the $\Ct_q$-action gives the following 
schematic formula 
$$
\bfS_\sigma  = \bJ\big|_{a\mapsto q a} \circ 
 \left| \textup{edge}\right| \circ \bJ^{\transp} \circ
 \bfG^{-1} \,,
$$
where the edge operator is the operator associated with constant 
twisted quasimaps and 
$$
\bJ^{\transp} = \left| \textup{capping
    operator}\right| 
$$
is the operator with 
matrix coefficients \eqref{accord2}. 

We can thus factor the LHS of \eqref{sSs} as 
follows
\begin{alignat}{2}
  \textup{LHS of \eqref{sSs}}  = &
   \left(\Stab^\transp_{-,\, \Topp,\, -\cL}  \, \, \bJ  \,\, 
 \Stab_{+,\, T^{1/2},\,  \cL}  
\right)
\Big|_{a\mapsto q a}  &&\times 
\label{factor1} \\
& 
\left( \Stab^\transp_{-,\, \Topp,\, -\cL} 
\right)
\Big|_{a\mapsto q a} 
 \, \, 
 \left| \textup{edge}\right|
\,\, \Stab_{-,\, T^{1/2},\,\cL}  \quad &&\times
\label{factor2} \\
& 
\Stab^\transp_{+,\, \Topp,\,-\cL} 
\,\,  
\bJ^{\transp} \, \bfG^{-1}  \,\, \Stab_{-,\, T^{1/2},\,\cL} \,.
&&
\label{factor3}
\end{alignat}
We deal with each factor separately. 

\subsubsection{}

\begin{Lemma}
The operator in the RHS of \eqref{factor1} is bounded at 
all infinities of $\Ct_q\times \bA$ and goes to $1$ at the point 
\eqref{qa}. 
\end{Lemma}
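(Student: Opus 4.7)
The operator in the RHS of \eqref{factor1} is $F(qa,q)$, where
$$F(a,q)=\Stab^\transp_{-\fC,\,\Topp,\,-\cL}(a)\,\bJ(a,q)\,\Stab_{\fC,\,T^{1/2},\,\cL}(a)$$
depends on $a\in\bA$ through all three factors and on $q$ only through $\bJ$. My plan is to analyze $F$ by $\Ct_q$-localization of $\bJ$, using the same polarization/slope cancellation that proved Theorem~\ref{t_SGS} for the analogous conjugation of $\bfG$.

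For the specific limit $a\to\infty$ with $qa\to 0$, we have $q=(qa)/a\to 0$. The $\Ct_q$-fixed loci in $\QM^{\textup{rel }p_1,\textup{nonsing }p_2}$ are accordions at the relative point $p_1$ attached to a constant map on the main component of $C$, and the smoothing direction of each resulting broken node carries weight $\psi^\vee_{p_1}\otimes q$. Applying Lemma~\ref{l_limit} to the factor $1/(1-q^{-1}\psi_{p_1})$, every $z$-coefficient of the nonempty-accordion contribution is a rational function of $q$ vanishing at $q=0$. Hence $\bJ(a,q)\to 1$ as $q\to 0$, and together with the orthogonality identity \eqref{StabStab},
$$\Stab^\transp_{-\fC,\,\Topp,\,-\cL}\circ\Stab_{\fC,\,T^{1/2},\,\cL}=\cO_{\diag X^\bA},$$
we obtain $F(a,q)\to 1$ as $q\to 0$ for every $a\in\bA$. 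The desired conclusion $F(qa,q)\to 1$ at the limit point then follows once we know $F(a,q)-1$ is bounded in $a$ at all infinities of $\bA$, uniformly in $q$ on a neighborhood of $q=0$.

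The remaining step, boundedness of $F(a,q)$ at all infinities of $\Ct_q\times\bA$, is essentially a verbatim repetition of the proof of Theorem~\ref{t_SGS}. Writing the $\Ct_q$-localization formula for $F(a,q)$, one decomposes the virtual normal bundle of each fixed locus into a broken-node contribution (each term $O(1)$ in $a$ by a $1/(1-\textup{weight})$ estimate), a balanced class (also $O(1)$ by the $\aroof$-analysis of Section~\ref{s_NB}), and the polarization terms at the two ends of each accordion stratum. The dominant $a$-growth of the polarization terms is then exactly cancelled by the normalization factors of the two stable envelopes, provided $\cL$ lies in the neighborhood \eqref{cLnearzero} and the left envelope carries the opposite chamber $-\fC$, the opposite polarization $\Topp$, and the opposite slope $-\cL$; the $q$-dependence enters only through broken-node factors and is therefore bounded away from $q=0,\infty$. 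The main obstacle is to verify this polarization cancellation weight-by-weight at the asymmetric endpoints of $\bJ$ --- $p_1$ being relative but $p_2$ being nonsingular --- where the stable envelope $\Stab_{\fC,T^{1/2},\cL}$ must absorb the $p_2$-discrepancy exactly as it does in the capping/vertex factorization \eqref{vertex_draw}.
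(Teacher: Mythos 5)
Your argument has a gap at the step where you pass from the fixed-$a$ limit $\lim_{q\to0}F(a,q)=1$ to the joint limit $F(qa,q)\to1$ along $a\to\infty$, $qa\to0$. This is a two-variable limit at a corner of $\Ct_q\times\bA$, and a pointwise $q\to 0$ limit at fixed $a$ together with boundedness does not determine it: the value can depend on the relative rate at which $q\to0$ and $a\to\infty$, and the uniformity hypothesis you append is stated but never established. Moreover, Lemma~\ref{l_limit} controls the $q\to0$ limit of the accordion contribution for \emph{fixed} $\bA$-weights; after the substitution $a\mapsto qa$ the $\bA$-weight of $\psi_{p_1}$ is itself a function of both $q$ and $a$, so the lemma does not directly govern the joint limit.

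The missing ingredient is the minuscule hypothesis on $\sigma$, in force throughout Theorem~\ref{t_R} and used precisely at this step. Since $\bA$ acts with weights $0,\pm1$ on tangent spaces at $\sigma$-fixed points, after the substitution $a\mapsto qa$ one has $\weight(\psi_{p_1})\in\{1,(qa)^{\pm1}\}$, and in each of these three cases the factor $\bigl(1-q^{-1}\weight(\psi_{p_1})\bigr)^{-1}$ tends to $0$ at the point $a\to\infty$, $qa\to0$ --- it equals $(1-q^{-1})^{-1}$, $(1-a)^{-1}$, or $\bigl(1-(q^2a)^{-1}\bigr)^{-1}$ respectively, and $q\to0$, $a\to\infty$, $q^2a=q\cdot(qa)\to0$ --- while remaining bounded at all other infinities of the torus. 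This kills every nonempty accordion stratum in the localization sum directly, leaving only $\Stab^{\transp}_{-}\circ\Stab_{+}=1$; no fixed-$a$ reduction is needed. If $\sigma$ were not minuscule and $\weight(\psi_{p_1})$ could be $(qa)^{k}$ with $|k|\geq2$, then $q^{k-1}a^k$ would be indeterminate along $a\to\infty$, $qa\to0$, and the conclusion would fail. Your boundedness discussion, repeating the proof of Theorem~\ref{t_SGS} with the polarization cancellation against stable envelope normalizations, is essentially what the paper does for that half; it is the limit at \eqref{qa} that requires the explicit weight bookkeeping supplied by minusculeness rather than your $q\to0$ reduction.
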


\begin{proof}
We argue as in the proof of Theorem \ref{t_SGS}, now with the 
additional insertion of $\left(1-q^{-1} \psi_{p_1}\right)^{-1}$. 
Since $\bA$ acts on tangents spaces to all fixed points with 
weights $0,\pm 1$, we conclude 
$$
\textup{weight}(\psi_{p_1}) \in \{1, (qa)^{\pm 1} \}  \,.
$$
In every case, 
$$
\frac1{1-q^{-1} \, \textup{weight}(\psi_{p_1})} \to 0 
$$
at the point \eqref{qa} and remains bounded at all other infinities. 
Thus in the limit \eqref{qa} the accordions at $p_1$ are 
suppressed and the result follows. 
\end{proof}

\subsubsection{}

\begin{Lemma}
The operator in \eqref{factor3} is bounded at 
all infinities of $\Ct_q\times \bA$ and goes to $1$ at the point 
\eqref{qa}. 
\end{Lemma}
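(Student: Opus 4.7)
The plan is to follow the strategy of the preceding lemma and of the proof of Theorem~\ref{t_SGS}, with the new ingredient that the capping operator $\bJ^\transp$ at $p_2$ and the glue operator $\bfG$ are going to cancel each other in the relevant limit.

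First, by \eqref{StabStab} we have $\Stab^\transp_{+,\,\Topp,\,-\cL}=\Stab_{-,\,T^{1/2},\,\cL}^{-1}$, so the operator in \eqref{factor3} equals
$$
A \;=\; \Stab_{-}^{-1} \,\bJ^\transp\, \bfG^{-1}\, \Stab_{-} \;=\; \bigl(\Stab_-^{-1}\bJ^\transp\Stab_-\bigr)\cdot \bfG_{\Stab}^{-1},
$$
where, by Theorem~\ref{t_SGS} applied with $\fC=-$ and the slope $\cL$ in the neighborhood of zero required by \eqref{cLnearzero}, the second factor $\bfG_{\Stab}^{-1}$ is independent of $a\in\bA$. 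Boundedness of this second factor at every infinity is therefore automatic.

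Next, I would replace $\bJ^\transp$ by its $\Ct_q$-localization description \eqref{accord2}, which exhibits it as the proper pushforward to $X\times X$ of $\tO_\vir\, z^{\deg f}\,(1-q\psi_{p_2})^{-1}$ from the accordion moduli $\QM^\text{\normalsize$\sim$}_{\textup{relative }p_2,p_2'}$. The conjugation $\Stab_-^{-1}\bJ^\transp\Stab_-$ then falls under the broken-nodes-plus-polarization analysis of Theorem~\ref{t_SGS}: the polarization contributions at the accordion endpoints cancel the growth of the stable envelopes --- this cancellation is precisely what the slope condition \eqref{cLnearzero} was set up to do --- while the broken-node contributions are of the form $(1-\textup{weight}\otimes\textup{line bundle})^{-1}$ and hence bounded at all $\bA$-infinities. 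The only new ingredient is the insertion $(1-q\psi_{p_2})^{-1}$; by the same computation as the one used in the preceding lemma for $\psi_{p_1}$, the $\psi_{p_2}$-weights at any $\Ct_q$-fixed point lie in $\{1,(qa)^{\pm 1}\}$, giving bounded factors $(1-q)^{-1}$, $(1-q\cdot qa)^{-1}$, $(1-a^{-1})^{-1}$ at all infinities of $\Ct_q\times\bA$.

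To compute the value at the point \eqref{qa}, I would observe that each of these three factors individually tends to $1$ as $q\to 0$, $a\to\infty$ with $qa\to 0$, so the insertion $(1-q\psi_{p_2})^{-1}$ collapses to $1$ in this limit. The accordion operator with trivial $\psi$-insertion is exactly $\bfG$ (by definition, and consistently with Theorem~\ref{t_tube}), so $\bJ^\transp\to\bfG$ and therefore $\bJ^\transp\bfG^{-1}\to 1$. Conjugation by $\Stab_-$ then gives $A\to \Stab_-^{-1}\cdot 1\cdot \Stab_-=1$, as claimed. The main obstacle I anticipate is the careful bookkeeping of equivariant weights in the $\Ct_q$-localization of $\bJ^\transp$ relative to that of $\bJ$ used in the preceding lemma: the two differ not just in the exchange of $p_1$ with $p_2$ but also in the sign of the $\Ct_q$-action on the cotangent line at the marked point, and checking that the $\{1,(qa)^{\pm 1}\}$ weight list is genuinely unchanged under this exchange is the least mechanical step of the argument.
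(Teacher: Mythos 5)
Your overall strategy matches the paper's: conjugate $\bJ^\transp\bfG^{-1}$ by stable envelopes, observe that the polarization and broken-node contributions are controlled by the Theorem~\ref{t_SGS} analysis, and read off the limit at \eqref{qa} from the asymptotics of the $\psi_{p_2}$-insertion. The extra decomposition $A = (\Stab_-^{-1}\bJ^\transp\Stab_-)\cdot\bfG_{\Stab}^{-1}$ is a valid and clean rephrasing (though when you invoke Theorem~\ref{t_SGS} for boundedness you should also note that $\bfG$, and hence $\bfG_\Stab$, does not involve $q$ at all, since the gluing count carries no $\Ct_q$-action; independence of $a$ alone does not by itself give boundedness on $\Ct_q\times\bA$).

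The one genuine error is the weight list. You write that ``by the same computation as the one used in the preceding lemma for $\psi_{p_1}$, the $\psi_{p_2}$-weights at any $\Ct_q$-fixed point lie in $\{1,(qa)^{\pm 1}\}$'' and you flag this as the least mechanical step. It is indeed the step that needs care, and it comes out differently from what you assume: the weight list is \emph{not} unchanged under the exchange of $p_1$ and $p_2$. The twisted quasimaps are set up (Section \ref{s_O(p)}) so that $\Ct_q$ acts via $\sigma$ in the fiber of $\cV$ over $p_1$ but acts trivially over $p_2$. That is exactly why the preceding lemma gets $\textup{weight}(\psi_{p_1})\in\{1,(qa)^{\pm 1}\}$ --- the $q$-shift comes from the twist at $p_1$. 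Over $p_2$ there is no such shift, so the $\bA$-weights on the relevant tangent lines are just $\{1,a^{\pm 1}\}$, and the inserted factors are $(1-q)^{-1}$, $(1-qa)^{-1}$, $(1-qa^{-1})^{-1}$, not the three you listed. Fortunately both lists of monomials tend to $0$ at \eqref{qa} and give bounded factors at all infinities, so your final conclusion is unaffected --- but the identification of the weights, which you yourself singled out as the crux, is wrong as stated.
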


\begin{proof}
We argue as in the previous lemma, now with the 
 insertion of $\left(1-q \psi_{p_2}\right)^{-1}$. 
We have 
$$
q \, \textup{weight}(\psi_{p_2}) \in \{q, q a^{\pm 1} \}  \,,
$$
which goes to zero in the limit \eqref{qa}. Thus $\psi_{p_2}$ 
plays no role in this limit and
$$
\bJ^{\transp} \, \bfG^{-1} \to \bfG \, \bfG^{-1} = 1 \,,
$$
as was to be shown. 
\end{proof}

\subsubsection{}

It remains to deal with the operator in \eqref{factor2}. Let 
$f \equiv x$ be a constant map to $x \in F$. 
Since the weights of $\bA$ in $T_x X$ are $\{0,\pm 1\}$, the 
virtual normal bundle at $f$ to the space of $\bA$-fixed quasimaps is 
\begin{equation}
  N_\vir  =  H^0(\cT_{x,>0}) = \left(T_{x,>0} X \right)\big|_{a\mapsto
    qa} + T_{x,>0} X  \,,
\end{equation}
where $T_{x,>0} X$ denotes the $\bA$-attracting part of $T_x X$
and $\cT_{x,>0}$ is the bundle of these spaces over the domain 
of the quasimap. 

Therefore, the normal bundle contribution to $\cO_\vir \otimes
\cK^{1/2}_\vir$, which appears in the denominator of the 
localization formula, precisely cancels with the 
contributions \eqref{Stab_FF_naive}  from the 
diagonal part of stable 
envelopes,  which appears in the numerators. 
The 
remaining factors in \eqref{Stab_FF} combine with 
the polarization factors in \eqref{def_tO} to 
give 
\begin{multline}
 (-1)^{\rk N^{1/2,\textup{opp}}_{>0}+\rk N^{1/2}_{>0}} 
\frac1{(\det N^{1/2,\textup{opp}})\big|_{a\mapsto qa} \det N^{1/2}}
\times \\
\left(
  \frac{\det N^{1/2} }{ \det N^{1/2} \big|_{a\mapsto qa}  }
\right)^{1/2}   = 
 (-1)^{\frac12 \codim F} h^{\frac12 \codim F} 
\end{multline}
as the normal bundle contribution of the diagonal parts of 
stable envelopes. The factor with $\hbar$ disappears when 
we convert two-point invariants to operators as in 
\eqref{convolution_twisted}.  Together with the degree weight, this
gives
$$
\begin{matrix*}[c]
    \textup{diagonal} \\
\textup{contributions}
  \end{matrix*}  \quad =  (-1)^{\frac12\codim F}  z^{\deg}\,,
$$
where the degree of the constant map was computed in 
\eqref{deg_const}\,.

To suppress the off-diagonal contributions, we need that 
\begin{equation}
\weight_{qa} \frac{\cL^{-1}\big|_{F_2}} {\cL^{-1}\big|_{F_1}} \, 
\weight_{a} \frac{\cL\big|_{F_2}} {\cL\big|_{F'_1}} \to 0 \,,
\label{weight_qa} 
\end{equation}
in the limit \eqref{qa} for any triple of components $F_1,F'_1,F_2$ 
of the fixed locus such that 
\begin{equation}
F_2 \subset \Attr^f_{-\fC}(F_1), \Attr^f_{-\fC}(F'_1)
\label{repellFF} 
\end{equation}
and 
$$
(F_1,F'_1) \ne (F_2,F_2) \,. 
$$
The condition \eqref{repellFF} means there is a chain of 
torus orbits from $F_1$ and $F'_1$ to $F_2$ and computing 
the degree of that chain of curves with respect to $\cL^{-1}$ we see that 
\eqref{weight_qa} is satisfied if $\cL^{-1}$ is ample. 

\noindent 
This concludes the proof of Theorem \ref{t_R}.

%
%
%
%
%
%
%

\bibspread

\begin{bibdiv}
	\begin{biblist}


\bibitem{AO}
M.~Aganagic and A.~Okounkov,
\emph{Elliptic stable envelopes}, 
\texttt{arXiv:1604.00423}. 

\bib{ArnMech}{book}{
   author={Arnol{\cprime}d, V. I.},
   title={Mathematical methods of classical mechanics},
   series={Graduate Texts in Mathematics},
   volume={60},
   note={Translated from the 1974 Russian original by K. Vogtmann and A.
   Weinstein;
   Corrected reprint of the second (1989) edition},
   publisher={Springer-Verlag, New York},
   date={199?},
   pages={xvi+516},
   isbn={0-387-96890-3},
}

\bib{AB}{article}{
   author={Atiyah, M. F.},
   author={Bott, R.},
   title={A Lefschetz fixed point formula for elliptic complexes. I},
   journal={Ann. of Math. (2)},
   volume={86},
   date={1967},
   pages={374--407},
   issn={0003-486X},
}

\bib{AtHirz}{article}{
   author={Atiyah, Michael},
   author={Hirzebruch, Friedrich},
   title={Spin-manifolds and group actions},
   conference={
      title={Essays on Topology and Related Topics (M\'emoires d\'edi\'es
      \`a Georges de Rham)},
   },
   book={
      publisher={Springer, New York},
   },
   date={1970},
   pages={18--28},
}

\bib{Bal}{article}{
   author={Balagovi{\'c}, Martina},
   title={Degeneration of trigonometric dynamical difference equations for
   quantum loop algebras to trigonometric Casimir equations for Yangians},
   journal={Comm. Math. Phys.},
   volume={334},
   date={2015},
   number={2},
   pages={629--659},
   issn={0010-3616},
}

\bib{BCFHR}{article}{
   author={Behrend, Kai},
   author={Ciocan-Fontanine, Ionut},
   author={Hwang, Junho},
   author={Rose, Michael},
   title={The derived moduli space of stable sheaves},
   journal={Algebra Number Theory},
   volume={8},
   date={2014},
   number={4},
   pages={781--812},
   issn={1937-0652},
}



\bib{BF}{article}{
   author={Behrend, K.},
   author={Fantechi, B.},
   title={The intrinsic normal cone},
   journal={Invent. Math.},
   volume={128},
   date={1997},
   number={1},
   pages={45--88},
   issn={0020-9910},
}

\bib{BezF}{article}{
   author={Bezrukavnikov, Roman},
   author={Finkelberg, Michael},
   title={Wreath Macdonald polynomials and the categorical McKay
   correspondence},
   note={With an appendix by Vadim Vologodsky},
   journal={Camb. J. Math.},
   volume={2},
   date={2014},
   number={2},
   pages={163--190},
   issn={2168-0930},
}

\bib{BFS}{book}{
   author={Bezrukavnikov, Roman},
   author={Finkelberg, Michael},
   author={Schechtman, Vadim},
   title={Factorizable sheaves and quantum groups},
   series={Lecture Notes in Mathematics},
   volume={1691},
   publisher={Springer-Verlag, Berlin},
   date={1998},
   pages={x+287},
   isbn={3-540-64619-1},
}

\bib{BK}{article}{
   author={Bezrukavnikov, R.},
   author={Kaledin, D.},
   title={Fedosov quantization in positive characteristic},
   journal={J. Amer. Math. Soc.},
   volume={21},
   date={2008},
   number={2},
   pages={409--438},
   issn={0894-0347},
}

\bibitem{BezLo}
R.~Bezrukavnikov and I.~Losev, 
\emph{Etingof conjecture for quantized quiver varieties}, 
\texttt{arXiv:1309.1716}. 

\bib{BM}{article}{
   author={Bezrukavnikov, Roman},
   author={Mirkovi{\'c}, Ivan},
   title={Representations of semisimple Lie algebras in prime characteristic
   and the noncommutative Springer resolution},
   journal={Ann. of Math. (2)},
   volume={178},
   date={2013},
   number={3},
   pages={835--919},
   issn={0003-486X},
}


\bib{BS}{article}{
   author={Borel, Armand},
   author={Serre, Jean-Pierre},
   title={Le th\'eor\`eme de Riemann-Roch},
   language={French},
   journal={Bull. Soc. Math. France},
   volume={86},
   date={1958},
   pages={97--136},
   issn={0037-9484},
}

\bib{Bros}{article}{
   author={Brosnan, Patrick},
   title={On motivic decompositions arising from the method of Bia\l
   ynicki-Birula},
   journal={Invent. Math.},
   volume={161},
   date={2005},
   number={1},
   pages={91--111},
   issn={0020-9910},
}

\bib{CNO}{article}{
   author={Carlsson, Erik},
   author={Nekrasov, Nikita},
   author={Okounkov, Andrei},
   title={Five dimensional gauge theories and vertex operators},
   language={English, with English and Russian summaries},
   journal={Mosc. Math. J.},
   volume={14},
   date={2014},
   number={1},
   pages={39--61, 170},
   issn={1609-3321},
}


\bib{CO}{article}{
   author={Carlsson, Erik},
   author={Okounkov, Andrei},
   title={Exts and vertex operators},
   journal={Duke Math. J.},
   volume={161},
   date={2012},
   number={9},
   pages={1797--1815},
   issn={0012-7094},
}




\bib{CF1}{article}{
   author={Ciocan-Fontanine, Ionu{\c{t}}},
   author={Kapranov, Mikhail},
   title={Derived Quot schemes},
   language={English, with English and French summaries},
   journal={Ann. Sci. \'Ecole Norm. Sup. (4)},
   volume={34},
   date={2001},
   number={3},
   pages={403--440},
   issn={0012-9593},
}

\bib{CF2}{article}{
   author={Ciocan-Fontanine, Ionu{\c{t}}},
   author={Kapranov, Mikhail M.},
   title={Derived Hilbert schemes},
   journal={J. Amer. Math. Soc.},
   volume={15},
   date={2002},
   number={4},
   pages={787--815},
   issn={0894-0347},
}

\bib{CF3}{article}{
   author={Ciocan-Fontanine, Ionu{\c{t}}},
   author={Kapranov, Mikhail},
   title={Virtual fundamental classes via dg-manifolds},
   journal={Geom. Topol.},
   volume={13},
   date={2009},
   number={3},
   pages={1779--1804},
   issn={1465-3060},
}

\bib{CKM}{article}{
   author={Ciocan-Fontanine, Ionu{\c{t}}},
   author={Kim, Bumsig},
   author={Maulik, Davesh},
   title={Stable quasimaps to GIT quotients},
   journal={J. Geom. Phys.},
   volume={75},
   date={2014},
   pages={17--47},
   issn={0393-0440},
}

\bib{ChariPress}{book}{
   author={Chari, Vyjayanthi},
   author={Pressley, Andrew},
   title={A guide to quantum groups},
   publisher={Cambridge University Press, Cambridge},
   date={1994},
   pages={xvi+651},
   isbn={0-521-43305-3},
}

\bib{CG}{book}{
   author={Chriss, Neil},
   author={Ginzburg, Victor},
   title={Representation theory and complex geometry},
   series={Modern Birkh\"auser Classics},
   note={Reprint of the 1997 edition},
   publisher={Birkh\"auser Boston, Inc., Boston, MA},
   date={2010},
   pages={x+495},
   isbn={978-0-8176-4937-1},
}

\bib{Demaz}{article}{
   author={Demazure, Michel},
   title={A very simple proof of Bott's theorem},
   journal={Invent. Math.},
   volume={33},
   date={1976},
   number={3},
   pages={271--272},
   issn={0020-9910},
}


\bib{EinLaz}{article}{
   author={Ein, Lawrence},
   author={Lazarsfeld, Robert},
   title={The gonality conjecture on syzygies of algebraic curves of large
   degree},
   journal={Publ. Math. Inst. Hautes \'Etudes Sci.},
   volume={122},
   date={2015},
   pages={301--313},
   issn={0073-8301},
}


\bib{Et}{article}{
   author={Etingof, Pavel},
   title={Symplectic reflection algebras and affine Lie algebras},
   language={English, with English and Russian summaries},
   journal={Mosc. Math. J.},
   volume={12},
   date={2012},
   number={3},
   pages={543--565, 668--669},
   issn={1609-3321},
}

\bib{EFK}{book}{
   author={Etingof, Pavel I.},
   author={Frenkel, Igor B.},
   author={Kirillov, Alexander A., Jr.},
   title={Lectures on representation theory and Knizhnik-Zamolodchikov
   equations},
   series={Mathematical Surveys and Monographs},
   volume={58},
   publisher={American Mathematical Society, Providence, RI},
   date={1998},
   pages={xiv+198},
   isbn={0-8218-0496-0},
}

\bib{ES}{book}{
   author={Etingof, Pavel},
   author={Schiffmann, Olivier},
   title={Lectures on quantum groups},
   series={Lectures in Mathematical Physics},
   edition={2},
   publisher={International Press, Somerville, MA},
   date={2002},
   pages={xii+242},
   isbn={1-57146-094-2},
}

\bib{EV}{article}{
   author={Etingof, P.},
   author={Varchenko, A.},
   title={Dynamical Weyl groups and applications},
   journal={Adv. Math.},
   volume={167},
   date={2002},
   number={1},
   pages={74--127},
   issn={0001-8708},
}

\bib{FG}{article}{
   author={Fantechi, Barbara},
   author={G{\"o}ttsche, Lothar},
   title={Riemann-Roch theorems and elliptic genus for virtually smooth
   schemes},
   journal={Geom. Topol.},
   volume={14},
   date={2010},
   number={1},
   pages={83--115},
   issn={1465-3060},
}


\bib{FDA}{collection}{
   author={Fantechi, Barbara},
   author={G{\"o}ttsche, Lothar},
   author={Illusie, Luc},
   author={Kleiman, Steven L.},
   author={Nitsure, Nitin},
   author={Vistoli, Angelo},
   title={Fundamental algebraic geometry},
   series={Mathematical Surveys and Monographs},
   volume={123},
   note={Grothendieck's FGA explained},
   publisher={American Mathematical Society, Providence, RI},
   date={2005},
   pages={x+339},
   isbn={0-8218-3541-6},
}

\bib{FFFR}{article}{
   author={Feigin, Boris},
   author={Finkelberg, Michael},
   author={Frenkel, Igor},
   author={Rybnikov, Leonid},
   title={Gelfand-Tsetlin algebras and cohomology rings of Laumon spaces},
   journal={Selecta Math. (N.S.)},
   volume={17},
   date={2011},
   number={2},
   pages={337--361},
   issn={1022-1824},
}

\bib{FrenResh}{article}{
   author={Frenkel, I. B.},
   author={Reshetikhin, N. Yu.},
   title={Quantum affine algebras and holonomic difference equations},
   journal={Comm. Math. Phys.},
   volume={146},
   date={1992},
   number={1},
   pages={1--60},
   issn={0010-3616},
}

\bib{Ful}{book}{
   author={Fulton, William},
   title={Intersection theory},
   series={Ergebnisse der Mathematik und ihrer Grenzgebiete. 3. Folge. A
   Series of Modern Surveys in Mathematics [Results in Mathematics and
   Related Areas. 3rd Series. A Series of Modern Surveys in Mathematics]},
   volume={2},
   edition={2},
   publisher={Springer-Verlag, Berlin},
   date={1998},
   pages={xiv+470},
   isbn={3-540-62046-X},
   isbn={0-387-98549-2},
}

\bib{Gaits}{article}{
   author={Gaitsgory, D.},
   title={Twisted Whittaker model and factorizable sheaves},
   journal={Selecta Math. (N.S.)},
   volume={13},
   date={2008},
   number={4},
   pages={617--659},
   issn={1022-1824},
}

\bib{GinzNak}{article}{
   author={Ginzburg, Victor},
   title={Lectures on Nakajima's quiver varieties},
   language={English, with English and French summaries},
   conference={
      title={Geometric methods in representation theory. I},
   },
   book={
      series={S\'emin. Congr.},
      volume={24},
      publisher={Soc. Math. France, Paris},
   },
   date={2012},
   pages={145--219},
}

\bib{Giv}{article}{
   author={Givental, Alexander},
   title={On the WDVV equation in quantum $K$-theory},
   note={Dedicated to William Fulton on the occasion of his 60th birthday},
   journal={Michigan Math. J.},
   volume={48},
   date={2000},
   pages={295--304},
   issn={0026-2285},
}

\bib{GivTon}{article}{
   author={Givental, Alexander},
   author={Tonita, Valentin},
   title={The Hirzebruch-Riemann-Roch theorem in true genus-0 quantum
   K-theory},
   conference={
      title={Symplectic, Poisson, and noncommutative geometry},
   },
   book={
      series={Math. Sci. Res. Inst. Publ.},
      volume={62},
      publisher={Cambridge Univ. Press, New York},
   },
   date={2014},
   pages={43--91},
}

\bib{GP}{article}{
   author={Graber, T.},
   author={Pandharipande, R.},
   title={Localization of virtual classes},
   journal={Invent. Math.},
   volume={135},
   date={1999},
   number={2},
   pages={487--518},
   issn={0020-9910},
}

\bib{DHL}{article}{
   author={Halpern-Leistner, Daniel},
   title={The derived category of a GIT quotient},
   journal={J. Amer. Math. Soc.},
   volume={28},
   date={2015},
   number={3},
   pages={871--912},
   issn={0894-0347},
}

\bib{Mirror}{book}{
   author={Hori, Kentaro},
   author={Katz, Sheldon},
   author={Klemm, Albrecht},
   author={Pandharipande, Rahul},
   author={Thomas, Richard},
   author={Vafa, Cumrun},
   author={Vakil, Ravi},
   author={Zaslow, Eric},
   title={Mirror symmetry},
   series={Clay Mathematics Monographs},
   volume={1},
   note={With a preface by Vafa},
   publisher={American Mathematical Society, Providence, RI; Clay
   Mathematics Institute, Cambridge, MA},
   date={2003},
   pages={xx+929},
   isbn={0-8218-2955-6},
}


\bib{IKV}{article}{
   author={Iqbal, Amer},
   author={Koz{\c{c}}az, Can},
   author={Vafa, Cumrun},
   title={The refined topological vertex},
   journal={J. High Energy Phys.},
   date={2009},
   number={10},
   pages={069, 58},
   issn={1126-6708},
}

\bib{Iri}{article}{
   author={Iritani, Hiroshi},
   title={An integral structure in quantum cohomology and mirror symmetry
   for toric orbifolds},
   journal={Adv. Math.},
   volume={222},
   date={2009},
   number={3},
   pages={1016--1079},
   issn={0001-8708},
}

\bib{Kal1}{article}{
   author={Kaledin, Dmitry},
   title={Derived equivalences by quantization},
   journal={Geom. Funct. Anal.},
   volume={17},
   date={2008},
   number={6},
   pages={1968--2004},
   issn={1016-443X},
}

\bib{Kal2}{article}{
   author={Kaledin, Dmitry},
   title={Geometry and topology of symplectic resolutions},
   conference={
      title={Algebraic geometry---Seattle 2005. Part 2},
   },
   book={
      series={Proc. Sympos. Pure Math.},
      volume={80},
      publisher={Amer. Math. Soc., Providence, RI},
   },
   date={2009},
   pages={595--628},
}

\bib{Koll}{book}{
   author={Koll{\'a}r, J{\'a}nos},
   title={Rational curves on algebraic varieties},
   series={Ergebnisse der Mathematik und ihrer Grenzgebiete. 3. Folge. A
   Series of Modern Surveys in Mathematics [Results in Mathematics and
   Related Areas. 3rd Series. A Series of Modern Surveys in Mathematics]},
   volume={32},
   publisher={Springer-Verlag, Berlin},
   date={1996},
   pages={viii+320},
   isbn={3-540-60168-6},
}

\bib{Krat}{article}{
   author={Kontsevich, Maxim},
   title={Enumeration of rational curves via torus actions},
   conference={
      title={The moduli space of curves},
      address={Texel Island},
      date={1994},
   },
   book={
      series={Progr. Math.},
      volume={129},
      publisher={Birkh\"auser Boston, Boston, MA},
   },
   date={1995},
   pages={335--368},
}

\bib{Kr1}{article}{
   author={Krichever, Igor},
   title={Obstructions to the existence of $S\sp{1}$-actions. Bordisms of
   branched coverings},
   language={Russian},
   journal={Izv. Akad. Nauk SSSR Ser. Mat.},
   volume={40},
   date={1976},
   number={4},
   pages={828--844, 950},
   issn={0373-2436},
}

\bib{Kr2}{article}{
   author={Krichever, Igor},
   title={Generalized elliptic genera and Baker-Akhiezer functions},
   language={Russian},
   journal={Mat. Zametki},
   volume={47},
   date={1990},
   number={2},
   pages={34--45, 158},
   issn={0025-567X},
   translation={
      journal={Math. Notes},
      volume={47},
      date={1990},
      number={1-2},
      pages={132--142},
      issn={0001-4346},
   },
}

\bib{Lee}{article}{
   author={Lee, Y.-P.},
   title={Quantum $K$-theory. I. Foundations},
   journal={Duke Math. J.},
   volume={121},
   date={2004},
   number={3},
   pages={389--424},
   issn={0012-7094},
}

\bib{LevPand}{article}{
   author={Levine, M.},
   author={Pandharipande, R.},
   title={Algebraic cobordism revisited},
   journal={Invent. Math.},
   volume={176},
   date={2009},
   number={1},
   pages={63--130},
   issn={0020-9910},
}

\bib{Li1}{article}{
   author={Li, Jun},
   title={Stable morphisms to singular schemes and relative stable
   morphisms},
   journal={J. Differential Geom.},
   volume={57},
   date={2001},
   number={3},
   pages={509--578},
   issn={0022-040X},
}

\bib{Li2}{article}{
   author={Li, Jun},
   title={A degeneration formula of GW-invariants},
   journal={J. Differential Geom.},
   volume={60},
   date={2002},
   number={2},
   pages={199--293},
   issn={0022-040X},
}

\bib{LiWu}{article}{
   author={Li, Jun},
   author={Wu, Baosen},
   title={Good degeneration of Quot-schemes and coherent systems},
   journal={Comm. Anal. Geom.},
   volume={23},
   date={2015},
   number={4},
   pages={841--921},
   issn={1019-8385},
}


\bib{Man}{article}{
   author={Manetti, Marco},
   title={Lectures on deformations of complex manifolds (deformations from
   differential graded viewpoint)},
   journal={Rend. Mat. Appl. (7)},
   volume={24},
   date={2004},
   number={1},
   pages={1--183},
   issn={1120-7183},
}

\bib{mnop1}{article}{
   author={Maulik, D.},
   author={Nekrasov, N.},
   author={Okounkov, A.},
   author={Pandharipande, R.},
   title={Gromov-Witten theory and Donaldson-Thomas theory. I},
   journal={Compos. Math.},
   volume={142},
   date={2006},
   number={5},
   pages={1263--1285},
   issn={0010-437X},
}

\bib{mnop2}{article}{
   author={Maulik, D.},
   author={Nekrasov, N.},
   author={Okounkov, A.},
   author={Pandharipande, R.},
   title={Gromov-Witten theory and Donaldson-Thomas theory. II},
   journal={Compos. Math.},
   volume={142},
   date={2006},
   number={5},
   pages={1286--1304},
   issn={0010-437X},
}

\bib{moop}{article}{
   author={Maulik, D.},
   author={Oblomkov, A.},
   author={Okounkov, A.},
   author={Pandharipande, R.},
   title={Gromov-Witten/Donaldson-Thomas correspondence for toric 3-folds},
   journal={Invent. Math.},
   volume={186},
   date={2011},
   number={2},
   pages={435--479},
   issn={0020-9910},
}

\bib{MauObl}{article}{
   author={Maulik, Davesh},
   author={Oblomkov, Alexei},
   title={Quantum cohomology of the Hilbert scheme of points on $\scr A\sb
   n$-resolutions},
   journal={J. Amer. Math. Soc.},
   volume={22},
   date={2009},
   number={4},
   pages={1055--1091},
   issn={0894-0347},
}

\bibitem{MO1}
D.~Maulik and A.~Okounkov, 
\emph{Quantum groups and quantum cohomology}, 
\texttt{arXiv:1211.1287}.

\bibitem{MO2}
D.~Maulik and A.~Okounkov, 
in preparation. 

\bib{MP_top}{article}{
   author={Maulik, D.},
   author={Pandharipande, R.},
   title={A topological view of Gromov-Witten theory},
   journal={Topology},
   volume={45},
   date={2006},
   number={5},
   pages={887--918},
   issn={0040-9383},
}

\bib{Nak1}{article}{
   author={Nakajima, Hiraku},
   title={Instantons on ALE spaces, quiver varieties, and Kac-Moody
   algebras},
   journal={Duke Math. J.},
   volume={76},
   date={1994},
   number={2},
   pages={365--416},
   issn={0012-7094},
}

\bib{Nak2}{article}{
   author={Nakajima, Hiraku},
   title={Quiver varieties and Kac-Moody algebras},
   journal={Duke Math. J.},
   volume={91},
   date={1998},
   number={3},
   pages={515--560},
   issn={0012-7094},

}

\bib{Nak3}{article}{
   author={Nakajima, Hiraku},
   title={Quiver varieties and finite-dimensional representations of quantum
   affine algebras},
   journal={J. Amer. Math. Soc.},
   volume={14},
   date={2001},
   number={1},
   pages={145--238},
   issn={0894-0347},
}

\bibitem{NakCoul}
Hiraku Nakajima,
\emph{Towards a mathematical definition of Coulomb branches of $3$-dimensional $\mathcal N=4$ gauge theories, I}
\texttt{arXiv:1503.03676}.


\bib{Zth}{article}{
   author={Nekrasov, Nikita},
   title={$\bold Z$-theory: chasing ${\germ m}/f$ theory},
   language={English, with English and French summaries},
   note={Strings 04. Part II},
   journal={C. R. Phys.},
   volume={6},
   date={2005},
   number={2},
   pages={261--269},
   issn={1631-0705},
}

\bibitem{NO}
N.~Nekrasov and  A.~Okounkov, 
\emph{Membranes and Sheaves}, 
\texttt{arXiv:1404.2323}. 

\bib{NS1}{article}{
   author={Nekrasov, Nikita A.},
   author={Shatashvili, Samson L.},
   title={Supersymmetric vacua and Bethe ansatz},
   journal={Nuclear Phys. B Proc. Suppl.},
   volume={192/193},
   date={2009},
   pages={91--112},
   issn={0920-5632},
}

\bib{NS2}{article}{
   author={Nekrasov, Nikita A.},
   author={Shatashvili, Samson L.},
   title={Quantization of integrable systems and four dimensional gauge
   theories},
   conference={
      title={XVIth International Congress on Mathematical Physics},
   },
   book={
      publisher={World Sci. Publ., Hackensack, NJ},
   },
   date={2010},
   pages={265--289},
}

\bib{ECM}{article}{
   author={Okounkov, Andrei},
   title={Random surfaces enumerating algebraic curves},
   conference={
      title={European Congress of Mathematics},
   },
   book={
      publisher={Eur. Math. Soc., Z\"urich},
   },
   date={2005},
   pages={751--768},
}

\bib{OP1}{article}{
   author={Okounkov, A.},
   author={Pandharipande, R.},
   title={Quantum cohomology of the Hilbert scheme of points in the plane},
   journal={Invent. Math.},
   volume={179},
   date={2010},
   number={3},
   pages={523--557},
   issn={0020-9910},
}


\bib{OP2}{article}{
   author={Okounkov, A.},
   author={Pandharipande, R.},
   title={The local Donaldson-Thomas theory of curves},
   journal={Geom. Topol.},
   volume={14},
   date={2010},
   number={3},
   pages={1503--1567},
   issn={1465-3060},
}



\bib{OR1}{article}{
   author={Okounkov, Andrei},
   author={Reshetikhin, Nikolai},
   title={Correlation function of Schur process with application to local
   geometry of a random 3-dimensional Young diagram},
   journal={J. Amer. Math. Soc.},
   volume={16},
   date={2003},
   number={3},
   pages={581--603 (electronic)},
   issn={0894-0347},
}

\bib{OR2}{article}{
   author={Okounkov, Andrei},
   author={Reshetikhin, Nicolai},
   title={Random skew plane partitions and the Pearcey process},
   journal={Comm. Math. Phys.},
   volume={269},
   date={2007},
   number={3},
   pages={571--609},
   issn={0010-3616},
}

\bibitem{OS}
A.~Okounkov and A.~Smirnov, 
\emph{Quantum difference equations for Nakajima varieties},
\texttt{arXiv:1602.09007}.


\bib{Pal}{article}{
   author={Palamodov, V. P.},
   title={Deformations of complex spaces},
   language={Russian},
   conference={
      title={Current problems in mathematics. Fundamental directions, Vol.\
      10 (Russian)},
   },
   book={
      series={Itogi Nauki i Tekhniki},
      publisher={Akad. Nauk SSSR, Vsesoyuz. Inst. Nauchn. i Tekhn. Inform.,
   Moscow},
   },
   date={1986},
   pages={123--221, 283},
}

\bibitem{PandPix}
R.~Pandharipande and A.~Pixton, 
\emph{Gromov-Witten/Pairs correspondence for the quintic 3-fold}, 
\texttt{arXiv:1206.5490}. 

\bib{Reshet}{article}{
   author={Reshetikhin, N. Yu.},
   title={Quasitriangular Hopf algebras and invariants of links},
   language={Russian},
   journal={Algebra i Analiz},
   volume={1},
   date={1989},
   number={2},
   pages={169--188},
   issn={0234-0852},
   translation={
      journal={Leningrad Math. J.},
      volume={1},
      date={1990},
      number={2},
      pages={491--513},
      issn={1048-9924},
   },
}

\bib{Segal}{article}{
   author={Segal, Graeme},
   title={The representation ring of a compact Lie group},
   journal={Inst. Hautes \'Etudes Sci. Publ. Math.},
   number={34},
   date={1968},
   pages={113--128},
   issn={0073-8301},
}


\bib{StEn}{book}{
   author={Stanley, Richard P.},
   title={Enumerative combinatorics. Volume 1},
   series={Cambridge Studies in Advanced Mathematics},
   volume={49},
   edition={2},
   publisher={Cambridge University Press, Cambridge},
   date={2012},
   pages={xiv+626},
   isbn={978-1-107-60262-5},
}

\bib{TV}{article}{
   author={Tarasov, V.},
   author={Varchenko, A.},
   title={Dynamical differential equations compatible with rational $qKZ$
   equations},
   journal={Lett. Math. Phys.},
   volume={71},
   date={2005},
   number={2},
   pages={101--108},
   issn={0377-9017},
}

\bib{TolCas}{article}{
   author={Toledano Laredo, Valerio},
   title={The trigonometric Casimir connection of a simple Lie algebra},
   journal={J. Algebra},
   volume={329},
   date={2011},
   pages={286--327},
   issn={0021-8693},
}

\bib{Thomason}{article}{
   author={Thomason, R. W.},
   title={Une formule de Lefschetz en $K$-th\'eorie \'equivariante
   alg\'ebrique},
   language={French},
   journal={Duke Math. J.},
   volume={68},
   date={1992},
   number={3},
   pages={447--462},
   issn={0012-7094},
}

\bib{VinPop}{article}{
   author={Vinberg, {\`E}. B.},
   author={Popov, V. L.},
   title={Invariant theory},
   language={Russian},
   conference={
      title={Algebraic geometry, 4 (Russian)},
   },
   book={
      series={Itogi Nauki i Tekhniki},
      publisher={Akad. Nauk SSSR, Vsesoyuz. Inst. Nauchn. i Tekhn. Inform.,
   Moscow},
   },
   date={1989},
   pages={137--314, 315},
}


\bib{Vois}{article}{
   author={Voisin, Claire},
   title={Green's generic syzygy conjecture for curves of even genus lying
   on a $K3$ surface},
   journal={J. Eur. Math. Soc. (JEMS)},
   volume={4},
   date={2002},
   number={4},
   pages={363--404},
   issn={1435-9855},
}

	\end{biblist}
\end{bibdiv}

\vfill\eject


\end{document}

Abstract:

These are notes from my
 lectures on quantum K-theory of Nakajima quiver 
varieties and K-theoretic Donaldson-Thomas theory of threefolds
given at Columbia and Park City Mathematical Institute. 
They contain an introduction to the subject and a number of 
new results. In particular, we prove the main conjecture of 
arXiv:hep-th/0412021 and the conjecture of arXiv:1404.2323
in the simplest case of reduced smooth curves. 
We also prove the the absence of 
quantum corrections to the capped vertex with descendents 
for sufficiently large framing (and polarization), which is a
property we call large framing vanishing. The shift 
operators for minuscule shift are shown to be given by 
qKZ operators, which is a K-theoretic analog of the result of 
arXiv:1211.1287.

